\newcommand{\R}{\mathbb{R}}
\newcommand{\N}{\mathbb{N}}
\newcommand{\1}{\mathds{1}}
\newcommand{\I}{\mathrm{i}}
\renewcommand{\S}{\mathbb{S}}
\theoremstyle{plain}
\newtheorem{proposition}{Proposition}[section]
\newtheorem{corollary}[proposition]{Corollary}
\newtheorem{theorem}[proposition]{Theorem}
\newtheorem*{theorem*}{Theorem}
\newtheorem{lemma}[proposition]{Lemma}
\newtheorem*{conjecture*}{Conjecture}
\theoremstyle{definition}
\newtheorem{definition}[proposition]{Definition}
\newtheorem{remark}[proposition]{Remark}
\newtheorem{remarks}[proposition]{Remarks}
\newtheorem*{remarks*}{Remarks}
\begin{document}
\title[Gevrey smoothing for the homogeneous non-cutoff Boltzmann equation]{Gevrey smoothing for weak solutions of the fully nonlinear homogeneous Boltzmann and Kac equations without cutoff for Maxwellian molecules}
\author{Jean-Marie Barbaroux}
\author{Dirk Hundertmark}
\author{Tobias Ried}
\author{Semjon Vugalter}
\date{21st September 2015}
\thanks{\textcopyright~2015 by the authors. Faithful reproduction of this article,
        in its entirety, by any means is permitted for non-commercial purposes}
	\begin{abstract}
	  It has long been suspected that the non-cutoff Boltzmann operator has similar coercivity properties as a fractional Laplacian. This has led to the hope that the homogenous Boltzmann equation enjoys similar regularity properties as the heat equation with a fractional Laplacian.
	   In particular, the weak solution of the fully nonlinear non-cutoff homogenous Boltzmann equation with initial datum in $L^1_2(\mathbb{R}^d)\cap L\log L(\mathbb{R}^d)$, i.e., finite mass, energy and entropy, should immediately become Gevrey regular for strictly positive times. We prove this conjecture for Maxwellian molecules. 
	\end{abstract}
\maketitle
{\noindent
2010 \textit{Mathematics Subject Classification}: 35D10 (Primary); 
 35B65, 35Q20, 82B40 (Secondary) \\ 
\textit{Keywords}: Gevrey regularity, Non-cutoff homogeneous Boltzmann equation, Non-cutoff homogeneous Kac equation, Maxwellian molecules
}
{\hypersetup{linkcolor=black}
\tableofcontents}
\section{Introduction}\label{sec:introduction}

   It has long been suspected that the non-cutoff Boltzmann operator with a singular cross section kernel has similar coercivity properties as a fractional Laplacian $(-\Delta)^\nu$, for suitable $0<\nu<1$. This has been made precise by \textsc{Alexandre, Desvillettes, Villani}, and \textsc{Wennberg} \cite{ADVW00}, see also the reviews by \textsc{Alexandre} \cite{Ale09} and by \textsc{Villani} \cite{Vil02} for its history, and has led to the hope that the fully nonlinear  homogenous Boltzmann equation enjoys similar regularity properties as the heat equation with a fractional Laplacian given by
		\begin{align*}
			\left\{\begin{array}{rl}
					\partial_t u + (-\Delta)^{\nu}u &= 0 \\
					\left.u\right|_{t=0} &= u_0 \,\in L^1(\R^d) .
			\end{array}	\right.
		\end{align*}
 Using the Fourier transform one immediately sees that
	\begin{align*}
	\widehat{u}(t,\xi) = \mathrm{e}^{-t (2\pi |\xi|)^{2\nu}} \widehat{u_0}(\xi) \quad \text{with}  \quad \widehat{u_0} \in L^{\infty}(\R^d),
	\end{align*}
	 so
	\begin{align*}
		\sup_{t>0} \sup_{\xi\in\R^d} \mathrm{e}^{t |\xi|^{2\nu}}|\widehat{u}(t,\xi)| \leq \|u_0\|_{L^1(\R^d)} <\infty,
	\end{align*}
	that is, the Fourier transform of the solution is extremely fast decaying for strictly positive times.

  Introducing the Gevrey spaces as in Definition \ref{def:Gevrey}, it is natural to expect,   see, for example, \textsc{Desvillettes} and \textsc{Wennberg} \cite{DW10}:
   \begin{conjecture*}[Gevrey smoothing]
    Any weak	 solution of the non-cutoff homogenous Boltzmann equation with a singular cross section kernel of order $\nu$ and with initial datum in $L^1_2(\mathbb{R}^d)\cap L\log L(\mathbb{R}^d)$, i.e., finite mass, energy and entropy, belongs to the Gevrey class $G^{\frac{1}{2\nu}}(\R^d)$ for strictly positive times.
   \end{conjecture*}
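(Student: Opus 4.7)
The plan is to work entirely on the Fourier side, where for Maxwellian molecules Bobylev's identity reduces the collision operator to
\begin{equation*}
    \partial_t \hat f(t, \xi) \,=\, \int_{\S^{d-1}} b(\sigma \cdot \xi/|\xi|) \bigl[ \hat f(t, \xi^+) \hat f(t, \xi^-) - \hat f(t, 0) \hat f(t, \xi) \bigr] \, d\sigma,
\end{equation*}
with $\xi^{\pm} = (\xi \pm |\xi|\sigma)/2$ and $|\xi^+|^2 + |\xi^-|^2 = |\xi|^2$. After normalising the mass so that $\hat f(t, 0) = 1$, I would introduce the Gevrey-weighted function
\begin{equation*}
    G_\lambda(t, \xi) \,:=\, e^{\lambda(t) \langle \xi \rangle^{2\nu}} \hat f(t, \xi)
\end{equation*}
with a carefully chosen increasing radius $\lambda : [0, T] \to [0, \infty)$, $\lambda(0) = 0$, and aim at a uniform in time bound on $\|G_\lambda(t)\|_{L^2_\xi}$. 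Any such bound is equivalent to $f(t, \cdot)$ lying in the Gevrey class $G^{1/(2\nu)}(\R^d)$ for $t > 0$.

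The pivotal structural observation is the subadditivity inequality
\begin{equation*}
    |\xi|^{2\nu} \,=\, \bigl( |\xi^+|^2 + |\xi^-|^2 \bigr)^\nu \,\leq\, |\xi^+|^{2\nu} + |\xi^-|^{2\nu},
\end{equation*}
valid for all $\nu \in (0, 1]$. Writing $\hat f(\xi^\pm) = e^{-\lambda \langle \xi^\pm \rangle^{2\nu}} G_\lambda(\xi^\pm)$ in the gain term of the Bobylev identity, this subadditivity produces an exponential prefactor bounded by $1$ (up to harmless low-frequency errors), so the Gevrey weight distributes cleanly across the binary collision. This is the ingredient that makes the Maxwellian case tractable, in that it replaces the commutator estimates between $Q$ and the weight that would otherwise be needed and which are notoriously delicate for singular kernels.

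Differentiating $\tfrac{1}{2} \|G_\lambda(t)\|_{L^2_\xi}^2$ in time produces, beside the collisional contribution, a growth term $\dot\lambda(t) \int \langle\xi\rangle^{2\nu} |G_\lambda|^2 \, d\xi$ coming from the weight. The technical core of the proof is then a Gevrey-weighted analogue of the coercivity estimate of \textsc{Alexandre, Desvillettes, Villani}, and \textsc{Wennberg} \cite{ADVW00}: using the subadditivity inequality to dominate the weighted gain by the unweighted one, and the a priori mass, energy, and entropy bounds on $f$ to control the low-frequency loss, one should obtain
\begin{equation*}
    - \mathrm{Re} \int \overline{G_\lambda(\xi)} \, e^{\lambda(t) \langle\xi\rangle^{2\nu}} \, \widehat{Q(f,f)}(\xi) \, d\xi \,\geq\, c_0 \int |\xi|^{2\nu} |G_\lambda|^2 \, d\xi \,-\, C \|G_\lambda\|_{L^2}^2,
\end{equation*}
with $c_0 > 0$ depending only on the physical bounds on $f$. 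Choosing the growth rate $\dot\lambda(t) < c_0$ lets the coercive term absorb the growth term, and a Gr\"onwall argument closes the estimate.

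The main technical obstacle is item three: both the gain and loss parts of the Bobylev integral are individually divergent near $\theta = 0$, and one has to preserve the cancellation between them encoded in the cancellation lemma of \cite{ADVW00} after the Gevrey weight is inserted. This forces a careful second order Taylor expansion of $G_\lambda(\xi^+) G_\lambda(\xi^-) - G_\lambda(\xi)$ at small $\theta$, exploiting that $\xi^- \to 0$ (where $G_\lambda = 1$) and $\xi^+ \to \xi$, so as to extract the $\theta^2$ vanishing that compensates the $\theta^{-1-2\nu}$ singularity of $b$. Finally, since the theorem concerns weak solutions, one approximates $f$ by classical solutions associated to a truncated cross section, proves the Gevrey bound uniformly in the truncation parameter, and passes to the limit using the standard weak stability theory.
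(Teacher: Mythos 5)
Your plan correctly identifies the Bobylev identity as the natural starting point, and the geometric fact that $\xi^- \to 0$ near the grazing singularity does appear in the paper's argument (it is what makes the truncated commutator finite). However, there is a genuine gap in the central structural claim. Subadditivity gives you
\begin{equation*}
\langle\xi\rangle^{2\nu} \leq \langle\xi^+\rangle^{2\nu} + \langle\xi^-\rangle^{2\nu},
\end{equation*}
hence $e^{\lambda\langle\xi\rangle^{2\nu}} \leq e^{\lambda\langle\xi^+\rangle^{2\nu}}\, e^{\lambda\langle\xi^-\rangle^{2\nu}}$: the \emph{full} weight lands on $\xi^-$. After pairing the weighted gain term against $\overline{G_\lambda \hat f(\xi)}$ you are facing a trilinear expression in which one factor, $e^{\lambda\langle\xi^-\rangle^{2\nu}}|\hat f(\xi^-)|$, must be controlled pointwise to close the estimate with $L^2$ norms of $G_\lambda f$. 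But the energy identity only gives $L^2$ control of $G_\lambda f$, and there is no passage from $L^2$ control of $G_\lambda f$ to $L^\infty$ control of $G_\lambda \hat f$ with the \emph{same} exponent. Equivalently, the "harmless low-frequency errors" you dismiss are not confined to low frequencies: away from $\theta=0$ the vector $\xi^-$ ranges over the whole ball $|\xi^-|\le|\xi|/\sqrt 2$, the cross section there is merely integrable, and $e^{\lambda\langle\xi^-\rangle^{2\nu}}|\hat f(\xi^-)|$ is simply unbounded a priori.

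What the paper uses instead is the \emph{strict concavity} inequality of Lemma~\ref{lem:abalpha}: for $0\le s^-\le s^+$,
\begin{equation*}
(1+s^-+s^+)^\alpha \leq \epsilon\bigl(\alpha,\tfrac{s^+}{s^-}\bigr)(1+s^-)^\alpha + (1+s^+)^\alpha, \qquad \epsilon(\alpha,u)=(1+u)^\alpha - u^\alpha < 1.
\end{equation*}
This is strictly stronger than subadditivity: it puts only a power $\epsilon<1$ of the weight on $\xi^-$. The gap $1-\epsilon>0$ is exactly the room needed for the Kolmogorov--Landau interpolation machinery (Lemma~\ref{lem:1dlemma}, Corollary~\ref{cor:ddlemma}) to convert local $L^2$ bounds of $G_\Lambda f$ into pointwise bounds of $G^\epsilon\hat f$ on slightly smaller balls, which in turn feeds the induction over length scales $\Lambda_N\to\infty$ (via the cut-off multiplier $G_\Lambda$ and the relation $P_\Lambda Q(g,f)=P_\Lambda Q(P_\Lambda g,P_\Lambda f)$). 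Your proposal skips all of this. Moreover, the size of $\epsilon(\alpha,1)=2^\alpha-1$ is precisely what dictates the constraint $\alpha\le\alpha_{m,n}$ in Theorems~\ref{thm:gevrey-main1}--\ref{thm:gevrey-main3}; for $\nu$ close to $1$ the conjectured $G^{1/(2\nu)}$ regularity is obtained only under stronger moment assumptions on $f_0$, an obstruction your argument does not see because it treats the weight transfer as cost-free. Finally, the claim that subadditivity "replaces the commutator estimates" is misleading: since $Q$ does not commute with Fourier multipliers, the commutation error $\langle G_\Lambda Q(f,f)-Q(f,G_\Lambda f), G_\Lambda f\rangle$ is unavoidable; the sub-elliptic estimate of \cite{ADVW00} is applied to $Q(f,G_\Lambda f)$ with the \emph{unweighted} $f$ in the first slot, and the commutator is exactly what remains to be bounded.
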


   The central results of our work is a proof of this conjecture for Maxwellian molecules. In particular, we prove
   \begin{theorem*} Assume that the non-cutoff Boltzman cross section has a singularity $1+2\nu$ with $0<\nu<1$ and obeys some further technical conditions, which are true in all physically relevant cases, for details see \eqref{eq:cross-section} and \eqref{eq:cross-section-bdd}. Then, for initial conditions $f_0\in L\log L\cap L^1_m$ with an integer
   		\begin{align*}
   			m\ge \max\left(2,\frac{2^\nu-1}{2(2-2^{\nu})}\right)
   		\end{align*}
   		any weak solution of the fully non-linear homogenous Boltzmann equation for Maxwellian molecules belongs to the Gevrey class $G^{\frac{1}{2\nu}}$ for strictly positive times.

   		In particular, for $\nu \le \log(9/5)/\log(2) \simeq 0,847996$ we have
   		$m=2$ and the theorem does not require anything except the physically reasonable assumptions of finite mass, energy, and entropy. If $\log(9/5)/\log(2)<\nu<1$ and we assume only that $f_0\in L\log L\cap L^1_2$, then we prove that the solution is in $G^{\frac{\log2}{2\log(9/5)}}$, in particular, it is ultra-analytic.
   \end{theorem*}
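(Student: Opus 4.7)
The plan is to pass to Fourier space, where for Maxwellian molecules the \emph{Bobylev identity} turns the collision operator into the explicit bilinear expression
\[
\widehat{Q(f,f)}(\xi) = \int_{\S^{d-1}} b\bigl(\tfrac{\xi}{|\xi|}\cdot\sigma\bigr)\bigl[\widehat f(\xi^+)\widehat f(\xi^-) - \widehat f(0)\widehat f(\xi)\bigr]\,d\sigma, \quad \xi^{\pm}=\tfrac{\xi\pm|\xi|\sigma}{2}.
\]
Writing $\varphi=\widehat f$ and using $|\mathrm{Re}(\bar z w)|\le|z||w|$, one obtains a pointwise differential inequality for $|\varphi(t,\xi)|$ whose loss term has coefficient $\varphi(t,0)=\|f_0\|_{L^1}$ (conserved). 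Since Gevrey regularity of index $1/(2\nu)$ is equivalent to the Fourier decay $|\varphi(t,\xi)|\lesssim e^{-ct|\xi|^{2\nu}}$ for some $c>0$, I would introduce the weighted quantity
\[
M(t,\xi) := e^{ct|\xi|^{2\nu}}|\varphi(t,\xi)|
\]
and aim to show $\sup_{t,\xi} M<\infty$ for $c$ sufficiently small. To avoid ill-defined expressions I would first carry out every computation with $|\xi|^{2\nu}$ replaced by $\min(|\xi|^{2\nu},N)$ (or with a frequency cutoff), and pass to the limit $N\to\infty$ at the end.

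The structural inequality making the scheme work is the subadditivity of $x\mapsto x^\nu$ for $0<\nu<1$ combined with $|\xi^+|^2+|\xi^-|^2=|\xi|^2$: setting $\cos\theta = \sigma\cdot\xi/|\xi|$,
\[
D(\xi,\sigma):=|\xi^+|^{2\nu}+|\xi^-|^{2\nu}-|\xi|^{2\nu} = |\xi|^{2\nu}\bigl(\cos^{2\nu}(\theta/2)+\sin^{2\nu}(\theta/2)-1\bigr)\ge 0,
\]
with $D\sim|\xi|^{2\nu}\theta^{2\nu}$ for grazing collisions $\theta\to 0$ --- exactly matching the angular singularity of $b$ and producing an integrable gain $e^{-ctD}$. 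Substituting $|\varphi|=e^{-ct|\xi|^{2\nu}}M$ yields
\[
\partial_t M(t,\xi) \le c|\xi|^{2\nu}M(t,\xi) + \int_{\S^{d-1}} b(\cos\theta)\bigl[e^{-ctD}M(\xi^+)M(\xi^-)-\varphi(0)M(\xi)\bigr]\,d\sigma,
\]
and the key quantitative fact
\[
\int_{\S^{d-1}} b(\cos\theta)\bigl[1-e^{-ctD(\xi,\sigma)}\bigr]\,d\sigma \;\gtrsim\; \kappa\,t\,|\xi|^{2\nu}
\]
for large $|\xi|$ is the Maxwellian-molecules Fourier counterpart of the fractional-Laplacian dissipation, essentially the \textsc{Alexandre--Desvillettes--Villani--Wennberg} coercivity.

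With the dissipation in hand I would run a Grönwall / maximum-principle argument on the truncated $M$. At a point $\xi^*$ realising the (truncated) supremum, $|\xi^{*\pm}|\le|\xi^*|$ gives $M(\xi^{*\pm})\le M(\xi^*)$, so the bilinear term is bounded by $M(\xi^*)^2\int b\,e^{-ctD}\,d\sigma$, and combined with the coercive loss one obtains a differential inequality of the form
\[
\tfrac{d}{dt}\sup_\xi M(t,\xi) \le \bigl(c-\kappa\bigr)|\xi^*|^{2\nu} M(t,\xi^*) + C\,M(t,\xi^*)
\]
valid at points $\xi^*$ with $|\xi^*|$ large. Choosing $c<\kappa$ makes the leading term nonpositive and Grönwall closes the estimate on any finite interval; every bound being $N$-independent, the truncation can be removed.

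The hard part will be the non-grazing regime $\theta\approx\pi/2$, where $|\xi^{*\pm}|\sim|\xi^*|/\sqrt 2$, so that $M(\xi^{*\pm})$ is \emph{not} much smaller than $M(\xi^*)$, and the defect $D$ saturates at $(2^{1-\nu}-1)|\xi|^{2\nu}$: the factor $e^{-ctD}$ by itself does not damp the bilinear term strongly enough. The quantitative improvement must come from using moment information on $f$, through estimates of the form $|\varphi(\xi^-)-1|\lesssim|\xi^-|^2\|f\|_{L^1_2}$ and their higher-order analogues; this is the origin of the condition $m\ge(2^\nu-1)/(2(2-2^\nu))$, which becomes vacuous (i.e.\ $m=2$) precisely when $\nu\le\log(9/5)/\log 2$. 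A secondary but genuine obstacle is that weak solutions need not satisfy the Fourier ODE pointwise; this would be handled by regularising $f_0$, deriving all estimates uniformly in the regularisation, and passing to the limit using stability of weak solutions together with conservation of mass, momentum and energy.
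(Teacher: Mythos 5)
Your route is genuinely different from the paper's: the paper works in an $L^2$ framework (reformulating the weak equation as an energy identity with a cut-off Gevrey multiplier $G_\Lambda$, invoking the ADVW sub-elliptic coercivity estimate, bounding a trilinear commutation error, and closing by a Kolmogorov--Landau interpolation and an induction over dyadic-type length scales $\Lambda_N\to\infty$), while you propose a pointwise Fourier ODE with a maximum-principle argument on the weight $M(t,\xi)=e^{ct|\xi|^{2\nu}}|\hat f(t,\xi)|$. Unfortunately the central step of your argument does not close, for two intertwined reasons.

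First, the maximum-point bound $M(t,\xi^{*\pm})\le M(t,\xi^*)$ destroys the grazing cancellation on which the non-cutoff operator relies. The integrand $b(\cos\theta)\bigl[e^{-ctD}M(\xi^+)M(\xi^-)-M(\xi)\bigr]$ is integrable only because $M(\xi^-)\to M(0)=1$ and $\xi^+\to\xi$ as $\theta\to0$; once you replace $M(\xi^-)$ by its global maximum $\mu:=M(\xi^*)$ (which can be large), the integrand near $\theta=0$ becomes $\sim b(\cos\theta)\,\mu(\mu-1)$, and $\int b\,d\sigma=\infty$. There is no way to split a linear Gr\"onwall term off the $\mu^2$ gain without re-introducing a divergent angular integral, and the term you display, $(c-\kappa)|\xi^*|^{2\nu}M(t,\xi^*)+C\,M(t,\xi^*)$, is linear in $M$, which is inconsistent with the bilinear gain you derived two lines earlier. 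This is precisely the obstruction the paper names explicitly (``the problem is that one has to control the pointwise bound with an $L^2$ norm, which is in general impossible'') and circumvents by keeping the $\xi^-$ factor inside a Corollary~\ref{cor:expdiff}-type estimate that retains a vanishing factor $1-|\xi^+|^2/|\xi|^2\sim\theta^2$ and the fractional power $G(\xi^-)^{\epsilon(\alpha,\cot^2(\theta/2))}$, rather than replacing $\hat f(\xi^-)$ by a sup.

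Second, your proposed remedy for the non-grazing regime does not do what you claim. The moment expansion $|\hat f(\xi^-)-1|\lesssim|\xi^-|^2\|f\|_{L^1_2}$ is informative only when $|\xi^-|$ is small, i.e.\ when $\theta$ is small --- the grazing regime --- whereas at $\theta\approx\pi/2$ one has $|\xi^-|\sim|\xi^*|/\sqrt2$, which is large, and the Taylor bound is vacuous. The threshold $m\ge(2^\nu-1)/(2(2-2^\nu))$ in the paper arises from an entirely different mechanism: the Kolmogorov--Landau interpolation in Lemma~\ref{lem:1dlemma} and Corollary~\ref{cor:ddlemma} converts a local $L^2$ bound on $G_{\sqrt2\Lambda}\hat f$ into a pointwise bound $|\hat f(\eta)|\lesssim G(\eta)^{-2m/(2m+n)}$ on a slightly smaller ball (with $n=1$ after the additional codimension-one averaging of Section~\ref{subsec:partiii}), and combining this with the strict-concavity exponent $\epsilon(\alpha,1)=2^\alpha-1$ from Lemma~\ref{lem:abalpha} forces $2^\alpha-1\le 2m/(2m+1)$, i.e.\ $\alpha\le\alpha_{m,1}$. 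A plan to reach the same conclusion would need some substitute for this interpolation-plus-induction machinery; simply invoking ``moment information'' is not enough. Finally, the regularisation to pass from the weak formulation to a pointwise Fourier ODE is a nontrivial stability question for the non-cutoff problem and is treated in the paper instead by the $H^\infty$ smoothing result (Appendix~\ref{sec:appendix-hinfty}), which lets one start the Gevrey argument from $L^2$ data after an arbitrarily short waiting time.
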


 \begin{enumerate}
 	\item For a more precise formulation of our results, see Theorems \ref{thm:gevrey-main1}, \ref{thm:gevrey-main2}, and \ref{thm:gevrey-main3} for the case $m=2$ and Theorems \ref{thm:gevrey-main1-m},  \ref{thm:gevrey-main2-m}, and \ref{thm:gevrey-main3-m} below.
 	\item We would like to stress that our results cover both the weak and strong singularity regimes, where $0<\nu<1/2$, respectively $1/2\le \nu<1$.
 	\item The theorem above applies to all dimensions $d\geq 1$. The physical case for Maxwellian molecules in dimension $d=3$ is $\nu=1/4$.
 \end{enumerate}

   The main problem for establishing Gevrey regularity is that, in order to use the coercivity results of \textsc{Alexandre, Desvillettes, Villani} and \textsc{Wennberg} \cite{ADVW00}, one has to bound a non-linear and non-local commutator of the Boltzmann kernel with certain sub-Gaussian Fourier multipliers.
   The main ingredient in our proof is a new way of estimating this non-local and nonlinear commutator.

\subsection{The non-cutoff Boltzmann and Kac models}
We study the regularity of weak solutions of the Cauchy problem
\begin{align}\label{eq:cauchyproblem}	
\begin{cases}
\partial_t f = Q(f,f) & \\
f|_{t=0} = f_0 &
\end{cases}
\end{align}
for the fully nonlinear homogeneous Boltzmann and Kac equation in $d\geq 1$ dimensions \cite{Bol72,Kac59}.

For $d\geq 2$ the bilinear operator $Q$ is given by
\begin{align}\label{eq:Boltzmann-kernel}
	Q(g,f) = \int_{\R^d} \int_{\S^{d-1}} b(\cos\theta) \left( g(v'_*) f(v') - g(v_*) f(v) \right) \, \mathrm{d}\sigma \mathrm{d} v_*,
\end{align}
that is, the Boltzmann collision operator for Maxwellian molecules with angular collision kernel $b$ depending only on the deviation angle $\cos\theta = \sigma \cdot \frac{v-v_*}{|v-v_*|}$ for $\sigma\in\mathbb{S}^{d-1}$. Here we use the $\sigma$-representation of the collision process, in which
\begin{align*}
	v' = \frac{v+v_*}{2} + \frac{|v-v_*|}{2}\sigma, \quad v'_* = \frac{v+v_*}{2} - \frac{|v-v_*|}{2}\sigma, \quad \text{for } \sigma \in \S^{d-1}.
\end{align*}

By symmetry properties of the Boltzmann collision operator $Q(f,f)$, the function $b$ can be assumed to be supported on angles $\theta \in [0, \frac{\pi}{2}]$, for otherwise, see \cite{Vil02}, it can be replaced by
	\begin{align*}
	\widetilde{b}(\cos\theta) = \left( b(\cos\theta) + b(\cos(\pi-\theta) \right) \1_{\{0\leq \theta\leq \frac{\pi}{2}\}}.
	\end{align*}

We will assume that the angular collision kernel $b$ has the non-integrable singularity
\begin{align}
	\label{eq:cross-section} \sin^{d-2}\theta \, b(\cos\theta) \sim \frac{\kappa}{\theta^{1+2\nu}},\quad \text{as } \theta\to 0^+
\end{align}
for some $\kappa>0$ and $0 < \nu <1$, and satisfies
\begin{align}\label{eq:cross-section2}
	\int_0^{\pi/2} \sin^d\theta \, b(\cos\theta)\,\mathrm{d}\theta <\infty.
\end{align}

For inverse $s$-power forces (in three spatial dimensions), described by the potential $U(r) = r^{1-s}$, $s>2$, the collision kernel is of the more general form
\begin{align*}
	B(|v-v_*|, \cos\theta) = b(\cos\theta) |v-v_*|^{\gamma}, \quad \gamma = \frac{s-5}{s-1},
\end{align*}
where the angular collision kernel $b$ is locally smooth with a non-integrable singularity
\begin{align*}
	\sin\theta \, b(\cos\theta) \sim K \theta^{-1-2\nu}, \quad \nu = \frac{1}{s-1}.
\end{align*}
The case of \emph{(physical) Maxwellian molecules} corresponds to the values $\gamma=0$, $s=5$, $\nu=\frac{1}{4}$.

For $d=1$ we set
\begin{align}
	Q(g,f)\,\, = K(g,f) = \int_{\R} \int_{-\tfrac{\pi}{2}}^{\tfrac{\pi}{2}} b_1(\theta) \left( f(w'_*) g(w') - f(w_*) g(w) \right) \, \mathrm{d}\theta \mathrm{d} w_*,
\end{align}
which is the Kac operator for Maxwellian molecules, and angular collision kernel $b_1\geq 0$.
The pre- and post-collisional velocities are related by
\begin{align*}
	\begin{pmatrix} w' \\ w'_* \end{pmatrix} = \begin{pmatrix} \cos \theta & - \sin\theta \\  \sin\theta & \cos\theta \end{pmatrix} \begin{pmatrix} w \\ w_* \end{pmatrix}, \quad \text{for } \theta \in [-\tfrac{\pi}{2}, \tfrac{\pi}{2}].
\end{align*}

In the original Kac model $b_1$ was chosen to be constant, whereas we will assume, as in \cite{Des03}, that $b_1$ is an even function and has the non-integrable singularity
\begin{align}\label{eq:cross-section-kac}
	b_1(\theta) \sim \frac{\kappa}{|\theta|^{1+2\nu}}, \quad \text{for } \theta \to 0,
\end{align}
with $0<\nu<1$ and some $\kappa>0$, and further satisfies
\begin{align}\label{eq:cross-section-kac2}
	\int_{-\tfrac{\pi}{2}}^{\tfrac{\pi}{2}} b_1(\theta) \sin^2\theta \,\mathrm{d}\theta < \infty.
\end{align}

Making use of symmetry properties of the collision operator $K(f,f)$, we can assume $b_1$ to be supported on angles $\theta\in[ -\frac{\pi}{4}, \frac{\pi}{4}]$, for otherwise it can be replaced by its symmetrised version
\begin{align*}
	\widetilde{b_1}(\theta) = \left( b_1(\theta) + b_1(\tfrac{\pi}{2}-\theta) \right) \1_{\{0\leq \theta \leq \frac{\pi}{4}\}} + \left( b_1(\theta) + b_1(-\tfrac{\pi}{2}-\theta) \right) \1_{\{-\frac{\pi}{4}\leq \theta \leq 0\}}.
\end{align*}
This simple observation will be very convenient for our analysis.

We will mainly work with the weighted $L^p$ spaces, defined as
\begin{align*}
	L^p_{\alpha}(\R^d) := \left\{ f\in L^p(\R^d): \langle \cdot \rangle^{\alpha} f \in L^p(\R^d) \right\},
\end{align*}
$p\geq 1$, $\alpha \in \R$, with norm
\begin{align*}
	\| f\|_{L^p_{\alpha}(\R^d)} = \left( \int_{\R^d} |f(v)|^p \langle v \rangle^{\alpha p} \,\mathrm{d}v \right)^{1/p}, \quad \langle v \rangle := (1+ |v|^2)^{1/2}.
\end{align*}

We will also use the weighted ($L^2$ based) Sobolev spaces
\begin{align*}
	H^k_{\ell}(\R^d) = \left\{ f\in \mathcal{S}'(\R^d): \langle \cdot \rangle^{\ell} f \in H^k(\R^d) \right\}, \quad k,\ell \in \R,
\end{align*}
where $H^k(\R^d)$ are the usual Sobolev spaces
  given by 	$H^k (\R^d) = \left\{ f\in \mathcal{S}'(\R^d): \langle \cdot \rangle^{k} \hat{f} \in L^2(\R^d) \right\}$, for $k  \in \R$.
The inner product on $L^2(\R^d)$ is given by $\langle f,g\rangle = \int_{\R^d} \overline{f(v)} g(v) \,\mathrm{d}v$.


It will be assumed that the initial datum $f_0\not\equiv 0$ is a non-negative density with finite mass, energy and entropy, which is equivalent to
\begin{align}\label{eq:initialdata}
	f_0\geq 0, \quad f_0\in L^1_2(\R^d)\cap L\log L(\R^d),
\end{align}
where
\begin{align*}
	L\log L(\R^d) = \left\{f:\R^d\to\R \text{ measurable}: \|f\|_{L\log L} = \int_{\R^d} |f(v)| \log\left(1+|f(v)|\right)\,\mathrm{d}v < \infty\right\},
\end{align*}
and the negative of the entropy is given by $H(f):=\int_{\R^d} f\log f\, \mathrm{d}v$.

The space $L^1_2(\R^d)\cap L\log L(\R^d)$ is very natural, since
\begin{lemma}\label{lem:entropy}
	Let $f\geq 0$. Then
	\begin{align*}
		f\in L^1_2(\R^d)\cap L\log L(\R^d) \quad  \Leftrightarrow  \quad f\in L^1_2(\R^d) \text{ and } H(f) \text{ is finite}.
	\end{align*}
\end{lemma}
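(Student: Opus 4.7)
The equivalence is trivial save for the control of the negative part of $f\log f$ on the set $\{0<f<1\}$, where $f|\log f|$ is bounded pointwise by $\mathrm{e}^{-1}$ but a priori non-integrable over $\R^d$; the $L^1_2$ hypothesis has to enter at exactly this point. I would split $\int f\log f\,\mathrm{d}v$ according to $\{f\ge 1\}$ and $\{0<f<1\}$. On $\{f\ge 1\}$ one has $0\le f\log f\le f\log(1+f)$ from $\log f\le\log(1+f)$, and this part is bounded by $\|f\|_{L\log L}$.

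For the forward direction (assuming $f\in L^1_2\cap L\log L$, show $H(f)$ finite) it remains to bound $\int_{0<f<1}|f\log f|\,\mathrm{d}v$. I would use Young's inequality in entropy form, $xy\le x\log x - x + \mathrm{e}^y$ for $x\ge 0$ and $y\in\R$, applied pointwise with $x=f(v)$ and $y=-|v|^2$, giving $-|v|^2 f(v)\le f(v)\log f(v) - f(v) + \mathrm{e}^{-|v|^2}$. Integrating and using $f\in L^1_2$ together with $\int_{\R^d}\mathrm{e}^{-|v|^2}\,\mathrm{d}v<\infty$ yields $\int f\log f\,\mathrm{d}v\ge -C$ for a finite $C$ depending only on $\|f\|_{L^1_2}$ and $d$. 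Combined with the upper bound $\int_{f\ge 1}f\log f\,\mathrm{d}v\le\|f\|_{L\log L}$, this forces $\int_{0<f<1}|f\log f|\,\mathrm{d}v\le\|f\|_{L\log L}+C<\infty$, so $\int|f\log f|\,\mathrm{d}v<\infty$ and $H(f)$ is finite.

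For the reverse direction I would compare $\log(1+f)$ to $\log f$ and $\log 2$ directly: on $\{0\le f\le 1\}$, $\log(1+f)\le\log 2$, so $\int_{f\le 1}f\log(1+f)\,\mathrm{d}v\le(\log 2)\|f\|_{L^1}$; on $\{f\ge 1\}$, $\log(1+f)\le\log(2f)=\log 2+\log f$, hence $f\log(1+f)\le(\log 2)f+f\log f$, which is integrable because $f\in L^1$ and because finiteness of $H(f)$ gives $\int_{f\ge 1}f\log f\,\mathrm{d}v<\infty$. Summing the two pieces proves $f\in L\log L$.

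There is no genuine obstacle here; the single conceptual ingredient is the Young/entropy inequality with Gaussian weight $\mathrm{e}^{-|v|^2}$, which is precisely what turns the $L^1_2$ second-moment bound into an integrable pointwise lower bound on $f\log f$. Everything else is elementary algebra with the logarithm.
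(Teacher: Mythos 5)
Your proof is correct, and the forward direction takes a genuinely different route from the paper's. Where you invoke the convex-duality (Young/entropy) inequality $xy\le x\log x - x + \mathrm{e}^y$ with $y=-|v|^2$ to get the pointwise bound $(f\log f)_-\le |v|^2 f + \mathrm{e}^{-|v|^2}$, the paper instead uses $\log(1/f)\le C_\delta f^{-\delta}$ for $0<f\le 1$, so $f\log(1/f)\le C_\delta f^{1-\delta}$, and then applies H\"older's inequality against the weight $\langle v\rangle^2$, which requires choosing $0<\delta<\tfrac{2}{d+2}$. Both arguments convert the second moment into integrability of the negative part, but yours avoids the auxiliary parameter $\delta$ and H\"older's inequality and gives a cleaner pointwise majorant; the paper's version, on the other hand, yields the explicit quantitative estimate $\int f\log_- f\,\mathrm{d}v\le C_{\delta,d}\|f\|_{L^1_2}^{1-\delta}$, which it reuses in the remark following Lemma~\ref{lem:subelliptic} (the bound \eqref{eq:LlogL} on $\|f\|_{L\log L}$ in terms of $\|f_0\|_{L^1}$, $H(f_0)$, $\|f_0\|_{L^1_2}$). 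Your reverse direction is the same decomposition as the paper's, just phrased a bit more directly: you use that finiteness of $H(f)$ already gives $\int(f\log f)_+<\infty$, rather than re-expressing $\int_{\{f>1\}}f\log f$ as $H(f)+\int f\log_- f$ and bounding the last term again via $L^1_2$ as the paper does (again only to make \eqref{eq:LlogL} explicit). One small caveat worth stating explicitly: "$H(f)$ finite" should be read as $\int|f\log f|\,\mathrm{d}v<\infty$, which is what both you and the paper tacitly assume when asserting that finiteness of $H(f)$ controls $\int_{\{f\ge 1\}}f\log f\,\mathrm{d}v$.
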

We suspect that this lemma is well-known, at least to the experts, but we could not find a reference in the literature. For the reader's convenience we will give the proof in appendix \ref{sec:appendix-LlogL}. Following is the precise definition of weak solutions which we use.

\begin{definition}[Weak Solutions of the Cauchy Problem \eqref{eq:cauchyproblem} \cite{Ark81,Vil98,Des95}]\label{def:weaksolution}
	Assume that the initial datum $f_0$ is in $L^1_2(\R^d)\cap L\log L(\R^d)$. $f: \R_+ \times \R^d \to \R$ is called a weak solution to the Cauchy problem \eqref{eq:cauchyproblem}, if it satisfies the following conditions\footnote{Throughout the text, whenever not explicitly mentioned, we will drop the dependence on $t$ of a function, i.e. $f(v):= f(t,v)$ etc}:
	\begin{enumerate}[label=(\roman*)]
		\item $f\geq 0$,\,\,\,  $f\in\mathcal{C}(\R_+; \mathcal{D}'(\R^d))\cap  L^{\infty}(\R_+;L^1_2(\R^d)\cap L \log L(\R^d))$  
		\item $f(0,\cdot) = f_0$
		\item For all $t\geq 0$, mass is conserved, $\int_{\R^d} f(t,v) \, \mathrm{d} v = \int_{\R^d} f_0(v) \, \mathrm{d}v$, kinetic energy is decreasing, $\int_{\R^d} f(t,v)\, v^2 \, \mathrm{d} v \leq \int_{\R^d} f_0(v) \, v^2 \, \mathrm{d}v$, and the entropy is increasing, $H(f(t,\cdot))\leq H(f_0)$.
		\item For all $\varphi\in\mathcal{C}^1(\R_+; \mathcal{C}_0^{\infty}(\R^d))$ one has
		\begin{align}\label{eq:weakformulation}
		\begin{split}
			&\langle f(t,\cdot), \varphi(t,v) \rangle - \langle f_0 , \varphi(0,\cdot)\rangle - \int_0^t \langle f(\tau, \cdot) \partial_{\tau}\varphi(\tau,\cdot)\rangle \,  \mathrm{d}\tau \\
			&\quad = \int_0^t \langle Q(f,f)(\tau, \cdot), \varphi(\tau,\cdot) \rangle \, \mathrm{d}\tau, \quad \text{for all } t\geq 0,
			\end{split}
		\end{align}
		where the latter expression involving $Q$ is defined by
		\begin{align*}
			&\langle Q(f,f), \varphi \rangle \\
			&\quad = \frac{1}{2}\int_{\R^{2d}}\int_{\S^{d-1}} b\left(\frac{v-v_*}{|v-v_*|}\cdot \sigma\right) f(v_*) f(v) \left( \varphi(v')+\varphi(v'_*) - \varphi(v) - \varphi(v_*)\right) \, \mathrm{d}\sigma \mathrm{d}v\mathrm{d}v_*,
		\end{align*}
		for test functions $\varphi \in W^{2,\infty}(\R^d)$ in dimension $d\geq 2$, and in one dimension
		\begin{align*}
		\langle Q(f,f), \varphi\rangle = \langle K(f,f), \varphi \rangle =\int_{\R^2} \int_{-\tfrac{\pi}{4}}^{\tfrac{\pi}{4}} b_1(\theta) \, g(w_*) g(w) \left(\phi(w') - \phi(w)\right) \, \mathrm{d}\theta\mathrm{d}w  \mathrm{d}w_*
		\end{align*}
		for test functions $\varphi \in W^{2,\infty}(\R)$, making use of symmetry properties of the Boltzmann and Kac collision operators and cancellation effects.
	\end{enumerate}
\end{definition}

Collecting results from the literature, the following is known regarding the existence, uniqueness and further properties of weak solutions.

\begin{theorem}[Arkeryd, Desvillettes, Mischler, Goudon, Villani, Wennberg]\label{thm:existence}
There exists a weak solution of the Cauchy problem \eqref{eq:cauchyproblem} in the sense of Definition \ref{def:weaksolution}.
For $d\geq 2$ momentum and energy are conserved,
\begin{align}
	\int_{\R^d} f(t,v) \, v \,\mathrm{d}v = \int_{\R^d} f_0(v)\, v \,\mathrm{d}v, \quad 	\int_{\R^d} f(t,v) \,v^2 \,\mathrm{d}v = \int_{\R^d} f_0(v)\, v^2 \,\mathrm{d}v.
\end{align}
In the one dimensional case (Kac equation), momentum is not conserved and energy can only decrease and is conserved under the additional moment assumption $f_0\in L^1_{2p}$ for some $p\geq2$.
\end{theorem}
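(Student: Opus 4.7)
This is a collection theorem piecing together the cited works, so my plan is to assemble the classical Arkeryd--Villani truncation scheme and then extract the conservation laws from microscopic collision invariants. First, I would regularise the singular angular cross section by setting $b_n(\cos\theta) = b(\cos\theta)\,\mathbf{1}_{\{\theta\ge 1/n\}}$, for which Arkeryd's $L^1$ theory yields a unique weak solution $f_n$ of the Cauchy problem \eqref{eq:cauchyproblem}. Since $b_n$ is bounded, the cut-off collision operator preserves mass, momentum and energy, while Boltzmann's $H$-theorem provides $H(f_n(t))\le H(f_0)$. Combined with Lemma~\ref{lem:entropy}, this gives a uniform-in-$n$ bound of $f_n$ in $L^\infty(\R_+; L^1_2(\R^d)\cap L\log L(\R^d))$.

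Second, I would extract a limit via weak compactness. The $L\log L$ bound together with the second-moment bound give equi-integrability and tightness, so Dunford--Pettis yields a subsequence with $f_n(t,\cdot)\rightharpoonup f(t,\cdot)$ in $L^1(\R^d)$ for each $t\ge 0$. Equicontinuity in time comes from testing the weak formulation against $\varphi\in\mathcal{C}_0^\infty(\R^d)$: for such $\varphi$, the key cancellation estimate
\begin{align*}
	|\varphi(v')+\varphi(v'_*)-\varphi(v)-\varphi(v_*)|\ \lesssim\ \|\varphi\|_{W^{2,\infty}}\,|v-v_*|^2\sin^2(\theta/2)
\end{align*}
together with hypothesis \eqref{eq:cross-section2} makes $\langle Q(f_n,f_n),\varphi\rangle$ uniformly bounded. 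A diagonal/Arzelà--Ascoli argument then produces $f\in\mathcal{C}(\R_+;\mathcal{D}'(\R^d))\cap L^\infty(\R_+;L^1_2\cap L\log L)$, and by lower semicontinuity we retain the mass conservation, energy inequality and entropy inequality of Definition \ref{def:weaksolution}(iii). Passing to the limit inside the weak formulation uses the same Taylor cancellation to dominate the $\sigma$-integral uniformly in $n$, plus the convergence of $f_n\otimes f_n$ on velocity regions with $|v|+|v_*|\le R$ and a tail estimate controlled by the uniform second moment.

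Third, for the conservation laws when $d\ge 2$, I would exploit the pointwise microscopic invariants $v+v_*=v'+v'_*$ and $|v|^2+|v_*|^2=|v'|^2+|v'_*|^2$: formally substituting $\varphi(v)=v_i$ or $\varphi(v)=|v|^2$ into the weak formulation makes the symmetrised integrand vanish identically. Since these $\varphi$ are not compactly supported, I would approximate by $\varphi_R=\chi(v/R)v_i$ and $\varphi_R=\chi(v/R)|v|^2$ and send $R\to\infty$, using uniform propagation of the second (or slightly higher) moment for Maxwellian molecules. In the Kac case only the energy is microscopically conserved, so momentum is generically lost; energy conservation as an equality requires enough additional integrability to justify testing with $\varphi(w)=w^2$, which is why one assumes $f_0\in L^1_{2p}$ for some $p\ge 2$ and invokes the Desvillettes moment-propagation lemma.

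The main obstacle is the limit passage in the nonlinear collision term: because the angular singularity is non-integrable, one cannot split the weak form into gain and loss integrals separately, and must genuinely use the second-order cancellation at $\theta=0$ together with the $|v-v_*|^2$ factor absorbed by the $L^1_2$ moment bound. Everything else (Dunford--Pettis, lower semicontinuity, microscopic conservation) is routine once this estimate is in place.
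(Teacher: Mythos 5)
The paper does not prove this theorem. It is stated as a collection of known results, and the remark immediately following it attributes the $d\ge 2$ existence theory to \textsc{Arkeryd} \cite{Ark72,Ark81} (with further contributions by Goudon, Villani, Desvillettes), uniqueness to \textsc{Toscani--Villani} \cite{TV99}, and the Kac case to \textsc{Desvillettes} \cite{Des95}. Your outline is a faithful sketch of the classical truncation-and-compactness argument found in those references: cut off the angular singularity, solve the cutoff equation, obtain uniform $L^1_2\cap L\log L$ bounds from conservation plus the $H$-theorem, extract a weak $L^1$ limit, pass to the limit using the grazing-collision cancellation, and get conservation by approximating the unbounded invariants $v_i$, $|v|^2$. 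There are, however, two points that must be tightened.

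First, the estimate $|\varphi(v')+\varphi(v'_*)-\varphi(v)-\varphi(v_*)|\lesssim\|\varphi\|_{W^{2,\infty}}|v-v_*|^2\sin^2(\theta/2)$ does \emph{not} hold pointwise. A Taylor expansion gives $[\nabla\varphi(v)-\nabla\varphi(v_*)]\cdot(v'-v)+O(|v'-v|^2)$, and since $|v'-v|=|v-v_*|\sin(\theta/2)$, the first-order term is only of size $|v-v_*|^2\sin(\theta/2)$, one power of $\theta$ less than what you wrote. The extra power appears only after integration over the azimuthal variable $\omega\in\S^{d-2}(v-v_*)$, because the component of $v'-v$ orthogonal to $v-v_*$, namely $\tfrac{|v-v_*|}{2}\sin\theta\,\omega$, has zero $\omega$-mean, leaving only the parallel part $(v_*-v)\sin^2(\theta/2)$. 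Stating the estimate in $\omega$-averaged form matters: the pointwise version only closes the argument for $\nu<\tfrac12$, whereas the averaged one, combined with \eqref{eq:cross-section2}, covers the full range $0<\nu<1$ assumed in the paper.

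Second, weak $L^1$ compactness from Dunford--Pettis does not, by itself, give the convergence of $f_n\otimes f_n$ you invoke when passing to the limit in the quadratic collision term. One needs an extra ingredient: either strong $L^1$ compactness (the route taken by Arkeryd), or the observation that the family $v_*\mapsto \int f_n(v) K(v,v_*)\,\mathrm{d}v$ (with $K$ the $\sigma$-integrated kernel) is uniformly bounded and equicontinuous so the weak limit can be iterated. As written, ``the convergence of $f_n\otimes f_n$ on bounded velocity regions'' presumes exactly what needs to be shown; this limit passage is the genuine technical core of the cited existence proofs, not the boundedness estimate.
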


\begin{remark}
$d\geq 2$: The existence of weak solutions of the Cauchy problem \eqref{eq:cauchyproblem} with initial conditions satisfying \eqref{eq:initialdata} for the homogeneous Boltzmann equation was first proved by \textsc{Arkeryd} \cite{Ark72,Ark81} (see also the articles by \textsc{Goudon} \cite{Gou97}, \textsc{Villani} \cite{Vil98}, and \textsc{Desvillettes} \cite{Des01,Des03}). Uniqueness in this case was shown by \textsc{Toscani} and \textsc{Villani} \cite{TV99}, see  also the review articles by \textsc{Mischler} and \textsc{Wennberg} \cite{MW99} (for the cut-off case) and \textsc{Desvillettes} \cite{Des01}.

$d=1$: For the homogeneous non-cutoff Kac equation for Maxwellian molecules existence of weak solutions was established by \textsc{Desvillettes} \cite{Des95}.
\end{remark}

\subsection{Higher regularity of weak solutions}

 It has been pointed out by several authors \cite{Ale09,DW10,Vil02} that, for singular cross-sections, the Boltzmann operator essentially behaves like a singular integral operator with a leading term similar to a fractional Laplace operator $(-\Delta)^{\nu}$.  In terms of compactness properties this has been noticed for the linearised Boltzmann kernel as early as in \cite{Pao74} and for the nonlinear Boltzmann kernel in \cite{Lio94}. Since the solutions of the heat equation with a fractional Laplacian gain a high amount of regularity for arbitrary positive times, it is natural to believe, as conjectured in \cite{DW10}, that weak solutions to the non-cutoff Boltzmann equation gain a certain amount of smoothness, and even analyticity, for any $t>0$.
 	This is in sharp contrast to the fact that in the Grad's cutoff case there cannot be any smoothing effect. Instead, regularity and singularities of the initial datum get propagated in this case, see, for example,
	\cite{MV04}.

The discussion about solution of the heat equation with a fractional Laplacian motivates the following definition of Gevrey spaces, which give
    a convenient framework to describe this smoothing by interpolating between smooth  and (ultra-)analytic functions.

\begin{definition}\label{def:Gevrey}

	Let $s>0$. A function $f$ belongs to the Gevrey class $G^s(\R^d)$, 	
	if there exists an $\epsilon_0>0$ such that
	\begin{align*}
		e^{\epsilon_0 \langle D_v \rangle^{1/s}} f \in L^2(\R^d)\,, \quad  \text{where} \quad \langle D_v \rangle = \left(1+ |D_v|^2 \right)^{1/2}.
	\end{align*}
	and we use the notation $D_v=-\frac{\I}{2\pi} \nabla_v$. Thus, $G^1(\R^d)$ is the space of real analytic functions, and $G^s(\R^d)$ for $s\in(0,1)$ the space of ultra-analytic functions.
	
	Equivalently\footnote{see, for example, Theorem 4 in \cite{LO97}.}, $f\in G^s(\R^d)$ if
	$f\in \mathcal{C}^{\infty}(\R^d)$ and there exists a constant $C>0$ such that for all $k\in\N_0$ one has
	\begin{align*}
		\| D^k f\|_{L^2(\R^d)} \leq C^{k+1} (k!)^s,
	\end{align*}
	where $\| D^k f\|_{L^2}^2 = \sup_{|\beta|=k} \| \partial^{\beta} f\|_{L^2}^2$.
\end{definition}

The first regularisation results in this direction were due to \textsc{Desvillettes} for the spatially homogeneous non-cutoff Kac equation \cite{Des95} and the homogeneous non-cutoff Boltzmann equation for Maxwellian molecules in two dimensions \cite{Des97}, where $\mathcal{C}^{\infty}$ regularisation is proved. Later,
\textsc{Desvillettes} and \textsc{Wennberg} \cite{DW10} proved, under rather general assumptions on the collision cross-section (excluding Maxwellian molecules, though), regularity in Schwartz space of weak solutions to the non-cutoff homogeneous Boltzmann equation.
By quite different methods, using Littlewood-Paley decompositions, \textsc{Alexandre} and \textsc{El Safadi} \cite{AE05} showed that the assumptions on the cross-section \eqref{eq:cross-section}-\eqref{eq:cross-section2} imply that the solutions are in $H^{\infty}$ for any positive time $t>0$. By moment propagation results for Maxwellian molecules (see \textsc{Truesdell} \cite{Tru56}) this cannot be improved to regularity in Schwartz space.

 For collision cross-sections corresponding to Debye-Yukawa-type interaction potentials,
 \begin{align*}
 	\sin\theta\,b(\cos\theta) \sim K \theta^{-1}(\log\theta^{-1})^{\ell} \quad  \text{for } \theta\to0 \quad \text{(with some } K>0,\, \ell>0\text{)},
 	\end{align*}
 \textsc{Morimoto, Ukai, Xu} and \textsc{Yang} \cite{MUXY09} proved the same $H^{\infty}$ regularising effect using suitable test functions in the weak formulation of the problem.

\bigskip

The question of the local existence of solutions in Gevrey spaces for Gevrey regular initial data with additional strong decay at infinity was first addressed in 1984 by \textsc{Ukai} \cite{Uka84}, both in the spatially homogeneous and inhomogeneous setting.

We are interested in the Gevrey \emph{smoothing effect}, namely that under the (physical) assumptions of finite mass, energy and entropy of the initial data, weak solutions of the homogeneous Boltzmann equation without cutoff are Gevrey functions for any strictly positive time. This question was treated in the case of the \emph{linearised} Boltzmann equation in the homogeneous setting by \textsc{Morimoto} \textit{et al.} \cite{MUXY09}, where they proved that, given $0<\nu<1$,  weak solutions of the linearized Boltzmann equation belong to the space $G^{\frac{1}{\nu}}(\R^3)$ for any  positive times.
Still in a linearised setting, \textsc{Lerner, Morimoto, Pravda-Starov} and \textsc{Xu} \cite{LMPX14} proved a Gelfand-Shilov smoothing effect, which includes Gevrey regularity, on radially symmetric solutions of the homogeneous non-cutoff Boltzmann equation for Maxwellian molecules. For the non-Maxwellian  Boltzmann operator, Gevrey regularity was proved under very strong unphysical decay assumptions on the initial datum in \cite{Lin14}.

For radially symmetric solutions, the homogeneous non-cutoff Boltzmann equation for Maxwellian molecules is related to the homogeneous non-cutoff Kac equation. The non-cutoff Kac equation was introduced by \textsc{Desvillettes} in \cite{Des95}, where first regularity results were established, see also \textsc{Desvillettes}' review \cite{Des03}. For this equation, the best available results so far are due to \textsc{Lekrine} and \textsc{Xu} \cite{LX09} and \textsc{Glangetas} and \textsc{Najeme} \cite{GN13}:  \textsc{Lekrine} and \textsc{Xu} \cite{LX09} proved Gevrey regularisation of order $\frac{1}{2\alpha}$ for mild singularities $0<\nu<\frac{1}{2}$ and all $0<\alpha<\nu$. Strong singularities $\tfrac{1}{2} \leq \nu<1$ were treated by \textsc{Glangetas} and \textsc{Najeme} \cite{GN13}, where they prove that for $\nu=\frac{1}{2}$ the solution becomes Gevrey regular of order $\frac{1}{2\alpha}$ for any $0<\alpha<\frac{1}{2}$ and Gevrey regular of order $1$, that is, analytic, when $\frac{1}{2}<\nu<1$. Thus, in the critical case $\nu=\frac{1}{2}$, the result of \cite{GN13} misses analyticity of weak solutions and they do not prove ultra-analyticity in the range $0<\nu<1$. Moreover, both results are obtained under the \emph{additional} moment assumption $f_0\in L^1_{2+2\nu}(\R)$.

Ultra-analyticity results have previously been obtained by \textsc{Morimoto} and \textsc{Xu} \cite{MX09} for the homogeneous Landau equation in the Maxwellian molecules case and related simplified models in kinetic theory.
The analysis of smoothing properties of Landau equation is quite different from the Boltzmann and Kac equations. The Landau equation explicitly contains a second order elliptic term, which yields coercivity,  and, more importantly, certain commutators with weights in Fourier space are identically zero, which simplifies the analysis tremendously, see Proposition 2.2 in \cite{MX09}.

For the nonlinear non-cutoff homogeneous Boltzmann equation some partial results regarding Gevrey regularisation were obtained by \textsc{Morimoto} and \textsc{Ukai} \cite{MU10} including the non-Maxwellian molecules case, but under the strong additional assumptions of Maxwellian decay and smoothness of the solution. Still with these strong decay assumptions, \textsc{Yin} and \textsc{Zhang} \cite{YZ12,YZ14} extended this result to a larger class of kinetic cross-sections.

We stress that for the main result of our paper the initial datum is \emph{only} assumed to obey the \emph{natural} assumptions coming from physics, i.e., finiteness of mass, energy and entropy.

\bigskip

Given $\beta>0$ and $\alpha\in(0,1)$ we define the Gevrey multiplier $G: \R_+ \times \R^d \to \R$ by
\begin{align*}
G(t, \eta) := e^{\beta t \langle \eta\rangle^{2\alpha}}
\end{align*}
and for $\Lambda>0$ the cut-off Gevrey multiplier $G_{\Lambda}: \R_+ \times \R^d \to \R$ by
\begin{align*}
G_{\Lambda}(t,\eta):= G(t, \eta) \1_{\Lambda}(|\eta|),
\end{align*}
where $\1_{\Lambda}$ is the characteristic function of the interval $[0, \Lambda]$. The associated Fourier multiplication operator is denoted by $G_{\Lambda}(t,D_v)$,
\begin{align*}
	(G_{\Lambda}(t, D_v)f)(t,v) := \int_{\R^d} G_{\Lambda}(t,\eta)\hat{f}(t, \eta) \, e^{2\pi\I \eta\cdot v} \,\mathrm{d}\eta = \mathcal{F}^{-1}\left[ G_{\Lambda}(t,\cdot) \hat{f}(t, \cdot) \right].
\end{align*}
We use the following convention regarding the Fourier transform of a function $f$ in this article,
\begin{align*}
	(\mathcal{F} f) (\eta) = \hat{f}(\eta) = \int_{\R^d} f(v) \, e^{-2\pi\I v\cdot \eta} \,\mathrm{d}v.
\end{align*}

The Fourier transform of the Boltzmann operator for Maxwellian molecules has the form (Bobylev identity, \cite{Bob84})
\begin{align}\label{eq:bobylev}
	\widehat{Q(g,f)}(\eta) = \int_{\S^{d-1}} b\left(\frac{\eta}{|\eta|}\cdot\sigma\right) \left[ \hat{g}(\eta^-) \hat{f}(\eta^+) - \hat{g}(0) \hat{f}(\eta) \right] \, \mathrm{d}\sigma, \quad \eta^{\pm} = \frac{\eta\pm |\eta| \sigma}{2},
\end{align}
for $d\geq 2$. There is a similar Bobylev identity for the Kac operator \cite{Des95},
\begin{align}
	\widehat{K(g,f)}(\eta) = \int_{-\frac{\pi}{4}}^{\frac{\pi}{4}} b_1(\theta) \left[ \hat{g}(\eta^-)\hat{f}(\eta^+) - \hat{g}(0) \hat{f}(\eta)\right] \,\mathrm{d}\theta, \quad \eta^+ = \eta \cos\theta, \eta^- = \eta \sin\theta.
\end{align}
A simple, but in a sense important, consequence of Bobylev's identity is that, for all $d\geq 1$,
\begin{equation}\label{eq:quasilocal}
	P_\Lambda Q(g,f) = P_\Lambda Q(P_\Lambda g, P_\Lambda f)
\end{equation}
where, for convenience, we put $P_\Lambda:= \1_\Lambda(D_v)$ for the orthogonal projection onto Fourier 'modes' $|\eta|\le \Lambda$.

Note also that, since $G_{\Lambda}(t,\cdot)$ has compact support in $\R^d_{\eta}$ for any $t>0$, one has
\[G_{\Lambda}f, G_{\Lambda}^2 f \in L^{\infty}([0,T_0]; H^{\infty}(\R^d))\]
for any finite $T_0>0$ and $\Lambda>0$, if $f\in L^{\infty}([0,T_0];L^1(\R^d))$. This holds, since
\begin{align*}
\|G_{\Lambda} f \|_{H^s(\R^d_v)}^2 \leq \| \hat{f} \|_{L^{\infty}(\R^d_{\eta})}^2 \| \langle \cdot \rangle^s G_{\Lambda}(t, \cdot)\|_{L^2(\R^d_{\eta})}^2 \leq \| f \|_{L^1(\R^d_v)}^2 \| \langle\cdot\rangle^{s} G_{\Lambda}(T_0, \cdot) \|_{L^2(\R^d_{\eta})}^2, \quad \text{ for all } s\geq 0.
\end{align*}
These functions, due to the cut-off in Fourier space, are even analytic in a strip containing $\R^d_v$.

\begin{theorem}[Gevrey smoothing I]\label{thm:gevrey-main1}
		Assume that the cross-section $b$ satisfies the \emph{singularity condition} \eqref{eq:cross-section} and the \emph{integrability condition} \eqref{eq:cross-section2} for $d\geq 2$, and for $d=1$, $b_1$ satisfies the \emph{singularity condition} \eqref{eq:cross-section-kac} and the \emph{integrability condition} \eqref{eq:cross-section-kac2} for some $0<\nu<1$. Let $f$ be a weak solution of the Cauchy problem \eqref{eq:cauchyproblem} with initial datum satisfying conditions \eqref{eq:initialdata}. Then, for all $0<\alpha\leq \min\left\{\alpha_{2,d}, \nu\right\}$,
	\begin{align}
		f(t,\cdot)\in G^{\tfrac{1}{2\alpha}}(\R^d)
	\end{align}
	for all $t>0$, where $\alpha_{2,d} = \frac{\log[(8+d)/(4+d)]}{\log 2}$. 
\end{theorem}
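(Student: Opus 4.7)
The plan is to apply the weak formulation of Definition~\ref{def:weaksolution} with the Fourier-cutoff test function $\varphi(t,v)=\bigl(G_\Lambda(t,D_v)\bigr)^2 f(t,v)$, which is admissible because the cutoff $\1_\Lambda(|\eta|)$ makes $G_\Lambda^2 f$ smooth in $v$ as observed after~\eqref{eq:quasilocal}, and to propagate an $L^2$ bound on $G_\Lambda(t,D_v) f(t,\cdot)$ that is uniform in $\Lambda$. Passing to the limit $\Lambda\to\infty$ by monotone convergence in Fourier space will then yield $e^{\beta t\langle D_v\rangle^{2\alpha}}f(t,\cdot)\in L^2(\R^d)$, that is, $f(t,\cdot)\in G^{1/(2\alpha)}(\R^d)$, on a short interval $[0,T_0]$; a bootstrap based on the conservation laws in Definition~\ref{def:weaksolution}(iii) will extend this to all $t>0$.

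Using Plancherel together with Bobylev's identity~\eqref{eq:bobylev} and the quasi-locality~\eqref{eq:quasilocal}, the weak formulation produces a differential identity of the schematic form
\begin{align*}
\tfrac{d}{dt}\|G_\Lambda f(t,\cdot)\|_{L^2}^2 = 2\beta\|\langle D_v\rangle^\alpha G_\Lambda f\|_{L^2}^2 + 2\,\mathrm{Re}\,\langle G_\Lambda Q(f,f),\,G_\Lambda f\rangle,
\end{align*}
whose first right-hand term is the ``bad'' growth produced by $\partial_t G_\Lambda$ and must be dominated by dissipation hidden in the second term. I would rewrite the second term on the Fourier side as a double integral over $\sigma\in\S^{d-1}$ and $\eta\in\R^d$ of $b\bigl(\tfrac{\eta}{|\eta|}\cdot\sigma\bigr)\overline{\widehat f(\eta)}G_\Lambda(\eta)^2\bigl[\widehat f(\eta^+)\widehat f(\eta^-)-\widehat f(0)\widehat f(\eta)\bigr]$, and then exploit the pointwise factorisation $G_\Lambda(\eta)^2\widehat f(\eta^+)\widehat f(\eta^-)=G_\Lambda(\eta^+)\widehat f(\eta^+)\cdot G_\Lambda(\eta^-)\widehat f(\eta^-)\cdot R_\Lambda(\eta,\sigma)$ with $R_\Lambda=G_\Lambda(\eta)^2/[G_\Lambda(\eta^+)G_\Lambda(\eta^-)]$. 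The subadditivity $|\eta|^{2\alpha}\le|\eta^+|^{2\alpha}+|\eta^-|^{2\alpha}$, valid for $\alpha\in(0,\tfrac12]$ since $|\eta^+|^2+|\eta^-|^2=|\eta|^2$, forces $R_\Lambda\le G_\Lambda(\eta)$ and preserves the grazing-angle cancellation in $\widehat f(\eta^-)$ on which the ADVW coercivity relies.

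After this splitting, the bilinear form decomposes as a genuine $Q$-type expression in $G_\Lambda f$---to which the ADVW coercivity estimate applies, producing a dissipative contribution of size $-c_0\|\langle D_v\rangle^\nu G_\Lambda f\|_{L^2}^2$ with $c_0$ depending only on $\|f_0\|_{L^1_2}$ and $H(f_0)$ through Lemma~\ref{lem:entropy}---plus a nonlocal, nonlinear commutator involving the differences $G_\Lambda(\eta)-G_\Lambda(\eta^\pm)$. Estimating this commutator is the main obstacle. The plan is to symmetrise in $\sigma\leftrightarrow -\sigma$ to expose the ADVW-type cancellation, dyadically separate the regimes $|\eta^+|\sim|\eta|$ and $|\eta^+|\sim|\eta^-|\sim|\eta|/\sqrt 2$, and use the uniform bound $|\widehat f(\eta^\pm)|\le\|f_0\|_{L^1}$ coming from mass conservation to dominate the commutator by $\varepsilon\|\langle D_v\rangle^\nu G_\Lambda f\|_{L^2}^2+C_\varepsilon\|G_\Lambda f\|_{L^2}^2+C\|f_0\|_{L^1}^2$; the geometric balance in this bound succeeds precisely when $2^\alpha(4+d)\le 8+d$, which is the condition $\alpha\le\alpha_{2,d}=\log[(8+d)/(4+d)]/\log 2$.

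Choosing $\beta$ small enough that $2\beta\|\langle D_v\rangle^\alpha G_\Lambda f\|_{L^2}^2$ is absorbed by $c_0\|\langle D_v\rangle^\nu G_\Lambda f\|_{L^2}^2$ (possible since $\alpha\le\nu$), all these estimates assemble into the $\Lambda$-uniform differential inequality $\tfrac{d}{dt}\|G_\Lambda f\|_{L^2}^2\le C(\|G_\Lambda f\|_{L^2}^2+\|f_0\|_{L^1}^2)$, whose constants depend only on mass, energy, and entropy. The initial datum $G_\Lambda(0,D_v)f_0=P_\Lambda f_0$ is not uniformly $L^2$-bounded in $\Lambda$ because $f_0$ is only in $L^1_2\cap L\log L$; this is handled by restarting the propagation from a small $t_0>0$ at which the $H^\infty$-smoothing of Alexandre--El~Safadi guarantees $f(t_0,\cdot)\in L^2$. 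Gronwall then gives the $\Lambda$-uniform bound on $[t_0,T_0]$, monotone convergence in $\Lambda$ yields $f(t,\cdot)\in G^{1/(2\alpha)}(\R^d)$ for $t\in(t_0,T_0]$, and iteration extends the conclusion to every $t>0$.
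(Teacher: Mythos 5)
Your plan follows the paper's outline in its early steps (the $L^2$ reformulation via the test function $G_\Lambda^2 f$, the ADVW coercivity applied to $\langle Q(f,G_\Lambda f),G_\Lambda f\rangle$, and removing the $L^2$ assumption on $f_0$ by restarting from $H^\infty$-regular data), but there is a serious gap precisely where the paper says the real work is. The commutation error, after Bobylev's identity, contains a factor schematically of the form $G(\eta^-)^{\delta}\,|\hat f(\eta^-)|$ with $G$ exponential. Your proposal handles this by using ``the uniform bound $|\hat f(\eta^\pm)|\le\|f_0\|_{L^1}$ coming from mass conservation'' to dominate the commutator by $\varepsilon\|G_\Lambda f\|_{H^\nu}^2 + C_\varepsilon\|G_\Lambda f\|_{L^2}^2 + C\|f_0\|_{L^1}^2$. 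That bound controls $|\hat f(\eta^-)|$, but it does nothing to the exponentially growing $G(\eta^-)^\delta$ that multiplies it; there is no cancellation or smallness available in that factor from mass conservation alone, and a dyadic split in $|\eta^+|/|\eta|$ does not remove it either. This is exactly the obstacle the paper singles out in the remark after Proposition~\ref{prop:ce}: with polynomial weights, $\|\hat f\|_{L^\infty}\le\|f_0\|_{L^1}$ suffices (that is the $H^\infty$ proof in Appendix~\ref{sec:appendix-hinfty}), but with the Gevrey weight it does not.

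The paper resolves this with three ideas that are entirely absent from your proposal. First, the strict-concavity estimate (Lemma~\ref{lem:abalpha} and Corollary~\ref{cor:expdiff}) produces a strictly smaller exponent $\epsilon(\alpha,|\eta^+|^2/|\eta^-|^2)\le\epsilon(\alpha,1)=2^\alpha-1<1$ on the $G(\eta^-)$ factor. Second, the Kolmogorov--Landau inequalities (Lemma~\ref{lem:1dlemma}, Corollary~\ref{cor:ddlemma}), together with the a~priori $L^1_2$ bound, convert an $L^2$ bound of $G_{\sqrt 2\Lambda}f$ into a pointwise bound $|\hat f(\eta)|\lesssim G(\eta)^{-2m/(2m+d)}$ on a slightly smaller ball. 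Third, these two exponents are matched, $\epsilon(\alpha,1)\le 2m/(2m+d)$, and an induction over the length scales $\Lambda_N=\bigl(\tfrac{1+\sqrt 2}{2}\bigr)^N\Lambda_0$ (the hypothesis $\mathrm{Hyp1}_\Lambda$) closes the loop, because $\eta^-$ always lands in the ball of radius $\Lambda/\sqrt 2$. The threshold $\alpha_{2,d}=\log[(8+d)/(4+d)]/\log 2$ is exactly $\epsilon(\alpha_{2,d},1)=\tfrac{2m}{2m+d}$ with $m=2$; it arises from this matching of exponents, not from the ``geometric balance'' in a dyadic decomposition that you invoke. As written, your argument would not produce a $\Lambda$-uniform Gronwall inequality, and the appearance of the correct condition $2^\alpha(4+d)\le 8+d$ is not substantiated by anything in the proposal.

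A minor additional point: the subadditivity $(|\eta^+|^2+|\eta^-|^2)^\alpha\le|\eta^+|^{2\alpha}+|\eta^-|^{2\alpha}$ holds for all $\alpha\in(0,1]$, not only for $\alpha\le\tfrac12$; restricting to $\alpha\le\tfrac12$ would in any case exclude the range of $\alpha_{2,d}$ for $d\le 5$ where the theorem yields analyticity. But this is secondary compared with the missing control of the exponential factor on $\eta^-$, which is the essential difficulty.
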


\begin{remarks}
\begin{enumerate}[label=(\roman*)]

	\item In numbers, $\alpha_{2,1} \simeq 0.847997$, $\alpha_{2,2} \simeq 0.736966$, and $\alpha_{2,3} \simeq 0.652077$. This means, that under \emph{only} physically reasonable assumptions of finite mass, energy, and entropy, weak solutions are analytic for $\nu\geq\tfrac{1}{2}$ and even ultra-analytic if $\nu>\frac{1}{2}$. It is easy to see that $\alpha_{2,d}$ is decreasing in $d$ and for $d=6$, $\alpha_{2,6} \simeq 0.485427$, hence, for $d\geq 6$, analyticity (respectively ultra-analyticity) does not follow from this theorem.
	\item For the proof of Theorem \ref{thm:gevrey-main1} (and also \ref{thm:gevrey-main2} and \ref{thm:gevrey-main3} below) it is important that the energy of $f$ is bounded, which enters in the technical Lemma \ref{lem:1dlemma} and its Corollary \ref{cor:ddlemma}. A considerably simpler proof could be given using only that $f\in L^1_1(\R^d)$. In this case, $\alpha_{2,d}$ is replaced by $\alpha_{1,d} = \frac{\log[(4+d)/(2+d)]}{\log 2}$ (see also Remark \ref{rem:general-m} below). However, $\alpha_{1,3} < 0.4855$ in three dimensions,  thus we would not be able to conclude (ultra-)analytic smoothing of weak solutions for strong singularities $\frac{1}{2}\leq \nu <1$.
    \item As our theorem above shows, weak solutions of the homogenous Kac equation become Gevrey regular for strictly positive times for moderately singular collision kernels with singularity $\nu\in (0,\frac{1}{2})$, see \eqref{eq:cross-section-kac} for the precise description of the singularity, for $\nu = \frac{1}{2}$ they become analytic, which improves the result of \textsc{Glangetas} and \textsc{Najeme} \cite{GN13} in this critical case, and even ultra-anaytic for $\nu\in (\frac{1}{2}, 1)$.
	\item Rotationally symmetric solutions $f$ corresponding to rotationally symmetric initial conditions $f_0$ are Gevrey regular for strictly positive times under the same conditions as in the one-dimensional case $d=1$. The proof is exactly as the proof of Theorem \ref{thm:gevrey-main1-m} with some small changes in the proof of Lemma \ref{lem:induction2} where the independence of the solution $f$ on the angular coordinates can be explicitly used with the $n=1$ version of Corollary \ref{cor:ddlemma}.
\end{enumerate}
\end{remarks}

As already remarked, the result of Theorem \ref{thm:gevrey-main1} deteriorate in the dimension. Under the same assumptions, but using quite a bit more structure of the Boltzmann operator, we can prove a dimension independent version. Its proof is considerably more involved than the proof of Theorem \ref{thm:gevrey-main1}.

\begin{theorem}[Gevrey smoothing II]\label{thm:gevrey-main2}
	Let $d\ge 2$. Assume that the cross-section $b$ satisfies the conditions of Theorem \ref{thm:gevrey-main1}. Let $f$ be a weak solution of the Cauchy problem \eqref{eq:cauchyproblem} with initial datum satisfying conditions \eqref{eq:initialdata}. Then, for all $0<\alpha\leq \min\left\{\alpha_{2,2}, \nu\right\}$,
	\begin{align}
		f(t,\cdot)\in G^{\tfrac{1}{2\alpha}}(\R^d)
	\end{align}
	for all $t>0$, where $\alpha_{2,2} = \frac{\log(5/3)}{\log 2} \simeq 0.736966$. In particular, in contrast to Theorem~\ref{thm:gevrey-main1}, the weak solution is real analytic if $\nu=\frac{1}{2}$ and ultra-analytic if $\nu>\frac{1}{2}$ in \emph{any dimension}.
\end{theorem}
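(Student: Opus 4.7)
The plan is to run the same Gevrey-multiplier energy scheme used for Theorem \ref{thm:gevrey-main1}, but to replace the $d$-dimensional application of Corollary \ref{cor:ddlemma} by its two-dimensional instance, by exploiting the fact that each collision in Bobylev's identity takes place inside the two-plane spanned by $\eta$ and $\sigma$. After a short preliminary step that upgrades the $L\log L$ datum to $L^2$ regularity for any strictly positive time via ADVW coercivity, it suffices to show that for some $T_0>0$, some $\beta>0$ and a starting time $t_0>0$,
$\sup_{t\in[0,T_0]}\|G_\Lambda(t,D_v)f(t_0+t,\cdot)\|_{L^2(\R^d)}^2 \leq C$,
uniformly in $\Lambda>0$; then $\Lambda\to\infty$ by monotone convergence, together with the arbitrariness of $t_0$, gives $f(t,\cdot)\in G^{1/(2\alpha)}$ for every $t>0$.

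Taking $G_\Lambda^2(t,D_v)f$ as test function in Definition \ref{def:weaksolution}\,(iv) (permissible because the Fourier cutoff makes it smooth with all derivatives bounded), Plancherel and the Bobylev identity \eqref{eq:bobylev} yield the energy equality
\begin{align*}
\tfrac{1}{2}\tfrac{d}{dt}\|G_\Lambda f\|_{L^2}^2 - \beta\|\langle D_v\rangle^\alpha G_\Lambda f\|_{L^2}^2 = \mathrm{Re}\!\int\!\!\int b\bigl(\tfrac{\eta\cdot\sigma}{|\eta|}\bigr)G_\Lambda(\eta)^2\bigl[\hat f(\eta^-)\hat f(\eta^+)-\hat f(0)\hat f(\eta)\bigr]\overline{\hat f(\eta)}\,d\sigma d\eta.
\end{align*}
Adding and subtracting $G_\Lambda(\eta^+)\hat f(\eta^-)\hat f(\eta^+)\overline{G_\Lambda(\eta)\hat f(\eta)}$ splits the right-hand side into a coercive part and a commutator remainder $\mathcal{R}_\Lambda$. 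After symmetrisation $\sigma\mapsto-\sigma$ and the quasi-locality identity \eqref{eq:quasilocal}, the coercive part is bounded above by $-\kappa_1\|(-\Delta)^{\nu/2}G_\Lambda f\|_{L^2}^2$ plus controllable lower-order terms (this is essentially ADVW coercivity applied to $G_\Lambda f$), and the task reduces to absorbing both $\mathcal{R}_\Lambda$ and $\beta\|\langle D_v\rangle^\alpha G_\Lambda f\|_{L^2}^2$ into this fractional-Laplacian term for $\alpha\leq\min(\alpha_{2,2},\nu)$ and $\beta$ sufficiently small.

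\textbf{The main obstacle} is the dimension-independent estimate of
$\mathcal{R}_\Lambda=\mathrm{Re}\int b\,[G_\Lambda(\eta)-G_\Lambda(\eta^+)]\,\hat f(\eta^-)\hat f(\eta^+)\,\overline{G_\Lambda(\eta)\hat f(\eta)}\,d\sigma d\eta$.
The pointwise bound $|G_\Lambda(\eta)-G_\Lambda(\eta^+)|\lesssim \beta t\,|\eta^-|^{2\alpha}\,G_\Lambda(\eta)$ (which uses $|\eta^-|^2=\tfrac{1-\cos\theta}{2}|\eta|^2$ and the regularity of $r\mapsto e^{\beta t\langle r\rangle^{2\alpha}}$), together with Cauchy--Schwarz, reduces $\mathcal{R}_\Lambda$ to a nonlinear bilinear integral whose naive $d$-dimensional bound would invoke Corollary \ref{cor:ddlemma} in the ambient dimension and produce the suboptimal threshold $\alpha_{2,d}$. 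The improvement comes from the geometric observation that for each fixed $\eta$ the vectors $\eta^\pm$ both lie in $\mathrm{span}(\eta,\sigma)$: parameterising $\sigma=\cos\theta\,\tfrac{\eta}{|\eta|}+\sin\theta\,\tau$ with $\tau\in\S^{d-2}\cap\{\eta\}^\perp$, one may perform the $\tau$-integration \emph{first} and control the resulting circle-average of $|\hat f(\cdot)|^2$ in $d$-independent fashion using only the energy bound $f\in L^\infty(L^1_2)$; what remains is a genuinely two-dimensional integral to which the $n=2$ case of Corollary \ref{cor:ddlemma} applies, yielding the constant $\alpha_{2,2}=\log(5/3)/\log 2$. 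Carrying out this reduction rigorously---in particular the $d$-free circle averaging and the two-plane application of the technical lemma---is what makes the proof ``considerably more involved'' than that of Theorem \ref{thm:gevrey-main1}.

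Once $\mathcal{R}_\Lambda$ has been absorbed, and $\beta$ is chosen small enough to also swallow $\beta\|\langle D_v\rangle^\alpha G_\Lambda f\|_{L^2}^2$, one obtains a differential inequality
$\tfrac{d}{dt}\|G_\Lambda f\|_{L^2}^2\leq K\bigl(1+\|G_\Lambda f\|_{L^2}^2\bigr)$
with $K$ depending only on $\|f_0\|_{L^1_2}$, $H(f_0)$, $\beta$, $T_0$ and $\nu$, but not on $\Lambda$. Combined with the initial bound $\|G_\Lambda(0)f(t_0)\|_{L^2}=\|P_\Lambda f(t_0)\|_{L^2}\leq\|f(t_0)\|_{L^2}<\infty$ obtained from the preliminary $L^2$-regularisation step, Gronwall yields the $\Lambda$-uniform estimate on $[t_0,t_0+T_0]$; sending $\Lambda\to\infty$ by monotone convergence gives $f(t_0+T_0,\cdot)\in G^{1/(2\alpha)}$, and since $t_0>0$ is arbitrary this yields $f(t,\cdot)\in G^{1/(2\alpha)}$ for all $t>0$.
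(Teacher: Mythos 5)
Your proposal gets the high-level architecture right — upgrade to $L^2$ via $H^\infty$ smoothing, test against $G_\Lambda^2 f$, use ADVW coercivity for the good part, and observe that the collision is essentially a two-dimensional event in $\mathrm{span}(\eta,\sigma)$ so that the codimension-$2$ sphere average should be what brings in $\alpha_{2,2}$ rather than $\alpha_{2,d}$. But the proof has a genuine gap at its core: the pointwise bound you assert,
\[
|G_\Lambda(\eta)-G_\Lambda(\eta^+)|\lesssim \beta t\,|\eta^-|^{2\alpha}\,G_\Lambda(\eta),
\]
is not usable in the form given. Carrying $G_\Lambda(\eta)$ (rather than $G_\Lambda(\eta^+)$) produces $G_\Lambda(\eta)^2|\hat f(\eta)||\hat f(\eta^+)|$ in the trilinear integrand, and the factor $\hat f(\eta^+)$ then has no weight, so it cannot be closed by $\|G_\Lambda f\|_{L^2}^2$. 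The whole point of the paper's Lemma \ref{lem:abalpha} and Corollary \ref{cor:expdiff} (the strict concavity bound) is the refined estimate
\[
|G(\eta)-G(\eta^+)|\lesssim \beta t\,\langle\eta^+\rangle^{2\alpha}\sin^2\tfrac{\theta}{2}\; G(\eta^-)^{\epsilon(\alpha,\cot^2\theta/2)}\,G_\Lambda(\eta^+),
\]
which \emph{moves the weight onto $\eta^+$} and, crucially, leaves only the strictly sub-unit power $\epsilon(\alpha,\cdot)\le 2^\alpha-1<1$ of $G$ on $\hat f(\eta^-)$. Without that $\epsilon<1$, you would need to control $\sup G(\eta^-)|\hat f(\eta^-)|$ — exactly the quantity you are trying to prove finite — and the scheme is circular. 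Your proposal does not identify or use this gain.

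A second, related gap: you treat the absorption of $\mathcal{R}_\Lambda$ as leading to a single $\Lambda$-uniform Gronwall differential inequality, but the constant in that inequality necessarily depends on a uniform (or $\S^{d-2}$-averaged) bound on $G(\eta^-)^\epsilon|\hat f(\eta^-)|$ over the ball $|\eta^-|\le\Lambda/\sqrt{2}$, which is not a priori available. The paper resolves this by an induction over length scales $\Lambda_N=(\tfrac{1+\sqrt 2}{2})^N\Lambda_0$: Hypothesis $\mathrm{Hyp2}_\Lambda(M)$ (Definition \ref{def:Hyp_Lambda2}) is an $\S^{d-2}$-averaged pointwise bound, Lemma \ref{lem:induction1-2} propagates it to an $L^2$ bound at scale $\sqrt 2\Lambda$ (for small enough $\beta$), and Lemma \ref{lem:induction2-2} — the two-dimensional Kolmogorov–Landau/Corollary \ref{cor:ddlemma} step on cylinders, which is where the threshold $\alpha_{2,2}$ comes from — converts that back to $\mathrm{Hyp2}_{\widetilde\Lambda}(M)$ on a slightly larger ball. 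Your description of ``circle-averaging $|\hat f|^2$'' captures the geometry of this step but omits the $G^\epsilon$ factor that must survive the averaging and the bootstrap structure needed to close it. As written, the proposal cannot be completed to a valid proof.
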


If the integrability conditions \eqref{eq:cross-section2} is replaced by the slightly stronger condition that $b(\cos\theta)$ is bounded away from $\theta=0$, that is,
\begin{align}\label{eq:cross-section-bdd}
	\text{for any } 0<\theta_0<\tfrac{\pi}{2} \text{ there exists } C_{\theta_0}<\infty
	\text{ such that } 0\le b(\cos\theta) \le C_{\theta_0}   \text{ for all } \theta_0\le \theta\le \tfrac{\pi}{2},
\end{align}
which is true in all physically relevant cases,
we can prove an even stronger result.

\begin{theorem}[Gevrey smoothing III]\label{thm:gevrey-main3}
	Let $d\ge 2$. Assume that the cross-section $b$ satisfies the conditions of Theorem \ref{thm:gevrey-main1} and the condition \eqref{eq:cross-section-bdd},  that is, it is bounded away from the singularity.  Let $f$ be a weak solution of the Cauchy problem \eqref{eq:cauchyproblem} with initial datum satisfying conditions \eqref{eq:initialdata}. Then, for all $0<\alpha\leq \min\left\{\alpha_{2,1}, \nu\right\}$,
	\begin{align}
		f(t,\cdot)\in G^{\tfrac{1}{2\alpha}}(\R^d)
	\end{align}
	for all $t>0$, where $\alpha_{2,1} = \frac{\log(9/5)}{\log 2} \simeq 0.847997$.
\end{theorem}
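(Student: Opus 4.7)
The proof follows the same overall architecture as Theorems \ref{thm:gevrey-main1} and \ref{thm:gevrey-main2}: one tests the weak formulation against $G_\Lambda^2 f$, uses the Alexandre--Desvillettes--Villani--Wennberg coercivity estimate to bound below by a positive term comparable to $\|(-\Delta)^{\nu/2}(G_\Lambda f)\|_{L^2}^2$, and absorbs the remaining nonlinear commutator into a Gr\"onwall-type argument yielding uniform $L^2$ control of $G_\Lambda f$ in $\Lambda$. The entire improvement in this theorem comes from sharpening the commutator estimate using the extra hypothesis \eqref{eq:cross-section-bdd}.

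The plan is as follows. First, apply the Bobylev identity \eqref{eq:bobylev} to reduce the nonlinear term to
\begin{equation*}
\int_{\R^d}\int_{\S^{d-1}} b\!\left(\tfrac{\eta}{|\eta|}\cdot\sigma\right) G_\Lambda^2(\eta)\,\overline{\widehat{f}(\eta)}\bigl[\widehat{f}(\eta^-)\widehat{f}(\eta^+)-\widehat{f}(0)\widehat{f}(\eta)\bigr]\,\mathrm{d}\sigma\,\mathrm{d}\eta,
\end{equation*}
and use the cancellation trick (adding and subtracting $G_\Lambda(\eta^+)^2$) to rewrite the troublesome part as the commutator
\begin{equation*}
\bigl[G_\Lambda(\eta)^2-G_\Lambda(\eta^+)^2\bigr]\widehat{f}(\eta^+)\widehat{f}(\eta^-)\overline{\widehat{f}(\eta)},
\end{equation*}
plus a manageable remainder which, together with the coercive term, can be absorbed as in the previous theorems. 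Second, split the angular integration at some small $\theta_0$: for $\theta\in(\theta_0,\tfrac{\pi}{2}]$, the bound $b(\cos\theta)\le C_{\theta_0}$ from \eqref{eq:cross-section-bdd} makes the corresponding integral \emph{absolutely integrable in $\sigma$}, so that piece reduces to a cut-off--type expression controlled by $\|f\|_{L^1}\|G_\Lambda f\|_{L^2}^2$ using mass conservation. The nontrivial contribution therefore comes from $\theta\in(0,\theta_0]$, where $b$ carries its genuine $\theta^{-1-2\nu}$ singularity after accounting for the Jacobian $\sin^{d-2}\theta$.

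For this concentrated-in-$\theta$ part, decompose $\sigma=\cos\theta\,\tfrac{\eta}{|\eta|}+\sin\theta\,\tau$ with $\tau\in\S^{d-2}\cap\eta^\perp$; then $|\eta^+|=|\eta|\cos(\theta/2)$ and $|\eta^-|=|\eta|\sin(\theta/2)$ are \emph{independent} of $\tau$, and so is the prefactor $b(\cos\theta)$. Integration over $\tau$ can therefore be carried out explicitly, yielding a weight $\sin^{d-2}\theta\, b(\cos\theta)$ of exactly the one-dimensional singularity type $\kappa\,\theta^{-1-2\nu}$ from \eqref{eq:cross-section}. This is the crucial point at which the boundedness condition \eqref{eq:cross-section-bdd} makes a difference: because the non-singular part of $b$ has already been peeled off, the remaining estimate genuinely sees only the one-dimensional radial geometry $r\mapsto r\cos(\theta/2)$. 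Consequently the relevant sub-Gaussian commutator inequality reduces to the one-dimensional Lemma~\ref{lem:1dlemma} rather than its higher-dimensional Corollary \ref{cor:ddlemma}, and produces the same constant $\alpha_{2,1}=\log(9/5)/\log 2$ obtained in Theorem \ref{thm:gevrey-main1} for $d=1$.

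Once the commutator is bounded by $C\|G_\Lambda f\|_{L^2}^2$ plus lower-order terms absorbable into the coercive quantity, a Gr\"onwall argument (identical to that used for Theorems \ref{thm:gevrey-main1}--\ref{thm:gevrey-main2}) gives uniform-in-$\Lambda$ bounds on $\|G_\Lambda(t,\cdot)\widehat{f}(t,\cdot)\|_{L^2}$ on a small time interval, which propagates to all positive times by iteration using the conserved mass and energy. Passing $\Lambda\to\infty$ by monotone convergence yields $e^{\beta t\langle\eta\rangle^{2\alpha}}\widehat{f}(t,\cdot)\in L^2$ for all $\alpha\le\min\{\alpha_{2,1},\nu\}$, which by Definition \ref{def:Gevrey} is the desired $G^{1/(2\alpha)}$ regularity. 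The main obstacle is the clean separation of the bounded--$\theta$ part from the singular part and the verification that, after this separation, the only remaining commutator to control is one-dimensional in nature --- this is precisely where the technically stronger condition \eqref{eq:cross-section-bdd}, strictly stronger than \eqref{eq:cross-section2}, is essential.
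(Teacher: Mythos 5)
Your overall architecture (Bobylev identity, coercivity, Gr\"onwall, split the $\theta$-integral at some $\theta_0$ using the boundedness condition \eqref{eq:cross-section-bdd}) is correct, and splitting at $\theta_0$ is indeed the right starting move. However, the two pieces of the split play essentially the opposite roles from what you describe, and the mechanism by which the one-dimensional exponent $\alpha_{2,1}$ is attained is missing.

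First, the piece $\theta\in(\theta_0,\tfrac{\pi}{2}]$ is \emph{not} trivial. You claim that boundedness of $b$ there makes the contribution controllable by $\|f\|_{L^1}\|G_\Lambda f\|_{L^2}^2$ via mass conservation. That would require bounding $|\hat f(\eta^-)|$ by $\|f\|_{L^1}$, which is fine for a polynomially growing multiplier but fails here: the term in the commutator that one must control is $G(\eta^-)^{\epsilon(\alpha,|\eta^+|^2/|\eta^-|^2)}|\hat f(\eta^-)|$, and for $\theta\ge\theta_0$ the exponent $\epsilon(\alpha,\cot^2\tfrac{\theta}{2})$ is bounded below (close to $\epsilon(\alpha,1)$), so the sub-Gaussian weight is genuinely present and $L^\infty$ control of $\hat f$ alone does not suffice. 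This is exactly the point emphasised in the remark following Proposition \ref{prop:ce}. In the paper this bounded-$\theta$ regime is, in fact, the region that carries the new content: because $b(\cos\theta)\le C_{\theta_0}$ there, one may average over the $(d-1)$-dimensional set $(\theta,\omega)\in[\theta_0,\pi/2]\times\S^{d-2}(\eta)$ (see the hypothesis $\mathrm{Hyp3}_\Lambda$, the change-of-variables Lemma \ref{lem:change of variables}, and Lemma \ref{lem:induction2-3}), and it is this extra codimension-one averaging, converted by Corollary \ref{cor:ddlemma} with $n=1$, that yields the exponent $\tfrac{2m}{2m+1}$, hence $\alpha_{m,1}$.

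Second, your treatment of the singular piece $\theta\in(0,\theta_0]$ is not valid as stated. You note that $|\eta^\pm|$ and $b(\cos\theta)$ are independent of $\tau$ and conclude that the $\tau$-integration can be "carried out explicitly," reducing to a one-dimensional estimate. But $\hat f(\eta^-)$ \emph{does} depend on $\tau$ (only $|\eta^-|$ is $\tau$-independent), so the $\tau$-integral cannot be performed explicitly; the codimension-two average of $G(\eta^-)^\epsilon|\hat f(\eta^-)|$ is precisely what needs to be estimated, and averaging over $\S^{d-2}$ alone only produces the two-dimensional exponent $\alpha_{m,2}$ of Theorem \ref{thm:gevrey-main2}, not $\alpha_{m,1}$. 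What actually makes the singular piece manageable in the paper is the strict concavity Lemma \ref{lem:abalpha}: near $\theta=0$ one has $\cot^2\tfrac{\theta}{2}$ large, so $\epsilon(\alpha,\cot^2\tfrac{\theta}{2})$ can be made $\le\epsilon(\alpha_{m,2},1)=\tfrac{2m}{2m+2}$ by choosing $\theta_0$ small, at which point the already-established Corollary \ref{cor:gevrey2} (from Part II) provides the required uniform bound $M_2$. So the singular part is dispatched by the earlier, weaker theorem, while the improvement to $\alpha_{m,1}$ is earned entirely in the bounded-$\theta$ regime. Your proposal has these two roles inverted and omits both the strict-concavity reduction near the singularity and the $(\theta,\omega)$-averaging (plus its change of variables) away from it, which are the two essential ideas of the proof.
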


\begin{remark}\label{rem:general-m}
\begin{enumerate}[label=(\roman*)]
	\item Since we do not rely on interpolation inequalities between Sobolev spaces, our results also include the \emph{limiting case} $\alpha=\nu$, at least if $\nu\leq \alpha_{2,n}$ ($n=d, 2, 1$). This is in contrast to all previous results on smoothing properties of the Boltzmann and Kac equations.
	\item If higher moments of the initial datum are bounded (and thus stay bounded eternally due to moment propagation results, see, for instance, \textsc{Villani}'s review \cite{Vil02}), the results in Theorem \ref{thm:gevrey-main2} and Theorem \ref{thm:gevrey-main3} can be improved in the high singularity case, where $\nu$ is close to one. Namely, let $f_0\in L\log L \cap L^1_m (\R^d)$ for some integer $m >2$, then the constants $\alpha_{2,d}$, $\alpha_{2,2}$, respectively $\alpha_{2,1}$ are replaced by $\alpha_{m,n} = \frac{\log[(4m+n)/(2m+n)]}{\log 2}$ ($n=d, 2, 1$), which are strictly increasing towards the limit $\alpha_{\infty,n} = 1$ as $m$ becomes large. See Theorems \ref{thm:gevrey-main1-m}, \ref{thm:gevrey-main2-m} and \ref{thm:gevrey-main3-m} below.
\end{enumerate}	
\end{remark}

Moreover, we prove that for very strong singularities $\nu$, we can prescribe precise conditions on the initial datum such that we have $f\in G^{\frac{1}{2\nu}}(\R^d)$.
\begin{theorem}
	Given $0<\nu<1$, there is $m(\nu)$ such that, if $m\in \N$ and $m\ge m(\nu)$ and $f_0\in L\log L \cap L^1_m$, the weak solution is in $G^{\frac{1}{2\nu}}(\R^d)$ for all $t>0$.
	
	More precisely, under the conditions of Theorem \ref{thm:gevrey-main1}  having  
	$m \ge  \max\left ( 2, \frac{2^\nu -1}{2-2^\nu} \right)$ yields Gevrey smoothing of order $\frac{1}{2\nu}$ and under the slightly stronger conditions of Theorem \ref{thm:gevrey-main3} having   
	  $m \ge  \max\left ( 2, \frac{2^\nu -1}{2(2-2^\nu)} \right)$ is enough.  
\end{theorem}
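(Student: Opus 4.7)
The plan is to invoke the moment-dependent Gevrey smoothing Theorems~\ref{thm:gevrey-main2-m} and~\ref{thm:gevrey-main3-m} announced in Remark~\ref{rem:general-m}(ii), and to reduce the claim to an elementary inequality in $m$. These theorems assert that when $f_0\in L\log L\cap L^1_m$ for integer $m\ge 2$, the constants $\alpha_{2,2}$, $\alpha_{2,1}$ of Theorems~\ref{thm:gevrey-main2} and~\ref{thm:gevrey-main3} are respectively upgraded to
\[
\alpha_{m,n}=\frac{\log\bigl[(4m+n)/(2m+n)\bigr]}{\log 2},\qquad n\in\{2,1\},
\]
with $n=2$ for the dimension-free Theorem~\ref{thm:gevrey-main2-m} (under the cross-section hypotheses \eqref{eq:cross-section}, \eqref{eq:cross-section2} shared with Theorem~\ref{thm:gevrey-main1}) and $n=1$ for the sharper Theorem~\ref{thm:gevrey-main3-m} (which requires the extra boundedness \eqref{eq:cross-section-bdd}). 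The conclusion is $f(t,\cdot)\in G^{1/(2\alpha)}(\R^d)$ for every $0<\alpha\le\min\{\alpha_{m,n},\nu\}$, and by Remark~\ref{rem:general-m}(i) the endpoint $\alpha=\nu$ is admissible. Consequently, to land in the target class $G^{1/(2\nu)}(\R^d)$ it suffices to arrange $\nu\le\alpha_{m,n}$.

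The second step is a short rearrangement. The inequality $\nu\le\alpha_{m,n}$ is equivalent to $2^{\nu}(2m+n)\le 4m+n$, i.e.\ $n(2^{\nu}-1)\le 2m(2-2^{\nu})$, and since $2-2^{\nu}>0$ for $0<\nu<1$, this is in turn the same as
\[
m\ \ge\ \frac{n(2^{\nu}-1)}{2(2-2^{\nu})}.
\]
Substituting $n=2$ yields the threshold $\tfrac{2^{\nu}-1}{2-2^{\nu}}$ and $n=1$ yields the sharper $\tfrac{2^{\nu}-1}{2(2-2^{\nu})}$. Combining each with the baseline requirement $m\ge 2$ built into the hypotheses of the source theorems produces exactly the two stated conditions $m\ge\max\!\bigl(2,\tfrac{2^{\nu}-1}{2-2^{\nu}}\bigr)$ and $m\ge\max\!\bigl(2,\tfrac{2^{\nu}-1}{2(2-2^{\nu})}\bigr)$, and one can take $m(\nu)$ to be the smallest integer at least equal to the smaller of the two right-hand sides.

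There is essentially no technical obstacle: the entire analytic content is already packaged into Theorems~\ref{thm:gevrey-main1-m}--\ref{thm:gevrey-main3-m}, and the present statement is simply their logical inverse, asking for the minimal number of finite moments on $f_0$ needed to attain the optimal Gevrey index $\tfrac{1}{2\nu}$ for a prescribed singularity order $\nu$. As a qualitative sanity check, both thresholds diverge as $\nu\uparrow 1$ (because $2-2^{\nu}\downarrow 0$), which matches the heuristic that reaching ultra-analyticity at the strongest admissible singularity ought to demand progressively heavier decay of the initial datum.
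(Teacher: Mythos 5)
Your proof is correct and takes essentially the same route the paper indicates in its remark: cite Theorems~\ref{thm:gevrey-main2-m} and~\ref{thm:gevrey-main3-m}, use the fact that the endpoint $\alpha=\nu$ is admissible, and solve the elementary inequality $\nu\le\alpha_{m,n}$ for $m$ in the cases $n=2$ and $n=1$. The algebra $\nu\le\alpha_{m,n}\iff 2^\nu(2m+n)\le 4m+n\iff m\ge \tfrac{n(2^\nu-1)}{2(2-2^\nu)}$ is the only content the paper leaves implicit, and you carry it out correctly.
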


\begin{remark} 
The proof of this Theorem follows directly from the results of Theorems \ref{thm:gevrey-main1-m},  \ref{thm:gevrey-main2-m}, and \ref{thm:gevrey-main3-m}  in Section \ref{sec:great-results}, which extend Theorems \ref{thm:gevrey-main1}, \ref{thm:gevrey-main2}, and \ref{thm:gevrey-main3} to the case of finite moments $m\ge2$.
\end{remark}

The strategy of the proofs of our main results Theorems \ref{thm:gevrey-main1}, \ref{thm:gevrey-main2}, and \ref{thm:gevrey-main3} is as follows:
We start with the additional assumption $f_0\in L^2$ on the initial datum. We use the
known $H^\infty$ smoothing of the non-cutoff Boltzmann and Kac equation to allow this.
This yields an $L^2$ reformulation of the weak formulation of the Boltzmann and Kac equations which includes suitable growing Fourier multipliers.

The inclusion of sub-Gaussian Fourier multipliers leads to a nonlocal and nonlinear commutator of the Boltzmann and Kac kernels, which turns out to be a three-linear expression in the weighted solution $\hat{f}$ on the Fourier side. In order to bound this expression with $L^2$ norms, one of the three terms has to be controlled pointwise, \emph{including} a sub-Gaussian growing factor, see Proposition \ref{prop:ce}. The problem is that one has to control the pointwise bound with an $L^2$ norm, which is in general impossible. To overcome this obstacle there are several important technical steps:
\begin{enumerate}[label=(\arabic*)]
	\item When working on a ball of radius $\Lambda$, we need this uniform control only on a a ball of radius $\Lambda/\sqrt{2}$, which enables an inductive procedure.
	\item Using the additional a priori information that the kinetic energy is finite, or, depending on the initial condition, even higher moments are finite, we transform weighted $L^2$ bounds into pointwise bounds on slightly smaller balls with an additional loss of power in the weights in Fourier space. Here we rely on
	    Kolmogorov-Landau type inequalities, see Lemma \ref{lem:landau} and appendix \ref{sec:app-landau}.
	\item Use of strict concavity of the Fourier multipliers, see Lemma \ref{lem:abalpha}, in order to compensate for this loss of power.
	\item Averaging over a codimension 2 sphere, in the proof of Theorem \ref{thm:gevrey-main2}, which allows us to get, in any dimension, the same results as for the two dimensional Boltzmann equation.
	\item Averaging over a codimension 1  set constructed from a codimension 2 sphere and the collision angles $\theta$ away from the singularity, and using the fact that near the singularity, one of the three Fourier weights is not big due to Lemma \ref{lem:abalpha}, enables us to get,  in any dimension, the same results as for the one-dimensional Kac equation under the conditions of Theorems \ref{thm:gevrey-main3} and \ref{thm:gevrey-main3-m}.
\end{enumerate}

\section{Gevrey regularity and (ultra-)analyticity of weak solutions with $L^2$ initial data}\label{S2}
In this section, we will prove the Gevrey smoothing of weak solutions with
initial datum $f_0$ satisfying \eqref{eq:initialdata} and, \emph{additionally},  $f_0\in L^2(\R^d)$.

\subsection{$L^2$-Reformulation of the homogeneous Boltzmann equation for weak solutions and coercivity}\label{sec:reform}
The following is our starting point for the proof of the regularizing properties of the homogenous Boltzmann equation.
\begin{proposition}[] \label{prop:L2reform}
	Let $f$ be a weak solution of the Cauchy problem \eqref{eq:cauchyproblem} with initial datum $f_0$ satisfying \eqref{eq:initialdata}, and let $T_0>0$. Then for all $t\in(0, T_0]$, $\beta>0$, $\alpha\in(0,1)$, and $\Lambda>0$ we have
$G_{\Lambda}f \in \mathcal{C}\left([0, T_0]; L^2(\R^d)\right)$ and
	\begin{align}\label{eq:reformulation}
	\begin{split}
	&\frac{1}{2} \|G_{\Lambda}(t,D_v)f(t,\cdot)\|_{L^2}^2 - \frac{1}{2} \int_0^t \left\langle f(\tau, \cdot), \left( \partial_\tau G_{\Lambda}^2(\tau, D_v) \right) f(\tau,\cdot)\right\rangle \,\mathrm{d}\tau \\
	&= \frac{1}{2} \|\1_{\Lambda}(D_v)f_0\|_{L^2}^2 + \int_0^t \left\langle Q(f,f)(\tau, \cdot), G_{\Lambda}^2(\tau, D_v)f(\tau, \cdot)\right\rangle \, \mathrm{d}\tau.
	\end{split}
	\end{align}
\end{proposition}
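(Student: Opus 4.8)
The plan is to derive the identity \eqref{eq:reformulation} from the weak formulation \eqref{eq:weakformulation} by a careful choice of test function, combined with a regularisation/approximation argument. Morally, one wants to insert $\varphi(\tau,v) = G_\Lambda^2(\tau,D_v)f(\tau,v)$ into the weak formulation; since $G_\Lambda^2(\tau,\cdot)$ has compact support in Fourier space, $G_\Lambda^2(\tau,D_v)f(\tau,\cdot)$ lies in $H^\infty(\R^d)$ (indeed is analytic in a strip), so it is a priori an admissible spatial test function in the $W^{2,\infty}$ sense used for the $Q$-pairing; the remaining issue is the time regularity and the fact that $\varphi$ then depends on $f$ itself. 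First I would establish the continuity statement $G_\Lambda f\in\mathcal C([0,T_0];L^2(\R^d))$: from $f\in\mathcal C(\R_+;\mathcal D'(\R^d))\cap L^\infty(\R_+;L^1(\R^d))$ one gets that $\hat f(t,\cdot)$ is, for each $t$, a bounded continuous function with $\|\hat f(t,\cdot)\|_\infty\le\|f(t,\cdot)\|_{L^1}\le C$, and $t\mapsto\hat f(t,\eta)$ is continuous for each fixed $\eta$ (testing against $e^{-2\pi i v\cdot\eta}$ localised by a cutoff, using the $\mathcal D'$-continuity together with the uniform $L^1_2$ bound to remove the cutoff); then $G_\Lambda(t,\eta)\hat f(t,\eta)$ is supported in $|\eta|\le\Lambda$, dominated by $G_\Lambda(T_0,\eta)\,C\in L^2$, and continuous in $t$ pointwise, so dominated convergence gives continuity in $L^2(\R^d_\eta)$, hence $G_\Lambda f\in\mathcal C([0,T_0];L^2)$ by Plancherel.

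Next I would justify the substitution. Fix $t\in(0,T_0]$ and approximate: replace $G_\Lambda^2(\tau,\eta)\hat f(\tau,\eta)$ by a mollified-in-time version $(\rho_\varepsilon *_\tau [G_\Lambda^2\hat f])(\tau,\eta)$, extended suitably near the endpoints, which is $\mathcal C^1$ in $\tau$ with values in $\mathcal C_0^\infty$-like functions (compact Fourier support, Schwartz in $v$), hence a legitimate test function $\varphi_\varepsilon\in\mathcal C^1(\R_+;\mathcal C_0^\infty(\R^d))$ — here one uses a spatial cutoff too, removed at the end using the strong $v$-decay coming from analyticity/Schwartz regularity of the Fourier-localised function and the $L^1_2$ bound on $f$. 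Plugging $\varphi_\varepsilon$ into \eqref{eq:weakformulation}, the left-hand side becomes $\langle f(t),\varphi_\varepsilon(t)\rangle-\langle f_0,\varphi_\varepsilon(0)\rangle-\int_0^t\langle f(\tau),\partial_\tau\varphi_\varepsilon(\tau)\rangle\,d\tau$. As $\varepsilon\to0$: $\varphi_\varepsilon(\tau)\to G_\Lambda^2(\tau,D_v)f(\tau)$ in $L^2$ uniformly on $[0,t]$ by the continuity just proved, so $\langle f(t),\varphi_\varepsilon(t)\rangle\to\langle f(t),G_\Lambda^2(t,D_v)f(t)\rangle=\|G_\Lambda(t,D_v)f(t)\|_{L^2}^2$ (using self-adjointness of the real multiplier $G_\Lambda$ and Plancherel), and $\langle f_0,\varphi_\varepsilon(0)\rangle\to\langle f_0,\1_\Lambda(D_v)^2 f_0\rangle=\|\1_\Lambda(D_v)f_0\|_{L^2}^2$ since $G_\Lambda(0,\cdot)=\1_\Lambda$. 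For the $\partial_\tau$ term, write $\partial_\tau\varphi_\varepsilon = \rho_\varepsilon*_\tau\partial_\tau(G_\Lambda^2\hat f)$ on the Fourier side and split $\partial_\tau(G_\Lambda^2\hat f)=(\partial_\tau G_\Lambda^2)\hat f + G_\Lambda^2\partial_\tau\hat f$; the first piece is controlled directly and converges to give $-\tfrac12\int_0^t\langle f(\tau),(\partial_\tau G_\Lambda^2(\tau,D_v))f(\tau)\rangle\,d\tau$ after accounting for the factor $2$ (note $\partial_\tau G_\Lambda^2 = 2G_\Lambda\partial_\tau G_\Lambda$ but the bookkeeping is through $\langle f,(\partial_\tau G_\Lambda^2)f\rangle$), while the second piece, $\int_0^t\langle f(\tau),\rho_\varepsilon*(G_\Lambda^2\partial_\tau\hat f)(\tau)\rangle\,d\tau$, must be matched against the $Q$-term.

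The heart of the argument, and the step I expect to be the main obstacle, is showing that the ``$f\,\partial_\tau(G_\Lambda^2 f)$'' contribution reproduces exactly $\int_0^t\langle Q(f,f)(\tau),G_\Lambda^2(\tau,D_v)f(\tau)\rangle\,d\tau$ up to the factor $\tfrac12$, and in particular that this $Q$-pairing is well-defined and finite. The formal reason is that $\partial_\tau f = Q(f,f)$ in $\mathcal D'$, so $\langle f,G_\Lambda^2\partial_\tau f\rangle = \langle G_\Lambda^2 f, Q(f,f)\rangle$ (self-adjointness of $G_\Lambda^2(\tau,D_v)$), and $G_\Lambda^2(\tau,D_v)f(\tau)\in W^{2,\infty}(\R^d)$ with the required bounds, so it is an admissible test function for the $Q$-pairing in Definition \ref{def:weaksolution}(iv); the finiteness follows from $f\in L^\infty(\R_+;L^1_2)$ together with the $\theta$-integrability \eqref{eq:cross-section2} and the second-order Taylor cancellation $\varphi(v')+\varphi(v'_*)-\varphi(v)-\varphi(v_*)=O(\|D^2\varphi\|_\infty|v-v_*|^2\theta^2)$, bounding $|\langle Q(f,f),\varphi\rangle|\lesssim \|f\|_{L^1_1}^2\,\|D^2\varphi\|_\infty\int_0^{\pi/2}\sin^d\theta\,b(\cos\theta)\,d\theta$. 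The delicate points are: (a) the mollification-in-$\tau$ must be shown to commute with everything in the limit, i.e. one needs $\tau\mapsto\langle Q(f,f)(\tau),\psi\rangle$ to be, say, in $L^1_{loc}$ for fixed nice $\psi$ and the time-dependence of $\psi=G_\Lambda^2(\tau,D_v)f(\tau)$ handled by the uniform-in-$\tau$ $W^{2,\infty}$ bounds plus $L^2$-continuity; (b) near $\tau=0$ one must check no boundary term is lost, which is where $G_\Lambda f\in\mathcal C([0,T_0];L^2)$ up to $\tau=0$ and $G_\Lambda(0,\cdot)=\1_\Lambda$ are used. Once these limits are in place, collecting the four terms and multiplying through by $\tfrac12$ gives exactly \eqref{eq:reformulation}.
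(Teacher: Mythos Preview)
Your outline is workable and captures the essential content, but it differs from the paper's route in two substantive ways, and your version would need more care in the places you flag as ``delicate''.

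\emph{Continuity of $G_\Lambda f$.} You argue pointwise continuity of $\hat f(t,\eta)$ plus domination. The paper instead takes a time-constant test function $\psi$ in the weak formulation, extends the admissible class from $\mathcal C_0^\infty$ to $H^k$, $k>\tfrac{d+4}{2}$, using a functional estimate $\|Q(g,f)\|_{H^{-k}}\le C\|g\|_{L^1_2}\|f\|_{L^1_2}$ (Lemma~\ref{lem:functional}), then plugs in $\psi=G_\Lambda^2 f(t,\cdot)$ and $\psi=G_\Lambda^2 f(s,\cdot)$ and adds. This yields directly that $t\mapsto\|G_\Lambda f(t)\|_{L^2}^2$ is Lipschitz on $[0,T_0]$, which is stronger and avoids the two-step ``pointwise then dominated convergence'' argument.

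\emph{The identity.} You mollify in time (and cut off in space), then split $\partial_\tau(G_\Lambda^2\hat f)$ via Leibniz and identify the $G_\Lambda^2\partial_\tau\hat f$ piece with a second copy of the $Q$-term. This can be made rigorous, but note that inside the convolution $G_\Lambda^2$ is evaluated at the dummy time variable, not at $\tau$, so the ``second piece'' is $\int\rho_\varepsilon(\tau-s)\langle G_\Lambda^2(s,D_v)f(\tau),Q(f,f)(s)\rangle\,ds$, and you need joint continuity/uniform integrability to pass to the limit. The paper bypasses this entirely: it rewrites, for $\varphi\in\mathcal C^1$, the term $\int_0^t\langle f(\tau),\partial_\tau\varphi(\tau)\rangle\,d\tau$ as the symmetrised difference quotient $\lim_{h\to0}\int_0^t\langle f(\tau)+f(\tau+h),\tfrac{\varphi(\tau+h)-\varphi(\tau)}{2h}\rangle\,d\tau$, observes that this expression is well-defined for $\varphi=G_\Lambda^2 f$ (no $\mathcal C^1$ in $t$ needed), and then uses algebra: the integrand splits as $\tfrac{1}{2h}(\|G_\Lambda f(\tau+h)\|^2-\|G_\Lambda f(\tau)\|^2)$, which telescopes to the boundary terms by the continuity already established, plus a term involving $\tfrac{G_\Lambda^2(\tau+h)-G_\Lambda^2(\tau)}{2h}$ alone, handled by dominated convergence. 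This avoids invoking $\partial_\tau f=Q(f,f)$ distributionally and the associated mollification bookkeeping.

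In short: your approach is standard and correct in spirit, but the paper's symmetrised difference quotient together with the $H^{-k}$ estimate on $Q$ gives a shorter path with fewer moving parts; in particular it never needs to split $\partial_\tau(G_\Lambda^2 f)$ or handle the product of a time-distribution with a time-dependent multiplier.
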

Informally, equation \eqref{eq:reformulation} follows from using $\varphi(t,\cdot):= G_{\Lambda}^2(t,D_v)f(t,\cdot)$  in the weak formulation of the homogenous Boltzmann equation.
Recall that $G_{\Lambda}^2f \in L^{\infty}([0, T_0]; H^{\infty}(\R^d))$ for any finite $T_0>0$, so it misses the required regularity in time needed to be used as a test function. The proof of Proposition \ref{prop:L2reform} is analogous to \textsc{Morimoto} \textit{et al.} \cite{MUXY09}, for the sake of completeness and the convenience of the reader, we prove it in appendix \ref{sec:appendix-reformulation}.

The coercive properties of the non-cutoff Boltzmann bilinear operator which play the crucial role in the smoothing of solutions are made precise in the following sub-elliptic estimate by \textsc{Alexandre, Desvillettes, Villani} and \textsc{Wennberg} \cite{ADVW00}. We remark that, while the proof there is given for the Boltzmann equation, it equally applies to the Kac equation.

\begin{lemma}[Sub-elliptic Estimate, \cite{ADVW00}]\label{lem:subelliptic}
	Let $g\in L^1_2(\R^d) \cap L\log L(\R^d)$, $g\geq 0$ ($g\not\equiv 0$). Assume that the collision cross-section $b$ satisfies \eqref{eq:cross-section}-\eqref{eq:cross-section2} or \eqref{eq:cross-section-kac}-\eqref{eq:cross-section-kac2} respectively, with $0<\nu<1$. Then there exists a constant $C_g>0$ (depending only on the dimension $d$, the collision kernel $b$, $\|g\|_{L^1_2}$ and $\|g\|_{L\log L}$) and a constant $C>0$ (depending only on $d$ and $b$), such that for any $f\in H^1(\R^d)$ one has
	\begin{align*}
		-\langle Q(g,f), f\rangle \geq C_g \|f\|_{H^{\nu}}^2 - C \|g\|_{L^1_2} \|f\|_{L^2}^2.
	\end{align*}
\end{lemma}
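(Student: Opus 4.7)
The plan is to follow the Fourier-side argument of \textsc{Alexandre, Desvillettes, Villani} and \textsc{Wennberg} \cite{ADVW00}, whose structural backbone adapts equally well to the Kac setting through the one-dimensional Bobylev identity. The starting point is Plancherel combined with \eqref{eq:bobylev}:
\begin{align*}
-\langle Q(g,f), f\rangle = \mathrm{Re}\int_{\R^d}\int_{\S^{d-1}} b(\cos\theta)\Bigl[\hat{g}(0)|\hat{f}(\eta)|^2 - \hat{g}(\eta^-)\hat{f}(\eta^+)\overline{\hat{f}(\eta)}\Bigr] \, \mathrm{d}\sigma\, \mathrm{d}\eta.
\end{align*}
I would decompose $\hat{g}(\eta^-) = \hat{g}(0) + (\hat{g}(\eta^-) - \hat{g}(0))$, giving a ``main'' integral $I$ carrying the constant weight $\hat{g}(0) = \|g\|_{L^1}$ and a ``remainder'' $R$ into which the dependence on higher moments of $g$ is packaged.

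For the main term I would use the algebraic identity
\begin{align*}
|\hat{f}(\eta)|^2 - \mathrm{Re}\bigl(\hat{f}(\eta^+)\overline{\hat{f}(\eta)}\bigr) = \tfrac{1}{2}|\hat{f}(\eta^+) - \hat{f}(\eta)|^2 + \tfrac{1}{2}\bigl(|\hat{f}(\eta)|^2 - |\hat{f}(\eta^+)|^2\bigr).
\end{align*}
The second (cancellation) piece is handled by the change of variables $\eta \mapsto \eta^+$, whose Jacobian is explicit in terms of $\theta$; combined with the integrability \eqref{eq:cross-section2} resp.\ \eqref{eq:cross-section-kac2}, this produces a contribution controlled by $C\|g\|_{L^1}\|f\|_{L^2}^2$. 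The first piece is the coercive one: using $|\eta^+ - \eta| = |\eta|\sin(\theta/2)$ and the singularity $\sin^{d-2}\theta\, b(\cos\theta) \sim \kappa\theta^{-1-2\nu}$, one shows by splitting the sphere into the region where $|\hat{f}(\eta^+)|$ is close to $|\hat{f}(\eta)|$ (where a quantitative non-degeneracy argument yields a lower bound) and its complement (where $|\hat{f}(\eta^+) - \hat{f}(\eta)| \geq \tfrac{1}{2}|\hat{f}(\eta)|$ directly) that
\begin{align*}
\int_{\S^{d-1}} b(\cos\theta)|\hat{f}(\eta^+) - \hat{f}(\eta)|^2 \, \mathrm{d}\sigma \gtrsim \psi_g(\eta)|\hat{f}(\eta)|^2,
\end{align*}
with $\psi_g(\eta) \sim |\eta|^{2\nu}$ as $|\eta| \to \infty$. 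Integrating in $\eta$ then yields $c_g \|f\|_{\dot H^\nu}^2$.

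For the remainder $R$, I would exploit that $g \in L^1_2$ implies $\hat{g} \in C^2$ with $\|\nabla^2 \hat{g}\|_\infty \leq (2\pi)^2\|g\|_{L^1_2}$; hence
\begin{align*}
|\hat{g}(\eta^-) - \hat{g}(0)| \leq \min\bigl(2\hat{g}(0),\, C\|g\|_{L^1_2}|\eta|^2\sin^2(\theta/2)\bigr).
\end{align*}
Inserting the quadratic bound for small $\theta$ (where $b(\cos\theta)\sin^2\theta$ is integrable by assumption) and the boundedness $2\hat{g}(0)$ for moderate $\theta$, together with $2|\hat{f}(\eta^+)\overline{\hat{f}(\eta)}| \leq |\hat{f}(\eta^+)|^2 + |\hat{f}(\eta)|^2$ and a change of variables to absorb the $\eta^+$-integral, I would obtain $|R| \leq C\|g\|_{L^1_2}\|f\|_{L^2}^2$, which combined with the cancellation estimate produces the negative term in the claimed inequality.

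The main obstacle, and the genuinely delicate part of \cite{ADVW00}, is proving that the coercivity constant $c_g$ is strictly positive and depends on $g$ only through $\|g\|_{L^1_2}$ and $\|g\|_{L\log L}$: this requires quantitative non-concentration of $g$, because a collision kernel paired with a $g$ that was a Dirac mass on a hyperplane would not see the full sphere of collision directions. Here the finite entropy is used through an averaging/lower-density argument to guarantee that no matter the shape of $g$, there is a fixed fraction of $\sigma$-directions for which the Fourier-side lower bound is effective; the quantitative bound then descends only on $\|g\|_{L^1_2}$ and $\|g\|_{L\log L}$. In the one-dimensional Kac setting the same scheme goes through, with the sphere $\S^{d-1}$ replaced by a two-point set and the geometric step considerably simplified.
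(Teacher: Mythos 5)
First, note that the paper does not prove Lemma~\ref{lem:subelliptic}; it is quoted from \cite{ADVW00}, with the Remark afterwards pointing to \cite{AMUXY10} for the dependence of the constants. Your sketch therefore has to be compared against the argument in those references, and there it does not work as you describe.

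The decisive step in \cite{ADVW00} keeps $\hat g(\eta^-)$ and $\hat g(0)$ together: after symmetrising in $\eta\leftrightarrow\eta^+$, the Fourier-side integrand is bounded below pointwise by $\tfrac{1}{2}\bigl(\hat g(0)-|\hat g(\eta^-)|\bigr)\bigl(|\hat f(\eta)|^2+|\hat f(\eta^+)|^2\bigr)$, using only $g\geq 0$ (so $|\hat g|\leq\hat g(0)$). The singularity of $b$ sits at $\theta=0$, where $|\eta^-|=|\eta|\sin(\theta/2)$ is small, and the factor $\hat g(0)-|\hat g(\eta^-)|$ vanishes there quadratically, which exactly tames the non-integrable part of $b$; the uniform lower bound $\hat g(0)-|\hat g(\zeta)|\gtrsim\min(1,|\zeta|^2)$ with constants controlled by $\|g\|_{L^1_2}$ and $\|g\|_{L\log L}$ is precisely where the entropy (non-concentration; ruling out a Dirac mass, for which $|\hat g|\equiv\hat g(0)$) enters, and integrating $b$ against it produces the $|\eta|^{2\nu}$ weight. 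Splitting $\hat g(\eta^-)=\hat g(0)+[\hat g(\eta^-)-\hat g(0)]$ as you propose destroys this cancellation, and neither resulting piece can be controlled in the way you claim. For the remainder $R$, inserting $|\hat g(\eta^-)-\hat g(0)|\leq\min(2\hat g(0),\,C\|g\|_{L^1_2}|\eta|^2\sin^2(\theta/2))$ into $\int_{\S^{d-1}} b\,|\hat g(\eta^-)-\hat g(0)|\,\mathrm{d}\sigma$ and splitting the $\theta$-integral at $\theta\sim|\eta|^{-1}$ gives a total of order $\|g\|_{L^1_2}|\eta|^{2\nu}$, so $|R|\lesssim\|g\|_{L^1_2}\|f\|_{\dot H^{\nu}}^2$ --- the same order as the coercivity you are trying to produce, not $\|f\|_{L^2}^2$. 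Conversely, the constant-weight main term cannot yield a $g$-dependent $\psi_g$: $\hat g(0)=\|g\|_{L^1}$ only sees the mass of $g$, so no entropy dependence can appear there. Worse, the unweighted functional $\int\!\int b\,|\hat f(\eta^+)-\hat f(\eta)|^2\,\mathrm{d}\sigma\,\mathrm{d}\eta$ does not even control $\|f\|_{\dot H^{\nu}}^2$: for $\hat f$ radial and supported on an annulus of unit width at radius $R$, the radial displacement $|\eta|-|\eta^+|\sim|\eta|\theta^2$ exits the annulus only for $\theta\gtrsim R^{-1/2}$, and $\int_{R^{-1/2}}^{\pi/2}\theta^{-1-2\nu}\,\mathrm{d}\theta\sim R^{\nu}$ falls a full power short of the needed $R^{2\nu}$. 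Both halves of your decomposition are therefore lossy, and the proposed route does not recover the lemma.
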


\begin{remark} As explained for instance in \cite{AMUXY10}, the constant $C_g$ is an increasing function of $\|g\|_{L^1}$, $\|g\|_{L^1_2}^{-1}$ and $\|g\|_{L\log L}^{-1}$. In particular, if $g$ is a weak solution of the Cauchy problem \eqref{eq:cauchyproblem} with initial datum $g_0\in L^1_2(\R^d)\cap L\log L(\R^d)$, we have $\|g\|_{L^1}=\|g_0\|_{L^1}$, $\|g\|_{L^1_2} \leq \|g_0\|_{L^1_2}$ and $\|g\|_{L\log L} \leq \log2 \|g_0\|_{L^1} + H(g_0) + C_{\delta,d} \|g_0\|_{L^1_2}^{1-\delta}$, for small enough $\delta>0$ (see \eqref{eq:LlogL}). This implies $C_g \geq C_{g_0}$ and thus
			\begin{align*}
				-\langle Q(g,f), f\rangle \geq C_g \|f\|_{H^{\nu}}^2 - C \|g\|_{L^1_2} \|f\|_{L^2}^2 \geq C_{g_0} \|f\|_{H^{\nu}}^2 - C \|g_0\|_{L^1_2} \|f\|_{L^2}^2.
			\end{align*}
		uniformly in $t\ge 0$.
\end{remark}

Together with Proposition \ref{prop:L2reform} the coercivity estimate Lemma \ref{lem:subelliptic} implies

\begin{corollary}[A priori bound for weak solutions]  \label{cor:gronwallbound}
	Let $f$ be a weak solution of the Cauchy problem \eqref{eq:cauchyproblem} with initial datum $f_0$ satisfying \eqref{eq:initialdata}, and let $T_0>0$. Then there exist constants $\widetilde{C}_{f_0}, C_{f_0} >0$ (depending only on the dimension $d$, the collision kernel $b$, $\|f_0\|_{L^1_2}$ and $\|f_0\|_{L\log L}$) such that for all $t\in(0, T_0]$, $\beta>0$, $\alpha\in(0,1)$, and $\Lambda>0$ we have
	\begin{align}
	\begin{split}
		\|G_{\Lambda}f\|_{L^2}^2
		\leq
		  \|\1_{\Lambda}(D_v)f_0\|_{L^2}^2
		  &+ \int_0^t 2\left( - \widetilde{C}_{f_0} \|G_{\Lambda}f\|_{H^{\nu}}^2 + C_{f_0} \|G_{\Lambda}f\|_{L^2}^2 \right) \, \mathrm{d}\tau \\
		&+ \int_0^t 2\left| \left\langle Q(f, G_{\Lambda}f) - G_{\Lambda} Q(f,f), G_{\Lambda}f\right\rangle\right| \, \mathrm{d}\tau \\
		&+ \int_0^t 2\beta \|G_{\Lambda}f\|_{H^{\alpha}}^2 \,\mathrm{d}\tau.
		\end{split}
	\end{align}
\end{corollary}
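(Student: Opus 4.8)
The plan is to start from the $L^2$-reformulation \eqref{eq:reformulation} of Proposition \ref{prop:L2reform}, multiply it by $2$, and estimate the two time integrals on its right-hand side separately. The first one is computed exactly: since $G_\Lambda^2(\tau,\eta)=e^{2\beta\tau\langle\eta\rangle^{2\alpha}}\1_\Lambda(|\eta|)$ we have $\partial_\tau G_\Lambda^2(\tau,\eta)=2\beta\langle\eta\rangle^{2\alpha}G_\Lambda^2(\tau,\eta)$, and since $\langle D_v\rangle^{2\alpha}$ and $G_\Lambda(\tau,D_v)$ are commuting self-adjoint Fourier multipliers, Plancherel's theorem gives $\langle f(\tau,\cdot),(\partial_\tau G_\Lambda^2(\tau,D_v))f(\tau,\cdot)\rangle = 2\beta\|\langle D_v\rangle^\alpha G_\Lambda(\tau,D_v)f(\tau,\cdot)\|_{L^2}^2 = 2\beta\|G_\Lambda f\|_{H^\alpha}^2$, which accounts for the last term in the claimed inequality.

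For the nonlinear integral I would use that $G_\Lambda(\tau,D_v)$ is self-adjoint on $L^2(\R^d)$, so that $\langle Q(f,f),G_\Lambda^2 f\rangle=\langle G_\Lambda Q(f,f),G_\Lambda f\rangle$, and then insert and remove $Q(f,G_\Lambda f)$ to write
\begin{align*}
\langle Q(f,f),G_\Lambda^2 f\rangle = \langle Q(f,G_\Lambda f),G_\Lambda f\rangle + \langle G_\Lambda Q(f,f)-Q(f,G_\Lambda f),G_\Lambda f\rangle .
\end{align*}
One first checks that all pairings here are meaningful: as recalled before the statement of Theorem \ref{thm:gevrey-main1}, $G_\Lambda f, G_\Lambda^2 f\in L^\infty([0,T_0];H^\infty(\R^d))\subset L^\infty([0,T_0];W^{2,\infty}(\R^d))$, so $\langle Q(f,f),G_\Lambda^2 f\rangle$ is defined through the symmetrised weak formulation of Definition \ref{def:weaksolution}, while $\langle Q(f,G_\Lambda f),G_\Lambda f\rangle$ is the bilinear form entering the sub-elliptic estimate, with $g=f(\tau,\cdot)\in L^1_2(\R^d)\cap L\log L(\R^d)$ and $G_\Lambda f(\tau,\cdot)\in H^1(\R^d)$.

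To the first term I apply Lemma \ref{lem:subelliptic} together with the Remark following it: since $f$ is a weak solution with initial datum satisfying \eqref{eq:initialdata} (in particular $f_0\ge0$, $f_0\not\equiv0$), one has $\|f(\tau,\cdot)\|_{L^1_2}\le\|f_0\|_{L^1_2}$ and a coercivity constant bounded below uniformly in $\tau$, so there are constants $\widetilde C_{f_0},C_{f_0}>0$, depending only on $d$, $b$, $\|f_0\|_{L^1_2}$ and $\|f_0\|_{L\log L}$, with
\begin{align*}
2\langle Q(f,G_\Lambda f),G_\Lambda f\rangle \le 2\bigl(-\widetilde C_{f_0}\|G_\Lambda f\|_{H^\nu}^2 + C_{f_0}\|G_\Lambda f\|_{L^2}^2\bigr), \qquad \tau\in(0,T_0].
\end{align*}
The remaining term is estimated trivially by $2\left|\langle Q(f,G_\Lambda f)-G_\Lambda Q(f,f),G_\Lambda f\rangle\right|$, the sign inside the modulus being irrelevant. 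Substituting these three contributions into the doubled identity \eqref{eq:reformulation}, together with $\|\1_\Lambda(D_v)f_0\|_{L^2}^2$ from its right-hand side, yields exactly the asserted bound.

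The statement is essentially a bookkeeping consequence of Proposition \ref{prop:L2reform} and Lemma \ref{lem:subelliptic}; the only delicate points are the well-definedness of the dual pairings with the stated regularity of $G_\Lambda f$ and $G_\Lambda^2 f$, and the uniformity in $\tau$ of the coercivity constants along the solution. Both are supplied by the Remark after Lemma \ref{lem:subelliptic} (which gives $C_g\ge C_{f_0}$ and $\|f(\tau,\cdot)\|_{L^1_2}\le\|f_0\|_{L^1_2}$); everything else is Plancherel's theorem and elementary algebra.
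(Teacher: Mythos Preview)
Your proof is correct and follows essentially the same approach as the paper: you decompose $\langle Q(f,f),G_\Lambda^2 f\rangle$ into the coercive part $\langle Q(f,G_\Lambda f),G_\Lambda f\rangle$ plus the commutation error, apply Lemma~\ref{lem:subelliptic} (with the uniformity from the Remark) to the first, compute $\partial_\tau G_\Lambda^2 = 2\beta\langle\eta\rangle^{2\alpha}G_\Lambda^2$ via Plancherel, and substitute into \eqref{eq:reformulation}. Your additional remarks on the well-definedness of the pairings and the sign in the commutator are accurate and slightly more explicit than the paper's presentation.
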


\begin{proof}
We want to apply the coercivity result from Lemma \ref{lem:subelliptic} to the second integral on the right hand side of Proposition \ref{prop:L2reform}. Therefore, we write
\begin{align*}
	\langle Q(f,f), G_{\Lambda}^2f \rangle &= \langle G_{\Lambda} Q(f,f), G_{\Lambda}f\rangle = \langle Q(f, G_{\Lambda}f), G_{\Lambda}f\rangle + \langle G_{\Lambda}Q(f,f) - Q(f,G_{\Lambda}f), G_{\Lambda}f\rangle \\
		&\leq - \widetilde{C}_{f_0} \|G_{\Lambda}f\|_{H^{\nu}}^2 + \underbrace{C \|f_0\|_{L^1_2}}_{=: C_{f_0}} \|G_{\Lambda}f\|_{L^2}^2 + \langle G_{\Lambda}Q(f,f) - Q(f,G_{\Lambda}f), G_{\Lambda}f\rangle .
\end{align*}
Moreover,
\begin{align*}
	\partial_\tau G_{\Lambda}^2(\tau, \eta)= 2\beta \langle \eta\rangle^{2\alpha} G_\Lambda(t,\eta) .
\end{align*}
Inserting those two results into \eqref{eq:reformulation}, we obtain
\begin{align*}
	\|G_{\Lambda} f\|_{L^2}^2 \leq \|\1_{\Lambda}(D_v)f_0\|_{L^2}^2 &+ 2\beta \int_0^t \|G_{\Lambda}f(\tau, \cdot)\|_{H^{\alpha}}^2 \, \mathrm{d}\tau + 2 \int_0^t \left( -\widetilde{C}_{f_0} \|G_{\Lambda}f\|_{H^{\nu}}^2 + C_{f_0} \|G_{\Lambda}f\|_{L^2}^2 \right) \, \mathrm{d}\tau \\ &+ 2\int_0^t \langle G_{\Lambda}Q(f,f) - Q(f,G_{\Lambda}f), G_{\Lambda}f\rangle \,\mathrm{d}\tau. \qedhere
\end{align*}
\end{proof}

\begin{remark}
It is natural to call the term 	$\langle G_{\Lambda}Q(f,f) - Q(f,G_{\Lambda}f), G_{\Lambda}f\rangle$ the \emph{commutation error}.
\end{remark}

\subsection{Bound on the commutation error}\label{ssec:ce}
Next, we prove a new bound on the commutation error. An important ingredient is the following elementary observation:
\begin{lemma}[Strict concavity bound]\label{lem:abalpha}
	Let $\alpha \in(0,1]$ be fixed. The map $0\leq u\mapsto\epsilon(\alpha, u):=(1+u)^{\alpha} - u^{\alpha}$ has the following properties:
	\begin{enumerate}[label=(\roman*)]
		\item If $\alpha \in (0,1)$, then $\epsilon(\alpha,\cdot)$ is \emph{strictly} decreasing on $[0, \infty)$ with $\lim_{u\to\infty} \epsilon(\alpha, u) = 0$. \\
		In particular, for any $\gamma\geq 1$ and $0\leq \gamma s^- \leq s^+$ one has
		\begin{align}\label{eq:epsilon-inequ}
			 \epsilon\left(\alpha,\tfrac{s^+}{s^-}\right)\le \epsilon\left(\alpha,\gamma \right)\le \epsilon(\alpha,1)= 2^\alpha-1<1.
		\end{align}
		Moreover, for all $\alpha\in (0,1)$ and all $u>0$
			\begin{align*}
				\epsilon\left(\alpha, u \right) \le u^{\alpha-1} .
			\end{align*}
		\item If  $u>0$, then $\epsilon(\cdot,u)$ is \emph{strictly} increasing on $[0, 1]$.
		\item For all  $s^-, s^+ \ge 0$
			\begin{align*}
			(1+s^-+s^+)^\alpha \le  \epsilon\left(\alpha,\tfrac{s^+}{s^-}\right)(1+s^-)^\alpha + (1+s^+)^\alpha.
			\end{align*}
	\end{enumerate}
\end{lemma}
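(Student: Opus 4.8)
The plan is to treat the three parts essentially by elementary calculus, exploiting strict concavity of $t\mapsto t^\alpha$ on $[0,\infty)$ for $\alpha\in(0,1)$. For part (i), I would fix $\alpha\in(0,1)$ and differentiate: $\partial_u\epsilon(\alpha,u)=\alpha\bigl((1+u)^{\alpha-1}-u^{\alpha-1}\bigr)$, which is strictly negative for $u>0$ because $t\mapsto t^{\alpha-1}$ is strictly decreasing when $\alpha-1<0$; this gives strict monotonicity on $[0,\infty)$. The limit $\lim_{u\to\infty}\epsilon(\alpha,u)=0$ follows by writing $\epsilon(\alpha,u)=u^\alpha\bigl((1+u^{-1})^\alpha-1\bigr)$ and using $(1+u^{-1})^\alpha-1\sim\alpha u^{-1}$ as $u\to\infty$, so that $\epsilon(\alpha,u)\sim\alpha u^{\alpha-1}\to 0$. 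The chain of inequalities \eqref{eq:epsilon-inequ} is then immediate from monotonicity: if $0\le\gamma s^-\le s^+$ then $s^+/s^-\ge\gamma\ge1$, so $\epsilon(\alpha,s^+/s^-)\le\epsilon(\alpha,\gamma)\le\epsilon(\alpha,1)=2^\alpha-1$, and $2^\alpha-1<1$ since $\alpha<1$. For the bound $\epsilon(\alpha,u)\le u^{\alpha-1}$, I would use concavity of $t\mapsto t^\alpha$ in the form of the tangent-line (subgradient) inequality at the point $u$: $(1+u)^\alpha\le u^\alpha+\alpha u^{\alpha-1}$, hence $\epsilon(\alpha,u)\le\alpha u^{\alpha-1}\le u^{\alpha-1}$ since $\alpha<1$.

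For part (ii), fix $u>0$ and view $g(\alpha):=\epsilon(\alpha,u)=(1+u)^\alpha-u^\alpha$ as a function of $\alpha\in[0,1]$. Differentiating in $\alpha$, $g'(\alpha)=(1+u)^\alpha\log(1+u)-u^\alpha\log u$. When $u\ge 1$ both terms are non-negative and $(1+u)^\alpha\log(1+u)>u^\alpha\log u$ because each factor is strictly larger, so $g'(\alpha)>0$. When $0<u<1$, $\log u<0$ while $\log(1+u)>0$, so $g'(\alpha)>0$ trivially. In all cases $g'(\alpha)>0$ on $[0,1]$, giving strict monotonicity; alternatively one can argue directly that $a^\alpha-b^\alpha$ is increasing in $\alpha$ whenever $a>b\ge0$ by the same computation with $a=1+u$, $b=u$.

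For part (iii), I would reduce to part (i). If $s^-=0$ the claimed inequality reads $(1+s^+)^\alpha\le\epsilon(\alpha,\infty)+( 1+s^+)^\alpha$, and since $\epsilon(\alpha,\infty)=0$ (or, for $\alpha=1$, interpreting $\epsilon(1,\cdot)\equiv1$ so the inequality is even more slack), this holds; similarly the symmetric degenerate cases are trivial, so assume $s^-,s^+>0$. Applying the subadditivity/concavity estimate $(a+b)^\alpha\le a^\alpha+b^\alpha$ is too lossy, so instead I would write $1+s^-+s^+=(1+s^-)\bigl(1+\tfrac{s^+}{1+s^-}\bigr)$ and use $(1+x)^\alpha-x^\alpha=\epsilon(\alpha,x)$ with $x=\tfrac{s^+}{1+s^-}$, together with the monotonicity from part (i) to replace $\epsilon\bigl(\alpha,\tfrac{s^+}{1+s^-}\bigr)$ by $\epsilon\bigl(\alpha,\tfrac{s^+}{s^-}\bigr)$ (legitimate since $\tfrac{s^+}{1+s^-}\le\tfrac{s^+}{s^-}$ and $\epsilon(\alpha,\cdot)$ is decreasing). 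Concretely, from $(1+s^-+s^+)^\alpha\le(1+s^-)^\alpha\epsilon\bigl(\alpha,\tfrac{s^+}{1+s^-}\bigr)\cdot(\text{scaling})$ one has to be a little careful with the homogeneity; the clean route is: set $a=1+s^-+s^+$, $b=1+s^+$, so $a-b=s^-$ and $a/b\ge 1$, and estimate $a^\alpha-b^\alpha=b^\alpha\bigl((a/b)^\alpha-1\bigr)$. Writing $a/b=1+\tfrac{s^-}{1+s^+}$ gives $a^\alpha-b^\alpha=b^\alpha\bigl(\epsilon(\alpha,\tfrac{s^-}{1+s^+})+(\tfrac{s^-}{1+s^+})^\alpha-1+1\bigr)$... which is getting circular. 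The main obstacle is exactly this bookkeeping in part (iii): the cleanest argument is to prove $(1+s^-+s^+)^\alpha-(1+s^+)^\alpha\le\epsilon(\alpha,\tfrac{s^+}{s^-})(1+s^-)^\alpha$ by noting both sides vanish at $s^+=0$ and comparing derivatives in $s^+$, or equivalently by applying strict concavity of $t\mapsto t^\alpha$ to the two chords over $[s^+,1+s^-+s^+]$ and $[0,1+s^-]$ (which have equal width $1+s^-$ but the first lies further right, hence has smaller slope-difference), and I would present whichever of these two equivalent calculus arguments is shortest. None of the three parts presents a genuine difficulty beyond careful elementary estimation.
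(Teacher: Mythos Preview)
Your arguments for parts (i) and (ii) are correct and essentially identical to the paper's: differentiate in $u$, respectively in $\alpha$, and read off the sign. The paper obtains the bound $\epsilon(\alpha,u)\le u^{\alpha-1}$ via the integral representation $\epsilon(\alpha,u)=\alpha\int_u^{1+u}r^{\alpha-1}\,\mathrm{d}r\le\alpha u^{\alpha-1}$, which is the same tangent-line inequality you invoke.

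Part (iii) has a real gap. Your first attempt factors $1+s^-+s^+=(1+s^-)\bigl(1+\tfrac{s^+}{1+s^-}\bigr)$ and then claims that $\epsilon\bigl(\alpha,\tfrac{s^+}{1+s^-}\bigr)$ may be replaced by $\epsilon\bigl(\alpha,\tfrac{s^+}{s^-}\bigr)$ using monotonicity. But $\tfrac{s^+}{1+s^-}\le\tfrac{s^+}{s^-}$ and $\epsilon(\alpha,\cdot)$ is \emph{decreasing}, so $\epsilon\bigl(\alpha,\tfrac{s^+}{1+s^-}\bigr)\ge\epsilon\bigl(\alpha,\tfrac{s^+}{s^-}\bigr)$: the inequality points the wrong way for an upper bound. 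You noticed something was off (``one has to be a little careful with the homogeneity''), but the subsequent attempts do not close the gap either; the chord intervals you describe at the end have the wrong width (the relevant interval for the left side is $[1+s^+,1+s^-+s^+]$, of width $s^-$, not $1+s^-$).

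The paper's one-line fix is to factor out $(s^-)^\alpha$ rather than $(1+s^-)^\alpha$:
\[
(1+s^-+s^+)^\alpha
=(s^-)^\alpha\Bigl[\Bigl(1+\tfrac{1+s^+}{s^-}\Bigr)^{\!\alpha}-\Bigl(\tfrac{1+s^+}{s^-}\Bigr)^{\!\alpha}\Bigr]+(1+s^+)^\alpha
=(s^-)^\alpha\,\epsilon\Bigl(\alpha,\tfrac{1+s^+}{s^-}\Bigr)+(1+s^+)^\alpha.
\]
Now the argument of $\epsilon$ is $\tfrac{1+s^+}{s^-}\ge\tfrac{s^+}{s^-}$, so monotonicity gives $\epsilon\bigl(\alpha,\tfrac{1+s^+}{s^-}\bigr)\le\epsilon\bigl(\alpha,\tfrac{s^+}{s^-}\bigr)$, and together with $(s^-)^\alpha\le(1+s^-)^\alpha$ this yields the claim. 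The point is that pulling out $(s^-)^\alpha$ pushes the \emph{extra} $1$ into the numerator of the $\epsilon$-argument, where the decreasing monotonicity works in your favour, whereas pulling out $(1+s^-)^\alpha$ pushes it into the denominator, where it works against you.
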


\begin{proof}
Since
\begin{align*}
	\frac{\partial}{\partial u} \epsilon(\alpha, u) = \alpha \left( (1+u)^{\alpha-1} - u^{\alpha-1}\right) <0 \quad \text{for } \alpha\in(0,1)
\end{align*}
$\epsilon(\alpha, \cdot)$ is strictly decreasing. Furthermore, for fixed $u>0$ we have
\begin{align*}
	\frac{\partial}{\partial\alpha} \epsilon(\alpha,u) = \log(1+u)\, (1+u)^{\alpha} - \log u \, u^{\alpha} >0,
\end{align*}
which shows that $\epsilon(\cdot, u)$ is strictly increasing.

For $\alpha\in(0,1)$ and $u\geq 0$ we estimate
\begin{align*}
	\epsilon(u,\alpha) = \alpha \int_u^{1+u} r^{\alpha-1}\,\mathrm{d}r \leq \alpha u^{\alpha-1} \leq u^{\alpha-1}.
\end{align*}
In particular, $\lim_{u\to\infty} \epsilon(\alpha,u) = 0$. By monotonicity, the chain of inequalities \eqref{eq:epsilon-inequ} follows.

Let $s^-,s^+\ge 0$. Then
\begin{align*}
	(1+s^-+s^+)^\alpha &= (s^-)^{\alpha} \left[\left(1+\tfrac{1+s^+}{s^-}\right)^{\alpha} - \left(\tfrac{1+s^+}{s^-}\right)^{\alpha}\right] + (1+s^+)^{\alpha} \\
	&\leq \epsilon\left(\alpha,\tfrac{1+s^+}{s^-}\right)(1+s^-)^\alpha + (1+s^+)^\alpha \leq \epsilon\left(\alpha,\tfrac{s^+}{s^-}\right)(1+s^-)^\alpha + (1+s^+)^\alpha
\end{align*}
where we made use of the monotonicity of $\epsilon(\alpha, \cdot)$ in the last inequality.
\end{proof}

\begin{remark}
 The proof of Lemma \ref{lem:abalpha} is so simple that one might wonder whether it could be of any use. In fact, it is crucial. It's usefulness is hidden in the fact that it enables us to gain a small exponent in the commutator estimates, see Proposition \ref{prop:ce} and Lemma \ref{lem:ce} below. Furthermore, $\epsilon(\alpha, \gamma)$ can be made as small as we like if $\gamma$ can be chosen large enough, which will be important in the proof of Theorem \ref{thm:gevrey-main3}.
\end{remark}

\begin{corollary}\label{cor:expdiff}
Let $\widetilde{G}(s):= e^{\beta t (1+s)^{\alpha}}$ for $s\geq 0$, $\alpha\in(0,1]$. Then, for all $s^- + s^+ = s$ with $ 0\leq s^- \leq s^+$,
\begin{align*}
	|\widetilde{G}(s) - \widetilde{G}(s^+)| \leq 2 \alpha\beta t (1+s^+)^{\alpha} \big(1-\tfrac{s^+}{s}\big) \,\widetilde{G}(s^-)^{\epsilon\big(\alpha,\tfrac{s^+}{s^-}\big)} \widetilde{G}(s^+)
\end{align*}
with $\epsilon(\alpha,u)$ from Lemma \ref{lem:abalpha}.
\end{corollary}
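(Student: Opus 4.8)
The plan is to turn the difference into a product and then control the resulting exponential by two complementary consequences of Lemma~\ref{lem:abalpha}. Since $s\ge s^+$ and $\widetilde{G}$ is increasing, the modulus is superfluous, and setting $A:=(1+s)^\alpha-(1+s^+)^\alpha\ge 0$ one has $\widetilde{G}(s)-\widetilde{G}(s^+)=\widetilde{G}(s^+)\big(e^{\beta t A}-1\big)$. Using the elementary bound $e^x-1\le x\,e^x$ for $x\ge0$, this becomes $\widetilde{G}(s)-\widetilde{G}(s^+)\le \beta t\,A\,e^{\beta t A}\,\widetilde{G}(s^+)$, so the whole statement reduces to (a) producing the polynomial prefactor $2\alpha(1+s^+)^\alpha\big(1-\tfrac{s^+}{s}\big)$ as an upper bound for $A$, and (b) producing the factor $\widetilde{G}(s^-)^{\epsilon(\alpha,s^+/s^-)}$ as an upper bound for $e^{\beta t A}$; both are exactly what Lemma~\ref{lem:abalpha} delivers.

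For (a) I would keep $A$ in integral form, $A=\alpha\int_{s^+}^{s}(1+r)^{\alpha-1}\,\mathrm{d}r\le \alpha(1+s^+)^{\alpha-1}(s-s^+)=\alpha(1+s^+)^{\alpha-1}s^-$, using $\alpha-1\le0$. Since $s^-\le s^+$ forces $s\le 2s^+\le 2(1+s^+)$, we get $s^-=\tfrac{s^-}{s}\,s\le 2(1+s^+)\big(1-\tfrac{s^+}{s}\big)$ and hence $A\le 2\alpha(1+s^+)^\alpha\big(1-\tfrac{s^+}{s}\big)$. For (b) I would apply part~(iii) of Lemma~\ref{lem:abalpha} to the present $s^-,s^+$: it gives $(1+s)^\alpha\le\epsilon(\alpha,\tfrac{s^+}{s^-})(1+s^-)^\alpha+(1+s^+)^\alpha$, i.e.\ $A\le \epsilon(\alpha,\tfrac{s^+}{s^-})(1+s^-)^\alpha$, so $e^{\beta t A}\le e^{\beta t\,\epsilon(\alpha,s^+/s^-)(1+s^-)^\alpha}=\widetilde{G}(s^-)^{\epsilon(\alpha,s^+/s^-)}$.

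Putting these together yields $\widetilde{G}(s)-\widetilde{G}(s^+)\le \beta t\,A\,e^{\beta t A}\,\widetilde{G}(s^+)\le 2\alpha\beta t(1+s^+)^\alpha\big(1-\tfrac{s^+}{s}\big)\,\widetilde{G}(s^-)^{\epsilon(\alpha,s^+/s^-)}\,\widetilde{G}(s^+)$, which is the asserted inequality. The degenerate cases are immediate: if $s=0$ (hence $s^-=s^+=0$), or if $s^-=0$, both sides vanish, using the convention $\epsilon(\alpha,\infty)=0$ coming from Lemma~\ref{lem:abalpha}(i) in the latter case. There is no genuine obstacle here; the only point one has to respect is to resist bounding $A$ by the crude $\alpha(1+s^+)^\alpha$, which would destroy the essential smallness factor $1-\tfrac{s^+}{s}$ — one must extract that factor from the integral representation of $A$, and feed the strict-concavity bound (iii) separately into the exponent.
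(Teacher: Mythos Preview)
Your proof is correct and follows essentially the same route as the paper. The only cosmetic difference is that you factor $\widetilde{G}(s)-\widetilde{G}(s^+)=\widetilde{G}(s^+)(e^{\beta t A}-1)$ and invoke $e^x-1\le xe^x$, whereas the paper integrates the derivative of $\widetilde{G}$ directly; since $e^{\beta t A}\widetilde{G}(s^+)=\widetilde{G}(s)$, both reductions land on the same expression $\alpha\beta t(1+s^+)^{\alpha-1}(s-s^+)\,\widetilde{G}(s)$, after which the use of $s\le 2(1+s^+)$ and Lemma~\ref{lem:abalpha}(iii) is identical.
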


\begin{proof}
Since $s^+ \leq s$ and $\alpha \in (0,1]$,
\begin{align*}
	|\widetilde{G}(s) - \widetilde{G}(s^+)| \leq \int_{s^+}^s \left| \frac{\mathrm{d}}{\mathrm{d}r}\widetilde{G}(r) \right| \mathrm{d}r =\alpha \beta t \int_{s^+}^s  (1+r)^{\alpha-1} \widetilde{G}(r) \, \mathrm{d}r \leq \alpha \beta t  (1+s^+)^{\alpha-1} (s-s^+) \widetilde{G}(s).
\end{align*}	
In addition, since $s\leq 2s^+$,
\begin{align*}
\frac{s-s^+}{1+s^+} = \left(1-\frac{s^+}{s}\right) \frac{s}{1+s^+} \leq 2 \left(1-\frac{s^+}{s}\right).
\end{align*}
Moreover, since $s=s^+ + s^-$, the strict concavity Lemma \ref{lem:abalpha} gives
\begin{align*}
	\widetilde{G}(s) \leq \widetilde{G}(s^-)^{\epsilon\big(\alpha,\tfrac{s^+}{s^-}\big)} \widetilde{G}(s^+),
\end{align*}
which completes the proof.
\end{proof}

\begin{proposition}[Bound on Commutation Error]\label{prop:ce}
	Let $f$ be a weak solution of the Cauchy problem \eqref{eq:cauchyproblem} with initial datum $f_0$ satisfying \eqref{eq:initialdata}. Recall $\epsilon(\alpha,u) = (1+u)^{\alpha}-u^\alpha$. Then for all $t\in(0, T_0]$, $\beta>0$, $\alpha\in(0,1)$, and $\Lambda>0$ we have
	\begin{align}\label{eq:ce}
	\begin{split}
		&\left| \left\langle Q(f, G_{\Lambda}f) - G_{\Lambda} Q(f,f), G_{\Lambda}f\right\rangle\right| \\
		&\qquad \leq 2 \alpha \beta t \int_{\R^d} \int_{\S^{d-1}} b\left(\frac{\eta}{|\eta|}\cdot \sigma \right) \left(1- \frac{|\eta^+|^2}{|\eta|^2}\right) G(\eta^-)^{\epsilon(\alpha, |\eta^+|^2/ |\eta^-|^2)} |\hat{f}(\eta^-)| \\
		&\hskip5cm \times G_{\Lambda}(\eta^+) |\hat{f}(\eta^+)|\, G_{\Lambda}(\eta)|\hat{f}(\eta)| \, \langle \eta^+ \rangle^{2\alpha} \,  \mathrm{d}\sigma \mathrm{d}\eta,
		\end{split}
	\end{align}
for $d\geq 2$, and
	\begin{align}\label{eq:ce-kac}
	\begin{split}
		&\left| \left\langle Q(f, G_{\Lambda}f) - G_{\Lambda} Q(f,f), G_{\Lambda}f\right\rangle\right| \\
		&\qquad \leq 2 \alpha \beta t \int_{\R} \int_{-\frac{\pi}{4}}^{\frac{\pi}{4}} b_1\left(\theta\right) \sin^2\theta \, G(\eta^-)^{\epsilon(\alpha,|\eta^+|^2/ | \eta^- |^2)} |\hat{f}(\eta^-)| \\
		&\hskip5cm \times G_{\Lambda}(\eta^+) |\hat{f}(\eta^+)|\, G_{\Lambda}(\eta)|\hat{f}(\eta)| \, \langle \eta^+ \rangle^{2\alpha} \,  \mathrm{d}\theta \mathrm{d}\eta,
		\end{split}
	\end{align}
in the one-dimensional case.
\end{proposition}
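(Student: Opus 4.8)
The plan is to transfer the commutator onto the Fourier side via Bobylev's identity \eqref{eq:bobylev} (and its Kac analogue), where it collapses to a harmless difference of Gevrey multipliers, and then to insert the pointwise estimate of Corollary~\ref{cor:expdiff}. Throughout I work at a fixed time $t>0$, where $G_\Lambda f\in H^\infty(\R^d)$, so that all the Bobylev-side expressions converge and the Plancherel manipulations below are legitimate exactly as in the setup of Proposition~\ref{prop:L2reform}. Since $f$ is real and $G_\Lambda(\eta)$ depends only on $|\eta|$ (hence $G_\Lambda f$ is real), Plancherel gives
\[
\left\langle Q(f, G_{\Lambda}f) - G_{\Lambda} Q(f,f), G_{\Lambda}f\right\rangle = \int_{\R^d}\Bigl(\widehat{Q(f,G_\Lambda f)}(\eta) - G_\Lambda(\eta)\widehat{Q(f,f)}(\eta)\Bigr)\,G_\Lambda(\eta)\,\overline{\hat f(\eta)}\,\mathrm{d}\eta .
\]
Applying \eqref{eq:bobylev} to both $\widehat{Q(f,G_\Lambda f)}$ and $\widehat{Q(f,f)}$ and using $\widehat{G_\Lambda f}(\eta)=G_\Lambda(\eta)\hat f(\eta)$, the crucial observation is that the two ``loss'' terms cancel: the contribution $\hat f(0)\,G_\Lambda(\eta)\hat f(\eta)$ coming from $\widehat{Q(f,G_\Lambda f)}$ is identical to $G_\Lambda(\eta)\,\hat f(0)\hat f(\eta)$ coming from $G_\Lambda(\eta)\widehat{Q(f,f)}$, so only the benign ``gain'' commutator survives:
\[
\widehat{Q(f,G_\Lambda f)}(\eta) - G_\Lambda(\eta)\widehat{Q(f,f)}(\eta) = \int_{\S^{d-1}} b\Bigl(\tfrac{\eta}{|\eta|}\cdot\sigma\Bigr)\,\hat f(\eta^-)\bigl(G_\Lambda(\eta^+) - G_\Lambda(\eta)\bigr)\hat f(\eta^+)\,\mathrm{d}\sigma .
\]
Taking absolute values inside both integrals yields the upper bound
\[
\bigl|\langle\cdots\rangle\bigr| \le \int_{\R^d}\int_{\S^{d-1}} b\Bigl(\tfrac{\eta}{|\eta|}\cdot\sigma\Bigr)\,|\hat f(\eta^-)|\,\bigl|G_\Lambda(\eta^+) - G_\Lambda(\eta)\bigr|\,|\hat f(\eta^+)|\,G_\Lambda(\eta)|\hat f(\eta)|\,\mathrm{d}\sigma\,\mathrm{d}\eta .
\]

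The next step is to estimate $\bigl|G_\Lambda(\eta^+) - G_\Lambda(\eta)\bigr|$ on the support of the integrand. Because $\eta^+$ and $\eta^-$ are orthogonal with $|\eta^+|^2 + |\eta^-|^2 = |\eta|^2$, one has $|\eta^+|\le|\eta|$; together with the factor $G_\Lambda(\eta)$, which forces $|\eta|\le\Lambda$, this gives $|\eta^+|\le\Lambda$ as well, so both characteristic functions equal $1$ and $G_\Lambda(\eta^+)-G_\Lambda(\eta)=G(\eta^+)-G(\eta)$. Writing $s=|\eta|^2$, $s^\pm=|\eta^\pm|^2$, so that $G(\eta)=\widetilde G(s)$, $G(\eta^\pm)=\widetilde G(s^\pm)$ with $\widetilde G(s)=e^{\beta t(1+s)^\alpha}$ and $s=s^-+s^+$, I verify the hypothesis $0\le s^-\le s^+$ of Corollary~\ref{cor:expdiff}: this is exactly where the standing assumption that $b$ is supported on $\theta\in[0,\tfrac\pi2]$ (which, as recalled in the introduction, is no loss of generality) enters, since then $\cos\theta=\tfrac{\eta}{|\eta|}\cdot\sigma\ge0$ and hence $s^+=\tfrac{1+\cos\theta}{2}|\eta|^2\ge\tfrac12|\eta|^2\ge s^-$. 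Corollary~\ref{cor:expdiff} then gives
\[
\bigl|G(\eta^+)-G(\eta)\bigr| \le 2\alpha\beta t\,(1+s^+)^{\alpha}\Bigl(1-\tfrac{s^+}{s}\Bigr)\,\widetilde G(s^-)^{\epsilon(\alpha,\,s^+/s^-)}\,\widetilde G(s^+),
\]
and translating back, $(1+s^+)^\alpha=\langle\eta^+\rangle^{2\alpha}$, $1-s^+/s=1-|\eta^+|^2/|\eta|^2$, $\widetilde G(s^-)=G(\eta^-)$, and $\widetilde G(s^+)=G(\eta^+)=G_\Lambda(\eta^+)$ (again using $|\eta^+|\le\Lambda$). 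Substituting this into the triple integral above produces precisely \eqref{eq:ce}.

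For the one-dimensional Kac case the argument is identical, starting from the Kac Bobylev identity with $\eta^+=\eta\cos\theta$, $\eta^-=\eta\sin\theta$: again $|\eta^+|^2+|\eta^-|^2=|\eta|^2$, on the support $\theta\in[-\tfrac\pi4,\tfrac\pi4]$ one has $\cos^2\theta\ge\sin^2\theta$, hence $s^+\ge s^-$, and here $1-|\eta^+|^2/|\eta|^2=\sin^2\theta$, which is the origin of the weight $b_1(\theta)\sin^2\theta$ in \eqref{eq:ce-kac}. I do not expect a serious obstacle in this proposition: its content is the algebraic cancellation of the loss terms in Bobylev's identity together with the elementary inequality of Corollary~\ref{cor:expdiff}; the only points demanding care are the bookkeeping with the Fourier cut-off $\1_\Lambda$ (making sure one never differentiates across $|\eta|=\Lambda$, which is handled by the remark that $G_\Lambda(\eta)\neq0$ already forces $|\eta^+|\le\Lambda$) and the verification of $s^-\le s^+$, for which the support restriction on $b$ is essential. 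The genuine difficulty lies downstream, in extracting useful $L^2$ bounds from the trilinear Fourier integral on the right-hand side of \eqref{eq:ce}.
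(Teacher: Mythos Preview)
Your proof is correct and follows essentially the same route as the paper's own argument: Bobylev's identity to reduce the commutator to the difference $G_\Lambda(\eta^+)-G_\Lambda(\eta)$, the observation that $|\eta^+|\le|\eta|\le\Lambda$ removes the cut-off, and then Corollary~\ref{cor:expdiff} applied with $s^\pm=|\eta^\pm|^2$ after checking $s^-\le s^+$ from the support restriction on $b$. Your write-up is in fact slightly more explicit about the cancellation of the loss terms and the role of the Plancherel step, but the substance is identical.
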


\begin{remark} If the weight $G$ was growing \emph{polynomially}, the term $G(\eta^-)$ in the integral \eqref{eq:ce}, respectively \eqref{eq:ce-kac}, would be replaced by $1$. In this case, the ``bad terms" which contain $\eta^-$ can simply be bounded by
 $\|\hat{f}\|_{L^\infty}\le \|f\|_{L^1} = \|f_0\|_{L^1}$ and the rest can be bounded nicely in terms of $\|G_\Lambda\hat{f}\|_{L^2}$ and $\|G_\Lambda\hat{f}\|_{H^\alpha}$, see the discussion in appendix \ref{sec:appendix-hinfty}.

 If the weight  $G$ is exponential, the estimate of the terms containing $\eta^-$ in \eqref{eq:ce}, respectively \eqref{eq:ce-kac}, is an additional challenge and the methods we devised in order to control this term in the commutation error is probably the most important new contribution of this work.
 \end{remark}

\begin{proof}[Proof of Proposition \ref{prop:ce}]
	We start with $d\geq 2$. By Bobylev's identity, one has
	\begin{align*}
		&\left| \left\langle Q(f, G_{\Lambda}f) - G_{\Lambda} Q(f,f), G_{\Lambda}f\right\rangle\right| = \left| \left\langle \mathcal{F}\left[Q(f, G_{\Lambda}f) -G_{\Lambda} Q(f,f)\right], \mathcal{F}\left[G_{\Lambda}f\right]\right\rangle_{L^2}\right| \\
		&\quad \leq \int_{\R^d} \int_{\S^{d-1}} b\left(\frac{\eta}{|\eta|}\cdot\sigma\right) G_{\Lambda}(\eta) |\hat{f}(\eta)| \, |\hat{f}(\eta^-)| \, |\hat{f}(\eta^+)| |G_{\Lambda}(\eta^+) - G_{\Lambda}(\eta)| \,\mathrm{d}\sigma \,\mathrm{d}\eta\\
		&\quad = \int_{\R^d} \int_{\S^{d-1}} b\left(\frac{\eta}{|\eta|}\cdot\sigma\right) G_{\Lambda}(\eta) |\hat{f}(\eta)| \, |\hat{f}(\eta^-)| \, |\hat{f}(\eta^+)| |G(\eta^+) - G(\eta)| \,\mathrm{d}\sigma \,\mathrm{d}\eta,
	\end{align*}
where the latter equality follows from the fact that $G_{\Lambda}$ is supported on the ball $\{|\eta|\leq \Lambda\}$ and $|\eta^+| \leq |\eta|$.

To estimate $|G(\eta^+) - G(\eta)|$, we use Corollary \ref{cor:expdiff} with $s:=|\eta|^2$ and, accordingly, $s^{\pm} = |\eta^{\pm}|^2$. Notice that
\begin{align*}
	|\eta^{\pm}|^2 = \frac{|\eta|^2}{2} \left( 1 \pm \frac{\eta}{|\eta|}\cdot \sigma \right), \quad |\eta|^2 = |\eta^+|^2 + |\eta^-|^2,
\end{align*}
and, writing $\cos\theta = \frac{\eta\cdot \sigma}{|\eta|}$, we also have
\begin{align*}
	|\eta^+|^2 = |\eta|^2 \cos^2\tfrac{\theta}{2}, \quad 	|\eta^-|^2 = |\eta|^2 \sin^2\tfrac{\theta}{2}.
\end{align*}
Since $b$ is supported on angles in $[0, \pi/2]$, one sees $0\leq |\eta^-|^2 \leq \frac{1}{2} |\eta|^2$ and $\frac{1}{2}|\eta|^2 \leq |\eta^+|^2 \leq |\eta|^2$.
Therefore, $s^-\leq \frac{s}{2} \leq s^+ \leq s$ and $s = s^+ + s^-$.

It follows that for all $\eta\in\R^d$ with $|\eta|\leq\Lambda$, noting that $|\eta^+|\leq |\eta|\leq \Lambda$,
\begin{align}
		|G(\eta) - G(\eta^+)| \leq 2\alpha\beta t \langle \eta^+ \rangle^{2\alpha}\left(1-\tfrac{|\eta^+|^2}{|\eta|^2}\right) \,  G(\eta^-)^{\epsilon(\alpha,|\eta^+|^2/|\eta^-|^2 )} G_{\Lambda}(\eta^+),
\end{align}
which finishes the proof in dimension $d\ge 2$.

For the Kac model we remark that the above proof depends only on $|\eta^-|\le |\eta^+|\leq |\eta|$ and $|\eta^-|^2+ |\eta^+|^2= |\eta|^2$, hence $|\eta^-|^2\le |\eta|^2/2$, and the strict concavity Lemma \ref{lem:abalpha} and the Corollary \ref{cor:expdiff}.
Since, by symmetry, we assume that $b_1$ is supported in $[-\pi/4, \pi/4]$, the same bounds for $\eta^-$ and $\eta^+$ hold in dimension one and the above proof can be literally translated, with obvious changes in notation, to the Kac equation.
\end{proof}

The bound on the commutation error in Proposition \ref{prop:ce} is a trilinear expression in the weak solution $f$. In order to close the a priori bound from Corollary \ref{cor:gronwallbound} in $L^2$, one of the terms has to be controlled \emph{uniformly} in $\eta$. Seemingly impossible with the growing weights, it is exactly at this place where the gain of the small exponent $\epsilon(\alpha, |\eta^+|^2/ |\eta^-|^2)\le \epsilon(\alpha,1)<1$ in the $G(\eta^-)$ term in \eqref{eq:ce} and \eqref{eq:ce-kac} allows us to proceed with this strategy.
This gain of the small exponent is new and enabled by the strict concavity bound of Lemma \ref{lem:abalpha} and its Corollary \ref{cor:expdiff} and it is crucial for our inductive approach for controlling the commutation error.

\begin{lemma}\label{lem:ce} The inequality
	\begin{align*}
		\left| \left\langle Q(f, G_{\Lambda}f) - G_{\Lambda} Q(f,f), G_{\Lambda}f\right\rangle\right|
		\le I_{d,\Lambda} + I_{d,\Lambda}^+
	\end{align*}
	holds, where, for $d\ge 2$
	\begin{align}\label{eq:Id}
	\begin{split}
		I_{d,\Lambda} &= \alpha \beta t \int_{\R^d}\int_{0}^{\tfrac{\pi}{2}}  \int_{\S^{d-2}(\eta)} \sin^{d}\theta \, b(\cos\theta)\, G(\eta^-)^{\epsilon\left(\alpha, \cot^2\tfrac{\theta}{2}\right)}\,|\hat{f}(\eta^-)| \, \1_{\tfrac{\Lambda}{\sqrt{2}}}(|\eta^-|) \,\mathrm{d}\omega\,\mathrm{d}\theta\\
		&\qquad \qquad \qquad \qquad \qquad \qquad \times |G_{\Lambda}(\eta) \hat{f}(\eta)|^2 \,\langle \eta \rangle^{2\alpha}\,\mathrm{d}\eta.
	\end{split}
	\end{align}
	Here the vector $\eta^-$ is expressed as a function of $\eta$ and $\sigma$, that is,
	\begin{align}\label{eq:etapar}
		\eta^-= \eta^-(\eta,\sigma) = \frac{1}{2}(\eta - |\eta|\sigma)
			= |\eta|\sin^2(\tfrac{\theta}{2}) \frac{\eta}{|\eta|} - |\eta| \sin (\tfrac{\theta}{2}) \cos (\tfrac{\theta}{2}) \,\omega
	\end{align}
	and $\sigma$ is is a vector on the unit sphere given by
	\begin{align}\label{eq:sigmaeta}
		\sigma = \sigma(\theta,\omega)=\cos(\theta) \frac{\eta}{|\eta|} + \sin (\theta)\, \omega
	\end{align}
	with polar angle $\theta \in [0,\pi/2]$ with respect to the north pole in the $\eta$ direction, $\omega\in \S^{d-2}(\eta):= \{ \widetilde{\omega}\in \R^d: \widetilde{\omega} \perp \eta, |\widetilde{\omega}|=1  \}$, the $d-2$
	sphere in $\R^d$ orthogonal to the $\eta$ direction, and $\mathrm{d}\omega$ the canonical measure on $\mathbb{S}^{d-2}$. 
	\begin{align}\label{eq:Idplus}
	\begin{split}
			I_{d, \Lambda}^+ &= 2^{d} \alpha\beta t\int_{\R^d} \int_0^{\frac{\pi}{4}} \int_{\S^{d-2}(\eta^+)} \sin^d \vartheta \, b\left( \cos 2\vartheta\right)  \,  \, G(\eta^-)^{\epsilon\left(\alpha, \cot^2\vartheta\right)} |\hat{f}(\eta^-)|\, \1_{\tfrac{\Lambda}{\sqrt{2}}}(|\eta^-|)\\  & \qquad \qquad \qquad \qquad \qquad  \times |G_{\Lambda}(\eta^+) \hat{f}(\eta^+)|^2 \langle\eta^+\rangle^{2\alpha} \, \mathrm{d}\vartheta\,\mathrm{d}\omega \,\mathrm{d}\eta^+
	\end{split}
	\end{align}
	where now the vector $\eta^-$ is expressed as a function of $\eta^+$ and $\sigma$, that is,
	\begin{align}\label{eq:etapluspar}
		\eta^-= \eta^-(\eta^+,\sigma) = \eta^+ - |\eta^+|\left(\frac{\eta^+\cdot\sigma}{|\eta^+|}\right)^{-1}\sigma = -|\eta^+| \tan (\vartheta) \, \omega
	\end{align}
	where now $\sigma$ is is a vector on the unit sphere with north pole in the $\eta^+$ direction given by
	\begin{align}\label{eq:sigmaetaplus}
		\sigma =\sigma(\vartheta,\omega)= \cos(\vartheta) \frac{\eta^+}{|\eta^+|} + \sin (\vartheta)\, \omega
	\end{align}
	with polar angle $\vartheta \in [0,\pi/4]$  and $\omega\in \S^{d-2}(\eta^+)$, the $(d-2)$-sphere in $\R^d$ orthogonal to the $\eta^+$ direction.
	If $d=2$ we set $\S^0:=\emptyset$ in this context.
	
	For $d=1$ we have
	\begin{align*}
		I_{1, \Lambda} &=  \alpha \beta t \int_{\R}\int_{-\tfrac{\pi}{4}}^{\tfrac{\pi}{4}} \sin^{2}\theta b_1(\theta)\, G(\eta^-)^{\epsilon\left(\alpha, \cot^2\tfrac{\theta}{2}\right)}\,|\hat{f}(\eta^-)| \, \1_{\tfrac{\Lambda}{\sqrt{2}}}(|\eta^-|) \,\mathrm{d}\theta\\
		&\qquad \qquad \qquad \qquad \qquad \qquad \times |G_{\Lambda}(\eta) \hat{f}(\eta)|^2 \,\langle \eta \rangle^{2\alpha}\,\mathrm{d}\eta, \\
		I_{1, \Lambda}^+ &= \sqrt{2} \alpha \beta t \int_{\R}\int_{-\tfrac{\pi}{4}}^{\tfrac{\pi}{4}} \sin^{2}\theta b_1(\theta)\, G(\eta^-)^{\epsilon\left(\alpha, \cot^2\tfrac{\theta}{2}\right)} \,|\hat{f}(\eta^-)| \, \1_{\tfrac{\Lambda}{\sqrt{2}}}(|\eta^-|) \,\mathrm{d}\omega\,\mathrm{d}\theta\\
		&\qquad \qquad \qquad \qquad \qquad \qquad \times |G_{\Lambda}(\eta^+) \hat{f}(\eta^+)|^2 \,\langle \eta^+ \rangle^{2\alpha}\,\mathrm{d}\eta^+,
	\end{align*}
	where in the first case $\eta^-=\eta^-(\eta,\theta)= \eta \sin \theta$ and in the second case $\eta^-= \eta^-(\eta^+,\theta)= \eta^+\tan\theta$ and there is no need to distinguish between the $\theta$ and $\vartheta$ parametrization.
\end{lemma}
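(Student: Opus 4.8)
The plan is to start from the trilinear commutator bound of Proposition~\ref{prop:ce}, insert a harmless Fourier cut-off on $\eta^-$, symmetrise the two ``good'' factors by Young's inequality, and then evaluate the two resulting pieces by explicit changes of variables.

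First, note that the integrand on the right-hand side of \eqref{eq:ce} vanishes unless $G_{\Lambda}(\eta)\neq 0$, i.e. $|\eta|\le\Lambda$; since $b$ is supported on $\theta\in[0,\pi/2]$ one has $|\eta^-|^2 = |\eta|^2\sin^2(\theta/2)\le\tfrac12|\eta|^2\le\tfrac12\Lambda^2$, so multiplying the integrand by $\1_{\Lambda/\sqrt2}(|\eta^-|)$ changes nothing (in $d=1$, $|\eta^-|=|\eta||\sin\theta|\le|\eta|/\sqrt2$ because $b_1$ is supported on $[-\pi/4,\pi/4]$). Next I would apply $2ab\le a^2+b^2$ with $a = G_{\Lambda}(\eta^+)|\hat f(\eta^+)|$ and $b = G_{\Lambda}(\eta)|\hat f(\eta)|$; this splits the bound \eqref{eq:ce} into $\alpha\beta t$ times the sum of an integral containing $|G_{\Lambda}(\eta)\hat f(\eta)|^2$ and one containing $|G_{\Lambda}(\eta^+)\hat f(\eta^+)|^2$, which will become $I_{d,\Lambda}$ and $I_{d,\Lambda}^+$ respectively.

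For the first integral, parametrise $\sigma\in\S^{d-1}$ by the polar angle $\theta$ about the $\eta$-axis and the azimuthal variable $\omega\in\S^{d-2}(\eta)$, so $\mathrm{d}\sigma = \sin^{d-2}\theta\,\mathrm{d}\theta\,\mathrm{d}\omega$. Using $|\eta^+|\le|\eta|$ to bound $\langle\eta^+\rangle^{2\alpha}\le\langle\eta\rangle^{2\alpha}$, the identity $1-|\eta^+|^2/|\eta|^2 = \sin^2(\theta/2)$ together with $\sin^2(\theta/2)\le\sin^2\theta$ on $[0,\pi/2]$ (so that $\sin^{d-2}\theta\cdot\sin^2(\theta/2)\le\sin^d\theta$), and $|\eta^+|^2/|\eta^-|^2 = \cot^2(\theta/2)$ for the exponent of $G(\eta^-)$, the first integral is bounded by $I_{d,\Lambda}$; for $d=1$ there is no $\omega$-integration and one recovers $I_{1,\Lambda}$ in the same way.

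The substantive point is the second integral, where the concentrated Fourier variable is $\eta^+$ rather than $\eta$, so one must change variables $\eta\mapsto\eta^+$ at fixed $\sigma$. From $\eta^+ = \tfrac12(\eta+|\eta|\sigma)$ the Jacobian matrix is $\tfrac12\bigl(I+\sigma\otimes\tfrac{\eta}{|\eta|}\bigr)$, whose determinant is $2^{-d}(1+\cos\theta) = 2^{-(d-1)}\cos^2(\theta/2)$, i.e. $\mathrm{d}\eta = 2^{d-1}\cos^{-2}(\theta/2)\,\mathrm{d}\eta^+$; moreover $\eta^+\cdot\sigma = |\eta|\cos^2(\theta/2)$, so the angle $\vartheta$ between $\eta^+$ and $\sigma$ satisfies $\cos\vartheta = \cos(\theta/2)$, hence $\vartheta=\theta/2\in[0,\pi/4]$, and then $|\eta| = |\eta^+|/\cos\vartheta$, $\eta^- = -|\eta^+|\tan\vartheta\,\omega$ (so $|\eta^+|^2/|\eta^-|^2 = \cot^2\vartheta$ and $G(\eta^-)^{\epsilon(\alpha,|\eta^+|^2/|\eta^-|^2)}$ becomes $G(\eta^-)^{\epsilon(\alpha,\cot^2\vartheta)}$), $b(\cos\theta) = b(\cos2\vartheta)$. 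Re-expressing $\mathrm{d}\sigma$ in polar coordinates about the $\eta^+$-axis, $\mathrm{d}\sigma = \sin^{d-2}\vartheta\,\mathrm{d}\vartheta\,\mathrm{d}\omega$ with $\omega\in\S^{d-2}(\eta^+)$, the total angular weight is $\sin^{d-2}\vartheta\cdot\sin^2\vartheta\cdot 2^{d-1}\cos^{-2}\vartheta\le 2^d\sin^d\vartheta$ (using $\cos^2\vartheta\ge\tfrac12$ on $[0,\pi/4]$), the surviving cut-off is $\1_{\Lambda/\sqrt2}(|\eta^-|)$ with $|\eta^-|=|\eta^+|\tan\vartheta$, and one arrives at $I_{d,\Lambda}^+$. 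The one-dimensional case is identical with the simpler substitution $\eta^+=\eta\cos\theta$, $\mathrm{d}\eta = (\cos\theta)^{-1}\mathrm{d}\eta^+\le\sqrt2\,\mathrm{d}\eta^+$ on $[-\pi/4,\pi/4]$, which produces the constant $\sqrt2\,\alpha\beta t$. I expect the change of variables $\eta\mapsto\eta^+$ — in particular correctly tracking how the measure $\mathrm{d}\sigma$ on $\S^{d-1}$ and the collision angle re-parametrise around the new axis $\eta^+$, so that $\vartheta=\theta/2$ is consistent with the computed Jacobian — to be the only genuinely delicate step; everything else is elementary trigonometric bookkeeping.
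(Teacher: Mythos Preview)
Your proof is correct and follows essentially the same route as the paper: start from Proposition~\ref{prop:ce}, insert the harmless cut-off $\1_{\Lambda/\sqrt{2}}(|\eta^-|)$, split via $2ab\le a^2+b^2$, parametrise $\sigma$ about the $\eta$-axis for $I_{d,\Lambda}$, and for $I_{d,\Lambda}^+$ change variables $\eta\mapsto\eta^+$ at fixed $\sigma$ and then re-parametrise $\sigma$ about the $\eta^+$-axis with $\vartheta=\theta/2$. The only cosmetic difference is that the paper bounds the Jacobian crudely by $|\partial\eta/\partial\eta^+|\le 2^d$ at the outset, whereas you keep the exact factor $2^{d-1}\cos^{-2}\vartheta$ and bound $\cos^2\vartheta\ge\tfrac12$ at the end --- the net effect is identical.
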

\begin{remark}
 In the $\eta$, respectively $\eta^+$, integrals above $\eta^-$  and $\sigma$ are always the same vectors expressed in different parametrizations. We therefore have the relation
  $\vartheta = \theta/2$, see Figure \ref{fig:geometry} for the geometry of the collision process in Fourier space.
\end{remark}

\begin{figure}[ht]\centering
\definecolor{ccqqqq}{rgb}{0.8,0.,0.}
\definecolor{ubqqys}{rgb}{0.29411764705882354,0.,0.5098039215686274}
\definecolor{qqwuqq}{rgb}{0.,0.39215686274509803,0.}
\definecolor{uuuuuu}{rgb}{0.26666666666666666,0.26666666666666666,0.26666666666666666}
\begin{tikzpicture}[line cap=round,line join=round,>=triangle 45,x=1.0cm,y=1.0cm]
\clip(-5,-.5) rectangle (6,5.5);
\draw [shift={(0.,2.5)},color=qqwuqq,fill=qqwuqq,fill opacity=0.1] (0,0) -- (90.:0.6) arc (90.:147.39540683375847:0.6) -- cycle;
\draw[color=ubqqys,fill=ubqqys,fill opacity=0.1] (0.2480998443537024,0.13581777214955604) -- (0.11228207220414639,0.3839176165032584) -- (-0.135817772149556,0.2480998443537024) -- (0.,0.) -- cycle;
\draw [shift={(0.,0.)},color=qqwuqq,fill=qqwuqq,fill opacity=0.1] (0,0) -- (90.:1.) arc (90.:118.69770341687922:1.) -- cycle;
\draw [shift={(2.1060230081732154,1.1529042019792943)},color=qqwuqq,fill=qqwuqq,fill opacity=0.1] (0,0) -- (180.:0.9) arc (180.:208.69770341687922:0.9) -- cycle;
\draw [shift={(-2.106023008173215,3.8470957980207054)},color=qqwuqq,fill=qqwuqq,fill opacity=0.1] (0,0) -- (-61.30229658312078:0.9) arc (-61.30229658312078:-32.60459316624155:0.9) -- cycle;
\draw [->] (0.,0.) -- (0.,5.);
\draw(0.,2.5) circle (2.5cm);
\draw [->] (0.,2.5) -- (-2.106023008173215,3.8470957980207054);
\draw [->] (0.,2.5) -- (2.1060230081732154,1.1529042019792943);
\draw [->] (0.,0.) -- (2.1060230081732154,1.1529042019792943);
\draw [->] (0.,0.) -- (-2.106023008173215,3.8470957980207054);
\draw [dash pattern=on 5pt off 5pt] (0.,0.) circle (5.cm);
\draw [dash pattern=on 5pt off 5pt] (0.,0.) circle (2.400941692315012cm);
\draw (0,5.) node[anchor=north west] {$\eta$};
\draw (-2.3,3.38) node[anchor=north west] {$\eta^+$};
\draw (0.6,1.1) node[anchor=north west] {$\eta^-$};
\draw [dash pattern=on 5pt off 5pt] (0.,1.1529042019792943)-- (2.1060230081732154,1.1529042019792943);
\draw [->,color=ccqqqq] (0.,2.5) -- (-0.9394274853127519,3.1008950581730215);
\begin{scriptsize}
\draw [fill=uuuuuu] (0.,0.) circle (1.5pt);
\draw [fill=uuuuuu] (0.,2.5) circle (1.5pt);
\draw [fill=uuuuuu] (2.1060230081732154,1.1529042019792943) circle (1.5pt);
\draw[color=qqwuqq] (-0.13,2.88) node {$\theta$};
\draw[color=qqwuqq] (-0.2,0.8) node {$\theta$/2};
\draw[color=qqwuqq] (1.5,1.03) node {$\theta$/2};
\draw[color=ccqqqq] (-0.5,2.55) node {$\sigma$};
\draw[color=qqwuqq] (-1.65,3.4) node {$\vartheta$};
\end{scriptsize}
\end{tikzpicture}
\caption{\label{fig:geometry}Geometry of the collision process in Fourier space.}
\end{figure}
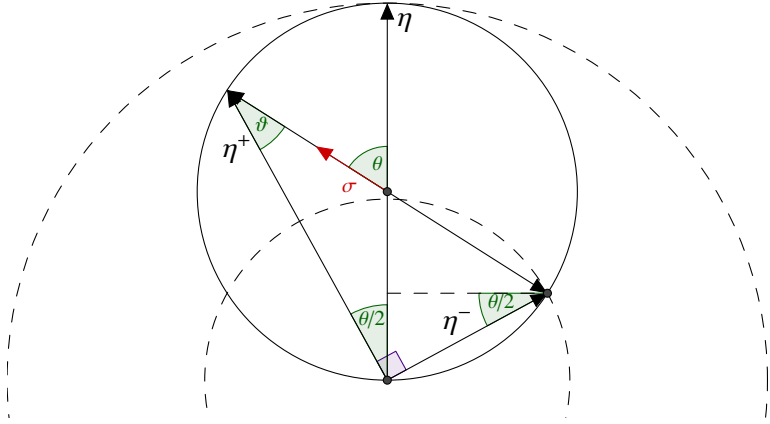

\begin{remark}
From the bounds given in Lemma \ref{lem:ce} one might already see that, in order to bound the commutation error by some multiple of $\|G_{\Lambda}f\|_{H^{\alpha}(\R^d)}^2$, one has to control integrals of the form
\begin{align*}
  \sup_{|\eta|\le \Lambda}\int_{0}^{\tfrac{\pi}{2}}  \int_{\S^{d-2}(\eta)} \sin^{d}\theta b(\cos\theta)\, G^{\epsilon\left(\alpha, \cot^2\tfrac{\theta}{2}\right)}(\eta^-)\,|\hat{f}(\eta^-)| \, \1_{\tfrac{\Lambda}{\sqrt{2}}}(|\eta^-|) \,\mathrm{d}\omega\,\mathrm{d}\theta,
\end{align*}
with the parametrisation \eqref{eq:etapar} for $\eta^-$, and similarly for \eqref{eq:Idplus} and the corresponding integrals in the one dimensional case. Due to characteristic function in $\eta^-$, this uniform control is not needed on the full ball of radius $\Lambda$, but only on a strictly smaller one, giving rise to an \emph{induction-over-length-scales} type of argument.
\end{remark}

\begin{proof}[Proof of Lemma \ref{lem:ce}] Let $d\ge 2$.
Using the elementary estimate
\begin{align*}
	|G_{\Lambda}(\eta) \hat{f}(\eta)|\, |G_{\Lambda}(\eta^+)\hat{f}(\eta^+)| \leq \frac{1}{2} \left(|G_{\Lambda}(\eta) \hat{f}(\eta)|^2 + |G_{\Lambda}(\eta^+) \hat{f}(\eta^+)|^2 \right)
\end{align*}
in the bound \eqref{eq:ce} gives
\begin{align*}
			\left| \left\langle Q(f, G_{\Lambda}f) - G_{\Lambda} Q(f,f), G_{\Lambda}f\right\rangle\right|
		\le \widetilde{I}_{d,\Lambda} + \widetilde{I}_{d,\Lambda}^+
\end{align*}
with
\begin{align*}
	\widetilde{I}_{d,\Lambda}
		&=  \alpha \beta t \int_{\R^d} \int_{\S^{d-1}} b\left(\frac{\eta}{|\eta|}\cdot \sigma \right) \left(1- \frac{|\eta^+|^2}{|\eta|^2}\right) G(\eta^-)^{\epsilon(\alpha, |\eta^+|^2/ |\eta^-|^2)} |\hat{f}(\eta^-)|
			\1_{\frac{\Lambda}{\sqrt{2}}}(|\eta^-|)\\
		&\hskip6cm \times  |G_{\Lambda}(\eta)\hat{f}(\eta)|^2 \, \langle \eta^+ \rangle^{2\alpha} \,  \mathrm{d}\sigma \mathrm{d}\eta,
		\intertext{and}
	\widetilde{I}_{d,\Lambda}^+
		&=  \alpha \beta t \int_{\R^d} \int_{\S^{d-1}} b\left(\frac{\eta}{|\eta|}\cdot \sigma \right) \left(1- \frac{|\eta^+|^2}{|\eta|^2}\right) G(\eta^-)^{\epsilon(\alpha, |\eta^+|^2/ |\eta^-|^2)} |\hat{f}(\eta^-)|
			\1_{\frac{\Lambda}{\sqrt{2}}}(|\eta^-|)\\
		&\hskip6cm \times |G_{\Lambda}(\eta^+) \hat{f}(\eta^+)|^2 \, \langle \eta^+ \rangle^{2\alpha} \,  \mathrm{d}\sigma \mathrm{d}\eta
\end{align*}
 First we consider $\widetilde{I}_{d,\Lambda}$: Writing $\sigma$ in a parametrization where the north pole is in the $\eta$ direction, one has
 \begin{align*}
 	\sigma = \cos\theta \frac{\eta}{|\eta|} + \sin \theta\, \omega
 \end{align*}
 where $\cos\theta = \tfrac{\eta\cdot\sigma}{|\eta|} \ge 0$ and $\omega$ is a unit vector orthogonal to $\eta$, that is, $\omega \in \S^{d-2}(\eta)$. Due to the support condition on $b$ one has $\cos\theta \ge 0$, that is, $\sigma$ is restricted to the northern hemisphere $\theta\in[0,\pi/2]$.
 In this parametization one has $\mathrm{d}\sigma = \sin^{d-2}\theta \, \mathrm{d}\theta\mathrm{d}\omega$.
 From the definition of $\eta^\pm$ one sees
 \begin{align*}
 	\eta^\pm
 		&= \frac{1}{2} (\eta\pm |\eta|\sigma)
 		 = \frac{|\eta|}{2}(1\pm \cos\theta) \frac{\eta}{|\eta|} \pm \frac{|\eta|}{2} \sin(\theta)\, \omega
 \end{align*}
  so
  \begin{align*}
  	\eta^+ = |\eta| \cos^2(\tfrac{\theta}{2}) \frac{\eta}{|\eta|} + |\eta| \sin(\tfrac{\theta}{2})\cos(\tfrac{\theta}{2})\, \omega .
  \end{align*}
  In particular,
  \begin{align*}
  	|\eta^+| = |\eta| \cos\frac{\theta}{2}, \quad \text{and} \quad 1-\frac{|\eta^+|^2}{|\eta|^2} = 1-\cos^2\frac{\theta}{2} = \sin^2\frac{\theta}{2}.
  \end{align*}
  Moreover,
  \begin{align*}
  		\eta^- = |\eta| \sin^2\frac{\theta}{2} \frac{\eta}{|\eta|} - |\eta| \sin\frac{\theta}{2}\cos\frac{\theta}{2}\, \omega, \quad \text{and} \quad 	|\eta^-| = |\eta| \sin\frac{\theta}{2} ,
  \end{align*}
  so
  \begin{align*}
  	\frac{|\eta^+|^2}{|\eta^-|^2} = \frac{\cos^2\frac{\theta}{2}}{\sin^2\frac{\theta}{2}}
  		= \cot^2\frac{\theta}{2} .
  \end{align*}
After this preparation, using also $\langle \eta^+\rangle^{2\alpha} \leq  \langle \eta\rangle^{2\alpha}$ and $\sin\frac{\theta}{2} \leq \sin \theta$ for $\theta\in[0, \frac{\pi}{2}]$, the inequality $\widetilde{I}_{d,\Lambda}\le I_{d,\Lambda}$ is immediate. The inclusion of the additional factor $\1_{\Lambda}(|\eta|) = \1_{\sin\frac{\theta}{2} \Lambda}(|\eta^-|) \leq \1_{\Lambda/\sqrt{2}}(|\eta^-|)$ seems artificial for the moment, but will be convenient to keep track of the fact that $\eta^-$ is always restricted to a ball of radius $\tfrac{\Lambda}{\sqrt{2}}$.

Concerning $\widetilde{I}_{d,\Lambda}$, we want to implement a change of variables from $\eta$ to $\eta^+$.
As a function of $\eta$ and $\sigma$, $\eta^+= \frac{1}{2}(\eta-|\eta|\sigma)$. Thus
\begin{align*}
	\left| \frac{\partial\eta^+}{\partial\eta} \right| = \left| \frac{1}{2} \left(\1 + \frac{\eta}{|\eta|}\otimes \sigma \right) \right| = \frac{1}{2^d} \left(1+ \frac{\eta}{|\eta|}\cdot\sigma\right) \geq \frac{1}{2^d},
\end{align*}
since $\eta\cdot \sigma \geq 0$ and the second equality is an application of Sylvester's determinant theorem. Therefore, the Jacobian of the transformation  from $\eta$ to $\eta^+$ can be bounded by
\begin{align*}
	\left| \frac{\partial\eta}{\partial\eta^+} \right|
	 = \left| \frac{\partial\eta^+}{\partial\eta} \right|^{-1} \le 2^d.
\end{align*}
In addition,
\begin{align*}
	|\eta^+|^2 = \frac{|\eta|^2}{2}\left(1+\frac{\eta\cdot\sigma}{|\eta|}\right) \quad \text{and} \quad \eta^+\cdot\sigma = \frac{|\eta|}{2} \left(1+\frac{\eta\cdot\sigma}{|\eta|}\right) = \frac{|\eta^+|^2}{|\eta|},
\end{align*}
 which implies
\begin{align*}
 	\frac{\eta^+\cdot\sigma}{|\eta^+|} = \frac{|\eta^+|}{|\eta|} \quad \text{and} \quad \frac{\eta\cdot\sigma}{|\eta|} = 2\frac{|\eta^+|^2}{|\eta|^2} - 1 = 2 \left(\frac{\eta^+\cdot\sigma}{|\eta^+|}\right)^2 - 1.
\end{align*}
Moreover, from the definition of $\eta^\pm$ one sees
\begin{align*}
	\eta= 2\eta^+ -|\eta|\sigma
\end{align*}
so
\begin{align*}
	\eta^- = \eta^+- |\eta|\sigma = \eta^+ - |\eta^+| \left(\frac{\eta^+\cdot\sigma}{|\eta^+|}\right)^{-1}\sigma .
\end{align*}
Therefore, taking care of the domain of integration,
\begin{align*}
	\widetilde{I}_d^+ \leq 2^{d} \int_{\R^d} \int_{\S^{d-1}} &b\left( 2 \left(\frac{\eta^+\cdot\sigma}{|\eta^+|}\right)^2 - 1\right)  \, \left(1-\left(\frac{\eta^+\cdot\sigma}{|\eta^+|}\right)^2 \right) \, \1_{\frac{\eta^+\cdot\sigma}{|\eta^+|}\Lambda}(|\eta^+|)\\
	&\qquad \qquad \times G^{\epsilon\left(\alpha, |\eta^+|^2/|\eta^-|^2\right)}(\eta^-) |\hat{f}(\eta^-)|\, |G_{\Lambda}(\eta^+) \hat{f}(\eta^+)|^2 \langle\eta^+\rangle^{2\alpha}  \, \mathrm{d}\sigma\,\mathrm{d}\eta^+.
\end{align*}
Introducing spherical coordinates with north pole in the $\eta^+$ direction, one has
\begin{align*}
	\sigma =\sigma(\vartheta,\omega)= \cos(\vartheta) \frac{\eta^+}{|\eta^+|} + \sin (\vartheta)\, \omega
\end{align*}
where now $\cos\vartheta= \tfrac{\eta^+\cdot\sigma}{|\eta^+|}$. From figure \ref{fig:geometry} one sees
$\vartheta =\tfrac{\theta}{2}\in [0,\pi/4]$. In this parametrization one  has
\begin{align*}
	\eta^- = \eta^+ - \frac{|\eta^+|}{\cos\vartheta}\sigma
		= - |\eta^+|\tan(\vartheta)\, \omega
\end{align*}
and again
$\mathrm{d}\sigma = \sin^{d-2}\vartheta \,\mathrm{d}\vartheta\mathrm{d}\omega$.
Thus
\begin{align*}
	\widetilde{I}_d^+ &\leq 2^{d} \int_{\R^d} \int_{\S^{d-2}} \int_0^{\frac{\pi}{4}} b\left( \cos 2\vartheta\right)  \, \sin^d \vartheta \, G^{\epsilon\left(\alpha, \cot^2\vartheta\right)}(\eta^-) |\hat{f}(\eta^-)|\, \1_{(\cos\vartheta) \Lambda}(|\eta^+|)\\  & \qquad \qquad \qquad \qquad \qquad  \times |G(\eta^+) \hat{f}(\eta^+)|^2 \langle\eta^+\rangle^{2\alpha} \, \mathrm{d}\vartheta\,\mathrm{d}\omega \,\mathrm{d}\eta^+.
\end{align*}
Since $|\eta^-|=  |\eta^+| \tan\vartheta$ we obtain $\1_{(\cos\vartheta)\Lambda}(|\eta^+|) = \1_{(\sin\vartheta)\Lambda}(|\eta^-|) \leq \1_{\Lambda/\sqrt{2}}(|\eta^-|)$ since $\vartheta\in [0,\pi/4]$. Hence $\widetilde{I}_{d,\Lambda}^+\le I_{d,\Lambda}^+$.

The proof in the $d=1$ case is completely analogous.
\end{proof}

\subsection{Extracting pointwise information from local $L^2$ bounds}

\begin{lemma}\label{lem:1dlemma}
	Let $m\geq 2$ and $h\in W^{m, \infty}(\R)$ and $q\geq \frac{1}{m}$. 
	Then there exists a constant $L_{m}<\infty$ depending only on $q, m, \|h\|_{L^{\infty}(\R)}$ and $\|h^{(m)}\|_{L^{\infty}(\R)}$ such that
	\begin{align*}
		|h(r)|^{q} \leq L_{m} \int_{\Omega_r} |h(\xi)|^{q-\tfrac{1}{m}}\,\mathrm{d}\xi \quad \text{for all } r\in\R,
	\end{align*}
	where $\Omega_r = [r, r+2]$ if $r\geq 0$ and $\Omega_r = [r-2, r]$ if $r<0$.
\end{lemma}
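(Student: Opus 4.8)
The plan is to compare $h$, on a suitably short interval containing $r$, with its Taylor polynomial of degree $m-1$ at $r$, and then to exploit the equivalence of the $L^\infty$- and $L^p$-(quasi)norms on the finite-dimensional space of polynomials of degree $\le m-1$, the length of the interval being chosen adaptively so that the Taylor remainder is negligible.

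\emph{Set-up.} I write $M:=|h(r)|$, $M_0:=\|h\|_{L^\infty(\R)}$, $A:=\|h^{(m)}\|_{L^\infty(\R)}$ and $p:=q-\tfrac1m\ge0$. The cases $M=0$ and $q=\tfrac1m$ are trivial (for the latter, $q-\tfrac1m=0$, so $\int_{\Omega_r}|h|^{q-1/m}\,\mathrm{d}\xi=2$ while $M^{1/m}\le M_0^{1/m}$, and $L_m=M_0^{1/m}/2$ suffices), so I assume $M>0$ and $q>\tfrac1m$. By reflection symmetry I may take $r\ge0$, so $\Omega_r=[r,r+2]$; for $\ell\in(0,2]$ to be chosen later I set $I':=[r,r+\ell]\subseteq\Omega_r$.

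\emph{Taylor plus Nikolskii.} On $I'$ I would write $h=P+R$ with $P$ the degree-$(m-1)$ Taylor polynomial of $h$ at $r$; Taylor's formula with integral remainder and $\|h^{(m)}\|_{L^\infty}\le A$ give $|R(\xi)|\le\frac{A}{m!}|\xi-r|^m\le\frac{A}{m!}\ell^m$ on $I'$. Since $P(r)=h(r)$, $\|P\|_{L^\infty(I')}\ge M$. Rescaling $I'$ to $[0,1]$ and invoking the equivalence of $L^\infty$ and $L^p$ on polynomials of degree $\le m-1$ yields $\|P\|_{L^\infty(I')}\le C_{m,p}\,\ell^{-1/p}\|P\|_{L^p(I')}$; together with the (quasi-)triangle inequality $\|P\|_{L^p(I')}\le C_p\bigl(\|h\|_{L^p(I')}+\|R\|_{L^p(I')}\bigr)$ and $\|R\|_{L^p(I')}\le\frac{A}{m!}\ell^{m+1/p}$ I obtain, with $C':=C_{m,p}C_p$,
\[ M\le C'\,\ell^{-1/p}\|h\|_{L^p(I')}+\tfrac{C'A}{m!}\,\ell^m. \]

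\emph{Choice of $\ell$ and conclusion.} I would then take $\ell:=\min\bigl(2,\ell_*\bigr)$ with $\ell_*:=\bigl(\tfrac{m!\,M}{2C'A}\bigr)^{1/m}$, which makes $\tfrac{C'A}{m!}\ell^m\le M/2$ in either case. Absorbing this term gives $\int_{\Omega_r}|h|^p\,\mathrm{d}\xi\ge\|h\|_{L^p(I')}^p\ge\bigl(\tfrac{M}{2C'}\bigr)^p\ell$. If $\ell=\ell_*$ (equivalently $M\le\tfrac{2^{m+1}C'A}{m!}$), the right-hand side equals $c_{m,q}\,A^{-1/m}M^{p+1/m}=c_{m,q}\,A^{-1/m}M^q$ for a constant $c_{m,q}>0$; if $\ell=2$ it is $\ge c'_{m,q}M^p$, whence $M^q=M^pM^{1/m}\le M_0^{1/m}M^p\le\frac{M_0^{1/m}}{c'_{m,q}}\int_{\Omega_r}|h|^p\,\mathrm{d}\xi$ since $M\le M_0$. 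In both cases $|h(r)|^q\le L_m\int_{\Omega_r}|h(\xi)|^{q-1/m}\,\mathrm{d}\xi$ with $L_m:=\max\bigl(\|h^{(m)}\|_{L^\infty}^{1/m}/c_{m,q},\ \|h\|_{L^\infty}^{1/m}/c'_{m,q}\bigr)$, depending only on $q,m,\|h\|_{L^\infty}$ and $\|h^{(m)}\|_{L^\infty}$.

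\emph{Main obstacle.} The heart of the matter is the exponent bookkeeping. The adaptive interval has length $\ell\sim(M/A)^{1/m}$, and on it $|h|\gtrsim M$ only in an averaged ($L^p$) sense; the product $\ell\cdot M^p=A^{-1/m}M^{p+1/m}=A^{-1/m}M^q$ closes the estimate with a constant independent of $M$ \emph{precisely because} the exponent on the right is $q-\tfrac1m$ rather than $q$. A crude pointwise bound of the type ``$|h|\ge M/2$ on an interval near $r$'' would instead produce a power $M^{q+1-1/m}$ and a constant blowing up as $M\to0$. A minor technical point is that $p=q-\tfrac1m$ can be $<1$, so the Nikolskii and triangle inequalities must be used in their quasinorm form (legitimate, since all Hausdorff quasinorms on a finite-dimensional space are equivalent); alternatively one may replace the $L^p$-norm-equivalence step by a Remez-type lower bound on $\lvert\{\,|P|\ge M/2\,\}\cap I'\rvert$.
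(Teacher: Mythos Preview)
Your proof is correct and takes a genuinely different route from the paper's. The paper argues via the \emph{first derivative}: it moves to the maximizer $r^*$ of $|h|$ on $\Omega_r$, writes $|h(r^*)|^p-|\langle h^p\rangle_{r^*}|$ as an integral involving $h'$, and then invokes the Kolmogorov--Landau interpolation inequality (proved separately in an appendix) to bound $\|h'\|_{L^\infty}\lesssim\|h\|_{L^\infty}^{1-1/m}\max(\|h\|_{L^\infty},\|h^{(m)}\|_{L^\infty})^{1/m}$. The factor $\|h\|_{L^\infty([r^*,r^*+1])}^{1-1/m}=|h(r^*)|^{1-1/m}$ is what produces the exponent shift from $q$ to $q-1/m$.

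You instead use the full Taylor polynomial of degree $m-1$ at $r$, combined with the Nikolskii-type equivalence of $L^\infty$ and $L^p$ on this finite-dimensional polynomial space, and an \emph{adaptive} interval length $\ell\sim(M/A)^{1/m}$ that kills the remainder. The exponent bookkeeping $\ell\cdot M^p\sim A^{-1/m}M^{q}$ then closes the estimate. This avoids Kolmogorov--Landau altogether and works directly at $r$ rather than at a maximizer. Your approach is arguably more self-contained and makes the role of the $1/m$ shift more transparent (it is exactly the scaling of the adaptive interval), at the cost of needing the quasinorm form of the triangle and Nikolskii inequalities when $q-1/m<1$; the paper's argument trades this for the separate Kolmogorov--Landau lemma but stays within ordinary $L^\infty$ estimates throughout.
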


Looking into the proof of Lemma \ref{lem:1dlemma}, it is clear that its $m=1$ version also holds, even with a much simpler proof. 
Before actually going into the proof, we state an important consequence of it, which will enable us to get pointwise decay estimates on a function once suitable $L^2$ norms are bounded. 

For $m\in\N$ define $\|D^m f\|_{L^{\infty}(\R^d)}:= \sup_{\omega\in\S^{d-1}} \| (\omega\cdot\nabla)^m f\|_{L^{\infty}(\R^d)}$. Notice that this norm is invariant under rotations of the function $f$. 

\begin{corollary}\label{cor:ddlemma}
	Let $H\in \mathcal{C}^{m}(\R^n)$. Then there exists a constant $L_{m,n}<\infty$ (depending only on $ m, n, \|H\|_{L^{\infty}(\R^n)}$ and, $\|D^m H\|_{L^{\infty}(\R^n)}$) such that
	\begin{align*}
		|H(x)| \leq L_{m,n} \left( \int_{Q_x} |H(\xi)|^2 \,\mathrm{d}\xi \right)^{\frac{m}{2m+n}},
	\end{align*}
	where $Q_x$ is a cube in $\R^n$ of side length $2$, with $x$ being one of the corners, such that it is oriented away from $x$ in the sense that $x\cdot (\xi-x) \geq 0$ for all $\xi\in Q_x$.
\end{corollary}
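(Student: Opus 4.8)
The plan is to deduce Corollary~\ref{cor:ddlemma} from the one-dimensional Lemma~\ref{lem:1dlemma} by iterating the latter once in each of the $n$ coordinate directions. First I would reduce to an axis-parallel cube. Write the given cube as $Q_x = x + \{\sum_{i=1}^n t_i u_i : 0\le t_i\le 2\}$, where $u_1,\dots,u_n$ is the orthonormal basis of edge directions emanating from the corner $x$. The orientation hypothesis ``$x\cdot(\xi-x)\ge 0$ for all $\xi\in Q_x$'' is then equivalent to $x\cdot u_i\ge 0$ for every $i$. Let $R\in O(n)$ be the rotation with $Ru_i=e_i$ and put $\widetilde H:=H\circ R^{-1}$, $\widetilde x:=Rx$. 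Then $\widetilde x_i = x\cdot u_i\ge 0$, $RQ_x = \widetilde x+[0,2]^n$, $|H(x)|=|\widetilde H(\widetilde x)|$, $\int_{Q_x}|H|^2 = \int_{RQ_x}|\widetilde H|^2$, $\|\widetilde H\|_{L^{\infty}}=\|H\|_{L^{\infty}}$, and $\|D^m\widetilde H\|_{L^{\infty}}=\|D^m H\|_{L^{\infty}}$ since this norm is rotation invariant as noted in the text. Hence it suffices to prove the estimate when every coordinate of $x$ is nonnegative and $Q_x=x+[0,2]^n$; this is exactly the configuration in which the intervals $\Omega_r=[r,r+2]$ (valid for $r\ge 0$) produced by Lemma~\ref{lem:1dlemma} point in the positive direction and are therefore compatible with the cube at every stage of the iteration below.

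Next I would fix the exponent $q:=\tfrac{2m+n}{m}=2+\tfrac nm$ and iterate. For fixed $(x_2,\dots,x_n)$ the slice $r\mapsto H(r,x_2,\dots,x_n)$ is in $\mathcal{C}^m(\R)$ with sup-norm at most $\|H\|_{L^{\infty}}$ and $m$-th derivative bounded by $\|D^m H\|_{L^{\infty}}$; by a Kolmogorov--Landau interpolation inequality on $\R$ (cf.\ Lemma~\ref{lem:landau}) the intermediate derivatives are bounded as well, so the slice lies in $W^{m,\infty}(\R)$ and Lemma~\ref{lem:1dlemma} applies with exponent $q\ge\tfrac1m$ at $r=x_1\ge 0$, giving
\[
  |H(x)|^q\le L_m\int_{x_1}^{x_1+2}\big|H(\xi_1,x_2,\dots,x_n)\big|^{q-\frac1m}\,\mathrm{d}\xi_1 .
\]
Applying the same estimate in the variable $x_2$ with exponent $q-\tfrac1m$, then in $x_3$ with exponent $q-\tfrac2m$, and so on — each step being admissible because the running exponent $q-\tfrac{j-1}{m}$ stays $\ge\tfrac1m$, which holds precisely because $q\ge\tfrac nm$ — and recombining the nested one-dimensional integrals by Tonelli, after $n$ steps one reaches
\[
  |H(x)|^q\le L_m^{\,n}\int_{Q_x}\big|H(\xi)\big|^{q-\frac nm}\,\mathrm{d}\xi = L_m^{\,n}\int_{Q_x}\big|H(\xi)\big|^{2}\,\mathrm{d}\xi ,
\]
since $q-\tfrac nm=2$. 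Raising to the power $\tfrac{m}{2m+n}$ yields the claim with $L_{m,n}:=L_m^{\,nm/(2m+n)}$; as all exponents $q-\tfrac jm$ occurring in the iteration depend on $m$ and $n$ only, $L_m$, and hence $L_{m,n}$, may be taken to depend only on $m,n,\|H\|_{L^{\infty}}$ and $\|D^mH\|_{L^{\infty}}$. For $m=1$ one uses instead the $m=1$ version of Lemma~\ref{lem:1dlemma} noted in the remark following it.

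I do not expect a genuine obstacle here: the proof is essentially bookkeeping built on Lemma~\ref{lem:1dlemma}. The two points requiring a little care, and the main things to get right, are (i) ensuring that the orientation of the one-dimensional domains $\Omega_r$ produced by Lemma~\ref{lem:1dlemma} is compatible with that of $Q_x$ throughout the iteration — which is the sole purpose of the initial rotation reducing to $x$ with nonnegative coordinates — and (ii) verifying that the one-dimensional slices of $H$ actually belong to $W^{m,\infty}(\R)$ so that Lemma~\ref{lem:1dlemma} is applicable, for which the Kolmogorov--Landau interpolation inequality is exactly what is needed.
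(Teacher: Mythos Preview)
Your proposal is correct and follows essentially the same approach as the paper: iterate Lemma~\ref{lem:1dlemma} once in each coordinate direction, starting from the exponent $q=2+\tfrac{n}{m}$ and decreasing by $\tfrac{1}{m}$ at each step until the exponent is $2$, then take the $\tfrac{m}{2m+n}$-th root. The paper's proof is slightly more terse: it works directly with the axis-parallel cube $Q_x=\Omega_{x_1}\times\cdots\times\Omega_{x_n}$ (so each $\Omega_{x_i}$ already points in the correct direction according to the sign of $x_i$), whereas you first rotate to reduce to the case of nonnegative coordinates; both are fine. Your remark that one should check the one-dimensional slices lie in $W^{m,\infty}(\R)$ via a Kolmogorov--Landau inequality on $\R$ is a point the paper leaves implicit. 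One small bookkeeping note: since the constant $L_m$ in Lemma~\ref{lem:1dlemma} depends on the exponent $q$, which changes at each step, the paper writes $L_{m,n}=\prod_{i=1}^n L_m^{(i)}$ with step-dependent constants rather than a single power $L_m^n$; your final sentence acknowledges this dependence, so the discrepancy is purely notational.
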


\begin{remark}
The constant $L_{m,n}$ in Corollary \ref{cor:ddlemma} is invariant under rotations of the function $H$. This will be convenient for its application in Sections \ref{subsec:partii} and \ref{subsec:partiii}. 
\end{remark}

\begin{proof}
	We apply Lemma \ref{lem:1dlemma} iteratively in each coordinate direction to obtain
	\begin{align*}
		|H(x_1, x_2, \dots, x_n)|^{2+\frac{n}{m}} &\leq L_m^{(1)} \int_{\Omega_{x_1}} |H(\xi_1, x_2, \dots, x_d)|^{2+\frac{n-1}{m}}\,\mathrm{d}\xi_1 \\
		&\leq L_m^{(1)}L_m^{(2)} \int_{\Omega_{x_1}}\int_{\Omega_{x_2}} |H(\xi_1, \xi_2, x_3 \dots, x_d)|^{2+\frac{n-2}{m}}\,\mathrm{d}\xi_1\,\mathrm{d}\xi_2 \\
		&\leq L_m^{(1)}\cdots L_m^{(n)} \int_{\Omega_{x_1}}\cdots\int_{\Omega_{x_d}} |H(\xi_1, \dots, \xi_d)|^{2}\,\mathrm{d}\xi_1\cdots\,\mathrm{d}\xi_n.
	\end{align*}
	The constants $L_m^{(i)}$, $i=1, \dots, n$, only depend on $m$, \
	\begin{align*}
		\|H(x_1, \dots, x_{i-1}, \,\cdot\,, x_{i+1}, \dots, x_n)\|_{L^{\infty}(\R)} &\leq \|H\|_{L^{\infty}(\R^n)} \\
		\intertext{and }
		\|\partial_i^m H(x_1, \dots, x_{i-1}, \,\cdot\,, x_{i+1}, \dots, x_n)\|_{L^{\infty}(\R)} &\leq \|D^m H\|_{L^{\infty}(\R^n)}.
	\end{align*}
	Setting $L_{m,n} = \prod_{i=1}^n L_m^{(i)}$ yields the stated inequality with $Q_x = \Omega_{x_1}\times \cdots \times \Omega_{x_n}$.
\end{proof}
\begin{remark}
It is worth noticing that the exponent in Corollary \ref{cor:ddlemma} is decreasing in the dimension and increasing in $m$.
\end{remark}

For the proof of Lemma \ref{lem:1dlemma} we need the following interpolation result between $L^{\infty}$ norms of derivatives of a function.

\begin{lemma}[Kolmogorov-Landau inequality on the unit interval]\label{lem:landau}
	Let $m\geq 2$ be an integer. There exists a constant $C_m>0$ such that for all $w\in W^{m, \infty}([0,1])$,
	\begin{align*}
		\|w^{(k)}\|_{L^{\infty}([0,1])} \leq C_m \left( \frac{\|w\|_{L^{\infty}([0,1])}}{u^k} + u^{m-k} \|w^{(m)}\|_{L^{\infty}([0,1])} \right), \quad k=1, \dots, m-1,
	\end{align*}
	for all $0< u \leq 1$.
\end{lemma}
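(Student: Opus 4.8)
The plan is to reduce the statement on $[0,1]$ to a scaling argument applied to the classical Kolmogorov–Landau inequality. First I would recall (or reprove via a compactness/contradiction argument, or via the explicit Taylor-remainder construction) that on a bounded interval $I$ of length $\ell$ there is a constant $\widetilde C_m$ such that
\begin{align*}
	\|w^{(k)}\|_{L^\infty(I)} \le \widetilde C_m\left(\ell^{-k}\|w\|_{L^\infty(I)} + \ell^{m-k}\|w^{(m)}\|_{L^\infty(I)}\right), \quad k = 1,\dots,m-1.
\end{align*}
The cleanest self-contained route is the following: fix a point $x\in[0,1]$ and a subinterval $J\subseteq[0,1]$ of length $u$ with $x\in J$; write the degree-$(m-1)$ Taylor polynomial $P$ of $w$ at $m$ chosen nodes in $J$ (or use divided differences / the Hermite–Genocchi formula), so that $w^{(k)}(x) = P^{(k)}(x) + R^{(k)}(x)$ where $R = w - P$ satisfies $\|R^{(m)}\|_{L^\infty(J)} = \|w^{(m)}\|_{L^\infty(J)}$ and $R$ vanishes to appropriate order; then estimate $|P^{(k)}(x)|$ by the $L^\infty(J)$ norm of $w$ divided by $u^k$ (this is a finite-dimensional estimate on polynomials of bounded degree, with a constant depending only on $m$ and $k$, invariant under affine rescaling of $J$ to $[0,1]$), and estimate $|R^{(k)}(x)|$ by $u^{m-k}\|w^{(m)}\|_{L^\infty(J)}$ using the integral form of the Taylor/interpolation remainder.

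The key structural point is affine invariance: if $\phi(t) = w(a + ut)$ maps $[0,1]$ onto $J = [a,a+u]\subseteq[0,1]$, then $\phi^{(j)}(t) = u^j w^{(j)}(a+ut)$, so an inequality for $\phi$ on the fixed interval $[0,1]$ with a constant $C_m$ depending only on $m$ transfers to $w$ on $J$ with the powers of $u$ inserted exactly as in the statement, and one then bounds $\|w\|_{L^\infty(J)} \le \|w\|_{L^\infty([0,1])}$ and $\|w^{(m)}\|_{L^\infty(J)} \le \|w^{(m)}\|_{L^\infty([0,1])}$ and takes the supremum over $x\in[0,1]$ (covering $[0,1]$ by such subintervals $J$ of length $u\le 1$, which is possible precisely because $u\le 1$). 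This is why the hypothesis $0 < u \le 1$ appears: for $u>1$ there is no subinterval of $[0,1]$ of that length, and one would need a global estimate instead.

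The main obstacle is establishing the base inequality on $[0,1]$ with a constant depending only on $m$ — i.e., the case $u=1$. I expect to handle this by the standard argument: suppose no such $C_m$ exists, take a sequence $w_j$ with $\|w_j^{(k)}\|_\infty = 1$ but $\|w_j\|_\infty + \|w_j^{(m)}\|_\infty \to 0$; by the Arzelà–Ascoli theorem (the intermediate derivatives $w_j^{(1)},\dots,w_j^{(m-1)}$ are uniformly bounded and equicontinuous once one controls $w_j^{(m)}$, bootstrapping down from $w_j$ and $w_j^{(m)}$ via the fundamental theorem of calculus), a subsequence converges in $C^{m-1}([0,1])$ to some $w$ with $w = 0$ and $w^{(m)} = 0$ hence $w \equiv 0$, contradicting $\|w^{(k)}\|_\infty = 1$. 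Alternatively, one can avoid compactness entirely by writing down explicit interpolation at equally spaced nodes and estimating divided differences, which also yields an (admittedly non-optimal) explicit $C_m$; either way the constant is harmless for the later applications, so I would not optimize it.
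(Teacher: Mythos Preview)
Your scaling reduction to the $u=1$ case is correct and is a clean way to organize the argument; it makes transparent why the restriction $u\le 1$ appears. The paper does not separate out this step but instead carries the parameter $h=u/2$ through the Taylor computation directly.

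For the base case, however, your compactness sketch has a gap. The claim that the intermediate derivatives $w_j^{(1)},\dots,w_j^{(m-1)}$ are uniformly bounded ``by bootstrapping down from $w_j$ and $w_j^{(m)}$ via the fundamental theorem of calculus'' is precisely the content of the lemma you are trying to prove: the fundamental theorem of calculus only gives $w_j^{(\ell)}(x)=w_j^{(\ell)}(0)+\int_0^x w_j^{(\ell+1)}$, and you have no a~priori control on the integration constants $w_j^{(\ell)}(0)$ from $\|w_j\|_\infty$ and $\|w_j^{(m)}\|_\infty$ alone. The natural way to control those constants is to evaluate the Taylor expansion of $w_j$ at several distinct points and solve the resulting linear (Vandermonde) system for the $w_j^{(\ell)}(0)$ --- which is exactly your ``explicit interpolation at nodes'' alternative. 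So the compactness route is not genuinely independent; it collapses back into the explicit argument.

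The paper takes that explicit route directly: Taylor-expand $w$ at $x$, evaluate at $m-1$ points $x+\lambda_s h$ with $0<\lambda_1<\cdots<\lambda_{m-1}\le 1$ and $h\in[0,\tfrac12]$, and invert the resulting Vandermonde system to solve for $h^k w^{(k)}(x)/k!$; this yields an explicit (non-optimal) constant $C_m = 2^m m!(m-1)\|V^{-1}\|$. Your interpolation/divided-difference alternative is essentially this and is what you should actually write down; the scaling repackaging is a pleasant but optional reorganization.
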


\begin{proof}
The result dates back to \textsc{E. Landau} and \textsc{A. N. Kolmogorov} who proved it on $\R$ and $\R^+$. A proof of the inequality on a finite interval can be found in the book by \textsc{R. A. DeVore} and \textsc{G. G. Lorentz} \cite{DL93} (pp.37--39), but for the reader's convenience we also give a short proof in Appendix \ref{sec:app-landau}.
\end{proof}

For us, the important consequence we are going to make use of is
\begin{corollary}\label{cor:landau}
	Let $C_m>0$ be the constant from Lemma \ref{lem:landau}. Then for all $w\in W^{m,\infty}([0,1])$,
	\begin{align}\label{eq:landau-multiplicative}
		\|w^{(k)}\|_{L^{\infty}([0,1])} \leq 2C_m \|w\|_{L^{\infty}([0,1])}^{1-k/m} \max\left\{\|w\|_{L^{\infty}([0,1])}^{k/m}, \|w^{(m)}\|_{L^{\infty}([0,1])}^{k/m} \right\}, \quad k=1, \dots, m-1.
	\end{align}
\end{corollary}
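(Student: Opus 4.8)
The plan is to optimize the free parameter $u\in(0,1]$ appearing in the Kolmogorov--Landau inequality of Lemma \ref{lem:landau}. Write $M_0:=\|w\|_{L^\infty([0,1])}$ and $M_m:=\|w^{(m)}\|_{L^\infty([0,1])}$; we may assume $M_0>0$, since otherwise $w\equiv0$ and both sides of \eqref{eq:landau-multiplicative} vanish. For a fixed $k\in\{1,\dots,m-1\}$, Lemma \ref{lem:landau} gives $\|w^{(k)}\|_{L^\infty([0,1])}\le C_m\big(M_0 u^{-k}+M_m u^{m-k}\big)$, and the right-hand side, viewed as a function of $u>0$, is minimized at the balancing point $u_\ast:=(M_0/M_m)^{1/m}$, where the two summands coincide, each equal to $M_0^{1-k/m}M_m^{k/m}$.

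First I would treat the case $M_m\ge M_0$: then $u_\ast\le1$, so it is an admissible choice in Lemma \ref{lem:landau}, and inserting it yields $\|w^{(k)}\|_{L^\infty([0,1])}\le 2C_m M_0^{1-k/m}M_m^{k/m}$. Since here $\max\{M_0^{k/m},M_m^{k/m}\}=M_m^{k/m}$, this is exactly \eqref{eq:landau-multiplicative}. In the complementary case $M_m<M_0$ (which also covers $M_m=0$, i.e.\ $w$ a polynomial of degree $<m$ on $[0,1]$), the balancing value $u_\ast$ would exceed $1$, so instead I simply take $u=1$ in Lemma \ref{lem:landau}, obtaining $\|w^{(k)}\|_{L^\infty([0,1])}\le C_m(M_0+M_m)\le 2C_m M_0=2C_m M_0^{1-k/m}M_0^{k/m}$; and now $\max\{M_0^{k/m},M_m^{k/m}\}=M_0^{k/m}$, so \eqref{eq:landau-multiplicative} again holds. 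The boundary case $M_0=M_m$ has $u_\ast=1$ and is covered, with equality, by the first branch.

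There is essentially no genuine obstacle here: the only point worth isolating is that the constraint $0<u\le1$ in Lemma \ref{lem:landau} prevents us from always using the truly optimal exponent $u_\ast$, which is precisely why the statement carries the maximum $\max\{\|w\|_{L^\infty([0,1])}^{k/m},\|w^{(m)}\|_{L^\infty([0,1])}^{k/m}\}$ rather than merely $\|w^{(m)}\|_{L^\infty([0,1])}^{k/m}$. All of the verification is the elementary substitution of $u_\ast$ (respectively $u=1$) into the additive bound, so I would keep it to a few lines.
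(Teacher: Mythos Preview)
Your proof is correct and essentially identical to the paper's own argument: the paper also distinguishes the two cases $\|w^{(m)}\|_{L^\infty}\ge \|w\|_{L^\infty}$ and $\|w^{(m)}\|_{L^\infty}\le \|w\|_{L^\infty}$, choosing $u=(\|w\|_{L^\infty}/\|w^{(m)}\|_{L^\infty})^{1/m}$ in the first and $u=1$ in the second. Your additional remark identifying $u_\ast$ as the minimizer of the additive bound is a nice piece of motivation that the paper omits, but the substance of the argument is the same.
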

\begin{proof}
	If $\|w^{(m)}\|_{L^{\infty}([0,1])} \leq \|w\|_{L^{\infty}([0,1])}$, we choose $u=1$ in the bound from Lemma \ref{lem:landau}, which gives
	\begin{align*}
		\|w^{(k)}\|_{L^{\infty}([0,1])} \leq 2 C_m \|w\|_{L^{\infty}([0,1])}
	\end{align*}
	in this case, and  if $\|w^{(m)}\|_{L^{\infty}([0,1])} \geq \|w\|_{L^{\infty}([0,1])}$, we can choose $u = \|w\|_{L^{\infty}([0,1])}^{1/m} \|w^{(m)}\|_{L^{\infty}([0,1])}^{-1/m} \leq 1$ to obtain
	\begin{align*}
		\|w^{(k)}\|_{L^{\infty}([0,1])} \leq 2 C_m \|w\|_{L^{\infty}([0,1])}^{1-k/m} \|w^{(m)}\|_{L^{\infty}([0,1])}^{k/m}.
	\end{align*}
	Together this proves \eqref{eq:landau-multiplicative}.
\end{proof}

We can now turn to the
\begin{proof}[Proof of Lemma \ref{lem:1dlemma}]
Assume without loss of generality that $r\geq 0$, so that $\Omega_r = [r, r+2]$. By the Sobolev embedding theorem $h$ is continuous and we let $r^*$ be a point in $\Omega_r$ where $|h|$ attains its maximum. We can assume that $r^* \in [r, r+1]$ and set $\langle h\rangle_{r^*}:= \int_{r^*}^{r^*+1}h(\xi)\,\mathrm{d}\xi$ (otherwise we use $\langle h\rangle_{r^*}:= \int_{r^*-1}^{r^*}h(\xi)\,\mathrm{d}\xi$). Then for some $p\geq 1$ we have
\begin{align*}
	|h(r^*)|^p - \left| \langle h^p\rangle_{r^*} \right| \leq \int_{r^*}^{r^*+1} |h^p(r^*) - h^p(\xi)| \,\mathrm{d}\xi = \int_0^1 |h^p(r^*) - h^p(r^*+\zeta)|\,\mathrm{d}\zeta.
\end{align*}
Bt the fundamental theorem of calculus, for any $\zeta\in[0,1]$ the integrand can be bounded by
\begin{align*}
	|h^p(r^*) - h^p(r^*+\zeta)| &\leq p \int_0^1 |h(r^*+s \zeta)|^{p-1} |h'(r^*+s\zeta)| \zeta\,\mathrm{d}s \\
	&\leq p \sup_{s\in[0,1]}|h'(r^* + s \zeta)| \int_0^1 |h(r^*+s \zeta)|^{p-1} \zeta\,\mathrm{d}s
\end{align*}
We now use that
\begin{align*}
	\sup_{s\in[0,1]}|h'(r^* + s \zeta)| = \sup_{x\in[0,\zeta]} |h'(r^* + x)| \leq \sup_{x\in[0,1]} |h'(r^* + x)| = \|h'(r^*+\cdot)\|_{L^{\infty}([0,1])}
\end{align*}
and apply the Kolmogorov-Landau inequality for the first derivative in its multiplicative form from Corollary \ref{cor:landau} to the function $[0,1]\ni x \mapsto h(r^*+x)\in W^{m,\infty}([0,1])$ to obtain
\begin{align*}
	\|h'(r^*+\cdot)\|_{L^{\infty}([0,1])} &\leq 2 C_m \|h(r^*+\cdot)\|_{L^{\infty}([0,1])}^{1-1/m} \max\left\{ \|h(r^*+\cdot)\|_{L^{\infty}([0,1])}^{1/m}, \|h^{(m)}(r^*+\cdot)\|_{L^{\infty}([0,1])}^{1/m}\right\} \\
	&\leq 2 C_m |h(r^*)|^{1-1/m} \max\left\{\|h\|_{L^{\infty}(\R)}^{1/m}, \|h^{(m)}\|_{L^{\infty}(\R)}^{1/m}\right\}.
\end{align*}
It follows that
\begin{align*}
	|h(r^*)|^p - \left| \langle h^p\rangle_{r^*} \right| &\leq 2 p C_m |h(r^*)|^{1-1/m} \max\left\{\|h\|_{L^{\infty}(\R)}^{1/m}, \|h^{(m)}\|_{L^{\infty}(\R)}^{1/m}\right\}  \\
	&\qquad \times \int_0^1 \int_0^1 |h(r^* + s\zeta)|^{p-1} \zeta\,\mathrm{d}s\,\mathrm{d}\zeta.
\end{align*}
The latter integral can be further estimated by
\begin{align*}
	\int_0^1 \int_0^1 |h(r^* + s\zeta)|^{p-1} \zeta\,\mathrm{d}s\,\mathrm{d}\zeta &= \int_0^1 \int_0^{\zeta} |h(r^*+x)|^{p-1} \,\mathrm{d}x\,\mathrm{d}\zeta 
	\\
	&\leq \int_0^1 \int_0^1 |h(r^*+x)|^{p-1} \,\mathrm{d}\zeta \,\mathrm{d}x = \int_0^1 |h(r^*+x)|^{p-1} \,\mathrm{d}x \\
	&= \int_{r^*}^{r^*+1} |h(\xi)|^{p-1}\,\mathrm{d}\xi \leq \int_{\Omega_r} |h(\xi)|^{p-1}\,\mathrm{d}\xi.
\end{align*}
Using
\begin{align*}
	\left|\langle h^p\rangle_{r^*}\right| \leq \int_{r^*}^{r^*+1} |h(\xi)|^p\,\mathrm{d}\xi &\leq \|h\|_{L^{\infty}(\Omega_r)} \int_{\Omega_r} |h(\xi)|^{p-1}\,\mathrm{d}\xi \\
	&\leq |h(r^*)|^{1-1/m} \|h\|_{L^{\infty}(\R)}^{1/m} \int_{\Omega_r} |h(\xi)|^{p-1}\,\mathrm{d}\xi
\end{align*}
we get
\begin{align*}
	|h(r^*)|^p \leq L_m |h(r^*)|^{1-1/m} \int_{\Omega_r} |h(\xi)|^{p-1}\,\mathrm{d}\xi
\end{align*}
with $L_m = 2pC_m \max\left\{\|h\|_{L^{\infty}(\R)}^{1/m}, \|h^{(m)}\|_{L^{\infty}(\R)}^{1/m}\right\} +\|h\|_{L^{\infty}(\R)}^{1/m}$,
and therefore
\begin{align*}
	|h(r^*)|^{p-1+1/m} \leq L_m \int_{\Omega_r} |h(\xi)|^{p-1}\,\mathrm{d}\xi.
\end{align*}
Choosing $q:=p-1+1/m \geq 1/m$ then yields
\begin{align*}
	|h(r)|^q \leq |h(r^*)|^q \leq L_m \int_{\Omega_r} |h(\xi)|^{q-1/m}\,\mathrm{d}\xi,
\end{align*}
which is the claimed inequality.
\end{proof}

\subsection{Gevrey smoothing of weak solutions for $L^2$ initial data: Part I}

Equipped with Corollary \ref{cor:ddlemma} we can construct an inductive scheme based upon a uniform bound on $G(\eta^-)^{\epsilon(\alpha, 1)} |\hat{f}(\eta^-)|$. As already remarked, this result will depend on the dimension, and will actually deteriorate quickly as dimension increases. Nevertheless it leads to strong regularity properties of weak solutions in the physically relevant cases.

\begin{theorem}\label{thm:gevrey1}
	Assume that the initial datum $f_0$ satisfies $f_0\geq 0$, $f_0 \in L\log L(\R^d) \cap L^1_{m}(\R^d)$ for some $m\geq 2$, and, in addition, $f_0\in L^2(\R^d)$.
	Further assume that the cross-section $b$ satisfies the \emph{singularity condition} \eqref{eq:cross-section} and the \emph{integrability condition} \eqref{eq:cross-section2} for $d\geq 2$, and for $d=1$, $b_1$ satisfies the \emph{singularity condition} \eqref{eq:cross-section-kac} and the \emph{integrability condition} \eqref{eq:cross-section-kac2} for some $0<\nu<1$. Let $f$ be a weak solution of the Cauchy problem \eqref{eq:cauchyproblem} with initial datum $f_0$. 	Set $\alpha_{m,d}:=\log\left(\frac{4m+d}{2m+d}\right)/\log 2$.
	Then, for all $0<\alpha\leq \min\left\{\alpha_{m,d}, \nu\right\}$ and $T_0>0$, there exists $\beta>0$, such that for all $t\in[0, T_0]$
 	\begin{align}\label{eq:local-time-uniform-1}
		e^{\beta t \langle D_v\rangle^{2\alpha}} f(t,\cdot) \in L^2(\R^d),
	\end{align}
	that is, $f\in G^{\tfrac{1}{2\alpha}}(\R^d)$ for all $t\in(0,T_0]$.
\end{theorem}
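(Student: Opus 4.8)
\emph{Reduction and set-up.} Since $\Lambda\mapsto\|G_\Lambda(t,D_v)f(t,\cdot)\|_{L^2}^2=\int_{|\eta|\le\Lambda}G(t,\eta)^2|\hat f(t,\eta)|^2\,\mathrm{d}\eta$ is non-decreasing, it suffices to bound $\sup_{t\in[0,T_0]}\|G_\Lambda f(t)\|_{L^2}$ by a constant independent of $\Lambda$ along a sequence $\Lambda_k\to\infty$; monotone convergence then gives $e^{\beta t\langle D_v\rangle^{2\alpha}}f(t,\cdot)\in L^2(\R^d)$ for every $t\in[0,T_0]$, which is \eqref{eq:local-time-uniform-1}. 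The whole proof is an induction over the length scale $\Lambda$, and it is convenient to track
\[
N(\rho):=\sup_{t\in[0,T_0]}\ \sup_{|\eta|\le\rho}G(t,\eta)^{2^\alpha-1}\,|\hat f(t,\eta)| .
\]
By Lemma \ref{lem:abalpha} one has $\epsilon(\alpha,u)\le\epsilon(\alpha,1)=2^\alpha-1<1$, and since $G\ge1$, every factor $G(\eta^-)^{\epsilon(\alpha,\cot^2(\theta/2))}|\hat f(\eta^-)|\,\1_{\Lambda/\sqrt2}(|\eta^-|)$ appearing in $I_{d,\Lambda}$ and $I_{d,\Lambda}^+$ of Lemma \ref{lem:ce} is bounded on its domain of integration by $N(\Lambda/\sqrt2)$ (this is precisely why the restriction $|\eta^-|\le\Lambda/\sqrt2$ there is useful).

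\emph{A priori bound on a fixed scale.} Plugging the trivial bound above into Lemma \ref{lem:ce} and using the integrability conditions \eqref{eq:cross-section2}, \eqref{eq:cross-section-kac2} to see that $\int_0^{\pi/2}\sin^d\theta\,b(\cos\theta)\,\mathrm{d}\theta<\infty$ (respectively $\int\sin^2\theta\,b_1(\theta)\,\mathrm{d}\theta<\infty$), one gets $\bigl|\langle Q(f,G_\Lambda f)-G_\Lambda Q(f,f),G_\Lambda f\rangle\bigr|\le C_b\,\beta\,N(\Lambda/\sqrt2)\,\|G_\Lambda f\|_{H^\alpha}^2$ with $C_b<\infty$ depending only on $T_0,d,b$ (and identically for the Kac case with $b_1$). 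Inserting this into Corollary \ref{cor:gronwallbound} and using $\alpha\le\nu$, hence $\|G_\Lambda f\|_{H^\alpha}\le\|G_\Lambda f\|_{H^\nu}$, the sum of the commutation error and $2\beta\|G_\Lambda f\|_{H^\alpha}^2$ is dominated by the coercive term $2\widetilde C_{f_0}\|G_\Lambda f\|_{H^\nu}^2$ as soon as $\beta<\widetilde C_{f_0}$ and $N(\Lambda/\sqrt2)\le M_\beta:=(\widetilde C_{f_0}-\beta)/(C_b\beta)$. In that case Corollary \ref{cor:gronwallbound} collapses to $\|G_\Lambda f(t)\|_{L^2}^2\le\|\1_\Lambda f_0\|_{L^2}^2+2C_{f_0}\int_0^t\|G_\Lambda f\|_{L^2}^2\,\mathrm{d}\tau$, and Grönwall (using here, and only here, the extra hypothesis $f_0\in L^2$ via $\|\1_\Lambda f_0\|_{L^2}\le\|f_0\|_{L^2}$) yields
\[
\sup_{t\in[0,T_0]}\|G_\Lambda f(t)\|_{L^2}^2\le A^2:=\|f_0\|_{L^2}^2\,e^{2C_{f_0}T_0},
\]
a constant independent of both $\Lambda$ and $\beta$.

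\emph{From local $L^2$ bounds to pointwise weighted bounds (the key step).} Suppose $\sup_t\|G_\Lambda f(t)\|_{L^2}^2\le A^2$. Apply Corollary \ref{cor:ddlemma} to $H:=\hat f(t,\cdot)$, which is globally $C^m$ with $\|H\|_{L^\infty}\le\|f_0\|_{L^1}$ and $\|D^mH\|_{L^\infty}\le(2\pi)^m\sup_t\|f(t)\|_{L^1_m}<\infty$ by conservation of mass and moment propagation for Maxwellian molecules; thus the constant $L_{m,d}$ there is independent of $t,\eta,\Lambda$. For $|\eta|\le\Lambda-2\sqrt d$ the cube $Q_\eta$ (side $2$, corner $\eta$, oriented away from the origin) lies in the ball $\{|\xi|\le\Lambda\}$ and satisfies $|\xi|\ge|\eta|$, hence $G(t,\xi)\ge G(t,\eta)\ge1$ on $Q_\eta$; therefore $\int_{Q_\eta}|\hat f(t,\xi)|^2\,\mathrm{d}\xi\le G(t,\eta)^{-2}\int_{Q_\eta}|G_\Lambda(t,\xi)\hat f(t,\xi)|^2\,\mathrm{d}\xi\le G(t,\eta)^{-2}A^2$, and Corollary \ref{cor:ddlemma} gives $G(t,\eta)^{2m/(2m+d)}|\hat f(t,\eta)|\le L_{m,d}A^{2m/(2m+d)}$. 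Now the condition $\alpha\le\alpha_{m,d}$ is, by monotonicity of $2^\alpha$, exactly $2^\alpha-1\le 2m/(2m+d)$, so using $G\ge1$ once more,
\[
N(\Lambda-2\sqrt d)\le M^*:=L_{m,d}\,A^{2m/(2m+d)},
\]
again a fixed constant. The interplay here — the loss $2m/(2m+d)<1$ incurred in passing from an $L^2$ bound with $m$ finite moments in dimension $d$ to a pointwise bound, against the gain $2^\alpha-1<1$ from the strict concavity of Lemma \ref{lem:abalpha} — is the heart of the matter, and matching the two is precisely the restriction $\alpha\le\alpha_{m,d}$.

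\emph{Induction over length scales and conclusion.} The crucial observation is that $A$, hence $M^*$, does not depend on $\beta$, while $M_\beta\to\infty$ as $\beta\to0^+$; likewise, for any fixed radius $R_0$, $N(R_0)\le e^{(2^\alpha-1)\beta T_0\langle R_0\rangle^{2\alpha}}\|f_0\|_{L^1}\to\|f_0\|_{L^1}$ as $\beta\to0^+$, using $|\hat f|\le\|f_0\|_{L^1}$. Fix $R_0$ large enough that $\phi(R):=\sqrt2\,R-2\sqrt d$ satisfies $\phi(R)>R$ for all $R\ge R_0$, and then pick $\beta\in(0,\widetilde C_{f_0})$ small enough that $M^*\le M_\beta$ and $N(R_0)\le M_\beta$. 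Set $R_k:=\phi^{\circ k}(R_0)$, so $R_k\uparrow\infty$. If $N(R_k)\le M_\beta$, then the a priori bound of the second step applied on scale $\Lambda:=\sqrt2\,R_k$ (legitimate since $N(\Lambda/\sqrt2)=N(R_k)\le M_\beta$) gives $\sup_t\|G_{\sqrt2 R_k}f(t)\|_{L^2}^2\le A^2$, whence by the third step $N(R_{k+1})=N(\sqrt2 R_k-2\sqrt d)\le M^*\le M_\beta$. By induction $\sup_t\|G_{\sqrt2 R_k}f(t)\|_{L^2}^2\le A^2$ for all $k$, and letting $k\to\infty$ we conclude $\|e^{\beta t\langle D_v\rangle^{2\alpha}}f(t,\cdot)\|_{L^2}^2\le A^2$ for every $t\in[0,T_0]$, i.e. $f(t,\cdot)\in G^{1/(2\alpha)}(\R^d)$ for $t\in(0,T_0]$. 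The $d=1$ (Kac) case is identical, using the one-dimensional formulas of Lemma \ref{lem:ce} and condition \eqref{eq:cross-section-kac2}.
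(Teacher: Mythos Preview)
Your proof is correct and follows essentially the same approach as the paper's: the induction over length scales based on the loop ``pointwise weighted bound on a ball of radius $\Lambda/\sqrt2$ $\Rightarrow$ $L^2$ bound on $G_\Lambda f$ via Gr\"onwall $\Rightarrow$ pointwise weighted bound on a larger ball via Corollary~\ref{cor:ddlemma}'', with the restriction $\alpha\le\alpha_{m,d}$ arising from matching the concavity gain $2^\alpha-1$ against the loss $2m/(2m+d)$. The only difference is cosmetic: the paper iterates via $\Lambda_{N+1}=\tfrac{1+\sqrt2}{2}\Lambda_N$ (a clean geometric sequence, obtained by requiring $Q_\eta\subset\{|\xi|\le\sqrt2\Lambda\}$ for $|\eta|\le\tfrac{1+\sqrt2}{2}\Lambda$ once $\Lambda\ge\Lambda_0=\tfrac{4\sqrt d}{\sqrt2-1}$), whereas you iterate via $R_{k+1}=\sqrt2\,R_k-2\sqrt d$; both maps push the radius to infinity and the argument is otherwise identical.
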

By decreasing $\beta$, if necessary, one even has a uniform bound,
\begin{corollary}\label{cor:gevrey1}
		Let $T_0>0$. Under the same conditions as in Theorem \ref{thm:gevrey1} there exit  $\beta>0$ and $M_1<\infty$ such that
	\begin{align}\label{eq:cor-gevrey1}
		\sup_{0\le t\le T_0}\sup_{\eta\in\R^d} e^{ \beta t \langle \eta \rangle^{2\alpha}} \, |\hat{f}(t,\eta)| \le M_1 .
	\end{align}
\end{corollary}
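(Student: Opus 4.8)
The plan is to upgrade the $L^2$ control of Theorem~\ref{thm:gevrey1} to the claimed uniform pointwise bound by feeding $\hat f(t,\cdot)$ into the pointwise-extraction Corollary~\ref{cor:ddlemma} and then absorbing the Gevrey weight into the resulting local $L^2$ integral; matching exponents is exactly what forces the passage from the original $\beta$ to a smaller one, as the statement anticipates.

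First I would collect two uniform inputs on $[0,T_0]$. From (the proof of) Theorem~\ref{thm:gevrey1} --- concretely from the $\Lambda$-independent a priori bound of Corollary~\ref{cor:gronwallbound} together with $\big\|e^{\beta_0 t\langle D_v\rangle^{2\alpha}}f(t,\cdot)\big\|_{L^2} = \lim_{\Lambda\to\infty}\|G_\Lambda f(t,\cdot)\|_{L^2}$ --- one obtains $\beta_0>0$ and $M_0<\infty$ with $\sup_{0\le t\le T_0}\big\|e^{\beta_0 t\langle D_v\rangle^{2\alpha}}f(t,\cdot)\big\|_{L^2(\R^d)}\le M_0$. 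Secondly, since $f_0\in L^1_m$ with $m\ge 2$, conservation of mass and energy (and, when $m>2$, propagation of moments for Maxwellian molecules, cf.\ \cite{Vil02}) give $\sup_{0\le t\le T_0}\|f(t,\cdot)\|_{L^1_m(\R^d)}=:C_m<\infty$; combined with the identity $\partial^\gamma\hat f=\widehat{(-2\pi\I v)^\gamma f}$ this yields $\hat f(t,\cdot)\in\mathcal{C}^m(\R^d)$ with $\|\hat f(t,\cdot)\|_{L^\infty}\le\|f_0\|_{L^1}$ and $\|D^m\hat f(t,\cdot)\|_{L^\infty}\le(2\pi)^m C_m$, uniformly in $t$. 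In particular the constant $L_{m,d}$ produced by Corollary~\ref{cor:ddlemma} --- which depends only on these two norms and is monotone in them, as one sees from the proof of Lemma~\ref{lem:1dlemma} --- may be taken independent of $t$.

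Then, applying Corollary~\ref{cor:ddlemma} with $n=d$ to the real and imaginary parts of $\hat f(t,\cdot)$ gives, for all $\eta\in\R^d$ and $t\in[0,T_0]$,
\begin{align*}
	|\hat f(t,\eta)|\le 2L_{m,d}\left(\int_{Q_\eta}|\hat f(t,\xi)|^2\,\mathrm{d}\xi\right)^{\frac{m}{2m+d}},
\end{align*}
where $Q_\eta$ is a cube of side $2$ with corner $\eta$, oriented away from $\eta$ in the sense that $\eta\cdot(\xi-\eta)\ge0$ for $\xi\in Q_\eta$. The key geometric observation is that this orientation forces $|\xi|^2=|\eta|^2+2\eta\cdot(\xi-\eta)+|\xi-\eta|^2\ge|\eta|^2$, hence $\langle\xi\rangle^{2\alpha}\ge\langle\eta\rangle^{2\alpha}$, for every $\xi\in Q_\eta$. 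Now choose $\beta:=\frac{m}{2m+d}\beta_0$, which is $\le\beta_0$ and therefore still satisfies all the hypotheses of Theorem~\ref{thm:gevrey1}. Since $\frac{2m+d}{m}\beta\, t\langle\eta\rangle^{2\alpha}=\beta_0 t\langle\eta\rangle^{2\alpha}\le\beta_0 t\langle\xi\rangle^{2\alpha}$ for $\xi\in Q_\eta$, writing $e^{\beta t\langle\eta\rangle^{2\alpha}}$ as an $\tfrac{m}{2m+d}$-th power and carrying it inside the integral yields
\begin{align*}
	e^{\beta t\langle\eta\rangle^{2\alpha}}|\hat f(t,\eta)| \le 2L_{m,d}\left(\int_{Q_\eta}e^{\beta_0 t\langle\xi\rangle^{2\alpha}}|\hat f(t,\xi)|^2\,\mathrm{d}\xi\right)^{\frac{m}{2m+d}} \le 2L_{m,d}\left(\big\|e^{\beta_0 t\langle D_v\rangle^{2\alpha}}f(t,\cdot)\big\|_{L^2}^2\right)^{\frac{m}{2m+d}} \le 2L_{m,d}\,M_0^{\frac{2m}{2m+d}}.
\end{align*}
Taking the supremum over $\eta\in\R^d$ and $t\in[0,T_0]$ gives \eqref{eq:cor-gevrey1} with $M_1:=2L_{m,d}M_0^{2m/(2m+d)}$.

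I do not expect a genuine obstacle. The argument is driven by the monotonicity of the Gevrey weight along cubes oriented away from the origin and by the bookkeeping identity matching the factor $\frac{2m+d}{m}$ to the exponent $\frac{m}{2m+d}$ of Corollary~\ref{cor:ddlemma} --- precisely the mechanism behind ``decreasing $\beta$ if necessary''. The only ingredient imported from outside this statement is the uniform-in-time $L^2$ bound, which is already contained in (the proof of) Theorem~\ref{thm:gevrey1}, and the regularity of $\hat f$, which is the Fourier-side expression of finiteness of moments of the weak solution.
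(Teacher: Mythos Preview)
Your argument is correct, but it is not the route the paper takes for this particular corollary. The paper's proof is a one-liner: the induction in the proof of Theorem~\ref{thm:gevrey1} already establishes $\mathrm{Hyp1}_{\Lambda_N}(M_1)$ for every $N\in\N$, i.e.,
\[
\sup_{|\zeta|\le\Lambda_N}G(t,\zeta)^{\epsilon(\alpha,1)}|\hat f(t,\zeta)|\le M_1\quad\text{for all }0\le t\le T_0,
\]
and letting $N\to\infty$ gives \eqref{eq:cor-gevrey1} directly (with $\beta$ replaced by $\epsilon(\alpha,1)\beta$). So the pointwise bound is not deduced from the $L^2$ conclusion of Theorem~\ref{thm:gevrey1} but is already a by-product of its proof.

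Your approach --- first extract the uniform $L^2$ bound, then feed $\hat f$ into Corollary~\ref{cor:ddlemma} and use the outward orientation of $Q_\eta$ to absorb the radial weight --- is precisely what the paper does later for Corollary~\ref{cor:gevrey2}, where the inductive hypothesis $\mathrm{Hyp2}_\Lambda$ involves spherical averages rather than pointwise values and therefore does not yield a pointwise bound directly. Your route is slightly suboptimal here: the natural choice is $\beta=\tfrac{2m}{2m+d}\beta_0$ rather than $\tfrac{m}{2m+d}\beta_0$, since the squared $L^2$ norm carries the weight $e^{2\beta_0 t\langle\xi\rangle^{2\alpha}}$; with your choice the final inequality still holds, but only via the wasteful step $e^{\beta_0 t\langle\xi\rangle^{2\alpha}}\le e^{2\beta_0 t\langle\xi\rangle^{2\alpha}}$. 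On the other hand, your version has the virtue of being self-contained once Theorem~\ref{thm:gevrey1} is stated, without having to unpack its inductive proof.
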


\begin{remark}
	\begin{enumerate}[label=(\roman*)]
		\item For strong singularities, the restriction on the Gevrey class originates in the bound on the commutation error, with the best value in $d=1$ dimension. The aim of part II below will be to recover the two-dimensional result in \emph{any dimension} $d\geq 2$. Under slightly stronger assumptions on the angular cross-section, which still covers all physically relevant cases, we can get the one-dimensional result in any dimension $d\geq 1$, see part III.
		\item In dimensions $d=1, 2, 3$ and $m=2$, corresponding to initial data with finite energy, we have $\alpha_{2,d} = \log\left(\frac{8+d}{4+d}\right)/\log 2 \geq \log\left(\frac{11}{7}\right)/\log 2 \simeq 0.652077$. This means that for $\nu=\frac{1}{2}$ the weak solution gets analytic and even ultra-analytic for $\nu>\frac{1}{2}$.
		\item In the case of physical Maxwellian molecules, where $\nu=\frac{1}{4}$, in three dimensions and with initial datum having finite mass, energy and entropy, we obtain Gevrey $G^{2}(\R^3)$ regularity.
		\item Even though the range of $\alpha$ in Theorem \ref{thm:gevrey1} above  deteriorates as the dimension increases, it only fails to cover (ultra-)analyticity results in dimensions $d\geq 6$. Theorems \ref{thm:gevrey2} and \ref{thm:gevrey3} below yield results uniformly in the dimension.
	\end{enumerate}
\end{remark}

We will prove Theorem \ref{thm:gevrey1} inductively over suitable length scales $\Lambda_N\to\infty$ as $N\to\infty$ in Fourier space.
To prepare for this, we fix some $M<\infty$, $0<T_0<\infty$ and introduce

\begin{definition}[Hypothesis $\mathrm{Hyp1}_{\Lambda}(M)$] \label{def:Hyp_Lambda}
    Let $M\geq 0$. Then  for all $0\le t\le T_0$
    \begin{align}\label{eq:Hyp_Lambda}
    	\sup_{|\zeta|\leq \Lambda} G(t, \zeta)^{\epsilon(\alpha,1)} |\hat{f}(t,\zeta)| \leq M.
    \end{align}
\end{definition}
\begin{remark}
    Recall that $G(t,\zeta)= e^{\beta t \langle\zeta\rangle^{\alpha}}$, that is, it depends on $\alpha, \beta$, and $t$, and also $f$ is a time dependent function,
    even though we suppress this dependence in our notation.
     Thus $\mathrm{Hyp1}_{\Lambda}(M)$ also depends on the parameters in $G(t,\zeta)$ and on $M$ and $T_0$, which, for simplicity, we do not emphasise in our notation.
     We will later fix some $T_0>0$ and a suitable large enough $M$.
     The main reason why this is possible is that, since $\|\hat f\|_{L^\infty} \le \|f\|_{L^1} = \|f_0\|_{L^1}<\infty$,
    for any $\Lambda, \beta, T_0>0$ the hypothesis $\mathrm{Hyp1}_{\Lambda}(M)$ is true for large enough $M$ and even any $M> \|f_0\|_{L^1}$ is possible by choosing
    $\beta>0$ small enough.
\end{remark}

A first step into the inductive proof is the following
\begin{lemma}\label{lem:induction1}
	Let $\alpha \leq \nu$ and define $c_{b,d}:= |\S^{d-2}| \int_{0}^{\tfrac{\pi}{2}} \sin^d\theta\, b(\cos\theta)\,\mathrm{d}\theta$ for $d\geq 3$, $c_{b,2}:= \int_{0}^{\tfrac{\pi}{2}} \sin^2\theta\, b(\cos\theta)\,\mathrm{d}\theta$, $c_{b,1}:= \int_{-\tfrac{\pi}{4}}^{\tfrac{\pi}{4}} \sin^2\theta\, b_1(\theta)\,\mathrm{d}\theta$, which are finite by the integrability assumptions \eqref{eq:cross-section2} and \eqref{eq:cross-section-kac2}, and let
	$\beta\le \frac{\tilde{C}_{f_0}}{(1+2^{d-1})\, c_{b,d}\alpha T_0 M +1}$. Then, for any weak solution of the homogenous Boltzmann equation,
	\begin{equation} \label{eq:inductionbound1}
		\mathrm{Hyp1}_{\Lambda}(M) \quad \Rightarrow \quad \|G_{\sqrt{2}\Lambda} f \|_{L^2(\R^d)} \le \|\1_{\sqrt{2}\Lambda}(D_v) f_0\|_{L^2(\R^d)} \, e^{C_{f_0}T_0}
	\end{equation}
	for all $0\le t\le T_0$.
\end{lemma}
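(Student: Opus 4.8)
The strategy is to feed the hypothesis $\mathrm{Hyp1}_\Lambda(M)$ into the a priori bound of Corollary~\ref{cor:gronwallbound} at the length scale $\sqrt2\,\Lambda$, and run a Gr\"onwall argument after absorbing the commutation error into the coercive term $-\widetilde C_{f_0}\|G_{\sqrt2\Lambda}f\|_{H^\nu}^2$ and into the multiplier-growth term $2\beta\|G_{\sqrt2\Lambda}f\|_{H^\alpha}^2$. The key point is that both of these negative/benign terms dominate the commutation error once $\beta$ is small enough, and this smallness is exactly what the hypothesis on $\beta$ encodes.

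First I would apply Lemma~\ref{lem:ce} with $\Lambda$ replaced by $\sqrt2\,\Lambda$, so that
$\left|\langle Q(f,G_{\sqrt2\Lambda}f)-G_{\sqrt2\Lambda}Q(f,f),G_{\sqrt2\Lambda}f\rangle\right|\le I_{d,\sqrt2\Lambda}+I_{d,\sqrt2\Lambda}^+$.
In both $I_{d,\sqrt2\Lambda}$ and $I_{d,\sqrt2\Lambda}^+$ the variable $\eta^-$ is restricted by the characteristic function $\1_{\Lambda}(|\eta^-|)$ (note $\tfrac{\sqrt2\Lambda}{\sqrt2}=\Lambda$), so on the support of the integrand we have $|\eta^-|\le\Lambda$. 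Since $\epsilon(\alpha,\cot^2\tfrac\theta2)\le\epsilon(\alpha,1)$ by Lemma~\ref{lem:abalpha}(i) and $G\ge 1$, we may bound
$G(\eta^-)^{\epsilon(\alpha,\cot^2\theta/2)}|\hat f(\eta^-)|\le G(\eta^-)^{\epsilon(\alpha,1)}|\hat f(\eta^-)|\le M$
by $\mathrm{Hyp1}_\Lambda(M)$, pulling this pointwise factor out of the $\eta$- (resp.\ $\eta^+$-) integral. What remains in $I_{d,\sqrt2\Lambda}$ is $\alpha\beta t\,M$ times $\int_{\R^d}|G_{\sqrt2\Lambda}(\eta)\hat f(\eta)|^2\langle\eta\rangle^{2\alpha}\,\big(\int_0^{\pi/2}\int_{\S^{d-2}}\sin^d\theta\,b(\cos\theta)\,\mathrm d\omega\,\mathrm d\theta\big)\,\mathrm d\eta
= \alpha\beta t\,M\,c_{b,d}\,\|G_{\sqrt2\Lambda}f\|_{H^\alpha}^2$; similarly $I_{d,\sqrt2\Lambda}^+$ contributes $2^{d-1}\alpha\beta t\,M\,c_{b,d}\,\|G_{\sqrt2\Lambda}f\|_{H^\alpha}^2$ after the angular change of variables $\vartheta$ and the Jacobian factor $2^d$ already incorporated in \eqref{eq:Idplus} (the constant $c_{b,d}$ absorbs the sphere measure and the angular integral, using $\vartheta=\theta/2$). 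Hence the commutation error is $\le (1+2^{d-1})\,c_{b,d}\,\alpha\beta T_0\,M\,\|G_{\sqrt2\Lambda}f\|_{H^\alpha}^2$ for $t\le T_0$. The one-dimensional case is identical with $c_{b,1}$ and the factor $(1+\sqrt2)$, which is $\le 1+2^{d-1}$ at $d=1$.

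Next I plug this into Corollary~\ref{cor:gronwallbound} at scale $\sqrt2\Lambda$. Since $\alpha\le\nu$, we have $\|G_{\sqrt2\Lambda}f\|_{H^\alpha}\le\|G_{\sqrt2\Lambda}f\|_{H^\nu}$, so the three error/growth terms are bounded by
$\big(-2\widetilde C_{f_0}+2(1+2^{d-1})c_{b,d}\alpha\beta T_0 M+2\beta\big)\|G_{\sqrt2\Lambda}f\|_{H^\nu}^2 + 2C_{f_0}\|G_{\sqrt2\Lambda}f\|_{L^2}^2$
inside the time integral. The choice $\beta\le\widetilde C_{f_0}\big/\big((1+2^{d-1})c_{b,d}\alpha T_0 M+1\big)$ makes $(1+2^{d-1})c_{b,d}\alpha\beta T_0 M+\beta\le\widetilde C_{f_0}$, so the $H^\nu$-coefficient is $\le 0$ and that term is dropped. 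We are left with
$\|G_{\sqrt2\Lambda}f(t)\|_{L^2}^2\le\|\1_{\sqrt2\Lambda}(D_v)f_0\|_{L^2}^2+2C_{f_0}\int_0^t\|G_{\sqrt2\Lambda}f(\tau)\|_{L^2}^2\,\mathrm d\tau$,
and Gr\"onwall's inequality (legitimate because $G_{\sqrt2\Lambda}f\in\mathcal C([0,T_0];L^2)$ by Proposition~\ref{prop:L2reform}) yields
$\|G_{\sqrt2\Lambda}f(t)\|_{L^2}\le\|\1_{\sqrt2\Lambda}(D_v)f_0\|_{L^2}\,e^{C_{f_0}t}\le\|\1_{\sqrt2\Lambda}(D_v)f_0\|_{L^2}\,e^{C_{f_0}T_0}$,
which is \eqref{eq:inductionbound1}.

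The only genuinely delicate point is the bookkeeping in the $\eta^+$-integral $I_{d,\sqrt2\Lambda}^+$: one must check that after the change of variables the weight $\langle\eta^+\rangle^{2\alpha}$ together with the squared multiplier really reassembles $\|G_{\sqrt2\Lambda}f\|_{H^\alpha}^2$ with the stated constant $2^{d-1}c_{b,d}$, and that the characteristic function $\1_\Lambda(|\eta^-|)$ survives this change of variables — both are already secured by the statement of Lemma~\ref{lem:ce}, so the remaining work is merely to quote it correctly and carry the constants. Everything else is a direct application of results proved earlier in the paper.
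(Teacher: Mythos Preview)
Your approach is essentially identical to the paper's: bound $\epsilon(\alpha,\cot^2\tfrac{\theta}{2})\le\epsilon(\alpha,1)$, invoke $\mathrm{Hyp1}_\Lambda(M)$ to pull out the $\eta^-$--factor as $M$, collect the angular integrals into $c_{b,d}$ (the paper makes the step $\sin\vartheta\le\sin 2\vartheta$ explicit to get the factor $2^{d-1}$ from $I^+_{d,\sqrt2\Lambda}$), then absorb everything into the coercive $H^\nu$ term using $\alpha\le\nu$ and the smallness of $\beta$, and finish with Gr\"onwall. One small slip: for $d=1$ you claim $1+\sqrt2\le 1+2^{d-1}=2$, which is false; the paper simply says the proof ``literally translates'' and does not track the $d=1$ constant precisely either, so this is a shared cosmetic issue rather than a gap in your argument.
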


\begin{remark}
The main point of this lemma is that the right hand side of \eqref{eq:inductionbound1} does not depend on $M$. This is crucial for our analysis and might seem a bit surprising, at first. It is achieved by making $\beta$ small enough.
\end{remark}

\begin{proof}
  Let $d\ge 2$.
  Since $\cot^2\tfrac{\theta}{2} \geq 1$ for $\theta\in[0, \frac{\pi}{2}]$ and $\cot^2\vartheta \geq 1$ for $\vartheta\in [0, \frac{\pi}{4}]$, we can bound $\epsilon(\alpha, \cot^2\tfrac{\theta}{2})$ and $\epsilon(\alpha, \cot^2\vartheta)$ by $\epsilon(\alpha, 1)$ in the integrals $I_{d, \sqrt{2}\Lambda}$ and $I_{d, \sqrt{2}\Lambda}^+$ from Lemma \ref{lem:ce}.

  Assume $\mathrm{Hyp1}_{\Lambda}(M)$ holds. Then
  \begin{align*}
	G(t,\zeta)^{\epsilon(\alpha,1)} |\hat{f}(t,\zeta)| \leq M \quad \text{for all}\quad  |\zeta|\leq \Lambda .
  \end{align*}
  In particular, the terms containing $\eta^-$ in $I_{d, \sqrt{2}\Lambda}$ and $I_{d, \sqrt{2}\Lambda}^+$ can be bounded by $M$.  Thus, these integrals can now be further estimated by
 \begin{align*}
 	I_{d, \sqrt{2}\Lambda} &\leq \alpha \beta t \, M \,|\S^{d-2}| \int_{0}^{\tfrac{\pi}{2}}   \sin^{d}\theta \,b(\cos\theta)\,\mathrm{d}\theta \int_{\R^d} |G_{\sqrt{2}\Lambda}(\eta) \hat{f}(\eta)|^2 \,\langle \eta \rangle^{2\alpha}\,\mathrm{d}\eta \\
 	&=  \alpha \beta t \, M \, c_{b,d} \|G_{\sqrt{2}\Lambda}f\|_{H^{\alpha}(\R^d)}^2
 \intertext{and,}
	I_{d, \sqrt{2}\Lambda}^+ &\leq 2^{d} \alpha\beta t\, M \, |\S^{d-2}|  \int_0^{\frac{\pi}{4}}   \sin^d \vartheta  \, b\left( \cos 2\vartheta\right) \, \mathrm{d}\vartheta \int_{\R^d} |G(\eta^+) \hat{f}(\eta^+)|^2 \langle\eta^+\rangle^{2\alpha} \,\mathrm{d}\eta^+.
 \end{align*}
 In the $\vartheta$ integral, we bound $\sin\vartheta \leq \sin(2\vartheta)$ to obtain
  \begin{align*}
	I_{d, \sqrt{2}\Lambda}^+ &\leq 2^{d-1} \alpha\beta t\, M \, c_{b,d} \|G_{\sqrt{2}\Lambda}f\|_{H^{\alpha}(\R^d)}^2
 \end{align*}
By Lemma \ref{lem:ce}, the commutation error corresponding to the weight $G_{\sqrt{2}\Lambda}$ is thus bounded by
	\begin{align}\label{eq:ce-etaetaplus}
	\begin{split}
	\left| \left\langle Q(f, G_{\sqrt{2}\Lambda}f) - G_{\sqrt{2}\Lambda} Q(f,f), G_{\sqrt{2}\Lambda}f\right\rangle\right| &\leq I_{d,\sqrt{2}\Lambda} + I_{d, \sqrt{2}\Lambda}^+ \\
    &\leq (1 + 2^{d-1})\, \alpha \beta t \, M \, c_{b,d} \|G_{\sqrt{2}\Lambda}f\|_{H^{\alpha}(\R^d)}^2 .
	\end{split}
 	\end{align}
With Corollary \ref{cor:gronwallbound} we then have
    \begin{align*}
    	\|G_{\sqrt{2}\Lambda}f\|_{L^2(\R^d)}^2 \leq &\|\1_{\sqrt{2}\Lambda}(D_v)f_0\|_{L^2}^2 + \int_0^t 2 C_{f_0} \|G_{\sqrt{2}\Lambda}f\|_{L^2(\R^d)}^2 \, \mathrm{d}\tau \\
    	&\qquad+\int_0^t 2 \left(- \widetilde{C}_{f_0} \|G_{\sqrt{2}\Lambda}f\|_{H^{\nu}(\R^d)}^2
	  + \left((1+2^{d-1})\, \alpha \beta t \, M \, c_{b,d} + \beta\right) \|G_{\sqrt{2}\Lambda} f\|_{H^{\alpha}(\R^d)}^2  \right)\, \mathrm{d}\tau .
    \end{align*}
    Since $\alpha\leq\nu$ and $\beta \le \frac{\widetilde{C}_{f_0}}{(1+2^{d-1}) c_{b,d} \, \alpha T_0 M + 1}$, this implies
    \begin{align*}
    \|G_{\sqrt{2}\Lambda}f\|_{L^2(\R^d)}^2 \leq \|\1_{\sqrt{2}\Lambda}(D_v)f_0\|_{L^2(\R^d)}^2 + \int_0^t 2C_{f_0} \|G_{\sqrt{2}\Lambda} f\|_{L^2(\R^d)}^2 \, \mathrm{d}\tau
    \end{align*}
    and with Gronwall's inequality
    \begin{align}\label{eq:gronwall-final}
    	\|G_{\sqrt{2}\Lambda}f\|_{L^2(\R^d)}^2 \leq \|\1_{\sqrt{2}\Lambda}(D_v) f_0\|_{L^2(\R^d)}^2 \, e^{2C_{f_0} T_0} 
    \end{align}
  follows.

  For $d=1$, we note that, with the obvious change in notation, the above proof literally translates to the Kac equation.
\end{proof}

The second ingredient gives a uniform bound in terms of a weighted $L^2$ norm and some a priori uniform bound on some higher derivative of $\hat{f}$.

\begin{lemma} \label{lem:induction2}
    	Assume that there exist finite constants $A_m$ and $B$, such that
    	\begin{equation}\label{eq:ind2-assumption1}
    	\|f(t, \cdot)\|_{L^1_m} \leq A_m , \quad  \text{and } \quad 	\| (G_{\sqrt{2}\Lambda} f)(t, \cdot) \|_{L^2(\R^d)} \le B
    	\end{equation}
    	for some integer $m\geq 2$ and for all $0\le t\le T_0$. Set
    	\begin{equation}\label{eq:ind2-tildeLambda}
    		\widetilde{\Lambda}:= \frac{1+\sqrt{2}}{2} \Lambda
    	\end{equation}
    	and assume furthermore that
    	\begin{equation}\label{eq:ind2-assumption2}
    		\Lambda\ge \Lambda_0 := \frac{4 \sqrt{d}}{\sqrt{2}-1}.
    	\end{equation}
    	Then for all $ |\eta|\le \widetilde{\Lambda}$
    	\begin{equation}\label{eq:Gbound}
    		|\hat{f}(t, \eta)| \leq K_1 \, G(t,\eta)^{-\frac{2m}{2m+d}} \quad \text{for all} \quad 0\leq t \leq T_0
    	\end{equation}
		with a constant $K_1$ depending only on the dimension $d$, $m$, $A_m$, and  $B$.
\end{lemma}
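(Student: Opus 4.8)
The plan is to apply the pointwise-from-$L^2$ estimate of Corollary \ref{cor:ddlemma} directly to $\hat f(t,\cdot)$ — crucially \emph{not} to the weighted function $G(t,\cdot)\hat f(t,\cdot)$, whose $\mathcal C^m$-bounds would blow up with $\Lambda$, $\beta$ and $t$ and thus spoil the $\Lambda$-independence of $K_1$. First I would record that the moment bound $\|f(t,\cdot)\|_{L^1_m}\le A_m$ from \eqref{eq:ind2-assumption1} gives $\hat f(t,\cdot)\in\mathcal C^m(\R^d)$ with $\|\hat f(t,\cdot)\|_{L^\infty}\le\|f(t,\cdot)\|_{L^1}\le A_m$ and $\|D^m\hat f(t,\cdot)\|_{L^\infty}\le(2\pi)^m\|f(t,\cdot)\|_{L^1_m}\le(2\pi)^mA_m$, since $\partial^\beta\hat f=\mathcal F[(-2\pi\I v)^\beta f]$ and $|v^\beta|\le\langle v\rangle^m$ for $|\beta|\le m$. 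Hence Corollary \ref{cor:ddlemma} (with $n=d$) applies to $H=\hat f(t,\cdot)$ with a constant $L_{m,d}$ depending only on $m,d,A_m$, yielding, for every $\eta\in\R^d$,
\[
|\hat f(t,\eta)|\le L_{m,d}\Big(\int_{Q_\eta}|\hat f(t,\xi)|^2\,\mathrm d\xi\Big)^{\frac{m}{2m+d}},
\]
where $Q_\eta$ is the side-$2$ cube with corner $\eta$, oriented away from $\eta$.

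Next I would exploit the geometry of $Q_\eta$. Since $Q_\eta$ is oriented away from $\eta$, every $\xi\in Q_\eta$ satisfies $\eta\cdot(\xi-\eta)\ge0$, whence $|\eta||\xi|\ge\eta\cdot\xi\ge|\eta|^2$ and therefore $|\xi|\ge|\eta|$; and since $\operatorname{diam}Q_\eta=2\sqrt d$ we also have $|\xi|\le|\eta|+2\sqrt d$. Thus, for $|\eta|\le\widetilde\Lambda=\tfrac{1+\sqrt2}{2}\Lambda$ and $\Lambda\ge\Lambda_0=\tfrac{4\sqrt d}{\sqrt2-1}$,
\[
|\xi|\le\widetilde\Lambda+2\sqrt d=\tfrac{1+\sqrt2}{2}\Lambda+2\sqrt d\le\sqrt2\,\Lambda\qquad\text{for all }\xi\in Q_\eta,
\]
the last inequality being \emph{exactly} equivalent to $\Lambda\ge\Lambda_0$. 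Consequently $Q_\eta\subset\overline{B_{\sqrt2\Lambda}}$, so $G_{\sqrt2\Lambda}(t,\xi)=G(t,\xi)$ on $Q_\eta$, and, since $G(t,\cdot)$ is radially non-decreasing and $|\xi|\ge|\eta|$, also $G(t,\xi)\ge G(t,\eta)$ on $Q_\eta$.

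Finally I would combine these with Plancherel's theorem and $\|G_{\sqrt2\Lambda}f(t,\cdot)\|_{L^2}\le B$: writing $|\hat f(t,\xi)|^2\le G(t,\eta)^{-2}|G_{\sqrt2\Lambda}(t,\xi)\hat f(t,\xi)|^2$ on $Q_\eta$ and integrating gives $\int_{Q_\eta}|\hat f(t,\xi)|^2\,\mathrm d\xi\le G(t,\eta)^{-2}\|G_{\sqrt2\Lambda}f(t,\cdot)\|_{L^2}^2\le G(t,\eta)^{-2}B^2$, and substituting into the estimate from Corollary \ref{cor:ddlemma},
\[
|\hat f(t,\eta)|\le L_{m,d}\big(G(t,\eta)^{-2}B^2\big)^{\frac{m}{2m+d}}=L_{m,d}\,B^{\frac{2m}{2m+d}}\,G(t,\eta)^{-\frac{2m}{2m+d}}
\]
for all $|\eta|\le\widetilde\Lambda$ and all $0\le t\le T_0$, so \eqref{eq:Gbound} holds with $K_1:=L_{m,d}\,B^{\frac{2m}{2m+d}}$, depending only on $d,m,A_m,B$. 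The one-dimensional Kac case is identical with $d=1$. The only genuinely delicate point — the main obstacle — is organizing the length scales so that the cube $Q_\eta$ on which one reads off the $L^2$ information still lies inside the ball of radius $\sqrt2\,\Lambda$, where $G_{\sqrt2\Lambda}$ coincides with the smooth weight $G$; this is precisely what dictates the relation $\widetilde\Lambda=\tfrac{1+\sqrt2}{2}\Lambda$ together with the threshold $\Lambda\ge\Lambda_0$, and it is also what lets the loss of power $\tfrac{2m}{2m+d}$ (rather than the naive $1$) appear — which is exactly the loss that Lemma \ref{lem:abalpha} is designed to absorb later.
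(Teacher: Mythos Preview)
Your proof is correct and follows essentially the same approach as the paper's: apply Corollary~\ref{cor:ddlemma} to $\hat f$ (not to the weighted function), use the geometry of $Q_\eta$ to ensure it lies in the ball of radius $\sqrt{2}\Lambda$ when $|\eta|\le\widetilde\Lambda$ and $\Lambda\ge\Lambda_0$, insert the weight $G$ by monotonicity since $\eta$ is the point of $Q_\eta$ closest to the origin, and conclude with $K_1=L_{m,d}B^{\frac{2m}{2m+d}}$. Your geometric verification that $|\xi|\le|\eta|+2\sqrt d\le\sqrt2\,\Lambda$ is \emph{exactly} equivalent to $\Lambda\ge\Lambda_0$ is slightly more explicit than the paper's appeal to ``Pythagoras' theorem'', but otherwise the arguments coincide.
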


\begin{remark}\label{rem:choice_alpha}
The exponent $\frac{2m}{2m+d}$ in equation \eqref{eq:Gbound} comes from Corollary \ref{cor:ddlemma}, choosing $n=d$. This is responsible for our definition of $\alpha_{m,d}$, since then $\epsilon\left(\alpha_{m,d},1\right) = \frac{2m}{2m+d}$.
\end{remark}

\begin{remark} \label{rem:ind2}
 The assumptions of Lemma \ref{lem:induction2} are quite natural: since the Boltzmann equation conserves mass and kinetic energy does not increase, we have the a priori estimate
\begin{align*}
	\|f(t,\cdot)\|_{L^1_2(\R^d)} \leq  \|f_0\|_{L^1_2(\R^d)} =: A_2,
\end{align*}
and due to the known results on moment propagation\footnote{see, for instance,  \textsc{Villani}'s review \cite{Vil02} pp. 73ff for references.} for the homogeneous Boltzmann equation in the Maxwellian molecules case, we have
\begin{align*}
	f_0\in L^1_m(\R^d) \quad \Longrightarrow \quad f(t,\cdot) \in L^1_m(\R^d) \text{ uniformly in } t\ge 0
 \end{align*}
for any $m>2$ in addition to assumptions \eqref{eq:initialdata}.
\end{remark}

The importance of Lemma \ref{lem:induction2} is that it effectively converts a local $L^2$ bound on suitable balls into a \emph{pointwise bound} on slightly smaller balls.

\begin{proof}[Proof of Lemma \ref{lem:induction2}]
By the Riemann-Lebesgue lemma $\hat{f}$ has continuous and bounded derivatives of order up to $m$. Since for any multi-index $\alpha\in\N_0^d$ one has $\partial^{\alpha}\hat{f} = (-2\pi\I)^{|\alpha|}\, \widehat{v^{\alpha} f}$, we obtain the bound
   \begin{align*}
   	\|D^m\hat{f}(t, \cdot)\|_{L^{\infty}(\R^d)} &= \sup_{\omega\in\S^{d-1}} \|(\omega\cdot\nabla)^m \hat{f}(t, \cdot)\|_{L^{\infty}(\R^d)} \leq \sup_{\omega\in\S^{d-1}} \sup_{\eta\in\R^d} \sum_{|\alpha|=m} \binom{m}{\alpha} \, |\omega^{\alpha}|\, |\partial^{\alpha}\hat{f}(\eta)| \\
   	&\leq  (2\pi)^m \sup_{\omega\in\S^{d-1}} \int_{\R^d} \sum_{|\alpha|=m} \binom{m}{\alpha} \, |\omega^{\alpha} v^{\alpha}| \, f(v)\, \mathrm{d}v \leq  (2\pi)^m \sup_{\omega\in\S^{d-1}}\int_{\R^d} (\omega\cdot v)^m \, f(v)\,\mathrm{d}v \\
   	&\leq  (2\pi)^m \int_{\R^d} |v|^m \, f(v)\,\mathrm{d}v \leq(2\pi)^m \|f(t, \cdot)\|_{L^1_m(\R^d)} \leq (2\pi)^m A_m
   \end{align*}
Of course, also $\|\hat{f}\|_{L^{\infty}(\R^d)} \leq \|f\|_{L^1(\R^d)} \leq A_m$.

	Let $\eta\in \R^d$ such that $|\eta| \leq \widetilde{\Lambda}$. By Corollary \ref{cor:ddlemma} applied to the function $\hat{f}$, there is a constant $L_{m,d}$ that depends only on $d, m$, and $A_m$ such that
	\begin{align*}
		|\hat{f}(\eta)| \leq L_{m,d} \left(\int_{Q_\eta} |\hat{f}(\zeta)|^2\,\mathrm{d}\zeta \right)^{\frac{m}{2m+d}}
	\end{align*}
	where $Q_{\eta}$ is the cube of side length 2 at $\eta$, such that all sides are oriented away from the origin. The definitions of $\widetilde{\Lambda}$ and $\Lambda_0$ guarantee by Pythagoras' theorem, that, for $|\eta|\leq \widetilde{\Lambda}$, $Q_{\eta}$ always stays inside the ball around the origin with radius $\sqrt{2}\Lambda$. Since the orientation of $Q_{\eta}$ is such that $\eta$ is the point closest to the origin and the weight $G$ is radial and increasing, we have
	\begin{align*}
		|\hat{f}(\eta)| &\leq L_{m,d} \left(G(\eta)^{-2} \int_{Q_\eta} G(\zeta)^2|\hat{f}(\zeta)|^2\,\mathrm{d}\zeta \right)^{\frac{m}{2m+d}} \\
		&\leq L_{m,d}\,  G(\eta)^{-\frac{2m}{2m+d}} \left(\int_{\{|\eta|\leq \sqrt{2}\Lambda\}} G(\zeta)^2|\hat{f}(\zeta)|^2\,\mathrm{d}\zeta \right)^{\frac{m}{2m+d}} \\
		&\leq L_{m,d} B^{\frac{2m}{2m+d}}\, G(\eta)^{-\frac{2m}{2m+d}}.
	\end{align*}
	Setting $K_1:= L_{m,d} B^{\frac{2m}{2m+d}}$ yields the claimed inequality.
\end{proof}

\begin{proof}[Proof of Theorem \ref{thm:gevrey1}]
	By Lemma \ref{lem:induction1}, \ref{lem:induction2}, and Remark \ref{rem:ind2}, a suitable choice for $A_m$, $B$, and the length scales $\Lambda_N$ is
 \begin{align*}
    B&:= \|f_0\|_{L^2(\R^d)} e^{C_{f_0}T_0}, \\
    A_m&:= \sup_{t\geq 0} \|f(t, \cdot)\|_{L^1_m(\R^d)} < \infty ,
   \end{align*}
   and
  \begin{equation*}
    \Lambda_N := \frac{\Lambda_{N-1}+\sqrt{2}\Lambda_{N-1}}{2} = \frac{1+\sqrt{2}}{2}\Lambda_{N-1} =
    \left( \frac{1+\sqrt{2}}{2} \right)^N \Lambda_0
  \end{equation*}
  with  $\Lambda_0$ from \eqref{eq:ind2-assumption2}.

  Furthermore, we set
  \begin{equation*}
    M_1:= \max \left\{ 2 A_m+1, K_1 \right\}
  \end{equation*}	
  with the constant $K_1$ from equation \eqref{eq:Gbound}.

 For the start of the induction, we need $\mathrm{Hyp1}_{\Lambda_0}(M_1)$ to be true.
 Since
 \begin{equation*}
 	\sup_{0\le t\le T_0} \sup_{|\eta|\le \Lambda_0} G(\eta)^{\epsilon(\alpha,1)} |\hat{f}(\eta)|
 	\le e^{\epsilon(\alpha,1) \beta T_0 (1+\Lambda_0^2)^{\alpha}} A_m
  \end{equation*}
  and from our choice of $M_1$ there exists $\beta_0 >0$ such that $\mathrm{Hyp1}_{\Lambda_0}(M_1)$ is true for all $0\le\beta\le \beta_0$.

  Now, we choose
  \begin{equation*}
  	\beta=\min \left( \beta_0, \frac{\tilde{C}_{f_0}}{(1+2^{d-1}) c_{b,d} \, \alpha T_0 M_1 +1} \right).
  \end{equation*}
  With this choice, the conditions of Lemma \ref{lem:induction1} and \ref{lem:induction2} are fulfilled and $\mathrm{Hyp1}_{\Lambda_0}(M_1)$ is true.

  For the induction step assume that $\mathrm{Hyp1}_{\Lambda_N}(M_1)$ is true. Then Lemma \ref{lem:induction1} gives
  \begin{equation*}
  	\| G_{\sqrt{2}\Lambda_N} f\|_{L^2(\R^d)} \le \|\1_{\sqrt{2}\Lambda}(D_v) f_0\|_{L^2(\R^d)} \, e^{C_{f_0}T_0} \leq B.
  \end{equation*}
  Note that $\epsilon(\alpha,1)\le \frac{2m}{2m+d}$, since $\alpha\leq \min\left\{\alpha_{m,d},\nu\right\}$, see Remark \ref{rem:choice_alpha}. In addition,
  $\Lambda_{N+1}= \widetilde{\Lambda}_N$, so Lemma \ref{lem:induction2} shows
  \begin{equation*}
  \sup_{|\eta|\le \Lambda_{N+1}} G(\eta)^{\epsilon(\alpha,1)} |\hat{f}(\eta)|
  \le K_1\leq M_1,
  \end{equation*}
  that is, $\mathrm{Hyp1}_{\Lambda_{N+1}}(M_1)$ is true. By induction, it is true for all $N\in\N$.
  Invoking Lemma \ref{lem:induction1} again, we also have
  \begin{equation*}
  	\|G_{\sqrt{2}\Lambda_N}f\|_{L^2(\R^d)} \leq B
  \end{equation*}
  for all $N\in\N$ and passing to the limit $N\to\infty$, we see
  $\|Gf\|_{L^2(\R^d)} \leq B $, which concludes the proof of the theorem.
\end{proof}

\begin{proof}[Proof of Corollary \ref{cor:gevrey1}]
The proof of Theorem \ref{thm:gevrey1} showed that given $T_0>0$ there exists $M_1>0$ and $\beta>0$ such that Hyp$1_{\Lambda_N}(M_1)$ is true for all $N\in\N$. This clearly implies \eqref{eq:cor-gevrey1}.
\end{proof}

\subsection{Gevrey smoothing of weak solutions for $L^2$ initial data: Part II}\label{subsec:partii}
The results of Part I are best in one dimension and give the correct smoothing in terms of the Gevrey class for $\nu$ not too close to one, more precisely $\nu\le \alpha_{m,d}$. In order to improve this  in higher dimensions $d\geq 2$ and for a larger range of singularities $0<\nu<1$, the commutator estimates have to be refined. We have

\begin{theorem}\label{thm:gevrey2} Let $d\ge 3$.
	Assume that the initial datum $f_0$ satisfies $f_0\geq 0$, $f_0 \in L\log L(\R^d) \cap L^1_{m}(\R^d)$ for some $m\geq 2$, and, in addition, $f_0\in L^2(\R^d)$.
	Further assume that the cross-section $b$ satisfies the \emph{singularity condition} \eqref{eq:cross-section} and the \emph{integrability condition} \eqref{eq:cross-section2} for some $0<\nu<1$. Let $f$ be a weak solution of the Cauchy problem \eqref{eq:cauchyproblem} with initial datum $f_0$, 	
	then for all $0<\alpha\leq \min\left\{\alpha_{m,2}, \nu\right\}$ and $T_0>0$, there exists $\beta>0$, such that for all $t\in[0, T_0]$
 	\begin{align}\label{eq:local-time-uniform-2}
		e^{\beta t \langle D_v\rangle^{2\alpha}} f(t,\cdot) \in L^2(\R^d),
	\end{align}
	that is, $f\in G^{\tfrac{1}{2\alpha}}(\R^d)$ for all $t\in(0,T_0]$.

	In particular, the weak solution is real analytic if $\nu=\frac{1}{2}$ and ultra-analytic if $\nu>\frac{1}{2}$.
\end{theorem}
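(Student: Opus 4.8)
The plan is to run the same induction-over-length-scales as in the proof of Theorem~\ref{thm:gevrey1}, but to replace the \emph{pointwise} control of the bad factor $G(\eta^-)^{\epsilon(\alpha,1)}|\hat f(\eta^-)|$ — which, via Corollary~\ref{cor:ddlemma} with $n=d$, only delivers $\alpha\le\alpha_{m,d}$ — by control of the \emph{spherical average} of this factor over the codimension-$2$ sphere $\S^{d-2}(\eta)$ that appears in $I_{d,\Lambda}$ and $I_{d,\Lambda}^+$ of Lemma~\ref{lem:ce}. Concretely, I would introduce a hypothesis $\mathrm{Hyp2}_\Lambda(M)$ asking that, for all $0\le t\le T_0$,
\[
\sup_{|\eta|\le\Lambda}\int_0^{\pi/2}\int_{\S^{d-2}(\eta)}\sin^d\theta\, b(\cos\theta)\, G(\eta^-)^{\epsilon(\alpha,1)}|\hat f(\eta^-)|\,\mathrm d\omega\,\mathrm d\theta\le M
\]
(with $\eta^-$ parametrised as in \eqref{eq:etapar}) together with the analogous bound in the $(\eta^+,\vartheta)$-parametrisation \eqref{eq:etapluspar}. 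The analogue of Lemma~\ref{lem:induction1} is then immediate: since $\cot^2\tfrac\theta2\ge1$ on $[0,\tfrac\pi2]$ and $\cot^2\vartheta\ge1$ on $[0,\tfrac\pi4]$ we have $\epsilon(\alpha,\cot^2\tfrac\theta2),\epsilon(\alpha,\cot^2\vartheta)\le\epsilon(\alpha,1)$ by Lemma~\ref{lem:abalpha}, and as $G\ge1$ the $(\theta,\omega)$-integral inside $I_{d,\sqrt2\Lambda}$ and $I_{d,\sqrt2\Lambda}^+$ is bounded by $M$. Hence the commutation error is $\le(1+2^{d-1})\,\alpha\beta t\,M\,c_{b,d}\,\|G_{\sqrt2\Lambda}f\|_{H^\alpha}^2$, and Corollary~\ref{cor:gronwallbound} with $\beta$ chosen small (exactly as in Lemma~\ref{lem:induction1}) and Gronwall give $\|G_{\sqrt2\Lambda}f\|_{L^2}\le\|\1_{\sqrt2\Lambda}(D_v)f_0\|_{L^2}\,e^{C_{f_0}T_0}=:B$, independently of $M$.

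The crux is the analogue of Lemma~\ref{lem:induction2}: assuming $\|f(t,\cdot)\|_{L^1_m}\le A_m$ and $\|G_{\sqrt2\Lambda}f\|_{L^2}\le B$, derive $\mathrm{Hyp2}_{\widetilde\Lambda}(M_1)$ for $\widetilde\Lambda=\tfrac{1+\sqrt2}{2}\Lambda$. Fix $|\eta|\le\widetilde\Lambda$ and $\theta\in(0,\tfrac\pi2]$; then $\rho:=|\eta^-|=|\eta|\sin\tfrac\theta2$ is fixed, and as $\omega$ runs over $\S^{d-2}(\eta)$ the vector $\eta^-=|\eta|\sin^2\tfrac\theta2\,\tfrac\eta{|\eta|}-|\eta|\sin\tfrac\theta2\cos\tfrac\theta2\,\omega$ traces a $(d-2)$-sphere of radius $r:=\rho\cos\tfrac\theta2$. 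Fix a constant $\rho_0>0$ (say $\rho_0=1$) and split. If $\rho\le\rho_0$ then $G(\eta^-)\le e^{\beta T_0(1+\rho_0^2)^\alpha}$ is bounded and $|\hat f(\eta^-)|\le\|f_0\|_{L^1}$, so this part of the integral is controlled after integrating $\sin^d\theta\, b(\cos\theta)$, which is finite by \eqref{eq:cross-section2}. If $\rho\ge\rho_0$ then $r=\rho\cos\tfrac\theta2\ge\rho_0/\sqrt2$ is bounded below; for each $\omega$ I apply Corollary~\ref{cor:ddlemma} with $n=2$ to $\hat f$ restricted to the $2$-plane $\mathrm{span}(\tfrac\eta{|\eta|},\omega)$, with the $2$-cube $Q_{\eta^-}$ spanned by $\tfrac\eta{|\eta|}$ and $-\omega$, so that $\eta^-$ is the corner of $Q_{\eta^-}$ closest to the origin. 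Using the bound $\|D^m\hat f\|_{L^\infty}\le(2\pi)^mA_m$ (as in Lemma~\ref{lem:induction2}), that $G$ is radial and increasing, and that $\alpha\le\alpha_{m,2}$ forces $\epsilon(\alpha,1)\le\epsilon(\alpha_{m,2},1)=\tfrac{2m}{2m+2}$ (Lemma~\ref{lem:abalpha}), this yields
\[
G(\eta^-)^{\epsilon(\alpha,1)}|\hat f(\eta^-)|\le L_{m,2}\Bigl(\int_{Q_{\eta^-}}G^2|\hat f|^2\,\mathrm d\zeta\Bigr)^{\tfrac{m}{2m+2}},
\]
and integrating over $\omega$ with Jensen's inequality (the exponent $\tfrac{m}{2m+2}<1$) bounds $\int_{\S^{d-2}(\eta)}(\cdot)\,\mathrm d\omega$ by $|\S^{d-2}|^{\tfrac{m+2}{2m+2}}\bigl(\int_{\S^{d-2}(\eta)}\int_{Q_{\eta^-(\omega)}}G^2|\hat f|^2\bigr)^{\tfrac{m}{2m+2}}$.

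The inner double integral is then estimated by a change-of-variables (coarea) argument: writing $\zeta=(a+s_1)\tfrac\eta{|\eta|}-(r+s_2)\omega$ with $a=|\eta|\sin^2\tfrac\theta2$, $s_1,s_2\in[0,2]$, the coordinates $(s_1+a,\,s_2+r,\,\omega)$ are Euclidean in the $\tfrac\eta{|\eta|}$-direction and polar in $\tfrac\eta{|\eta|}^\perp$, so $\mathrm d\zeta=(r+s_2)^{d-2}\,\mathrm ds_1\,\mathrm ds_2\,\mathrm d\omega$ and the map is injective; since $r\ge\rho_0/\sqrt2$, the Jacobian is bounded below, and (by $\widetilde\Lambda=\tfrac{1+\sqrt2}{2}\Lambda$, $\Lambda\ge\Lambda_0$, and Pythagoras) the image stays in $\{|\zeta|\le\sqrt2\Lambda\}$, whence $\int_{\S^{d-2}(\eta)}\int_{Q_{\eta^-(\omega)}}G^2|\hat f|^2\le C(\rho_0,d)\|G_{\sqrt2\Lambda}f\|_{L^2}^2\le C(\rho_0,d)B^2$. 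Multiplying by $\sin^d\theta\, b(\cos\theta)$ and integrating $\theta$ over $[0,\tfrac\pi2]$ (finite by \eqref{eq:cross-section2}, the $\theta$-constants now being bounded) produces $\mathrm{Hyp2}_{\widetilde\Lambda}(M_1)$ with $M_1$ depending only on $d,m,b,A_m,B,\rho_0$. The term $I_{d,\Lambda}^+$ is treated identically in the $(\eta^+,\vartheta)$-parametrisation, where $\eta^-=-|\eta^+|\tan\vartheta\,\omega$ with $\vartheta\in[0,\tfrac\pi4]$ and $\omega\in\S^{d-2}(\eta^+)$; there $r=\rho^+=|\eta^-|$, so the same $\rho_0$-dichotomy and coarea argument apply, now using the $2$-plane $\mathrm{span}(\tfrac{\eta^+}{|\eta^+|},\omega)$. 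With these two lemmas, Theorem~\ref{thm:gevrey2} follows exactly as Theorem~\ref{thm:gevrey1}: set $B=\|f_0\|_{L^2}e^{C_{f_0}T_0}$, $A_m=\sup_{t\ge0}\|f(t,\cdot)\|_{L^1_m}<\infty$ (moment propagation, Remark~\ref{rem:ind2}), $\Lambda_N=(\tfrac{1+\sqrt2}{2})^N\Lambda_0$, choose $M_1$ so large that $\mathrm{Hyp2}_{\Lambda_0}(M_1)$ holds for all small $\beta$ and $M_1$ dominates the constant above, then fix $\beta$ small enough for Lemma~\ref{lem:induction1}, run the induction $\mathrm{Hyp2}_{\Lambda_N}(M_1)\Rightarrow\|G_{\sqrt2\Lambda_N}f\|_{L^2}\le B\Rightarrow\mathrm{Hyp2}_{\Lambda_{N+1}}(M_1)$, and let $N\to\infty$ to get $\|Gf\|_{L^2}\le B$.

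I expect the main obstacle to be the coarea/Jacobian estimate for the $\omega$-average: one must pick the $2$-planes and $2$-cubes so that the sweep $(\omega,s_1,s_2)\mapsto\zeta$ is injective with Jacobian bounded below and with image inside the admissible ball, and the cut-off $\{\rho\le\rho_0\}$ is precisely what rescues the degeneration $r\to0$ as $\theta\to0$ — in that regime the weight $G(\eta^-)$ is close to $1$, so the crude bound $|\hat f(\eta^-)|\le\|f_0\|_{L^1}$ suffices and Corollary~\ref{cor:ddlemma} is not needed. A secondary point requiring care is that the output exponent of Corollary~\ref{cor:ddlemma} with $n=2$, namely $\tfrac{m}{2m+2}$, must match $\epsilon(\alpha,1)\le\tfrac{2m}{2m+2}$; this is exactly the role of the definition $\alpha_{m,2}=\log\tfrac{4m+2}{2m+2}/\log2$ together with the monotonicity in Lemma~\ref{lem:abalpha}, and it is also what makes the limiting case $\alpha=\alpha_{m,2}$ (hence $\alpha=\nu$ when $\nu\le\alpha_{m,2}$) admissible.
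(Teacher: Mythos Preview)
Your approach is the paper's: replace the pointwise bound by control of the $\S^{d-2}$-average, apply Corollary~\ref{cor:ddlemma} with $n=2$ in the $2$-plane spanned by $\eta/|\eta|$ and $\omega$, use Jensen on the $\omega$-integral, pass to cylindrical coordinates about the $\eta$-axis to recover $\|G_{\sqrt2\Lambda}f\|_{L^2}^2$, and split off a small-radius region where the crude bound $|\hat f|\le\|f_0\|_{L^1}$ suffices. The only substantive difference is in how the hypothesis is packaged: the paper's $\mathrm{Hyp2}_\Lambda$ (Definition~\ref{def:Hyp_Lambda2}) is a bound on the bare $\omega$-integral, uniformly over all points $z\tfrac{\zeta}{|\zeta|}-\rho\omega$ with $z^2+\rho^2\le\Lambda^2$, i.e.\ it is indexed by $|\eta^-|\le\Lambda$ rather than by $|\eta|\le\Lambda$ and carries no $\theta$-integral or $b$-weight.

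As you have written it, however, the induction does not close because of this indexing: your $\mathrm{Hyp2}_\Lambda(M)$ takes the supremum over $|\eta|\le\Lambda$, but to bound $I_{d,\sqrt2\Lambda}$ you must control the inner $(\theta,\omega)$-integral for all $|\eta|\le\sqrt2\Lambda$, which your hypothesis does not give; and your $L^2\Rightarrow\mathrm{Hyp2}$ step only returns the bound for $|\eta|\le\widetilde\Lambda<\sqrt2\Lambda$, so the two halves of the loop do not meet. The fix is exactly the paper's choice of parametrisation: formulate $\mathrm{Hyp2}_\Lambda$ in terms of the constraint $|\eta^-|\le\Lambda$ (equivalently $(z,\rho)\in A_\Lambda$). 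Then the cut-off $\1_\Lambda(|\eta^-|)$ already present in $I_{d,\sqrt2\Lambda}$ makes $\mathrm{Hyp2}_\Lambda$ apply directly regardless of $|\eta|$, and your $L^2$ step yields the hypothesis on the set $|\eta^-|\le\widetilde\Lambda$, since that is precisely the condition under which your $2$-cube $Q_{\eta^-}$ remains inside $\{|\zeta|\le\sqrt2\Lambda\}$. With this re-indexing (and the proviso $\beta\le1/T_0$ so the small-radius crude bound is uniform in $\beta$, as in Lemma~\ref{lem:induction2-2}), your argument coincides with the paper's.
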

The beauty of this theorem is that, in contrast to Theorem \ref{thm:gevrey1}, its result does not deteriorate as dimension increases. We also have a corollary similar to Corollary \ref{cor:gevrey1}, however with a weaker conclusion. Moreover, it is \emph{not} uniform in the time $t\ge 0$ but only holds on finite, but arbitrary, time intervals $[0,T_0]$.
\begin{corollary}\label{cor:gevrey2}
	Under the same assumptions as in Theorem \ref{thm:gevrey2}, for any weak solution
	$f$ of the Cauchy problem \eqref{eq:cauchyproblem} and any $0<T_0<\infty$ there exists $\widetilde{\beta}>0$ and $M<\infty$ such that
	\begin{align}\label{eq:cor-gevrey2}
		\sup_{0\le t\le T_0}\sup_{\eta\in\R^d} e^{ \widetilde{\beta} t \langle \eta \rangle^{2\alpha}} \, |\hat{f}(t,\eta)| \le M .
	\end{align}
\end{corollary}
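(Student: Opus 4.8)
The plan is to read off the pointwise Fourier bound \eqref{eq:cor-gevrey2} from the machinery already set up for Theorem \ref{thm:gevrey2}, in complete analogy with the way Corollary \ref{cor:gevrey1} was obtained from the proof of Theorem \ref{thm:gevrey1}. The induction underlying Theorem \ref{thm:gevrey2} follows the pattern of the proof of Theorem \ref{thm:gevrey1}: it runs over length scales $\Lambda_N\to\infty$ and at each scale the Gronwall step (as in \eqref{eq:gronwall-final}) produces a weighted $L^2$ bound that is \emph{uniform} on $[0,T_0]$. So the first step is to record that, for the $\beta>0$ furnished by that proof,
\begin{align*}
	B:=\sup_{0\le t\le T_0}\|G(t,D_v)f(t,\cdot)\|_{L^2(\R^d)}<\infty ,
\end{align*}
which is slightly more than the pointwise-in-$t$ statement \eqref{eq:local-time-uniform-2} but is exactly what the inductive argument delivers (with $B\le\|f_0\|_{L^2(\R^d)}e^{C_{f_0}T_0}$ or an analogous $T_0$-dependent constant). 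Combined with moment propagation (Remark \ref{rem:ind2}) we also have $A_m:=\sup_{t\ge 0}\|f(t,\cdot)\|_{L^1_m(\R^d)}<\infty$.

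The second step is to convert this \emph{global} weighted $L^2$ bound into a pointwise bound on $\hat f$ by applying Lemma \ref{lem:induction2} (or its codimension-$2$ refinement, based on Corollary \ref{cor:ddlemma} with $n=2$, which is used in the proof of Theorem \ref{thm:gevrey2}) with the radius taken large enough. Fix $\eta\in\R^d$ and choose $\Lambda\ge\max\bigl(\Lambda_0,\tfrac{2}{1+\sqrt 2}|\eta|\bigr)$, so that $|\eta|\le\widetilde\Lambda=\tfrac{1+\sqrt 2}{2}\Lambda$; since $\|G_{\sqrt 2\Lambda}f\|_{L^2}\le\|G f\|_{L^2}\le B$ and $\|f(t,\cdot)\|_{L^1_m}\le A_m$, Lemma \ref{lem:induction2} gives
\begin{align*}
	|\hat f(t,\eta)|\le K_1\,G(t,\eta)^{-\theta}=K_1\,e^{-\theta\beta t\langle\eta\rangle^{2\alpha}},\qquad 0\le t\le T_0 ,
\end{align*}
with $K_1=L_{m,d}B^{\theta}$ and $\theta=\tfrac{2m}{2m+d}$ (respectively $\theta=\tfrac{2m}{2m+2}$ in the codimension-$2$ version). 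The constant $K_1$ depends only on $d,m,A_m,B$ — in particular not on $\eta$ — so letting $\eta$ range over $\R^d$ and setting $\widetilde\beta:=\theta\beta$, $M:=K_1$ yields precisely \eqref{eq:cor-gevrey2}.

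Two observations close the picture. The factor $\theta<1$ is why $\widetilde\beta$ is strictly smaller than the $\beta$ of Theorem \ref{thm:gevrey2}, which is the ``weaker conclusion''; and since $B$ grows like $e^{C_{f_0}T_0}$ and the admissible $\beta$ must be chosen small depending on $T_0$ (so that the commutation error is absorbed by the coercivity gain), both $\widetilde\beta$ and $M$ genuinely depend on $T_0$ and cannot be made independent of it — this is the meaning of ``not uniform in the time $t\ge 0$''. The only step that requires care, and which I regard as the main (minor) obstacle, is the first one: one has to revisit the inductive proof of Theorem \ref{thm:gevrey2} to extract the $[0,T_0]$-uniform weighted $L^2$ bound $B$, rather than using the pointwise-in-$t$ conclusion \eqref{eq:local-time-uniform-2} as a black box — just as was done implicitly in the proof of Corollary \ref{cor:gevrey1}.
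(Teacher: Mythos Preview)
Your proposal is correct and follows essentially the same route as the paper: use the uniform-in-$t$ weighted $L^2$ bound $\|Gf\|_{L^2}\le B$ coming out of the induction in Theorem \ref{thm:gevrey2}, then convert it to a pointwise bound via Corollary \ref{cor:ddlemma} with $n=d$ (equivalently, Lemma \ref{lem:induction2}), yielding $\widetilde\beta=\tfrac{2m}{2m+d}\,\beta$. Two small remarks: the uniform bound $B=\|f_0\|_{L^2}e^{C_{f_0}T_0}$ is already explicit at the end of the proof of Theorem \ref{thm:gevrey2}, so no extra ``revisiting'' is needed; and your parenthetical alternative of using the codimension-$2$ refinement (Lemma \ref{lem:induction2-2}) does \emph{not} work here, since that lemma only controls spherical averages $\int_{\S^{d-2}}|\hat f|\,\mathrm d\omega$, not pointwise values of $\hat f$ --- the paper, like your main argument, uses the full $n=d$ version.
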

The proof of Theorem \ref{thm:gevrey2} is again based on an induction over length scales in Fourier space. Having a close look at the integrals $I_{d,\Lambda}$ and $I_{d,\Lambda}^+$ from Lemma \ref{lem:ce} and using that $\epsilon(\alpha,\gamma)$ is decreasing in $\gamma $, one sees that it should be enough to bound expressions of the form
\begin{align*}
	& \int_{\S^{d-2}(\eta)} G(\eta^-)^{\epsilon(\alpha,1)} |\hat{f}(\eta^-)|
		\1_{\tfrac{\Lambda}{\sqrt{2}}}(|\eta^-|)\, \mathrm{d}\omega \\
	\intertext{and}
	& \int_{\S^{d-2}(\eta^+)} G(\eta^-)^{\epsilon(\alpha,1)} |\hat{f}(\eta^-)|
		\1_{\tfrac{\Lambda}{\sqrt{2}}}(|\eta^-|)\, \mathrm{d}\omega
\end{align*}
uniformly in $\eta$ and $\theta$, respectively $\eta^+$ and $\vartheta$, with the parametrization \eqref{eq:etapar}, respectively \eqref{eq:etapluspar}, that is, instead of having to use the purely pointwise estimates  expressed in the hypothesis $\mathrm{Hyp}1_\Lambda$ from the previous section, one can take advantage of averaging over codimension $2$ spheres first. This motivates

\begin{definition}[Hypothesis $\mathrm{Hyp2}_{\Lambda}$(M)] \label{def:Hyp_Lambda2}
    Let $M\geq 0$ be finite. Then for all $0\le t\le T_0$,
    \begin{align}\label{eq:Hyp_Lambda2}
\sup_{\zeta\in\R^d\setminus\{0\}} \sup_{(z,\rho)\in A_{\Lambda}} \int_{\S^{d-2}(\zeta)} G\left(t,z\tfrac{\zeta}{|\zeta|} - \rho \omega \right)^{\epsilon(\alpha, 1)} \left|\hat{f}\left(t,z\tfrac{\zeta}{|\zeta|} - \rho \omega \right)\right|\,\mathrm{d}\omega \leq M,
    \end{align}
where $A_{\Lambda} = \{(z, \rho)\in \R^2: 0\leq z \leq \rho, z^2 + \rho^2 \leq \Lambda^2\}$ and $\S^{d-2}(\zeta) = \{ \omega\in \R^d: \omega \perp \zeta, |\omega| = 1\}$.
\end{definition}

Again, we have
\begin{lemma}\label{lem:induction1-2}
	Let $\alpha \leq \nu$, define $c_{b,d,2} = \int_{0}^{\frac{\pi}{2}} \sin^d \theta b(\cos\theta)\,\mathrm{d}\theta$ (which is finite by the integrability assumption \eqref{eq:cross-section2}), and
	let $\beta\le \frac{\tilde{C}_{f_0}}{(1+2^{d-1}) c_{b,d,2}\alpha T_0 M +1}$. Then, for any weak solution of the homogenous Boltzmann equation,
	\begin{equation} \label{eq:inductionbound1-2}
		(\mathrm{Hyp2}_{\Lambda}) \quad \Rightarrow \quad \|G_{\sqrt{2}\Lambda} f \|_{L^2(\R^d)} \le \|\1_{\sqrt{2}\Lambda}(D_v) f_0\|_{L^2(\R^d)} \, e^{C_{f_0}T_0}
	\end{equation}
	for all $0\le t\le T_0$.
\end{lemma}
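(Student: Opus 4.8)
The plan is to follow the proof of Lemma \ref{lem:induction1} almost verbatim, the only new point being that, instead of controlling the factor depending on $\eta^-$ pointwise, we first integrate it over the codimension $2$ sphere and then invoke the hypothesis $\mathrm{Hyp2}_{\Lambda}(M)$. Starting from Lemma \ref{lem:ce} applied to the weight $G_{\sqrt 2\Lambda}$, the commutation error is bounded by $I_{d,\sqrt 2\Lambda}+I_{d,\sqrt 2\Lambda}^+$. In both integrals the exponent of $G(\eta^-)$ is $\epsilon(\alpha,\cot^2\tfrac{\theta}{2})$, respectively $\epsilon(\alpha,\cot^2\vartheta)$; since $\cot^2\tfrac{\theta}{2}\ge 1$ for $\theta\in[0,\tfrac{\pi}{2}]$ and $\cot^2\vartheta\ge 1$ for $\vartheta\in[0,\tfrac{\pi}{4}]$, the monotonicity in Lemma \ref{lem:abalpha} gives $\epsilon(\alpha,\cdot)\le\epsilon(\alpha,1)$, and as $G\ge 1$ we may replace both exponents by $\epsilon(\alpha,1)$ without decreasing the integrands.

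The key step is to recognise the resulting inner spherical integrals as instances of the supremum defining $\mathrm{Hyp2}_{\Lambda}(M)$. In $I_{d,\sqrt 2\Lambda}$ the parametrisation \eqref{eq:etapar} writes $\eta^-=z\tfrac{\eta}{|\eta|}-\rho\,\omega$ with $z=|\eta|\sin^2\tfrac{\theta}{2}$, $\rho=|\eta|\sin\tfrac{\theta}{2}\cos\tfrac{\theta}{2}$ and $\omega\in\S^{d-2}(\eta)$; then $0\le z\le\rho$ because $\tan\tfrac{\theta}{2}\le 1$, and $z^2+\rho^2=|\eta^-|^2\le\Lambda^2$ on the support of the factor $\1_{\Lambda}(|\eta^-|)$, so $(z,\rho)\in A_{\Lambda}$ and the $\S^{d-2}(\eta)$-integral is $\le M$ uniformly in $\eta$ and $\theta$. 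Likewise, in $I_{d,\sqrt 2\Lambda}^+$ the parametrisation \eqref{eq:etapluspar} gives $\eta^-=-|\eta^+|\tan\vartheta\,\omega$, i.e. $z=0$ and $\rho=|\eta^+|\tan\vartheta$, with $z^2+\rho^2=|\eta^-|^2\le\Lambda^2$ on the relevant set, so again $(z,\rho)\in A_{\Lambda}$ and the $\S^{d-2}(\eta^+)$-integral is $\le M$. Pulling $M$ out of both integrals, and bounding $\sin\vartheta\le\sin 2\vartheta$ followed by the substitution $\theta=2\vartheta$ in the $I^+$-integral to convert its angular factor into $c_{b,d,2}$, I obtain
\begin{align*}
	I_{d,\sqrt 2\Lambda} &\le \alpha\beta t\,M\,c_{b,d,2}\,\|G_{\sqrt 2\Lambda}f\|_{H^{\alpha}}^2, \\
	I_{d,\sqrt 2\Lambda}^+ &\le 2^{d-1}\,\alpha\beta t\,M\,c_{b,d,2}\,\|G_{\sqrt 2\Lambda}f\|_{H^{\alpha}}^2,
\end{align*}
hence the commutation error is at most $(1+2^{d-1})\,\alpha\beta t\,M\,c_{b,d,2}\,\|G_{\sqrt 2\Lambda}f\|_{H^{\alpha}}^2$.

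Finally I would insert this estimate into Corollary \ref{cor:gronwallbound}. Since $\alpha\le\nu$ one has $\|G_{\sqrt 2\Lambda}f\|_{H^{\alpha}}\le\|G_{\sqrt 2\Lambda}f\|_{H^{\nu}}$, and the choice $\beta\le\widetilde{C}_{f_0}/((1+2^{d-1})c_{b,d,2}\alpha T_0 M+1)$ guarantees $(1+2^{d-1})\alpha\beta t\,M\,c_{b,d,2}+\beta\le\widetilde{C}_{f_0}$ for all $t\le T_0$, so the $H^{\nu}$ and $H^{\alpha}$ contributions combine to a non-positive term and drop out; what remains is $\|G_{\sqrt 2\Lambda}f\|_{L^2}^2\le\|\1_{\sqrt 2\Lambda}(D_v)f_0\|_{L^2}^2+2C_{f_0}\int_0^t\|G_{\sqrt 2\Lambda}f\|_{L^2}^2\,\mathrm{d}\tau$, and Gronwall's inequality yields \eqref{eq:inductionbound1-2}. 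The only nonroutine point is the geometric bookkeeping of the second paragraph: verifying that the vectors $\eta^-$ in \eqref{eq:etapar} and \eqref{eq:etapluspar} are precisely of the form $z\tfrac{\zeta}{|\zeta|}-\rho\,\omega$ with $(z,\rho)\in A_{\Lambda}$ and $\omega\in\S^{d-2}(\zeta)$ occurring in $\mathrm{Hyp2}_{\Lambda}(M)$, uniformly over the remaining integration variables $\theta,\vartheta$ and $\eta,\eta^+$; everything else is a literal transcription of the proof of Lemma \ref{lem:induction1}.
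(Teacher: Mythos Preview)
Your proof is correct and follows essentially the same approach as the paper: you bound the exponent $\epsilon(\alpha,\cdot)$ by $\epsilon(\alpha,1)$ using monotonicity, identify the inner $\S^{d-2}$-integrals in $I_{d,\sqrt 2\Lambda}$ and $I_{d,\sqrt 2\Lambda}^+$ as instances of the supremum in $\mathrm{Hyp2}_{\Lambda}(M)$ via exactly the same parametrisations $(z,\rho)=(|\eta|\sin^2\tfrac{\theta}{2},|\eta|\sin\tfrac{\theta}{2}\cos\tfrac{\theta}{2})$ and $(z,\rho)=(0,|\eta^+|\tan\vartheta)$, and then finish with Corollary~\ref{cor:gronwallbound} and Gronwall as in Lemma~\ref{lem:induction1}. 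The paper does the same and simply writes ``the rest of the proof is the same as in the proof of Lemma~\ref{lem:induction1}'' for the final step.
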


\begin{proof} Using the monotonicity of $\epsilon(\alpha,\gamma)$ in $\gamma$ and \eqref{eq:Id} one sees
\begin{align*}
		I_{d,\sqrt{2}\Lambda} \le \alpha \beta t
			\int_{\R^d}&\left(\int_{0}^{\tfrac{\pi}{2}} \sin^{d}\theta b(\cos\theta)
			\left(\int_{\S^{d-2}(\eta)}  G(\eta^-)^{\epsilon\left(\alpha, 1\right)}\,|\hat{f}(\eta^-)| \, \1_{\Lambda}(|\eta^-|)\,\mathrm{d}\omega \right) \,\mathrm{d}\theta \right) \\
			&\qquad \times |G_{\sqrt{2}\Lambda}(\eta) \hat{f}(\eta)|^2 \,\langle \eta \rangle^{2\alpha}  \, \mathrm{d}\eta
\end{align*}
where $\eta^-= \eta^-(\eta,\theta,\omega)$ is expressed via the parametrization \eqref{eq:etapar}. For $\sigma=(\theta, \omega)\in [0, \frac{\pi}{2}]\times \S^{d-2}$, one has $\eta^-  = |\eta|\sin^2\frac{\theta}{2} \frac{\eta}{|\eta|}+ |\eta| \sin\frac{\theta}{2}\cos\frac{\theta}{2} \, \omega$ and if $|\eta|\le \sqrt{2}\Lambda$, then $|\eta^-|\le \Lambda$. Identifying $z= |\eta|\sin^2\tfrac{\theta}{2}$ and $\rho= |\eta|\sin\frac{\theta}{2}\cos\frac{\theta}{2} $, and the direction of $\zeta$ with the direction of $\eta$,
 hypothesis $(\mathrm{Hyp2}_{\Lambda})$ clearly implies
 \begin{align*}
 	\sup_{|\eta|\le \sqrt{2}\Lambda}\sup_{\theta\in [0,\pi/2]}
 	\int_{\S^{d-2}(\eta)}  G(\eta^-)^{\epsilon\left(\alpha, 1\right)}\,|\hat{f}(\eta^-)| \, \1_{\Lambda}(|\eta^-|)\,\mathrm{d}\omega
 	\le M
 \end{align*}
	It follows that
	\begin{align*}
		I_{d,\sqrt{2}\Lambda} &\leq  \alpha \beta t \, M \int_{\R^d} \int_0^{\frac{\pi}{2}} \sin^d\theta b(\cos\theta)\, \mathrm{d}\theta \,|G_{\sqrt{2}\Lambda}(\eta) \hat{f}(\eta)|^2 \, \langle \eta \rangle\,\mathrm{d}\eta \\
		&= \alpha \beta t \, M \, c_{b,d,2} \|G_{\sqrt{2}\Lambda}f\|_{H^{\alpha}(\R^d)}^2.
	\end{align*}
Similarly one has
\begin{align*}
		I_{d,\sqrt{2}\Lambda}^+ \le 2^{d} \alpha \beta t
			\int_{\R^d}&\left(\int_{0}^{\tfrac{\pi}{4}} \sin^{d}\vartheta b(\cos2\vartheta)
			\left(\int_{\S^{d-2}(\eta^+)}  G(\eta^-)^{\epsilon\left(\alpha, 1\right)}\,|\hat{f}(\eta^-)| \, \1_{\Lambda}(|\eta^-|)\,\mathrm{d}\omega \right) \,\mathrm{d}\vartheta \right) \\
			&\qquad \times |G_{\sqrt{2}\Lambda}(\eta^+) \hat{f}(\eta^+)|^2 \,\langle \eta^+ \rangle^{2\alpha}  \, \mathrm{d}\eta^+
\end{align*}
where $\eta^-= \eta^-(\eta,\vartheta,\omega)$ is expressed via the parametrization \eqref{eq:etapluspar}. The vectors $\eta^-$ and $\eta^+$ are orthogonal and we have $\eta^-  = -|\eta^+| \tan\vartheta\,\omega$ for $(\vartheta, \omega)\in[0, \frac{\pi}{4}] \times \S^{d-2}(\eta^+)$.
	
	Setting $z=0$ and $\rho= |\eta^+|\tan\vartheta$ we have $\rho=|\eta^-|\le \Lambda$ in the $\vartheta$ and $\eta^+$ integrals above. Thus
	  $(\mathrm{Hyp2}_{\Lambda})$ again implies
  \begin{align*}
 	\sup_{|\eta^+|\le \sqrt{2}\Lambda}\sup_{\vartheta\in [0,\pi/4]}
 	\int_{\S^{d-2}(\eta^+)}  G(\eta^-)^{\epsilon\left(\alpha, 1\right)}\,|\hat{f}(\eta^-)| \, \1_{\Lambda}(|\eta^-|)\,\mathrm{d}\omega
 	\le M
 \end{align*}
Hence,
	\begin{align*}
		I_{d,\sqrt{2}\Lambda}^+ &\leq 2^{d} \alpha \beta t \, M \int_0^{\frac{\pi}{2}} \sin^d\theta b(\cos\theta)\, \mathrm{d}\theta  \int_{\R^d} \,|G_{\sqrt{2}\Lambda}(\eta^+) \hat{f}(\eta^+)|^2 \, \langle \eta^+ \rangle\,\mathrm{d}\eta^+ \\
		&\leq 2^{d-1} \alpha \beta t \, M \, c_{b,d,2} \|G_{\sqrt{2}\Lambda}f\|_{H^{\alpha}(\R^d)}^2.
	\end{align*}
	The rest of the proof is the same as in the proof of Lemma \ref{lem:induction1}.
\end{proof}

To close the induction process, we next show
\begin{lemma}\label{lem:induction2-2}
	    	Let $\beta\leq \frac{1}{T_0}$.     	Assume that there exist finite constants $A_m$ and $B$, such that
    	\begin{equation}\label{eq:ind2-assumption1-2}
    	\|f(t, \cdot)\|_{L^1_m} \leq A_m , \quad  \text{and } \quad 	\| (G_{\sqrt{2}\Lambda} f)(t, \cdot) \|_{L^2(\R^d)} \le B
    	\end{equation}
    	for some integer $m\geq 2$ and for all $0\le t\le T_0$.
    	
	Set $\widetilde{\Lambda}:= \frac{1+\sqrt{2}}{2} \Lambda$
		and assume that
	\begin{align}\label{eq:ind2-2-assumption2}
		\Lambda\ge \Lambda_0 := \frac{4 \sqrt{2}}{\sqrt{2}-1} .
	\end{align}
Then for all $\zeta\in\R^d\setminus\{0\}$ and $0\leq z \leq \rho$ with $\rho^2 + z^2 \leq \widetilde{\Lambda}^2$ one has
    	\begin{equation*}
    		\int_{\S^{d-2}(\zeta)} \left|\hat{f}\left(t,z \tfrac{\zeta}{|\zeta|} + \rho \omega\right)\right|\,\mathrm{d}\omega \leq K_2 \, \widetilde{G}(t,z^2+ \rho^2 )^{-\frac{2m}{2m+2}} \quad \text{for all } 0\le t\le T_0
    	\end{equation*}
		 with a constant $K_2$ depending only on $d,m,A_m$, and $B$. Recall that
		 $\widetilde{G}(t, s) = e^{\beta t (1+s)^\alpha}$.
\end{lemma}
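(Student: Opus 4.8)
The plan is to reduce the claimed bound to the two–dimensional Kolmogorov--Landau estimate of Corollary \ref{cor:ddlemma} with $n=2$, which is exactly what produces the exponent $\tfrac{2m}{2m+2}$; the averaging over $\mathbb{S}^{d-2}(\zeta)$ is what collapses the ``effective dimension'' from $d$ to $2$. Fix $\zeta\in\R^d\setminus\{0\}$, and for each $\omega\in\mathbb{S}^{d-2}(\zeta)$ consider $g_\omega\colon\R^2\to\C$, $g_\omega(z',\rho'):=\hat f\big(z'\tfrac{\zeta}{|\zeta|}+\rho'\omega\big)$. Since $\zeta/|\zeta|$ and $\omega$ are orthonormal, $g_\omega$ is the composition of $\hat f$ with an isometric embedding $\R^2\hookrightarrow\R^d$; it is $\mathcal C^m$ by Riemann--Lebesgue, with $\|g_\omega\|_{L^\infty(\R^2)}\le\|\hat f\|_{L^\infty}\le A_m$ and, exactly as in the proof of Lemma \ref{lem:induction2}, $\|D^m g_\omega\|_{L^\infty(\R^2)}\le\|D^m\hat f\|_{L^\infty(\R^d)}\le(2\pi)^m A_m$ — both uniform in $\omega$. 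Applying Corollary \ref{cor:ddlemma} to $g_\omega$ with the cube $Q_{(z,\rho)}=[z,z+2]\times[\rho,\rho+2]$ (this is the cube oriented away from $(z,\rho)$, as $z,\rho\ge0$) gives
\begin{align*}
  \Big|\hat f\big(z\tfrac{\zeta}{|\zeta|}+\rho\omega\big)\Big|
  \le L_{m,2}\Big(\int_{Q_{(z,\rho)}}\big|\hat f\big(z'\tfrac{\zeta}{|\zeta|}+\rho'\omega\big)\big|^2\,\mathrm{d}z'\,\mathrm{d}\rho'\Big)^{\frac{m}{2m+2}},
\end{align*}
where $L_{m,2}$ depends only on $m$ and $A_m$ (the constant in Corollary \ref{cor:ddlemma} only grows when the two sup-norms are enlarged, so the uniform bounds above may be substituted).

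Next I would integrate this over $\omega\in\mathbb{S}^{d-2}(\zeta)$ and, since $\tfrac{m}{2m+2}<1$, use Jensen's inequality (concavity of $t\mapsto t^{m/(2m+2)}$) to move the $\omega$-integral inside:
\begin{align*}
  \int_{\mathbb{S}^{d-2}(\zeta)}\Big|\hat f\big(z\tfrac{\zeta}{|\zeta|}+\rho\omega\big)\Big|\,\mathrm{d}\omega
  \le L_{m,2}\,|\mathbb{S}^{d-2}|^{1-\frac{m}{2m+2}}\Big(\int_{\mathbb{S}^{d-2}(\zeta)}\int_{Q_{(z,\rho)}}\big|\hat f(\eta)\big|^2\,\mathrm{d}z'\,\mathrm{d}\rho'\,\mathrm{d}\omega\Big)^{\frac{m}{2m+2}},
\end{align*}
with $\eta=\eta(z',\rho',\omega)=z'\tfrac{\zeta}{|\zeta|}+\rho'\omega$. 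On $Q_{(z,\rho)}$ one has $z'\ge z\ge0$ and $\rho'\ge\rho\ge0$, hence $|\eta|^2=z'^2+\rho'^2\ge z^2+\rho^2$; since $\widetilde G$ is increasing and $G(\eta)=\widetilde G(|\eta|^2)$, this yields $\widetilde G(z^2+\rho^2)^2|\hat f(\eta)|^2\le G(\eta)^2|\hat f(\eta)|^2$. Moreover the diameter of $Q_{(z,\rho)}$ in $\R^2$ is $2\sqrt2$, so when $z^2+\rho^2\le\widetilde\Lambda^2$ every $\eta$ occurring satisfies $|\eta|\le\widetilde\Lambda+2\sqrt2\le\sqrt2\,\Lambda$ by the choice of $\Lambda_0$ in \eqref{eq:ind2-2-assumption2} (Pythagoras), so $G(\eta)=G_{\sqrt2\Lambda}(\eta)$ there. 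Hence the inner integral is at most $\widetilde G(z^2+\rho^2)^{-2}\,\mathcal J$, where $\mathcal J:=\int_{\mathbb{S}^{d-2}(\zeta)}\int_{Q_{(z,\rho)}}G_{\sqrt2\Lambda}(\eta)^2|\hat f(\eta)|^2\,\mathrm{d}z'\,\mathrm{d}\rho'\,\mathrm{d}\omega$.

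It remains to bound $\mathcal J$ by a constant, and here I would split on the size of $\rho$. If $\rho\ge1$: writing $y=\rho'\omega$ in the hyperplane $\{\zeta\}^\perp\cong\R^{d-1}$ gives $\mathrm{d}y=\rho'^{\,d-2}\,\mathrm{d}\rho'\,\mathrm{d}\omega$ with $\rho'^{\,d-2}\ge\rho^{\,d-2}\ge1$, so $\mathrm{d}\rho'\,\mathrm{d}\omega\le\mathrm{d}y$ on the shell $\rho\le|y|\le\rho+2$, and extending the $z'$- and $y$-integrations to all of $\R\times\{\zeta\}^\perp\cong\R^d$,
\begin{align*}
  \mathcal J\le\int_{\R}\int_{\{y\perp\zeta\}}G_{\sqrt2\Lambda}\big(z'\tfrac{\zeta}{|\zeta|}+y\big)^2\big|\hat f\big(z'\tfrac{\zeta}{|\zeta|}+y\big)\big|^2\,\mathrm{d}y\,\mathrm{d}z'=\|G_{\sqrt2\Lambda}f\|_{L^2(\R^d)}^2\le B^2
\end{align*}
by Plancherel and \eqref{eq:ind2-assumption1-2}. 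If instead $\rho<1$, then $z^2+\rho^2<2$, so (using $\alpha<1$, $t\le T_0$ and $\beta\le1/T_0$) $\widetilde G(z^2+\rho^2)\le e^{\beta T_0\,3^\alpha}\le e^{3}$, while $\int_{\mathbb{S}^{d-2}(\zeta)}|\hat f|\,\mathrm{d}\omega\le|\mathbb{S}^{d-2}|\,A_m$; thus the asserted inequality holds trivially with constant $e^{3}|\mathbb{S}^{d-2}|A_m$. Combining the two ranges, $K_2:=\max\big\{\,e^{3}|\mathbb{S}^{d-2}|A_m,\ L_{m,2}|\mathbb{S}^{d-2}|^{1-\frac{m}{2m+2}}B^{\frac{2m}{2m+2}}\,\big\}$ works, and it depends only on $d,m,A_m,B$ (the case $d=2$ is covered verbatim, with $\mathbb{S}^0$ a two-point set and no Jacobian degeneracy). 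Note also that the conclusion is unaffected by whether one writes $z\tfrac{\zeta}{|\zeta|}+\rho\omega$ or $z\tfrac{\zeta}{|\zeta|}-\rho\omega$, since the $\omega$-average runs over the full sphere.

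The one genuinely delicate point is the return from the ``generalized polar'' integral over $\mathbb{S}^{d-2}(\zeta)\times[\rho,\rho+2]$ to an honest $L^2(\R^d)$ integral: the Jacobian $\rho'^{\,d-2}$ degenerates as $\rho\to0$, which forces the separate (but trivial) treatment of $\rho<1$, where the target weight $\widetilde G(z^2+\rho^2)$ is harmlessly $O(1)$. The only other thing needing care is the geometric claim that $Q_{(z,\rho)}$ stays inside the ball of radius $\sqrt2\,\Lambda$, which is precisely what the hypothesis $\Lambda\ge\Lambda_0=\tfrac{4\sqrt2}{\sqrt2-1}$ secures; the uniformity of $L_{m,2}$ in $\omega$ is immediate from the monotonicity of the constant in Corollary \ref{cor:ddlemma}.
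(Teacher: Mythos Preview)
Your proof is correct and follows essentially the same route as the paper's: apply Corollary \ref{cor:ddlemma} with $n=2$ to the two-variable slice $(z',\rho')\mapsto\hat f(z'\tfrac{\zeta}{|\zeta|}+\rho'\omega)$, integrate over $\omega\in\mathbb{S}^{d-2}(\zeta)$ with Jensen, insert the radially increasing weight, and then convert the $(\rho',\omega)$-integration to a genuine $L^2(\R^d)$ norm via the cylindrical Jacobian $\rho'^{\,d-2}$. The only difference is cosmetic: the paper splits the two regimes at $z^2+\rho^2=\Lambda_0^2$ (so that $\rho\ge\Lambda_0/\sqrt2$ in the nontrivial case), whereas you split at $\rho=1$; both serve the same purpose of sidestepping the Jacobian degeneracy near $\rho'=0$, and both yield a $K_2$ depending only on $d,m,A_m,B$.
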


\begin{proof} Fix $0<t\le T_0$, $\zeta\in \R^d\setminus\{0\}$, and
	set $F(\rho, z):= \hat{f}(t, z \frac{\zeta}{|\zeta|}+\rho\omega)$, where we drop, for simplicity, the dependence on the time $t$ in our notation for $F$.
	Then, since $\|f(t,\cdot)\|_{L^1_m}\le A_m$ one has $\hat{f}(t,\cdot)\in \mathcal{C}^m(\R^d)$  and thus also  $F\in \mathcal{C}^m(\R^2)$ with $\|F\|_{L^{\infty}} \leq A_m$ $\|\partial^m_\rho F\|_{L^{\infty}} \leq (2\pi)^mA_m$, and $\|\partial^m_z F\|_{L^{\infty}} \leq (2\pi)^mA_m$ and Corollary \ref{cor:ddlemma} applied to $F$ yields
	\begin{align}\label{eq:2dim}
		\left|\hat{f}\left( z \tfrac{\zeta}{|\zeta|}+\rho\omega\right)\right| &\leq L_{m,2} \left( \int_{\rho}^{\rho+2} \int_z^{z+2}
		\left|\hat{f}\left( x \tfrac{\zeta}{|\zeta|} + y\omega \right)\right|^2\,\mathrm{d}x \mathrm{d}y \right)^{\frac{m}{2m+2}}.
	\end{align}
	where we also dropped the dependence of $\hat{f}$ on the time variable $t$. Furthermore, we will drop the time dependence of $G$ and $\widetilde{G}$ in the following, that is, $G(\xi)$  and $\widetilde{G}(s)$ will stand for $G(t,\xi)$, respectively $\widetilde{G}(t,s)$.
	
	To recover the $L^2$ norm of $G_{\sqrt{2}\Lambda}f$ in the right hand side of \eqref{eq:2dim} we now need to take care of three things:
	\begin{enumerate}[label=(\roman*)]
	\item Multiply with a suitable power of the radially increasing weight $G$.
		\item Integrate over the missing $d-2$ directions, which will be taken care of by integrating over $\S^{d-2}(\zeta)$ and taking into account additional factors to get the $d$-dimensional Lebesgue measure.
		\item Ensure that the region of integration $[\rho,\rho+2]\times[z, z+2]\times \S^{d-2}(\zeta)$ stays inside a ball of radius $\sqrt{2}\Lambda$ uniformly in the direction of $\zeta$. This we control by choosing $\Lambda_0$ large enough (a simple geometric consideration shows that $\Lambda_0$ from the statement of Lemma \ref{lem:induction2-2} works) and restricting $\rho$ and $z$ by $\rho^2 + z^2 \leq \widetilde{\Lambda}^2$.
	\end{enumerate}

	Let $z,\rho\ge 0$. In  the region of integration in \eqref{eq:2dim}, the point $\rho\omega + z\tfrac{\eta}{|\eta|}$ is closest to the origin in $\R^d$, and since the weight $G$ is radially increasing, we get
	\begin{align}\label{eq:2dbound}
	\begin{split}
		\left|\hat{f}\left(z \tfrac{\zeta}{|\zeta|}+\rho\omega\right)\right| &\leq L_{m,2} \widetilde{G}\left(z^2+\rho^2\right)^{-\frac{2m}{2m+2}} \\
		&\qquad \left( \int_{\rho}^{\rho+2} \int_z^{z+2} G\left(x \tfrac{\zeta}{|\zeta|}+y\omega\right)^2  \left|\hat{f}\left(x \tfrac{\zeta}{|\zeta|}+y\omega\right)\right|^2\,\mathrm{d}x\mathrm{d}y \right)^{\frac{m}{2m+2}}.
		\end{split}
	\end{align}
	
	 Assume that $z^2+\rho^2\le \widetilde{\Lambda}^2$. Then the integration of inequality \eqref{eq:2dbound} over $\S^{d-2}(\zeta)$ yields with an application of Jensen's inequality ($t\mapsto t^{\frac{m}{2m+2}}$ is concave!)
	\begin{align*}
		\int_{\S^{d-2}(\zeta)} &\left|\hat{f}\left(z \tfrac{\zeta}{|\zeta|}+\rho\omega\right)\right|\, \mathrm{d}\omega \leq L_{m,2}  |\S^{d-2}|^{\frac{m+2}{2m+2}}\, \widetilde{G}\left(z^2+ \rho^2 \right)^{-\frac{2m}{2m+2}}\\
		& \quad \times \left( \int_{\S^{d-2}(\zeta)} \int_{\rho}^{\rho+2} \int_z^{z+2} G_{\sqrt{2}\Lambda}\left(x \tfrac{\zeta}{|\zeta|}+y\omega\right)^2  \left|\hat{f}\left(x\tfrac{\eta}{|\eta|}+y\omega\right)\right|^2\,\mathrm{d}x\,\mathrm{d}y \,\mathrm{d}\omega \right)^{\frac{m}{2m+2}}.
	\end{align*}
	Now assume additionally $0\leq z \leq \rho$ and $\Lambda_0^2 \leq \rho^2 + z^2 \leq \widetilde{\Lambda}^2$. Since $0\leq z \leq \rho$ we have $\Lambda_0^2 \leq z^2 + \rho^2 \leq 2 \rho^2$ and therefore
	\begin{align*}
		&\int_{\S^{d-2}(\zeta)} \int_{\rho}^{\rho+2} \int_z^{z+2} G_{\sqrt{2}\Lambda}\left(x \tfrac{\zeta}{|\zeta|}+y\omega\right)^2  \left|\hat{f}\left(x \tfrac{\zeta}{|\zeta|}+y\omega\right)\right|^2\,\mathrm{d}x\,\mathrm{d}y \,\mathrm{d}\omega \\
		&\leq 2^{\frac{d-2}{2}} \Lambda_0^{2-d} \int_{\S^{d-2}(\zeta)} \int_{\rho}^{\rho+2} \int_z^{z+2} G_{\sqrt{2}\Lambda}\left(x \tfrac{\zeta}{|\zeta|}+y\omega\right)^2  \left|\hat{f}\left(x \tfrac{\zeta}{|\zeta|}+y\omega\right)\right|^2\,y^{d-2} \,\mathrm{d}x\,\mathrm{d}y \,\mathrm{d}\omega \\
		&\leq 2^{\frac{d-2}{2}} \Lambda_0^{2-d} \|G_{\sqrt{2}\Lambda}f\|_{L^2(\R^d)}^2,
	\end{align*}
	since $y^{d-2} \,\mathrm{d}x\,\mathrm{d}y \,\mathrm{d}\omega$ is the $d$-dimensional Lebesgue measure in the cylindrical coordinates $(x,y\omega)$ with $x\in \R $, $y>0$, $\omega\in \S^{d-2}(\zeta)$ along the cylinder with axis $\zeta $.
	So with the assumption $\|G_{\sqrt{2}\Lambda}f\|_{L^2(\R^d)} \leq B$ we obtain
	\begin{align*}
		\int_{\S^{d-2}(\zeta)} \left|\hat{f}\left(t,z \tfrac{\zeta}{|\zeta|}+\rho\omega\right)\right|\,\mathrm{d}\omega &\leq L_{m,2}  |\S^{d-2}|^{\frac{m+2}{2m+2}}\, \left( 2^{\frac{d-2}{2}} \Lambda_0^{2-d} B^2 \right)^{\frac{m}{2m+2}} \widetilde{G}\left(t,z^2+ \rho^2 \right)^{-\frac{2m}{2m+2}}.
	\end{align*}
	In the case $z^2+ \rho^2 \leq \Lambda_0^2$ we have $\widetilde{G}(t, z^2+\rho^2)^{-1}\, e^{\beta t (1+\Lambda_0^2)^{\alpha}} \geq 1$ and we can simply bound
	\begin{align*}
		\int_{\S^{d-2}(\zeta)} \left|\hat{f}\left(t,z \tfrac{\zeta}{|\zeta|}+\rho\omega\right)\right|\,\mathrm{d}\omega
		&\le
			\widetilde{G}\left(t,z^2+\rho^2\right)^{-\frac{2m}{2m+2}}
			e^{\tfrac{2m}{2m+2}\beta t(1+\Lambda_0^2)^\alpha}
			|\S^{d-2}| \, \|\hat{f}(t,\cdot)\|_{L^\infty(\R^d)} \\
		&\le A_m |\S^{d-2}| e^{1+\Lambda_0^2} \widetilde{G}\left(t,z^2+\rho^2\right)^{-\frac{2m}{2m+2}}
	\end{align*}
	since $\beta\le 1/T_0$, by assumption. So choosing
	\begin{align*}
		K_2:= \max\left( L_{m,2}  |\S^{d-2}|^{\frac{m+2}{2m+2}}\, \left( 2^{\frac{d-2}{2}} \Lambda_0^{2-d} B^2 \right)^{\frac{m}{2m+2}},  A_m |\S^{d-2}| e^{1+\Lambda_0^2}\right)
	\end{align*}
	finishes the proof of the lemma.
\end{proof}

Now we have all the ingredients for the inductive
\begin{proof}[Proof of Theorem \ref{thm:gevrey2}]
		By Lemmata \ref{lem:induction1-2} and \ref{lem:induction2-2} a suitable choice for $A_m$ and  $B$ is
 \begin{align*}
    B&:= \|f_0\|_{L^2(\R^d)} e^{C_{f_0}T_0}, \\
    A_m&:= \sup_{t\geq 0} \|f(t, \cdot)\|_{L^1_m(\R^d)} < \infty.
   \end{align*}
   Note that the finiteness of $A_m$ is guaranteed since $f_0 \in L^1_m(\R^d)$, see Remark \ref{rem:ind2}. We further choose the length scales $\Lambda_N$ to be
  \begin{equation*}
    \Lambda_N := \frac{\Lambda_{N-1}+\sqrt{2}\Lambda_{N-1}}{2} = \frac{1+\sqrt{2}}{2}\Lambda_{N-1} =
    \left( \frac{1+\sqrt{2}}{2} \right)^N \Lambda_0
  \end{equation*}
  with  $\Lambda_0$ now from \eqref{eq:ind2-2-assumption2}, and we set
  \begin{equation*}
    M_2:= \max \left\{ 2 |\S^{d-2}|A_m+1, K_2 \right\}
  \end{equation*}	
  with the constant $K_2$ from Lemma \ref{lem:induction2-2}.

 For the start of the induction, we need $\mathrm{Hyp2}_{\Lambda_0}(M_2)$ to be true.
 Since
 \begin{align*}
 	\sup_{0\le t\le T_0}
 	\sup_{\zeta\in\R^d\setminus\{0\}} \sup_{(z,\rho)\in A_{\Lambda_0}}  &\int_{\S^{d-2}(\zeta)} G\left(t,z\tfrac{\zeta}{|\zeta|} - \rho \omega \right)^{\epsilon(\alpha, 1)} \left|\hat{f}\left(t,z\tfrac{\zeta}{|\zeta|} - \rho \omega \right)\right|\,\mathrm{d}\omega \\
 	&\le |\S^{d-2}|e^{\beta T_0 (1+\Lambda_0^2)} A_m
  \end{align*}
  and from our choice of $M_2$ there exists $\beta_0 >0$ such that $\mathrm{Hyp2}_{\Lambda_0}(M_2)$ is true for all $0\le\beta\le \beta_0$.

  Now, we choose
  \begin{equation*}
  	\beta=\min \left( \beta_0, T_0^{-1},\frac{\tilde{C}_{f_0}}{(1+2^{d-1}) c_{b,d,2} \, \alpha T_0 M_2 +1} \right).
  \end{equation*}
  With this choice, the conditions of Lemma \ref{lem:induction1-2} and \ref{lem:induction2-2} are fulfilled and $\mathrm{Hyp2}_{\Lambda_0}(M_2)$ is true.

  For the induction step assume that $\mathrm{Hyp2}_{\Lambda_N}(M_2)$ is true. Then Lemma \ref{lem:induction1-2} gives
  \begin{equation*}
  	\| G_{\sqrt{2}\Lambda_N} f\|_{L^2(\R^d)} \le \|\1_{\sqrt{2}\Lambda}(D_v) f_0\|_{L^2(\R^d)} \, e^{C_{f_0}T_0} = B
  \end{equation*}
  and then, since $\epsilon(\alpha,1)\le \frac{2m}{2m+2}$ by our choice of $\alpha$, and
  $\Lambda_{N+1}= \widetilde{\Lambda}_N$, Lemma \ref{lem:induction2-2} shows that  $\mathrm{Hyp2}_{\Lambda_{N+1}}(M_2)$ is true, so by induction, it is true for all $N\in\N$.
  Invoking Lemma \ref{lem:induction1-2} again, we also have
  \begin{equation*}
  	\|G_{\sqrt{2}\Lambda_N}f\|_{L^2(\R^d)} \leq B
  \end{equation*}
  for all $N\in\N$ and letting $N\to\infty$, we see
  $\|Gf\|_{L^2(\R^d)} \leq B $, which concludes the proof of Theorem \ref{thm:gevrey2}.
\end{proof}

\begin{proof}[Proof of Corollary \ref{cor:gevrey2}]
Theorem \ref{thm:gevrey2} shows that $Gf\in L^{2}(\R^d)$ for all $0\leq t\leq T_0$. applying Corollary \ref{cor:ddlemma} with $n=d$ to $\hat{f}$ yields
\begin{align*}
	|\hat{f}(\eta)| \leq L_{m,d} G(\eta)^{-\frac{2m}{2m+d}} \left( \int_{Q_{\eta}} G(\zeta)^2 |\hat{f}(\zeta)|^2\,\mathrm{d}\zeta \right)^{\frac{m}{2m+d}} \leq L_{m,d} \|Gf\|_{L^2(\R^d)}^{\frac{2m}{2m+d}}\, G(\eta)^{-\frac{2m}{2m+d}},
\end{align*}
where we also used that the Fourier multiplier is radially increasing. This proves the uniform bound \eqref{eq:cor-gevrey2} with $\widetilde{\beta} = \beta \frac{2m}{2m+d}$.
\end{proof}

\subsection{Gevrey smoothing of weak solutions for $L^2$ initial data: Part III}\label{subsec:partiii}

Under the slightly stronger assumption on the angular collision cross-section $b$, namely that $b$ is bounded away from the singularity, we can state out theorem about Gevrey regularisation in its strongest form.

\begin{theorem}\label{thm:gevrey3}
	Assume that the initial datum $f_0$ satisfies $f_0\geq 0$, $f_0 \in L\log L(\R^d) \cap L^1_{m}(\R^d)$ for some $m\geq 2$, and, in addition, $f_0\in L^2(\R^d)$.
	Further assume that the cross-section $b$ in dimensions $d\geq 2$ satisfies the \emph{singularity condition} \eqref{eq:cross-section} for some $0<\nu<1$ and the \emph{boundedness condition} \eqref{eq:cross-section-bdd}. Let $f$ be a weak solution of the Cauchy problem \eqref{eq:cauchyproblem} with initial datum $f_0$, then for all $0<\alpha\leq \min\left\{\alpha^{m,1}, \nu\right\}$ and all $T_0>0$, there exists $\beta>0$, such that for all $t\in[0, T_0]$ 	
 	\begin{align}\label{eq:local-time-uniform-3}
		e^{\beta t \langle D_v\rangle^{2\alpha}} f(t,\cdot) \in L^2(\R^d),
	\end{align}
	that is, $f\in G^{\tfrac{1}{2\alpha}}(\R^d)$ for all $t\in(0,T_0]$.

	In particular, the weak solution is real analytic if $\nu=\frac{1}{2}$ and ultra-analytic if $\nu>\frac{1}{2}$.
\end{theorem}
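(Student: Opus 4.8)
The overall plan is to follow the architecture of the proofs of Theorems~\ref{thm:gevrey1} and \ref{thm:gevrey2} almost verbatim: using the known $H^\infty$-smoothing one first reduces to initial data additionally in $L^2$, and then runs an induction over dyadic length scales $\Lambda_N=\bigl(\tfrac{1+\sqrt2}{2}\bigr)^N\Lambda_0\to\infty$ in Fourier space, propagating a suitable uniform hypothesis which in the limit $N\to\infty$ gives $Gf=e^{\beta t\langle D_v\rangle^{2\alpha}}f\in L^2$ for $0<t\le T_0$, i.e. $f(t,\cdot)\in G^{\frac{1}{2\alpha}}(\R^d)$. Only the propagated hypothesis changes. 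Since one now aims for the \emph{one-dimensional} exponent $\alpha^{m,1}$, for which $\epsilon(\alpha^{m,1},1)=\tfrac{2m}{2m+1}$, the quantity $G^{\epsilon(\alpha,1)}(\eta^-)|\hat f(\eta^-)|$ should be averaged over a set of codimension one rather than two. Concretely, fix $\theta_0\in(0,\tfrac\pi2)$ and, using the boundedness assumption \eqref{eq:cross-section-bdd}, split the angular integrals in the commutation-error bounds $I_{d,\Lambda}$, $I_{d,\Lambda}^+$ of Lemma~\ref{lem:ce} at $\theta_0$. Away from the singularity ($\theta\in[\theta_0,\tfrac\pi2]$) one averages $\eta^-$ \emph{jointly} over $\theta$ and over the codimension-two sphere $\S^{d-2}$; near the singularity ($\theta\in[0,\theta_0]$) one exploits that $|\eta^-|=|\eta|\sin\tfrac\theta2$ is a small multiple of $|\eta|$ together with the strict-concavity gain $\epsilon(\alpha,\cot^2\tfrac\theta2)\le\tan^{2-2\alpha}\tfrac\theta2$ from Lemma~\ref{lem:abalpha}(i). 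Accordingly I would introduce a hypothesis $\mathrm{Hyp3}_\Lambda(M)$ of the shape $\sup_\zeta\sup_{0<r\le\Lambda}\int_{\theta_0}^{\pi/2}\int_{\S^{d-2}(\zeta)}G(\eta^-)^{\epsilon(\alpha,1)}\,|\hat f(\eta^-)|\,\mathrm{d}\omega\,\mathrm{d}\theta\le M$, with $\eta^-$ expressed via \eqref{eq:etapar} (and likewise via the $\eta^+$-parametrisation \eqref{eq:etapluspar}), so that $\eta^-$ always stays in a ball of radius $\Lambda$.

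The first auxiliary lemma (the analogue of Lemma~\ref{lem:induction1-2}) should read: under $\mathrm{Hyp3}_\Lambda(M)$ and for $\beta$ small enough, $\|G_{\sqrt2\Lambda}f\|_{L^2}\le\|\1_{\sqrt2\Lambda}(D_v)f_0\|_{L^2}\,e^{C_{f_0}T_0}$. Its proof mimics that of Lemma~\ref{lem:induction1}: insert the $\theta_0$-split into $I_{d,\sqrt2\Lambda}$ and $I_{d,\sqrt2\Lambda}^+$. On $[\theta_0,\tfrac\pi2]$ bound $b(\cos\theta)\le C_{\theta_0}$, recognise the $(\theta,\omega)$-average and apply $\mathrm{Hyp3}$, getting a constant multiple of $\|G_{\sqrt2\Lambda}f\|_{H^\alpha}^2$. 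On $[0,\theta_0]$ bound $|\hat f(\eta^-)|\le\|f_0\|_{L^1}$ and use $G(\eta^-)^{\epsilon(\alpha,\cot^2\theta/2)}\le\exp\bigl(C\beta t\,|\eta|^{2\alpha}\theta^2\bigr)$ (from $\epsilon(\alpha,\cot^2\tfrac\theta2)\le\tan^{2-2\alpha}\tfrac\theta2$ and $|\eta^-|=|\eta|\sin\tfrac\theta2$) together with the fact that $\sin^d\theta\,b(\cos\theta)\sim\kappa\,\theta^{1-2\nu}$ is integrable near $0$; this bounds the near-singular contribution by a small constant (controlled by $\theta_0$, through $\theta_0^{2-2\nu}$, and by $\alpha\beta t$) times $\|G_{\sqrt2\Lambda}f\|_{H^\alpha}^2$ — provided $\theta_0$ is chosen small enough (in general depending on the current scale $\Lambda_N$) that the residual $\eta$-weight does not grow beyond $\langle\eta\rangle^{2\alpha}e^{2\beta t\langle\eta\rangle^{2\alpha}}$ on the ball $|\eta|\le\sqrt2\Lambda$. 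With both pieces absorbed, Corollary~\ref{cor:gronwallbound} together with Grönwall's inequality closes the estimate exactly as before.

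The second auxiliary lemma (the analogue of Lemma~\ref{lem:induction2-2}) should convert a local $L^2$-bound into $\mathrm{Hyp3}$ on the smaller scale $\widetilde\Lambda=\tfrac{1+\sqrt2}{2}\Lambda$: assuming $\|f\|_{L^1_m}\le A_m$ and $\|G_{\sqrt2\Lambda}f\|_{L^2}\le B$, apply Corollary~\ref{cor:ddlemma} with $n=1$ to $\hat f$ along the radial direction of $\eta^-$, insert the radially increasing weight $\widetilde G$, integrate over $(\theta,\omega)\in[\theta_0,\tfrac\pi2]\times\S^{d-2}(\zeta)$, apply Jensen, and use that in the coordinates $(s,\theta,\omega)$ the $d$-dimensional Lebesgue measure is, up to constants, $s^{d-1}\cos^{d-2}\tfrac\theta2\,\mathrm{d}s\,\mathrm{d}\theta\,\mathrm{d}\omega$, with $\cos\tfrac\theta2\ge\tfrac1{\sqrt2}$ on $[\theta_0,\tfrac\pi2]$ and $s\ge|\eta^-|$ bounded below on a fixed exterior region (the complementary bounded region being handled crudely via $\|\hat f\|_{L^\infty}\le A_m$, as in Lemma~\ref{lem:induction2-2}). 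This yields $\int_{\theta_0}^{\pi/2}\int_{\S^{d-2}(\zeta)}|\hat f(\eta^-)|\,\mathrm{d}\omega\,\mathrm{d}\theta\le K_3\,\widetilde G(\cdot)^{-2m/(2m+1)}$, and since $\epsilon(\alpha,1)\le\tfrac{2m}{2m+1}$ by $\alpha\le\alpha^{m,1}$ and $G\ge1$, $\mathrm{Hyp3}_{\widetilde\Lambda}(K_3)$ follows. With $B$, $A_m$, $\Lambda_N$ and $M_3=\max\{\dots,K_3\}$ chosen as in the proof of Theorem~\ref{thm:gevrey2} and $\beta$ the minimum of the two constraints above, $\mathrm{Hyp3}_{\Lambda_0}(M_3)$ holds at the base step, propagates to all $N$, and letting $N\to\infty$ gives $\|Gf\|_{L^2}\le B<\infty$.

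The step I expect to be the main obstacle is the near-singularity part of the commutation error in the first lemma. Unlike in Theorem~\ref{thm:gevrey2}, the boundedness of $b$ is of no help there, and one genuinely has to profit from the strict concavity of the multiplier: it is the combination $\epsilon(\alpha,\cot^2\tfrac\theta2)\,\langle\eta^-\rangle^{2\alpha}\lesssim\theta^2|\eta|^{2\alpha}$, valid precisely because $|\eta^-|=|\eta|\sin\tfrac\theta2$ and $\epsilon(\alpha,\cdot)$ decays like $(\cdot)^{\alpha-1}$, that controls the growth of the residual Fourier weight, while the mild singularity $\theta^{1-2\nu}$ is integrable. The delicate bookkeeping is to choose the splitting angle $\theta_0$ (if necessary shrinking with the scale $\Lambda_N$) and $\beta$ so that this near-singular term stays uniformly absorbable into the coercivity term $-\widetilde C_{f_0}\|G_{\sqrt2\Lambda}f\|_{H^\nu}^2$ of Corollary~\ref{cor:gronwallbound} \emph{and} the constants $K_3$, $M_3$ in the inductive scheme remain independent of $\Lambda$; once this balance is struck, everything else is a faithful adaptation of Parts~I and II, including the Kac case and the extension to general moments $m\ge2$ giving $\alpha^{m,1}=\alpha_{m,1}$.
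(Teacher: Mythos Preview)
Your overall architecture is right, and the far-from-singularity part (averaging $G(\eta^-)^{\epsilon(\alpha,1)}|\hat f(\eta^-)|$ jointly over $\theta\in[\theta_0,\tfrac\pi2]$ and $\omega\in\S^{d-2}$, then converting the local $L^2$ bound back into this hypothesis via Corollary~\ref{cor:ddlemma} with $n=1$) matches the paper. The paper carries out your second auxiliary lemma by an explicit change of variables (Lemma~\ref{lem:change of variables}) rather than a direct Jacobian computation in polar coordinates, but this is only a packaging difference.

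The genuine gap is exactly the point you flag as ``the main obstacle'': your treatment of the near-singular region $\theta\in[0,\theta_0]$ does not close. Your estimate $\epsilon(\alpha,\cot^2\tfrac\theta2)\langle\eta^-\rangle^{2\alpha}\lesssim\theta^2|\eta|^{2\alpha}$ is correct, but on the ball $|\eta|\le\sqrt{2}\Lambda$ this forces $\theta_0\lesssim\Lambda^{-\alpha}$ in order to keep $G(\eta^-)^{\epsilon}$ bounded. Then $C_{\theta_0}\sim\theta_0^{-(1+2\nu)}\sim\Lambda^{\alpha(1+2\nu)}\to\infty$, and since the far-from-singularity bound in your first lemma carries the factor $C_{\theta_0}M$, the admissible $\beta$ is driven to zero as $\Lambda\to\infty$. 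The induction cannot be run with a single $\beta>0$, and there is no way to ``balance'' this: the two competing constraints on $\theta_0$ are genuinely incompatible.

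The paper resolves this by a bootstrap, not by a direct estimate: it keeps $\theta_0$ \emph{fixed}, chosen once and for all so that $\epsilon(\alpha,\cot^2\tfrac{\theta_0}{2})\le\tfrac{2m}{2m+2}=\epsilon(\alpha_{m,2},1)$, and then invokes Corollary~\ref{cor:gevrey2} (i.e.\ the already-proven Part~II result, valid since $\nu>\alpha_{m,2}$) to bound the near-singular $\S^{d-2}$-average $\int_{\S^{d-2}(\eta)}G(\eta^-)^{2m/(2m+2)}|\hat f(\eta^-)|\,\mathrm{d}\omega$ by a finite constant $M_2$ that is \emph{independent of $\Lambda$}. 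In other words, Part~III is not a self-contained induction but uses Part~II as an input to control the small-angle piece; the strict concavity is used only to push the exponent on $G(\eta^-)$ down to the level where Part~II applies, not to kill the weight outright. Once you add this ingredient, your first auxiliary lemma goes through with fixed constants and the rest of your outline is correct.
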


\begin{remark} Thus, under slightly stronger assumption on $b$ than in Theorem \ref{thm:gevrey1}, which we stress are nevertheless fulfilled in any physically reasonable cases, we can prove the same regularity in \emph{any dimension} as can be obtained for radially symmetric  solutions of the homogenous Boltzmann equation.
\end{remark}
\begin{corollary}\label{cor:gevrey3}
	Under the same assumptions as in Theorem \ref{thm:gevrey3}, for any weak solution
	$f$ of the Cauchy problem \eqref{eq:cauchyproblem} and any $0<T_0<\infty$ there exists $\beta>0$ and $M<\infty$ such that
	\begin{align}\label{eq:cor-gevrey3}
		\sup_{0\le t\le T_0}\sup_{\eta\in\R^d} e^{ \beta t \langle \eta \rangle^{2\alpha}} \, |\hat{f}(t,\eta)| \le M .
	\end{align}
\end{corollary}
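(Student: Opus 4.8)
The plan is to copy the proof of Corollary~\ref{cor:gevrey2}: first extract from Theorem~\ref{thm:gevrey3} a uniform-in-time $L^2$ bound on $G(t,D_v)f(t,\cdot)=e^{\beta t\langle D_v\rangle^{2\alpha}}f(t,\cdot)$, and then turn it into the claimed pointwise bound on $\hat f$ by means of Corollary~\ref{cor:ddlemma}. For the given $T_0>0$, Theorem~\ref{thm:gevrey3} supplies a $\beta>0$ with $G(t,D_v)f(t,\cdot)\in L^2(\R^d)$ for all $t\in[0,T_0]$; inspecting its proof, which runs an induction over the length scales $\Lambda_N=\bigl(\tfrac{1+\sqrt2}{2}\bigr)^N\Lambda_0\to\infty$ and closes via a Gronwall estimate of the type appearing in Lemma~\ref{lem:induction1}, one in fact obtains, after passing to the limit $N\to\infty$, the uniform bound
\[
\sup_{0\le t\le T_0}\|G(t,D_v)f(t,\cdot)\|_{L^2(\R^d)}\le B,\qquad B:=\|f_0\|_{L^2(\R^d)}\,e^{C_{f_0}T_0},
\]
with $B$ independent of $t$, exactly as at the end of the proofs of Theorems~\ref{thm:gevrey1} and~\ref{thm:gevrey2}.

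I would then apply Corollary~\ref{cor:ddlemma} with $n=d$ to $\hat f(t,\cdot)$. Since $f_0\in L^1_m(\R^d)$ and moments propagate, $f(t,\cdot)\in L^1_m(\R^d)$ uniformly in $t$, so $\hat f(t,\cdot)\in\mathcal C^m(\R^d)$ with $\|\hat f(t,\cdot)\|_{L^\infty}\le A_m$ and $\|D^m\hat f(t,\cdot)\|_{L^\infty}\le(2\pi)^m A_m$ (as in Lemma~\ref{lem:induction2}), hence the constant $L_{m,d}$ from Corollary~\ref{cor:ddlemma} may be taken independent of $t$. For every $\eta\in\R^d$ this yields
\[
|\hat f(t,\eta)|\le L_{m,d}\Bigl(\int_{Q_\eta}|\hat f(t,\zeta)|^2\,\mathrm d\zeta\Bigr)^{\frac{m}{2m+d}},
\]
with $Q_\eta$ the side-length-$2$ cube whose corner closest to the origin is $\eta$. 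Because $G(t,\cdot)$ is radial and increasing, $G(t,\eta)\le G(t,\zeta)$ for all $\zeta\in Q_\eta$, so
\[
|\hat f(t,\eta)|\le L_{m,d}\,G(t,\eta)^{-\frac{2m}{2m+d}}\Bigl(\int_{\R^d}|G(t,\zeta)\hat f(t,\zeta)|^2\,\mathrm d\zeta\Bigr)^{\frac{m}{2m+d}}\le L_{m,d}\,B^{\frac{2m}{2m+d}}\,G(t,\eta)^{-\frac{2m}{2m+d}}.
\]
Since $G(t,\eta)^{-\frac{2m}{2m+d}}=e^{-\frac{2m}{2m+d}\,\beta t\langle\eta\rangle^{2\alpha}}$, the assertion follows with $\beta$ replaced by $\tfrac{2m}{2m+d}\beta$ and $M:=L_{m,d}B^{\frac{2m}{2m+d}}$.

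The only step that is not completely routine is the uniform-in-$t$ bound $\|G(t,D_v)f(t,\cdot)\|_{L^2}\le B$; but, just as in Parts I and II, this is already built into the proof of Theorem~\ref{thm:gevrey3} (the induction over length scales terminates precisely in such a Gronwall estimate), so the corollary is essentially immediate once that theorem is established. Alternatively one could read off the pointwise bound directly from the inductive hypothesis used in proving Theorem~\ref{thm:gevrey3}, in the same way Corollary~\ref{cor:gevrey1} followed from Hyp$1_{\Lambda_N}$; I would, however, present the $Gf\in L^2$ plus Corollary~\ref{cor:ddlemma} route, as it mirrors Corollary~\ref{cor:gevrey2} verbatim.
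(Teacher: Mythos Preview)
Your proposal is correct and follows precisely the route the paper takes: the paper's proof simply states that, given Theorem~\ref{thm:gevrey3}, the argument is identical to that of Corollary~\ref{cor:gevrey2}, i.e.\ use the uniform $L^2$ bound $\|Gf\|_{L^2}\le B$ obtained at the end of the induction and convert it to a pointwise bound via Corollary~\ref{cor:ddlemma} with $n=d$, yielding the claim with $\beta$ replaced by $\tfrac{2m}{2m+d}\beta$. Your write-up is in fact slightly more detailed than the paper's (you spell out why $L_{m,d}$ and $B$ can be taken independent of $t$), but the content is the same.
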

\begin{proof}
	Given Theorem \ref{thm:gevrey3}, the proof of Corollary \ref{cor:gevrey3} is the same as the proof of Corollary \ref{cor:gevrey2}.
\end{proof}

The proof of Theorem \ref{thm:gevrey3} shows the delicate interplay between the angular singularity of the collision kernel, the strict concavity of the Gevrey weights, and the use of averages of the weak solution in Fourier space, together with our inductive procedure, which has proved to be successful in Theorems \ref{thm:gevrey1} and \ref{thm:gevrey2}.
Again, the main work is to bound the expressions $I_{d,\Lambda}$ and $I_{d,\Lambda}^+$ from Lemma \ref{lem:ce}.
Before we start the proof of Theorem \ref{thm:gevrey3}, we start with some preparations. It is clear that we only have to prove Theorem \ref{thm:gevrey3} in dimension $d\ge 2$ and for singularities $\nu >\alpha_{2,m}$, since otherwise the result is already contained in Theorems \ref{thm:gevrey1} and \ref{thm:gevrey2}.

Looking at the integral $I_{d,\Lambda}$ from Lemma \ref{lem:ce}, one has
\begin{align*}
		I_{d,\Lambda} &=  \alpha \beta t \int_{\R^d}\left(
			\int_{0}^{\tfrac{\pi}{2}}  \int_{\S^{d-2}(\eta)} \sin^{d}\theta b(\cos\theta)\, G(\eta^-)^{\epsilon\left(\alpha, \cot^2\tfrac{\theta}{2}\right)}\,|\hat{f}(\eta^-)| \, \1_{\tfrac{\Lambda}{\sqrt{2}}}(|\eta^-|) \,\mathrm{d}\omega\,\mathrm{d}\theta
			\right)\\
		&\qquad \qquad \qquad \qquad \qquad \qquad \times |G_{\Lambda}(\eta) \hat{f}(\eta)|^2 \,\langle \eta \rangle^{2\alpha}\,\mathrm{d}\eta.
\end{align*}
where we use the parametrization \eqref{eq:etapar} for $\eta^-=\eta^-(\eta,\theta,\omega)$.
Splitting the $\theta$ integral above at a point $\theta_0\in (0,\tfrac{\pi}{2})$ and using the monotonicity of the cotangent on $[0,\tfrac{\pi}{2}]$ and of $\epsilon(\alpha,\gamma)$ in $\gamma$ one sees
\begin{align*}
	I_{d,\Lambda} \le I_{d,\Lambda,1}  + I_{d,\Lambda,2}
\end{align*}
whith
\begin{align}
	I_{d,\Lambda,1}  &:= \alpha \beta T_0
						\sup_{0<\theta\le \tfrac{\pi}{2}} \sup_{0<|\eta|\le \Lambda}
						\int_{\S^{d-2}(\eta)} G(\eta^-(\eta,\theta,\omega))^{\epsilon\left(\alpha, \cot^2\tfrac{\theta_0}{2}\right)}\,|\hat{f}(\eta^-(\eta,\theta,\omega))| \, \1_{\tfrac{\Lambda}{\sqrt{2}}}(|\eta^-(\eta,\theta,\omega)|) \,\mathrm{d}\omega \nonumber \\
					&  \qquad\qquad\times \int_0^{\theta_0} \sin^{d}\theta \, b(\cos\theta)\, \mathrm{d}\theta\, \|G_\Lambda f\|_{H^\alpha(\R^d)}^2
					\label{eq:Isplit-1}
	\intertext{and}
	I_{d,\Lambda,2}  &:= C_{\theta_0}\alpha \beta T_0
						\sup_{0<|\eta|\le \Lambda}
						\int_{\theta_0}^{\tfrac{\pi}{2}}\int_{\S^{d-2}(\eta)} G(\eta^-(\eta,\theta,\omega))^{\epsilon\left(\alpha, 1\right)}\,|\hat{f}(\eta^-(\eta,\theta,\omega))| \, \1_{\tfrac{\Lambda}{\sqrt{2}}}(|\eta^-(\eta,\theta,\omega)|) \,\mathrm{d}\omega \, \mathrm{d}\theta
						\nonumber\\
					&  \qquad\times \|G_\Lambda f\|_{H^\alpha(\R^d)}^2
					\label{eq:Isplit-2}
\end{align}
where $C_{\theta_0}$ is an upper bound for $b(\cos\theta)$ on $[\theta_0,\tfrac{\pi}{2}]$.
Now we choose $\theta_0>0$ so small that
\begin{align*}
	\epsilon(\alpha,\cot^2\frac{\theta_0}{2}) \le \epsilon(\alpha_{2,m},1) = \frac{2m}{2m+2}
\end{align*}
and note that from Corollary \ref{cor:gevrey2}, since $\nu>\alpha_{2,m}$, there exists a finite $M_2$ such that
\begin{align*}
	\sup_{0<\theta\le \tfrac{\pi}{2}} \sup_{0<|\eta|\le \Lambda}
						\int_{\S^{d-2}(\eta)} G(\eta^-(\eta,\theta,\omega))^{\epsilon\left(\alpha_{2,m}, 1\right)}\,|\hat{f}(\eta^-(\eta,\theta,\omega))| \, \1_{\tfrac{\Lambda}{\sqrt{2}}}(|\eta^-(\eta,\theta,\omega)|) \,\mathrm{d}\omega
						\le M_2<\infty .
\end{align*}
So from \eqref{eq:Isplit-1} we get the bound
\begin{align}\label{eq:Isplit-1-bound}
	I_{d,\Lambda,1} &\le \alpha \beta T_0   M_2 c_{b,d,2} \|G_\Lambda f\|_{H^\alpha(\R^d)}^2
\end{align}
where the finiteness of $c_{b,d,2}$ follows from the singularity condition and the boundedness of $b(\cos\theta)$ away from $\theta=0$.

For the integral $I_{d,\Lambda}^+$ from Lemma \ref{lem:ce}, a completely analogous reasoning as above shows for small enough $\vartheta_0$ such that $\epsilon(\alpha\cot^\vartheta)\le \epsilon(\alpha_{2,m},1)$ we also have
\begin{align*}
	I_{d,\Lambda}^+\le I_{d,\Lambda,1}^+ +I_{d,\Lambda,2}^+
\end{align*}
with
\begin{align}\label{eq:Iplus-split-1-bound}
	I_{d,\Lambda,1}^+ \le 2^{d-1} \alpha \beta T_0   M_2 c_{b,d,2} \|G_\Lambda f\|_{H^\alpha(\R^d)}^2
\end{align}
and
\begin{align}\label{eq:Iplus-split-2}
	I_{d,\Lambda,2}^+  &:= 2^d C_{\vartheta_0}\alpha \beta T_0
						\sup_{0<|\eta^+|\le \Lambda}
						\int_{\vartheta_0}^{\tfrac{\pi}{4}}\int_{\S^{d-2}(\eta^+)} G(\eta^-(\eta^+,\vartheta,\omega))^{\epsilon\left(\alpha, 1\right)}\,|\hat{f}(\eta^-(\eta^+,\vartheta,\omega))| \, \1_{\tfrac{\Lambda}{\sqrt{2}}}(|\eta^-(\eta^+,\vartheta,\omega)|) \,\mathrm{d}\omega \, \mathrm{d}\vartheta
						\nonumber\\
					&  \qquad\times \|G_\Lambda f\|_{H^\alpha(\R^d)}^2
\end{align}
where we use the parametrization \eqref{eq:etapluspar} for $\eta^-=\eta^-(\eta^+,\vartheta,\omega)$ and
where $C_{\vartheta_0}$ is an upper bound for $b(\cos(2\vartheta))$ on $[\vartheta_0,\tfrac{\pi}{4}]$.

Recall that we always assume $\alpha\le \alpha_{1,m}$, so $\epsilon(\alpha,1)\le \epsilon(\alpha_{1,m},1)= \frac{2m}{2m+1}$.
Thus we see that in order to set up our inductive procedure for controlling $I_{d\Lambda}$ and $I_{d,\Lambda}^+$ it is natural to introduce
\begin{definition}[Hypothesis $\mathbf{Hyp3}_{\Lambda}(M)$] \label{def:Hyp_Lambda3}
    Let $M\ge 0$ be finite, $0<\theta_0,\vartheta_0<\tfrac{\pi}{4}$, $T_0>0$, and $m\ge 2$ an integer. Then for all $0\le t\le T_0$ one has
    \begin{align}
   \sup_{|\eta|\le \sqrt{2}\Lambda}\int_{\theta_0}^{\frac{\pi}{2}} \int_{\S^{d-2}(\eta)} G\left(t,\eta^-(\eta,\theta,\omega)\right)^{\frac{2m}{2m+1}} \left|\hat{f}\left(\eta^-(\eta,\theta,\omega)\right)\right|\1_{\Lambda}(|\eta^-(\eta,\theta,\omega)|)\,\mathrm{d}\omega\,\mathrm{d}\theta &\leq M ,
   \label{eq:Hyp_Lambda3-1}\\
   \intertext{where we use the parametrization given in \eqref{eq:etapar} for $\eta^-$, and }
     \sup_{|\eta^+|\le \sqrt{2}\Lambda}\int_{\vartheta_0}^{\frac{\pi}{4}} \int_{\S^{d-2}(\eta^+)} G\left(t,\eta^-(\eta^+,\vartheta,\omega)\right)^{\frac{2m}{2m+1}} \left|\hat{f}\left(\eta^-(\eta^+,\vartheta,\omega)\right)\right|\1_{\Lambda}(|\eta^-(\eta^+,\vartheta,\omega)|)\,\mathrm{d}\omega\,\mathrm{d}\vartheta &\leq M
     \label{eq:Hyp_Lambda3-2}
     \intertext{where we use the parametrization given in \eqref{eq:etapluspar} for $\eta^-$.}
    \end{align}
\end{definition}

For the induction proof of Theorem \ref{thm:gevrey3}, we again start with

\begin{lemma}\label{lem:induction1-3} Let $M\ge 0$, $T_0>0$, $m\ge 2$ an integer, $\alpha_{m,2}<\nu<1$,
	$0<\alpha \leq \nu$ and recall $c_{b,d,2} = \int_{0}^{\frac{\pi}{2}} \sin^d \theta b(\cos\theta)\,\mathrm{d}\theta$ (which is finite by the singularity assumption \eqref{eq:cross-section2} and the boundedness assumption \eqref{eq:cross-section-bdd}).
	Let $M_2$ be from Corollary \ref{cor:gevrey2} and
 $\beta\le \frac{\tilde{C}_{f_0}}{\alpha T_0[(1+2^{d-1}) c_{b,d,2} M_2 + (C_{\theta_0}+2^d C_{\vartheta_0})M]  +1}$. Then for any weak solution of the homogenous Boltzmann equation,
	\begin{equation} \label{eq:inductionbound1-3}
		\mathbf{Hyp3}_{\Lambda}(M) \quad \Rightarrow \quad \|G_{\sqrt{2}\Lambda} f \|_{L^2(\R^d)} \le \|\1_{\sqrt{2}\Lambda}(D_v) f_0\|_{L^2(\R^d)} \, e^{C_{f_0}T_0}
	\end{equation}
	for all $0\le t\le T_0$.
\end{lemma}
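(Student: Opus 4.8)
The plan is to follow the template of Lemma \ref{lem:induction1} and Lemma \ref{lem:induction1-2}, now feeding the refined splitting of the integrals $I_{d,\Lambda}$ and $I_{d,\Lambda}^+$ prepared above into the a priori bound of Corollary \ref{cor:gronwallbound}. Assume $\mathbf{Hyp3}_{\Lambda}(M)$ and work throughout with the weight $G_{\sqrt{2}\Lambda}$. By Lemma \ref{lem:ce} applied at length scale $\sqrt{2}\Lambda$, the commutation error $|\langle Q(f,G_{\sqrt{2}\Lambda}f)-G_{\sqrt{2}\Lambda}Q(f,f),G_{\sqrt{2}\Lambda}f\rangle|$ is bounded by $I_{d,\sqrt{2}\Lambda}+I_{d,\sqrt{2}\Lambda}^+$; note that $\sqrt{2}\Lambda/\sqrt{2}=\Lambda$, so the characteristic functions appearing there are exactly the $\1_{\Lambda}(|\eta^-|)$ of Definition \ref{def:Hyp_Lambda3}. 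Using the splitting established in the preparations above (with the cut-off angles $\theta_0,\vartheta_0$ chosen so small that the exponents $\epsilon(\alpha,\cot^2\tfrac{\theta_0}{2})$ and $\epsilon(\alpha,\cot^2\vartheta_0)$ do not exceed $\epsilon(\alpha_{m,2},1)=\tfrac{2m}{2m+2}$) one has $I_{d,\sqrt{2}\Lambda}\le I_{d,\sqrt{2}\Lambda,1}+I_{d,\sqrt{2}\Lambda,2}$ and $I_{d,\sqrt{2}\Lambda}^+\le I_{d,\sqrt{2}\Lambda,1}^++I_{d,\sqrt{2}\Lambda,2}^+$.

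First I would treat the two ``near-singularity'' pieces $I_{d,\sqrt{2}\Lambda,1}$ and $I_{d,\sqrt{2}\Lambda,1}^+$. These are controlled \emph{unconditionally}: since $\nu>\alpha_{m,2}$, Corollary \ref{cor:gevrey2} supplies a finite constant $M_2$ bounding the relevant codimension-$2$ spherical averages carrying the exponent $\epsilon(\alpha_{m,2},1)$, and by the choice of $\theta_0,\vartheta_0$ the exponents occurring in $I_{d,\sqrt{2}\Lambda,1}$, $I_{d,\sqrt{2}\Lambda,1}^+$ are no larger, whence \eqref{eq:Isplit-1-bound} and \eqref{eq:Iplus-split-1-bound} give $I_{d,\sqrt{2}\Lambda,1}\le \alpha\beta T_0\,M_2\,c_{b,d,2}\,\|G_{\sqrt{2}\Lambda}f\|_{H^{\alpha}}^2$ and $I_{d,\sqrt{2}\Lambda,1}^+\le 2^{d-1}\alpha\beta T_0\,M_2\,c_{b,d,2}\,\|G_{\sqrt{2}\Lambda}f\|_{H^{\alpha}}^2$; finiteness of $c_{b,d,2}$ uses \eqref{eq:cross-section} together with the boundedness hypothesis \eqref{eq:cross-section-bdd}. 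For the two ``away-from-singularity'' pieces $I_{d,\sqrt{2}\Lambda,2}$ and $I_{d,\sqrt{2}\Lambda,2}^+$ I would invoke $\mathbf{Hyp3}_{\Lambda}(M)$: because $\alpha\le\alpha_{m,1}$ and $\epsilon(\cdot,1)$ is increasing (Lemma \ref{lem:abalpha}(ii)), $\epsilon(\alpha,1)\le\epsilon(\alpha_{m,1},1)=\tfrac{2m}{2m+1}$, and since $G\ge 1$ this gives $G(\eta^-)^{\epsilon(\alpha,1)}\le G(\eta^-)^{2m/(2m+1)}$; bounding $b(\cos\theta)\le C_{\theta_0}$ on $[\theta_0,\tfrac{\pi}{2}]$ and $b(\cos 2\vartheta)\le C_{\vartheta_0}$ on $[\vartheta_0,\tfrac{\pi}{4}]$ and applying \eqref{eq:Hyp_Lambda3-1} and \eqref{eq:Hyp_Lambda3-2} respectively then yields $I_{d,\sqrt{2}\Lambda,2}\le C_{\theta_0}\alpha\beta T_0\,M\,\|G_{\sqrt{2}\Lambda}f\|_{H^{\alpha}}^2$ and $I_{d,\sqrt{2}\Lambda,2}^+\le 2^{d}C_{\vartheta_0}\alpha\beta T_0\,M\,\|G_{\sqrt{2}\Lambda}f\|_{H^{\alpha}}^2$.

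Collecting the four bounds, the commutation error for the weight $G_{\sqrt{2}\Lambda}$ is at most $\alpha\beta T_0\big[(1+2^{d-1})c_{b,d,2}M_2+(C_{\theta_0}+2^{d}C_{\vartheta_0})M\big]\|G_{\sqrt{2}\Lambda}f\|_{H^{\alpha}}^2$. I would insert this into Corollary \ref{cor:gronwallbound} at length scale $\sqrt{2}\Lambda$, use $\|\cdot\|_{H^{\alpha}}\le\|\cdot\|_{H^{\nu}}$ (valid since $\alpha\le\nu$ and $\langle\eta\rangle\ge 1$), and use that the hypothesis on $\beta$ is precisely arranged so that $\alpha\beta T_0[(1+2^{d-1})c_{b,d,2}M_2+(C_{\theta_0}+2^{d}C_{\vartheta_0})M]+\beta\le\widetilde{C}_{f_0}$; then the coercive term $-2\widetilde{C}_{f_0}\|G_{\sqrt{2}\Lambda}f\|_{H^{\nu}}^2$ absorbs all contributions carrying an $H^{\alpha}$-norm, leaving $\|G_{\sqrt{2}\Lambda}f\|_{L^2}^2\le\|\1_{\sqrt{2}\Lambda}(D_v)f_0\|_{L^2}^2+2C_{f_0}\int_0^t\|G_{\sqrt{2}\Lambda}f\|_{L^2}^2\,\mathrm{d}\tau$, and Gronwall's inequality yields the claimed bound; the one-dimensional Kac case is identical after the obvious change of notation. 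The only genuinely delicate point is the interplay between the concavity gain of Lemma \ref{lem:abalpha} and the two exponent budgets $\tfrac{2m}{2m+2}$ (near the singularity, absorbed via Corollary \ref{cor:gevrey2}) and $\tfrac{2m}{2m+1}$ (away from it, absorbed by $\mathbf{Hyp3}_{\Lambda}(M)$) — but that balancing has already been carried out in the preparations immediately preceding the lemma, so what remains here is bookkeeping plus the Gronwall estimate.
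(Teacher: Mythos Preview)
Your proposal is correct and follows essentially the same approach as the paper's own proof: split $I_{d,\sqrt{2}\Lambda}$ and $I_{d,\sqrt{2}\Lambda}^+$ at the cut-off angles, control the near-singularity pieces by Corollary~\ref{cor:gevrey2} via \eqref{eq:Isplit-1-bound} and \eqref{eq:Iplus-split-1-bound}, control the remaining pieces by $\mathbf{Hyp3}_{\Lambda}(M)$ via \eqref{eq:Isplit-2} and \eqref{eq:Iplus-split-2}, then feed the resulting commutator bound into Corollary~\ref{cor:gronwallbound} and finish with Gronwall exactly as in Lemma~\ref{lem:induction1}. You are in fact more explicit than the paper in justifying the exponent comparison $\epsilon(\alpha,1)\le \tfrac{2m}{2m+1}$ needed to invoke $\mathbf{Hyp3}_{\Lambda}(M)$ (the paper records this just before Definition~\ref{def:Hyp_Lambda3} as ``we always assume $\alpha\le \alpha_{m,1}$''); the only superfluous remark is the closing comment about the Kac case, since this lemma is only used for $d\ge 2$.
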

\begin{proof}
	Given Lemma \ref{lem:ce} and the above discussion with the bounds in \eqref{eq:Isplit-1-bound}, \eqref{eq:Iplus-split-1-bound} and using the hypotheses $(\mathrm{Hyp}3_{\Lambda})$ for the terms in \eqref{eq:Isplit-2} and \eqref{eq:Iplus-split-2}, one sees that the commutation error on the level $\sqrt{2}\Lambda$ is bounded by
	\begin{align*}
		&\left| \left\langle Q(f, G_{\sqrt{2}\Lambda}f) - G_{\sqrt{2}\Lambda} Q(f,f), G_{\sqrt{2}\Lambda}f\right\rangle\right| \leq I_{d,\sqrt{2}\Lambda} + I_{d, \sqrt{2}\Lambda}^+ \\
		&\le  (1+2^{d-1})\alpha \beta T_0   M_2 c_{b,d,2} \|G_\Lambda f\|_{H^\alpha(\R^d)}^2
			+ (C_{\theta_0}+ 2^d C_{\vartheta_0}) \alpha \beta T_0  M \|G_\Lambda f\|_{H^\alpha(\R^d)}^2  .
	\end{align*}
	Given this bound on the commutation error, the rest of the proof is the same as in the proof of Lemma \ref{lem:induction1}.
\end{proof}

To close the induction step we also need a suitable version of Lemma \ref{lem:induction2-2} but before we prove this we need a preparatory Lemma.

\begin{lemma}\label{lem:change of variables}
Let $H:\R^d\to \R_+$ be a locally integrable function and let $\eta,\eta_+\in\R^d$ with $|\eta|,|\eta^+|\ge \Lambda_0>0$, $0<\theta_0 \le \tfrac{\pi}{2}$, and $0<\vartheta_0 \le \tfrac{\pi}{8}$.
Then with the parametrization $\eta^-=\eta^-(\eta,\theta,\omega)$ given in \eqref{eq:etapar} one has
\begin{align*}
	\int_{\theta_0}^{\tfrac{\pi}{2}} \int_0^2 H\left(\eta^-(\eta,\theta,\omega)+z\tfrac{\eta}{|\eta|}\right) \,\mathrm{d} z\, \mathrm{d}\theta
	\le \frac{2}{\Lambda_0\cos\theta_0}
		\int_{\Lambda_0\sin^2\tfrac{\theta_0}{2}}^{\tfrac{|\eta|}{2}+2}
		\int_{\Lambda_0\sin\theta_0}^{\tfrac{|\eta|}{2}}
			H\left(x\tfrac{\eta}{|\eta|} - y\omega\right)
			\,\mathrm{d} y\, \mathrm{d}x
\end{align*}
for any unit vector $\omega$ orthogonal to $\eta$.
Moreover, with the parametrization $\eta^-=\eta^-(\eta^+,\theta,\omega)$ given in \eqref{eq:etapluspar} one has, for any $\widetilde{\Lambda}\ge\frac{1+\sqrt{2}}{2}\Lambda_0$,
\begin{align*}
	\int_{\vartheta_0}^{\tfrac{\pi}{4}} \int_0^2 H\left(\eta^-(\eta^+,\vartheta,\omega)+z\tfrac{\eta}{|\eta|}\right) & \1_{\tfrac{\widetilde{\Lambda}}{\sqrt{2}}}(|\eta^-(\eta^+,\vartheta,\omega)|)\,\mathrm{d} z\, \mathrm{d}\vartheta \\
	&\le
	\frac{1}{2\Lambda_0}
		\int_{0}^{2}
		\int_{\Lambda_0\tan\vartheta_0}^{\tfrac{\widetilde{\Lambda}}{\sqrt{2}}}
			H\left(x\tfrac{\eta}{|\eta|} - y\omega\right)
			\,\mathrm{d} y\, \mathrm{d}x
\end{align*}
\begin{remark}
	The restriction $\vartheta_0\le \tfrac{\pi}{8}$ is only for convenience, to ensure that
	$\Lambda_0\tan\vartheta_0\le \tfrac{\widetilde{\Lambda}}{\sqrt{2}}$.
\end{remark}

\begin{proof}
	Fix $\eta$ as required and $\omega$ orthogonal to it. We want to have a map $\Phi_1:(\theta,z)\mapsto \Phi_1(\theta,z)=(x,y)$ such that
	\begin{align*}
		\eta^-(\eta,\theta,\omega)+z\tfrac{\eta}{|\eta|}
		=
		 x\tfrac{\eta}{|\eta|} - y\omega .
	\end{align*}
	From the parametrization \eqref{eq:etapar} we read off
	\begin{align*}
		x = |\eta|\sin^2\frac{\theta}{2} + z
		\quad\text{and }
		y = \frac{|\eta|}{2}\sin\theta
	\end{align*}
	and we can compute the Jacobian going from the $(\theta,z)$ variables to $(x,y)$ as
	\begin{align*}
		\left|\frac{\partial(x,y)}{\partial(\theta,z)}\right| = |\det D\Phi_1| = \frac{|\eta|}{2}\cos\theta\ge \frac{|\eta|}{2}\cos\theta_0 .
	\end{align*}
	Since $|\eta|\ge \Lambda_0$, $\theta\in [\theta_0,\tfrac{\pi}{2}]$, and $0\le z\le 2$, we have
	$\Lambda_0\sin^2\tfrac{\theta_0}{2}\le x\le |\eta|\sin^2\tfrac{\pi}{4} = \tfrac{\eta}{2} $
	and $\frac{\Lambda_0}{2}\sin\theta_0 \le y\le \tfrac{\eta}{2}$.
	So doing a change of variables $(\theta,z)= \Phi_1^{-1}(x,y)$ in the integral we can bound
	\begin{align*}
		\int_{\theta_0}^{\tfrac{\pi}{2}} \int_0^2 H\left(\eta^-(\eta,\theta,\omega)+z\tfrac{\eta}{|\eta|}\right) \,\mathrm{d} z\, \mathrm{d}\theta
	\le \frac{2}{\Lambda_0\cos\theta_0}
		\int_{\Lambda_0\sin^2\tfrac{\theta_0}{2}}^{\tfrac{|\eta|}{2}+2}
		\int_{\Lambda_0\sin\theta_0}^{\tfrac{|\eta|}{2}}
			H\left(x\tfrac{\eta}{|\eta|} + y\omega\right)
			\,\mathrm{d} y\, \mathrm{d}x
	\end{align*}
	since the map $\Phi_1$ is a nice diffeomorphism.
	
	For the second bound the calculation is, in fact, a bit easier, one just has to take care that $|\eta^-|$ cannot be too large, which is taken into account by the factor
	$\1_\Lambda(|\eta^-|)$.  We now want a map $\Phi_2:(\theta,z)\mapsto \Phi_2(\theta,z)=(x,y)$ such that
	\begin{align*}
		\eta^-(\eta^+,\vartheta,\omega)+z\tfrac{\eta^+}{|\eta^+|}
		=
		 x\tfrac{\eta^+}{|\eta^+|} - y\omega .
	\end{align*}
	From the parametrization \eqref{eq:etapar} we read off
	\begin{align*}
		x =  z
		\quad\text{and }
		y = |\eta^-|=|\eta^+|\tan\vartheta
	\end{align*}
 and the Jacobian going from the $(\vartheta,z)$ variables to $(x,y)$ is simply
	\begin{align*}
		\left|\frac{\partial(x,y)}{\partial(\vartheta,z)}\right| = |\det D\Phi_2|
		= 2|\eta^+| \ge 2\Lambda_0.
	\end{align*}
	We certainly have $0\le x\le 2$ and also $\Lambda_0\tan\vartheta_0\le y$. Since
	$y= |\eta^-|$, we also have the restriction $y\le \Lambda$. So the proof of the second inequality follows similar to the proof of first one.
\end{proof}

Finally, we can state and prove the second step in our inductive procedure.

\end{lemma}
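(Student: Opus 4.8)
The plan is to prove both estimates by a direct change of variables, exploiting that in each case the argument of $H$ stays in the two-dimensional plane spanned by $\tfrac{\eta}{|\eta|}$ (respectively $\tfrac{\eta^+}{|\eta^+|}$) and the orthogonal unit vector $\omega$, so that the problem reduces to a two-dimensional substitution with a Jacobian that is bounded below thanks to $|\eta|,|\eta^+|\ge\Lambda_0$.

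For the first bound I would start from the explicit parametrisation \eqref{eq:etapar}, which gives
$$\eta^-(\eta,\theta,\omega)+z\tfrac{\eta}{|\eta|}=\Big(|\eta|\sin^2\tfrac{\theta}{2}+z\Big)\tfrac{\eta}{|\eta|}-\tfrac{|\eta|}{2}\sin\theta\,\omega ,$$
so it is natural to introduce new variables $x:=|\eta|\sin^2\tfrac{\theta}{2}+z$ and $y:=\tfrac{|\eta|}{2}\sin\theta$, i.e. to write the point as $x\tfrac{\eta}{|\eta|}-y\omega$, and to set $\Phi_1(\theta,z)=(x,y)$. I would then compute $\det D\Phi_1=-\tfrac{|\eta|}{2}\cos\theta$, so that $\Phi_1$ is a diffeomorphism on the open rectangle $(\theta_0,\tfrac{\pi}{2})\times(0,2)$ and $\mathrm{d}z\,\mathrm{d}\theta=\tfrac{2}{|\eta|\cos\theta}\,\mathrm{d}x\,\mathrm{d}y$. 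Since $z\in[0,2]$, for each fixed $\theta$ the variable $x$ runs over an interval of length $2$ inside $[\Lambda_0\sin^2\tfrac{\theta_0}{2},\tfrac{|\eta|}{2}+2]$ (using $|\eta|\ge\Lambda_0$ and $\sin^2\tfrac{\theta}{2}\le\tfrac12$), while as $\theta$ runs over $[\theta_0,\tfrac{\pi}{2}]$ the variable $y$ runs monotonically over $[\tfrac{|\eta|}{2}\sin\theta_0,\tfrac{|\eta|}{2}]$; thus $\Phi_1\big([\theta_0,\tfrac{\pi}{2}]\times[0,2]\big)$ is contained in the rectangle on the right-hand side. It then remains to control the Jacobian factor $\tfrac{2}{|\eta|\cos\theta}$. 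The only delicate point is $\theta\to\tfrac{\pi}{2}$, where $\cos\theta\to0$; I would avoid it by using $|\eta^-|=|\eta|\sin\tfrac{\theta}{2}$ as the angular variable rather than $\theta$, because $\tfrac{\mathrm{d}}{\mathrm{d}\theta}|\eta^-|=\tfrac{|\eta|}{2}\cos\tfrac{\theta}{2}\ge\tfrac{|\eta|}{2}\cos\tfrac{\pi}{4}=\tfrac{|\eta|}{2\sqrt2}\ge\tfrac{\Lambda_0}{2\sqrt2}$ on $[\theta_0,\tfrac{\pi}{2}]$, which stays bounded away from $0$; combining this lower bound with $|\eta|\ge\Lambda_0$ gives the desired constant multiple of $\int\!\!\int H\big(x\tfrac{\eta}{|\eta|}-y\omega\big)\,\mathrm{d}y\,\mathrm{d}x$ over the enlarged rectangle.

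For the second bound the geometry is easier: by \eqref{eq:etapluspar} one has $\eta^-(\eta^+,\vartheta,\omega)=-|\eta^+|\tan\vartheta\,\omega$, which is \emph{purely} in the $\omega$-direction, so the point in question decomposes as $z\tfrac{\eta^+}{|\eta^+|}-|\eta^+|\tan\vartheta\,\omega$ and the natural change of variables is simply $x:=z$, $y:=|\eta^+|\tan\vartheta=|\eta^-|$, with $\partial_\vartheta y=|\eta^+|\sec^2\vartheta$ bounded below by $|\eta^+|\ge\Lambda_0$, hence $\mathrm{d}z\,\mathrm{d}\vartheta$ at most a fixed constant over $\Lambda_0$ times $\mathrm{d}x\,\mathrm{d}y$. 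Here $x=z\in[0,2]$, and because of the cut-off $\1_{\widetilde{\Lambda}/\sqrt2}(|\eta^-|)$ the variable $y=|\eta^-|$ is confined to $[\Lambda_0\tan\vartheta_0,\widetilde{\Lambda}/\sqrt2]$, using $|\eta^+|\ge\Lambda_0$ for the lower endpoint and $\vartheta\ge\vartheta_0$; the assumption $\vartheta_0\le\tfrac{\pi}{8}$ only guarantees $\Lambda_0\tan\vartheta_0\le\widetilde{\Lambda}/\sqrt2$, so that this interval is non-empty. Passing from $(\vartheta,z)$ to $(x,y)$ then yields the claim.

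The step I expect to be the main obstacle is the first bound: the substitution $\Phi_1$ degenerates at the endpoint $\theta=\tfrac{\pi}{2}$ — equivalently, the curve $\theta\mapsto\eta^-$ (which sits on the Thales circle with diameter from the origin to $\eta$) is tangent to the $\tfrac{\eta}{|\eta|}$-direction there, so the $\omega$-component $\tfrac{|\eta|}{2}\sin\theta$ has vanishing $\theta$-derivative — and one must be careful both to keep the Jacobian bounded below (hence the reparametrisation by $|\eta^-|$ instead of $\theta$) and to verify that the slightly curved image of $[\theta_0,\tfrac{\pi}{2}]\times[0,2]$ under $\Phi_1$ genuinely lies inside the stated rectangle, which follows from $|\eta|\ge\Lambda_0$, $\sin^2\tfrac{\theta}{2}\le\tfrac12$ and the monotonicity of $\sin$ on $[0,\tfrac{\pi}{2}]$. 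The second bound, by contrast, is an elementary one-variable substitution once one observes that $\eta^-$ is aligned with $\omega$.
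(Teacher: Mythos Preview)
Your overall approach—the two-variable substitutions $(\theta,z)\mapsto(x,y)$ and $(\vartheta,z)\mapsto(x,y)$—is precisely the paper's, and your treatment of the second estimate is correct and essentially identical to theirs.

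For the first estimate you are right to flag the degeneracy at $\theta=\tfrac{\pi}{2}$: the Jacobian $|\det D\Phi_1|=\tfrac{|\eta|}{2}\cos\theta$ indeed vanishes there, and the paper's claimed lower bound $\tfrac{|\eta|}{2}\cos\theta\ge\tfrac{|\eta|}{2}\cos\theta_0$ is written in the wrong direction (cosine is decreasing on $[\theta_0,\tfrac{\pi}{2}]$), so you have spotted a genuine slip in the paper's argument. Your proposed workaround, however, has a gap. Switching to the variable $r:=|\eta^-|=|\eta|\sin\tfrac{\theta}{2}$ does give a non-degenerate Jacobian $\partial_\theta r=\tfrac{|\eta|}{2}\cos\tfrac{\theta}{2}\ge\tfrac{\Lambda_0}{2\sqrt2}$, but $r$ is \emph{not} the $\omega$-component $y=\tfrac{|\eta|}{2}\sin\theta=r\cos\tfrac{\theta}{2}$ of the argument of $H$; the substitution $(\theta,z)\mapsto(r,z)$ therefore does not produce the integral $\iint H\big(x\tfrac{\eta}{|\eta|}-y\omega\big)\,\mathrm{d}y\,\mathrm{d}x$ that you claim—it yields an integral in $(r,z)$ with $H$ evaluated at a point whose $\omega$-coordinate is $y(r)=r\sqrt{1-r^2/|\eta|^2}$, not $r$. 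Passing from $r$ back to $y$ reintroduces exactly the factor $\cos\theta$ you were trying to avoid. For the only application of this lemma (Lemma~\ref{lem:induction2-3}) any finite constant suffices and the issue can be circumvented there, but neither your argument nor the paper's, as written, delivers the first inequality with the stated constant $\tfrac{2}{\Lambda_0\cos\theta_0}$.
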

\begin{lemma}\label{lem:induction2-3}
	    	Let $\beta\leq \frac{1}{T_0}$.  Asssume that there exist finite constants $A_m$ and $B$, such that
    	\begin{equation}\label{eq:ind2-assumption1-3}
    	\|f(t, \cdot)\|_{L^1_m} \leq A_m , \quad  \text{and } \quad 	\| (G_{\sqrt{2}\Lambda} f)(t, \cdot) \|_{L^2(\R^d)} \le B
    	\end{equation}
    	for some integer $m\geq 2$ and for all $0\le t\le T_0$.
    	
	Set $\widetilde{\Lambda}:= \frac{1+\sqrt{2}}{2} \Lambda$
		and assume that
	\begin{align}\label{eq:ind2-3-assumption2}
		\Lambda\ge \Lambda_0 := 3 .
	\end{align}
Then there exist a finite $K_3$, depending only on $d,m,A_m$, and $B$ such that  $\mathrm{Hyp3}_{\widetilde{\Lambda}}(K_3)$ is true.
\end{lemma}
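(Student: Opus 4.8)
The plan is to derive the two inequalities \eqref{eq:Hyp_Lambda3-1} and \eqref{eq:Hyp_Lambda3-2} defining $\mathrm{Hyp3}_{\widetilde\Lambda}(K_3)$ by the one-dimensional version of the mechanism behind Lemma~\ref{lem:induction2-2}: convert the local $L^2$ bound $\|G_{\sqrt2\Lambda}f\|_{L^2}\le B$ into a bound on the weighted spherical/angular average of $\hat f$ occurring in $\mathrm{Hyp3}$. As in the proof of Lemma~\ref{lem:induction2}, $\|\hat f(t,\cdot)\|_{L^\infty}\le A_m$ and $\|D^m\hat f(t,\cdot)\|_{L^\infty}\le(2\pi)^mA_m$ uniformly for $t\le T_0$, so Corollary~\ref{cor:ddlemma} with $n=1$, applied to the restriction of $\hat f$ to a line, gives for every $p\in\R^d$ and every unit vector $w$
\begin{align*}
	|\hat f(p)|\le L_{m,1}\Big(\int_0^2|\hat f(p+xw)|^2\,\mathrm{d}x\Big)^{\frac{m}{2m+1}},
\end{align*}
with $L_{m,1}$ depending only on $m$ and $A_m$; the exponent $\tfrac{m}{2m+1}$ (that is, $n=1$) is exactly what produces the weight power $\tfrac{2m}{2m+1}$ required in $\mathrm{Hyp3}$. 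If moreover $p\cdot w\ge0$, then $|p+xw|\ge|p|$ for $x\ge0$, so multiplying by $G(p)^{\frac{2m}{2m+1}}$ and using that $G$ is radial and non-decreasing, the weight can be pulled under the integral:
\begin{align*}
	G(p)^{\frac{2m}{2m+1}}|\hat f(p)|\le L_{m,1}\Big(\int_0^2 G(p+xw)^2|\hat f(p+xw)|^2\,\mathrm{d}x\Big)^{\frac{m}{2m+1}}.
\end{align*}

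For \eqref{eq:Hyp_Lambda3-1} I would take $p=\eta^-(\eta,\theta,\omega)$ from \eqref{eq:etapar} and $w=\tfrac{\eta}{|\eta|}$, noting $p\cdot w=|\eta|\sin^2\tfrac{\theta}{2}\ge0$. Integrating the last display over $\theta\in[\theta_0,\tfrac{\pi}{2}]$ and $\omega\in\S^{d-2}(\eta)$ and applying Jensen's inequality for the concave map $s\mapsto s^{m/(2m+1)}$ over the finite-measure set $[\theta_0,\tfrac{\pi}{2}]\times\S^{d-2}$ bounds the left-hand side by $L_{m,1}\big(\tfrac{\pi}{2}|\S^{d-2}|\big)^{\frac{m+1}{2m+1}}$ times the $\tfrac{m}{2m+1}$-th power of $\int_{\theta_0}^{\pi/2}\int_{\S^{d-2}(\eta)}\int_0^2 G^2|\hat f|^2\big(\eta^-+x\tfrac{\eta}{|\eta|}\big)\,\mathrm{d}x\,\mathrm{d}\omega\,\mathrm{d}\theta$. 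For $|\eta|\ge\Lambda_0$ this triple integral is, by the first part of Lemma~\ref{lem:change of variables} (for each fixed $\omega$, then integrated in $\omega$) followed by passing to cylindrical coordinates about the $\eta$-axis — where the radial Jacobian $y^{d-2}$ is bounded below since $y\ge\tfrac{\Lambda_0}{2}\sin\theta_0>0$ — bounded by a dimensional constant times $\int_{\R^d}G_{\sqrt2\Lambda}^2|\hat f|^2=\|G_{\sqrt2\Lambda}f\|_{L^2}^2\le B^2$, provided the image of the change of variables lies in $\{|\xi|\le\sqrt2\Lambda\}$ so that there $G=G_{\sqrt2\Lambda}$. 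The constraint $\Lambda\ge\Lambda_0$ is arranged precisely so that this containment holds for all but a bounded range of $\Lambda$; on that bounded range the part of the region possibly sticking out of the ball lies in a fixed compact set, where $G^2|\hat f|^2\le e^{2\langle\cdot\rangle^{2\alpha}}A_m^2$ by $\beta\le T_0^{-1}$ and $t\le T_0$, contributing only one further fixed constant. Thus \eqref{eq:Hyp_Lambda3-1} follows for $|\eta|\ge\Lambda_0$ with a constant depending only on $d,m,A_m,B$.

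The estimate \eqref{eq:Hyp_Lambda3-2} is proved in the same way, now with the second part of Lemma~\ref{lem:change of variables}: here $p=\eta^-(\eta^+,\vartheta,\omega)$ from \eqref{eq:etapluspar} is orthogonal to $\eta^+$, so $p\cdot w=0$ for $w=\tfrac{\eta^+}{|\eta^+|}$, the cut-off $\1_{\widetilde\Lambda}(|\eta^-|)$ is accounted for by the cut-off in that lemma (applied with a suitably chosen parameter), and after the change of variables and cylindrical coordinates one again lands on a constant multiple of $\|G_{\sqrt2\Lambda}f\|_{L^2}^2\le B^2$. It remains to handle the low-frequency pieces $|\eta|<\Lambda_0$ (resp. $|\eta^+|<\Lambda_0$) not covered by Lemma~\ref{lem:change of variables}: there $|\eta^-|\le|\eta|/\sqrt2<\Lambda_0/\sqrt2$, hence $G(\eta^-)^{\frac{2m}{2m+1}}|\hat f(\eta^-)|\le e^{1+\Lambda_0^2/2}A_m$ (using $\beta\le T_0^{-1}$ and $(1+s)^\alpha\le1+s$), and integrating over $\theta,\omega$ (resp. $\vartheta,\omega$) gives a fixed bound. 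Setting $K_3$ equal to the maximum of the four constants produced above yields $\mathrm{Hyp3}_{\widetilde\Lambda}(K_3)$.

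The hardest part is the geometric/change-of-variables bookkeeping: one must verify that the one-dimensional segment extension of $\eta^-$, together with the $\theta$- (resp. $\vartheta$-) integration and pushed forward through Lemma~\ref{lem:change of variables}, sweeps out a set contained — up to the harmless fixed remainder above — in the ball on which the cut-off weight $G_{\sqrt2\Lambda}$ agrees with $G$; it is this containment, combined with the strictly positive lower bound $y\ge\tfrac{\Lambda_0}{2}\sin\theta_0$ on the cylindrical radius, that dictates the assumption $\Lambda\ge\Lambda_0$ and, decisively, makes the final constant $K_3$ independent of $\Lambda$ — which is what the length-scale induction in the proof of Theorem~\ref{thm:gevrey3} requires. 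The remaining ingredients — Corollary~\ref{cor:ddlemma}, Jensen's inequality, and the a priori bounds on $\|\hat f\|_{L^\infty}$ and $\|D^m\hat f\|_{L^\infty}$ — are routine.
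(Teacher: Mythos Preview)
Your proposal is correct and follows essentially the same route as the paper's proof: apply Corollary~\ref{cor:ddlemma} with $n=1$ along the line in the $\eta$ (respectively $\eta^+$) direction, absorb the weight, integrate in $(\theta,\omega)$ using Jensen, invoke Lemma~\ref{lem:change of variables} to pass to cylindrical coordinates with the radial Jacobian bounded below, and handle the small-frequency regime $|\eta|\le\Lambda_0$ trivially via $\beta\le T_0^{-1}$. The only cosmetic difference is your treatment of the geometric containment: the paper verifies directly that the image of the change of variables stays in the ball of radius $\sqrt{2}\Lambda$ for all $\Lambda\ge\Lambda_0$, whereas you allow for a possible fixed overshoot on a bounded range of $\Lambda$ and absorb it into a constant; both work.
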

\begin{proof}
 	Fix $0<t\le T_0$, a direction $\eta\in \R^d\setminus\{0\}$, and define the function
	\begin{align*}
		z\mapsto F(z):= \hat{f}(t,\eta^- + z\tfrac{\eta}{|\eta|})
	\end{align*}
	of the single real variable $z$, where we think of $\eta^-$ as given in the $\eta$-parametrization \eqref{eq:etapar} for some $\theta$ and $\omega\in \S^{d-2}(\eta)$, and where we drop, for simplicity, the dependence on the time $t$ in our notation for $F$ and $f$.
	Then, since $\|f(t,\cdot)\|_{L^1_m}\le A_m$ one has $\hat{f}(t,\cdot)\in \mathcal{C}^m(\R^d)$  and thus also  $F\in \mathcal{C}^m(\R)$ with $\|F\|_{L^{\infty}} \leq A_m$ $\|\partial^m_z F\|_{L^{\infty}} \leq (2\pi)^m A_m$,  and Corollary \ref{cor:ddlemma} applied to $F$ now gives
	\begin{align*}
		|\hat{f}(\eta^-)| \le L_{m,1} \left( \int_0^2|\hat{f}(\eta^- + z\tfrac{\eta}{|\eta|})|^2\, \mathrm{d}z\right)^{\frac{m}{2m+2}} .
	\end{align*}
	We multiply  this with the radially increasing weight $G$ to get
	\begin{align*}
		G(\eta^-)^{\frac{2m}{2m+1}} |\hat{f}(\eta^-)|
		\leq
		  L_{m,1} \left( \int_0^2|G(\eta^- +z\tfrac{\eta}{|\eta|})\hat{f}(\eta^- + z\tfrac{\eta}{|\eta|})|^2\, \mathrm{d}z\right)^{\frac{m}{2m+2}} .
	\end{align*}
	Integrating this with respect to $\omega$ and $\theta$, where we think of $\eta^-= \eta^-(\eta,\theta,\omega)$ in the parametrization \eqref{eq:etapar}, and using Jensen's inequality for concave functions, one gets
	\begin{align}
		\int_{\theta_0}^{\tfrac{\pi}{2}} &\int_{\S^{d-2}(\eta)}  G(\eta^-)^{\frac{2m}{2m+1}} |\hat{f}(\eta^-)| \, \mathrm{d}\theta\, \mathrm{d}\omega \nonumber \\
		& \leq
		 L_{m,1} (\tfrac{\pi}{2})^{\frac{m+1}{2m+1}} |\S^{d-2}|^{\frac{m+1}{2m+1}}
		 \left( \int_{\theta_0}^{\tfrac{\pi}{2}} \int_{\S^{d-2}(\eta)} \int_0^2|G(\eta^- +z\tfrac{\eta}{|\eta|})\hat{f}(\eta^- + z\tfrac{\eta}{|\eta|})|^2\,
		 \mathrm{d}z \, \mathrm{d}\theta\, \mathrm{d}\omega \right)^{\frac{m}{2m+1}}
		 \label{eq:step1} .
	\end{align}
	Now assume that $|\eta|\ge \Lambda_0$. Because of the first part of Lemma \ref{lem:change of variables}, we can further bound
	\begin{align*}
		\eqref{eq:step1} &\le
		L_{m,1} (\tfrac{\pi}{2})^{\frac{m+1}{2m+1}} |\S^{d-2}|^{\frac{m+1}{2m+1}}
		\left(\frac{2}{\Lambda_0\cos\theta_0} \right)^{\frac{m}{2m+1}} \\
		 &\qquad\quad
		\left( \int_{\S^{d-2}(\eta)} \int_{\Lambda_0\sin^2\tfrac{\theta_0}{2}}^{\tfrac{|\eta|}{2}+2}
		\int_{\Lambda_0\sin\theta_0}^{\tfrac{|\eta|}{2}}
			| G(x\tfrac{\eta}{|\eta|}-y\omega)\hat{f}(x\tfrac{\eta}{|\eta|}-y\omega)|^2\,
		 \mathrm{d}y \, \mathrm{d}x\, \mathrm{d}\omega \right)^{\frac{m}{2m+1}} \\
		 & \le
		L_{m,1} (\tfrac{\pi}{2})^{\frac{m+1}{2m+1}} |\S^{d-2}|^{\frac{m+1}{2m+1}}
		\left(\frac{2}{\Lambda_0\cos\theta_0} \right)^{\frac{m}{2m+1}}
		(\Lambda_0\sin\theta_0)^{2-d} \\
		 &\qquad\quad
		\left( \int_{\S^{d-2}(\eta)} \int_{\Lambda_0\sin^2\tfrac{\theta_0}{2}}^{\tfrac{|\eta|}{2}+2}
		\int_{\Lambda_0\sin\theta_0}^{\tfrac{|\eta|}{2}}
			| G(x\tfrac{\eta}{|\eta|}-y\omega)\hat{f}(x\tfrac{\eta}{|\eta|}-y\omega)|^2
			\, y^{d-2}\mathrm{d}y\, \mathrm{d}x\, \mathrm{d}\omega \right)^{\frac{m}{2m+1}}
	\end{align*}
	Again, the integration measure $ y^{d-2}\mathrm{d}y\, \mathrm{d}x\, \mathrm{d}\omega$ is $d$-dimensional Lebesgue measure in the cylindrical coordinates  $(x,y\omega)$ with respect to the cylinder in the $\eta$ direction. One checks that the condition $\Lambda\ge\Lambda_0\ge 3$ ensures that
	\begin{align*}
		(\widetilde{\Lambda}/2+2)^2 +(\widetilde{\Lambda}/2) \le (\sqrt{2}\Lambda)^2
	\end{align*}
	so since $|\eta|\leq \widetilde{\Lambda}$, we can extend the integration above to a ball of radius $\sqrt{2}\Lambda$ to get
	\begin{align}
		\eqref{eq:step1}
		&\le
		L_{m,1} (\tfrac{\pi}{2})^{\frac{m+1}{2m+1}} |\S^{d-2}|^{\frac{m+1}{2m+1}}
		\left(\frac{2}{\Lambda_0\cos\theta_0} \right)^{\frac{m}{2m+1}}
		(\Lambda_0\sin\theta_0)^{2-d}
			\| G_{\sqrt{2}\Lambda}f \|_{L^2(\R^d)}^{\frac{2m}{2m+1}} \nonumber\\
		&\le
			 L_{m,1} (\tfrac{\pi}{2})^{\frac{m+1}{2m+1}} |\S^{d-2}|^{\frac{m+1}{2m+1}}
		\left(\frac{2}{\Lambda_0\cos\theta_0} \right)^{\frac{m}{2m+1}}
		(\Lambda_0\sin\theta_0)^{2-d} B^{\frac{2m}{2m+1}} .
			\label{eq:step2}
	\end{align}
 If $|\eta|\le \Lambda_0$ we simply bound
 \begin{align}\label{eq:step3}
 	\int_{\theta_0}^{\tfrac{\pi}{2}} &\int_{\S^{d-2}(\eta)}  G(\eta^-)^{\frac{2m}{2m+1}} |\hat{f}(\eta^-)| \, \mathrm{d}\theta\, \mathrm{d}\omega
 	\le \|\hat{f}\|_{L^\infty} \frac{\pi}{2} |\S^{d-2}| e^{\beta T_0(1+\Lambda_0^2/2)}
 	\le A_m \frac{\pi}{2}|\S^{d-2}| e^{1+\Lambda_0^2/2}.
 \end{align}
 Concerning the bound in the second half of $\mathrm{Hyp}_{\widetilde{\Lambda}}$, a completely analogous calculation as the one above, using the second halft of Lemma \ref{lem:change of variables} gives for $\lambda_0\le |\eta^+|\le \widetilde{\Lambda}$,

 \begin{align}
 	&\int_{\vartheta_0}^{\frac{\pi}{2}} \int_{\S^{d-2}(\eta^+)} G\left(t,\eta^-(\eta^+,\vartheta,\omega)\right)^{\frac{2m}{2m+1}} \left|\hat{f}\left(\eta^-(\eta^+,\vartheta,\omega)\right)\right|\1_{\tfrac{\Lambda}{\sqrt{2}}}(|\eta^-(\eta^+,\vartheta,\omega)|)\,\mathrm{d}\omega\,\mathrm{d}\vartheta \nonumber\\
	&\le
	 L_{m,1} (\tfrac{\pi}{2})^{\frac{m+1}{2m+1}} |\S^{d-2}|^{\frac{m+1}{2m+1}}
		\left(\frac{1}{2\Lambda_0} \right)^{\frac{m}{2m+1}} (\Lambda_0\tan\vartheta_0)^{2-d}
		\nonumber\\
		 &\qquad\quad
		\left( \int_{\S^{d-2}(\eta^+)} \int_{0}^{2}
		\int_{0}^{\frac{\widetilde{\Lambda}}{\sqrt{2}}}
			| G(x\tfrac{\eta}{|\eta|}-y\omega)\hat{f}(x\tfrac{\eta}{|\eta|}-y\omega)|^2
			\, y^{d-2}\mathrm{d}y\, \mathrm{d}x\, \mathrm{d}\omega \right)^{\frac{m}{2m+1}}	
			\label{eq:step4}	
 \end{align}
 By our choice of $\widetilde{\Lambda}$ and $\Lambda_0$, we always have $2^2+ (\widetilde{\Lambda}/2)^2\le (\sqrt{2}\Lambda)^2$, so we can extend the integration above to the whole ball $|\eta^+|\le \sqrt{2}\Lambda$ to see
  \begin{align}
  		\eqref{eq:step4}
  		&	\le
		L_{m,1} (\tfrac{\pi}{2})^{\frac{m+1}{2m+1}} |\S^{d-2}|^{\frac{m+1}{2m+1}}
		\left(\frac{1}{2\Lambda_0} \right)^{\frac{m}{2m+1}} (\Lambda_0\tan\vartheta_0)^{2-d}
		\| G_{\sqrt{2}\Lambda}f \|_{L^2(\R^d)}^{\frac{2m}{2m+1}} \nonumber\\
		&	\le
		L_{m,1} (\tfrac{\pi}{2})^{\frac{m+1}{2m+1}} |\S^{d-2}|^{\frac{m+1}{2m+1}}
		\left(\frac{1}{2\Lambda_0} \right)^{\frac{m}{2m+1}} (\Lambda_0\tan\vartheta_0)^{2-d}
		B^{\frac{2m}{2m+1}}
		\label{eq:step5}
  \end{align}
  If $|\eta^+|\le \Lambda_0$ we simply bound as above
 \begin{align}\label{eq:step6}
 	\int_{\vartheta_0}^{\tfrac{\pi}{4}} \int_{\S^{d-2}(\eta^+)}  G(\eta^-)^{\frac{2m}{2m+1}} |\hat{f}(\eta^-)| \, \mathrm{d}\vartheta\, \mathrm{d}\omega
  	\le A_m \frac{\pi}{4}|\S^{d-2}| e^{1+\Lambda_0^2}.
 \end{align}
 Now we set $K_3$ equal to the maximum of the constants in \eqref{eq:step2}, \eqref{eq:step3}, \eqref{eq:step5}, \eqref{eq:step6}. with this choice, $K_3$ depends only on $d,m,A_m$, and $B$ and   $\mathrm{Hyp3}_{\widetilde{\Lambda}}(K_3)$ is true.
\end{proof}

\begin{proof}[Proof of Theorem \ref{thm:gevrey3}]
By Lemmata \ref{lem:induction1-2} and \ref{lem:induction2-2} a suitable choice for $A_m$ and  $B$ is
 \begin{align*}
    B&:= \|f_0\|_{L^2(\R^d)} e^{C_{f_0}T_0}, \\
    A_m&:= \sup_{t\geq 0} \|f(t, \cdot)\|_{L^1_m(\R^d)} < \infty.
   \end{align*}
   Note that the finiteness of $A_m$ is guaranteed since $f_0 \in L^1_m(\R^d)$, see Remark \ref{rem:ind2}. Again choose the length scales $\Lambda_N$ to be
  \begin{equation*}
    \Lambda_N := \frac{\Lambda_{N-1}+\sqrt{2}\Lambda_{N-1}}{2} = \frac{1+\sqrt{2}}{2}\Lambda_{N-1} =
    \left( \frac{1+\sqrt{2}}{2} \right)^N \Lambda_0
  \end{equation*}
  with  $\Lambda_0=3$, see \eqref{eq:ind2-3-assumption2}, and we set
  \begin{equation*}
    M_3:= \max \left\{ 2 |\S^{d-2}|A_m+1, K_3 \right\}
  \end{equation*}	
  with the constant $K_3$ from Lemma \ref{lem:induction2-3}.

 For the start of the induction, we need $\mathrm{Hyp3}_{\Lambda_0}(M_3)$ to be true.
 Since
 \begin{align*}
 	\sup_{0\le t\le T_0}
 	\sup_{\zeta\in\R^d\setminus\{0\}} \sup_{(z,\rho)\in A_{\Lambda_0}}  &\int_{\S^{d-2}(\zeta)} G\left(t,z\tfrac{\zeta}{|\zeta|} - \rho \omega \right)^{\epsilon(\alpha, 1)} \left|\hat{f}\left(t,z\tfrac{\zeta}{|\zeta|} - \rho \omega \right)\right|\,\mathrm{d}\omega \\
 	&\le |\S^{d-2}|e^{\beta T_0 (1+\Lambda_0^2)} A_m
  \end{align*}
  and from our choice of $M_2$ there exists $\beta_0 >0$ such that $\mathrm{Hyp2}_{\Lambda_0}(M_2)$ is true for all $0\le\beta\le \beta_0$.

  Now, we choose
  \begin{equation*}
  	\beta=\min \left( \beta_0, T_0^{-1},\frac{\tilde{C}_{f_0}}{2^d c_{b,d,2} \, \alpha T_0 M_2 +1} \right).
  \end{equation*}
  With this choice, the conditions of Lemma \ref{lem:induction1-2} and \ref{lem:induction2-2} are fulfilled and $\mathrm{Hyp2}_{\Lambda_0}(M_2)$ is true.

  For the induction step assume that $\mathrm{Hyp2}_{\Lambda_N}(M_2)$ is true. Then Lemma \ref{lem:induction1-2} gives
  \begin{equation*}
  	\| G_{\sqrt{2}\Lambda_N} f\|_{L^2(\R^d)} \le \|\1_{\sqrt{2}\Lambda}(D_v) f_0\|_{L^2(\R^d)} \, e^{C_{f_0}T_0} = B
  \end{equation*}
  and then, since $\epsilon(\alpha,1)\le \frac{2m}{2m+2}$ by our choice of $\alpha$, and
  $\Lambda_{N+1}= \widetilde{\Lambda}_N$, Lemma \ref{lem:induction2-2} shows that  $\mathrm{Hyp2}_{\Lambda_{N+1}}(M_2)$ is true, so by induction, it is true for all $N\in\N$.
  Invoking Lemma \ref{lem:induction1-2} again, we also have
  \begin{equation*}
  	\|G_{\sqrt{2}\Lambda_N}f\|_{L^2(\R^d)} \leq B
  \end{equation*}
  for all $N\in\N$ and letting $N\to\infty$, we see
  $\|Gf\|_{L^2(\R^d)} \leq B $, which concludes the proof of Theorem \ref{thm:gevrey2}.
\end{proof}

\section[Gevrey regularity and (ultra-)analyticity of weak solutions]{Removing the $L^2$ constraint: Gevrey regularity and (ultra-)analyticity of weak solutions}\label{sec:great-results}
In this section we will give the proofs of Theorem \ref{thm:gevrey-main1}, \ref{thm:gevrey-main2}, and \ref{thm:gevrey-main3} in a slightly more general form. More precisely, we will prove

\begin{theorem}[Gevrey smoothing I]\label{thm:gevrey-main1-m}
		Assume that the cross-section $b$ satisfies the \emph{singularity condition} \eqref{eq:cross-section} and the \emph{integrability condition} \eqref{eq:cross-section2} for $d\geq 2$, and for $d=1$, $b_1$ satisfies the \emph{singularity condition} \eqref{eq:cross-section-kac} and the \emph{integrability condition} \eqref{eq:cross-section-kac2} for some $0<\nu<1$. Let $f$ be a weak solution of the Cauchy problem \eqref{eq:cauchyproblem} with initial datum $f_0\ge 0$ and $f_0\in L^1_m(\R^d)\cap L\log L(\R^d)$ for some integer $m\ge 2$. Then, for all $0<\alpha\leq \min\left\{\alpha_{m,d}, \nu\right\}$,
	\begin{align}
		f(t,\cdot)\in G^{\tfrac{1}{2\alpha}}(\R^d)
	\end{align}
	for all $t>0$, where $\alpha_{m,d} = \frac{\log[(4m+d)/(2m+d)]}{\log 2}$.
\end{theorem}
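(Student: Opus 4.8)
The plan is to remove the additional assumption $f_0\in L^2(\R^d)$ from Theorem~\ref{thm:gevrey1} and hence prove the general statement. The idea is a classical two-step approximation-and-limiting argument: first run the machinery of Section~\ref{S2} at a slightly later starting time $t_0>0$, where the solution is automatically in $L^2$ (indeed in $H^\infty$) by the known $H^\infty$-smoothing of the non-cutoff Boltzmann and Kac equations (Alexandre--El Safadi \cite{AE05}, Desvillettes \cite{Des95,Des97}, Morimoto et al.\ \cite{MUXY09}); then let $t_0\downarrow 0$.

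The first step is: fix $T_0>0$ and any $0<t_0<T_0$. Since $f$ is a weak solution with initial datum $f_0\in L^1_2\cap L\log L$, the $H^\infty$-smoothing results give $f(t_0,\cdot)\in H^\infty(\R^d)\subset L^2(\R^d)$; moreover $f(t_0,\cdot)\ge 0$, $f(t_0,\cdot)\in L^1_m\cap L\log L$ (the latter two by Definition~\ref{def:weaksolution}(iii) and Remark~\ref{rem:ind2}, moment propagation). Then $f(t_0+\cdot,\cdot)$ is itself a weak solution of the Cauchy problem on $[0,T_0-t_0]$ with initial datum $f(t_0,\cdot)$ satisfying all hypotheses of Theorem~\ref{thm:gevrey1}. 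Applying that theorem, for every $0<\alpha\le\min\{\alpha_{m,d},\nu\}$ there is $\beta=\beta(t_0)>0$ with $e^{\beta(s)\langle D_v\rangle^{2\alpha}}f(t_0+s,\cdot)\in L^2(\R^d)$ for all $0\le s\le T_0-t_0$; in particular $f(t,\cdot)\in G^{1/2\alpha}(\R^d)$ for every $t\in(t_0,T_0]$.

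The second step is to note that $t_0\in(0,T_0)$ was arbitrary, so taking (say) $t_0=t/2$ for any fixed $t\in(0,T_0]$ yields $f(t,\cdot)\in G^{1/2\alpha}(\R^d)$, and since $T_0$ was arbitrary this holds for all $t>0$. This completes the proof. The one-dimensional Kac case is identical, using Desvillettes' $\mathcal C^\infty$-regularisation \cite{Des95} in place of the Boltzmann $H^\infty$-smoothing.

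\textbf{Main obstacle.} The genuine content is already in Section~\ref{S2}; here the only delicate point is making sure that \emph{all} quantitative constants produced in Theorem~\ref{thm:gevrey1} --- the sub-elliptic constants $\widetilde C_{f_0}, C_{f_0}$ from Lemma~\ref{lem:subelliptic}, the moment bound $A_m$, and the $L^2$-norm $\|f(t_0,\cdot)\|_{L^2}$ entering $B$ --- are controlled \emph{uniformly in $t_0$}, or at least harmlessly dependent on it, so that one does not need a uniform-in-$t_0$ Gevrey radius. In fact one does not: the coercivity constants depend only on $\|f_0\|_{L^1_2}$ and $\|f_0\|_{L\log L}$ through the monotonicity in the Remark after Lemma~\ref{lem:subelliptic} (and $\|f(t_0,\cdot)\|_{L^1_2}\le\|f_0\|_{L^1_2}$, $H(f(t_0,\cdot))\le H(f_0)$ by Definition~\ref{def:weaksolution}(iii)), and the moment bound $A_m=\sup_{t\ge0}\|f(t,\cdot)\|_{L^1_m}$ is already $t_0$-independent by moment propagation. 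The only genuinely $t_0$-dependent quantity is $\|f(t_0,\cdot)\|_{L^2}$, which is finite for each fixed $t_0>0$ and enters only the constant $B$, hence only the final $\beta(t_0)$; since we are content with Gevrey regularity at each fixed positive time, this is immaterial. Thus the obstacle is purely bookkeeping, and the argument closes.
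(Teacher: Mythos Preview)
Your proposal is correct and follows essentially the same approach as the paper: use the known $H^\infty$ smoothing to obtain $f(t_0,\cdot)\in L^2(\R^d)$ for any $t_0>0$, then apply Theorem~\ref{thm:gevrey1} with $f(t_0,\cdot)$ as new initial datum (noting that $T_0$ there is arbitrary), and let $t_0\downarrow 0$. Your additional bookkeeping discussion---tracking which constants depend on $t_0$ and why this is harmless---is more detailed than the paper's own argument but entirely in the same spirit.
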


\begin{theorem}[Gevrey smoothing II]\label{thm:gevrey-main2-m}
	Let $d\ge 2$. Assume that the cross-section $b$ satisfies the conditions of Theorem \ref{thm:gevrey-main1}. Let $f$ be a weak solution of the Cauchy problem \eqref{eq:cauchyproblem} with initial datum $f_0\ge 0$ and $f_0\in L^1_m(\R^d)\cap L\log L(\R^d)$ for some integer $m\ge 2$. Then, for all $0<\alpha\leq \min\left\{\alpha_{m,2}, \nu\right\}$,
	\begin{align}
		f(t,\cdot)\in G^{\tfrac{1}{2\alpha}}(\R^d)
	\end{align}
	for all $t>0$, where $\alpha_{m,2} = \frac{\log[(4m+2)/(2m+2)]}{\log 2}$. In particular, the weak solution is real analytic if $\nu=\frac{1}{2}$ and ultra-analytic if $\nu>\frac{1}{2}$ in \emph{any dimension}.
\end{theorem}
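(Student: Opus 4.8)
The plan is to deduce Theorem~\ref{thm:gevrey-main2-m} from its $L^2$ version, Theorem~\ref{thm:gevrey2} (valid for $d\ge 3$; the case $d=2$ is already contained in Theorem~\ref{thm:gevrey-main1-m}, since $\alpha_{m,2}=\alpha_{m,d}$ when $d=2$), by using the known \emph{unconditional} $H^\infty$ smoothing of the non-cutoff homogeneous Boltzmann equation to manufacture, after an arbitrarily short time, an initial datum that in addition lies in $L^2$. In other words, the only thing separating the general statement from the $L^2$ statement of Section~\ref{S2} is the missing square-integrability of $f_0$, and this is repaired for free by running the equation for a time $t_0>0$.

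Concretely, I would fix an arbitrary $t_0>0$. By the $H^\infty$ smoothing result of \textsc{Alexandre} and \textsc{El~Safadi}~\cite{AE05}, which holds precisely under the singularity and integrability conditions \eqref{eq:cross-section}--\eqref{eq:cross-section2}, any weak solution satisfies $f(t_0,\cdot)\in H^\infty(\R^d)$, and in particular $f(t_0,\cdot)\in L^2(\R^d)$. Next I would check that $g_0:=f(t_0,\cdot)$ meets the remaining hypotheses of Theorem~\ref{thm:gevrey2}: non-negativity is immediate; $g_0\in L^1_m(\R^d)$ follows from moment propagation for Maxwellian molecules (\textsc{Truesdell}~\cite{Tru56}, see also \textsc{Villani}~\cite{Vil02}) applied to $f_0\in L^1_m$; and $g_0\in L\log L(\R^d)$ follows from Lemma~\ref{lem:entropy} together with the a priori bounds $\int_{\R^d} g_0\,v^2\,\mathrm dv\le\int_{\R^d}f_0\,v^2\,\mathrm dv$ and $H(g_0)\le H(f_0)$ from Definition~\ref{def:weaksolution}. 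Then I would observe that the time-shifted function $g(s,v):=f(t_0+s,v)$, $s\ge0$, is again a weak solution of \eqref{eq:cauchyproblem} with initial datum $g_0$: properties (i)--(iii) of Definition~\ref{def:weaksolution} transfer under the shift (energy decay and entropy growth for $g$ being obtained by comparing the times $t_0+s$ and $t_0$ in the original solution), and the weak formulation \eqref{eq:weakformulation} for $g$ is recovered from that for $f$ by inserting test functions supported in $\{\tau\ge t_0\}$. Applying Theorem~\ref{thm:gevrey2} to $g$ then gives, for every $0<\alpha\le\min\{\alpha_{m,2},\nu\}$ and every $T_0>0$, a $\beta>0$ with $e^{\beta s\langle D_v\rangle^{2\alpha}}g(s,\cdot)\in L^2(\R^d)$ for $s\in[0,T_0]$, hence $f(t,\cdot)=g(t-t_0,\cdot)\in G^{\frac1{2\alpha}}(\R^d)$ for all $t\in(t_0,t_0+T_0]$. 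Since $t_0>0$ is arbitrary — to reach a prescribed $t>0$ one simply takes $t_0=s=T_0=t/2$ — this yields $f(t,\cdot)\in G^{\frac1{2\alpha}}(\R^d)$ for all $t>0$. The analyticity/ultra-analyticity assertions follow since $\alpha_{m,2}\ge\alpha_{2,2}=\log(5/3)/\log2>\tfrac12$, so $\alpha=\tfrac12$ is admissible when $\nu=\tfrac12$ and $\alpha=\min\{\alpha_{m,2},\nu\}>\tfrac12$ when $\nu>\tfrac12$.

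I expect the only step requiring genuine care to be the verification that the shifted function $g$ is a weak solution in the precise sense of Definition~\ref{def:weaksolution} — chiefly that the moment and entropy monotonicities transfer correctly and that no time regularity is lost when restricting test functions to $\{\tau\ge t_0\}$; everything else is a direct appeal to Theorem~\ref{thm:gevrey2}, the $H^\infty$-smoothing literature, and standard moment-propagation results. (The corresponding one-dimensional statement, Theorem~\ref{thm:gevrey-main1-m} in the Kac case, proceeds identically, using \textsc{Desvillettes}'~\cite{Des95} $H^\infty$ result for the Kac equation in place of~\cite{AE05}.)
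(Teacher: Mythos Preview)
Your proposal is correct and follows essentially the same approach as the paper: use the known $H^\infty$ smoothing to gain $L^2$ integrability at any positive time, then restart the equation and apply the $L^2$ version (Theorem~\ref{thm:gevrey2}). Your write-up is in fact more careful than the paper's own argument, which dispatches all three Theorems~\ref{thm:gevrey-main1-m}--\ref{thm:gevrey-main3-m} in a single short paragraph without spelling out the verification that the time-shifted function is again a weak solution or that the moment and $L\log L$ bounds persist.
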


If the integrability conditions \eqref{eq:cross-section2} is replaced by the slightly stronger condition \eqref{eq:cross-section-bdd}, which is true in all physically relevant cases, we can prove the  stronger result

\begin{theorem}[Gevrey smoothing III]\label{thm:gevrey-main3-m}
	Let $d\ge 2$. Assume that the cross-section $b$ satisfies the conditions of Theorem \ref{thm:gevrey-main1} and the condition \eqref{eq:cross-section-bdd},  that is, they are bounded away from the singularity.  Let $f$ be a weak solution of the Cauchy problem \eqref{eq:cauchyproblem} with initial datum $f_0\ge 0$ and $f_0\in L^1_m(\R^d)\cap L\log L(\R^d)$ for some integer $m\ge 2$. Then, for all $0<\alpha\leq \min\left\{\alpha_{m,1}, \nu\right\}$,
	\begin{align}
		f(t,\cdot)\in G^{\tfrac{1}{2\alpha}}(\R^d)
	\end{align}
	for all $t>0$, where $\alpha_{m,1} = \frac{\log[(4m+1)/(2m+1)]}{\log 2}$.
\end{theorem}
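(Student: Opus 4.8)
The plan is to remove the auxiliary hypothesis $f_0\in L^2(\R^d)$ from Theorem~\ref{thm:gevrey3} by a restart argument based on the unconditional $H^\infty$-smoothing of weak solutions. First I would recall that, under the singularity condition \eqref{eq:cross-section} and the integrability condition \eqref{eq:cross-section2} (the latter being a consequence of \eqref{eq:cross-section-bdd}, which is assumed here), every weak solution $f$ of \eqref{eq:cauchyproblem} in the sense of Definition~\ref{def:weaksolution} satisfies $f(t,\cdot)\in H^\infty(\R^d)$ for all $t>0$; for $d\ge 2$ this is the theorem of Alexandre and El~Safadi~\cite{AE05}. In particular, $f(t_0,\cdot)\in L^2(\R^d)$ for every $t_0>0$.

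Next I would fix $t_0>0$, put $g_0:=f(t_0,\cdot)$ and $\tilde f(s,\cdot):=f(t_0+s,\cdot)$, and observe that $\tilde f$ is again a weak solution of \eqref{eq:cauchyproblem}, now with initial datum $g_0$: this follows by restricting the weak formulation \eqref{eq:weakformulation} to test functions in time supported in $\{\tau\ge t_0\}$, together with the continuity $f\in\mathcal C(\R_+;\mathcal D'(\R^d))$. The crucial point is that $g_0$ satisfies \emph{all} the hypotheses of Theorem~\ref{thm:gevrey3}: it is non-negative; it lies in $L^1_m(\R^d)$ with a bound uniform in $t_0$ by the moment-propagation results for Maxwellian molecules recalled in Remark~\ref{rem:ind2}; its negative entropy obeys $H(g_0)\le H(f_0)<\infty$, hence $g_0\in L\log L(\R^d)$ by Lemma~\ref{lem:entropy}; and, by the first step, $g_0\in L^2(\R^d)$.

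Then I would simply apply Theorem~\ref{thm:gevrey3} to $\tilde f$ on an arbitrary finite interval $[0,T_0]$: for every $0<\alpha\le\min\{\alpha_{m,1},\nu\}$ (which is exactly the admissible range of Theorem~\ref{thm:gevrey3}) it yields some $\beta>0$ with $e^{\beta s\langle D_v\rangle^{2\alpha}}\tilde f(s,\cdot)\in L^2(\R^d)$ for all $s\in[0,T_0]$, so that $f(t,\cdot)=\tilde f(t-t_0,\cdot)\in G^{1/(2\alpha)}(\R^d)$ for every $t\in(t_0,t_0+T_0]$. Since $t_0>0$ and $T_0>0$ are arbitrary, this gives $f(t,\cdot)\in G^{1/(2\alpha)}(\R^d)$ for all $t>0$, which is precisely Theorem~\ref{thm:gevrey-main3-m}. (The constant $\beta$ produced in this way will in general depend on $t_0$, but this is irrelevant for the qualitative conclusion.)

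The only non-routine point in this scheme is the restart in the second step: one has to make sure that the unconditional $H^\infty$-regularization genuinely applies inside the weak-solution class of Definition~\ref{def:weaksolution}, so that $f(t_0,\cdot)$ qualifies as an admissible initial datum, and that mass, energy and --- most importantly --- entropy transfer to $\tilde f$ with unchanged bounds, so that the conclusions of Theorem~\ref{thm:gevrey3} hold with the \emph{same} range of $\alpha$ and hence the same Gevrey exponent $\tfrac1{2\alpha}$ as one would obtain for $f_0$. Everything else is a direct invocation of the $L^2$-theory of Section~\ref{S2}, Lemma~\ref{lem:entropy}, and the moment-propagation facts in Remark~\ref{rem:ind2}.
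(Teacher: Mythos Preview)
Your proposal is correct and follows essentially the same approach as the paper: use the known $H^\infty$ smoothing to obtain $f(t_0,\cdot)\in L^2(\R^d)$ for any $t_0>0$, then restart and apply the $L^2$-version Theorem~\ref{thm:gevrey3}. The paper's own proof of Theorems~\ref{thm:gevrey-main1-m}--\ref{thm:gevrey-main3-m} is in fact terser than yours, simply invoking the $H^\infty$ smoothing and the restart without spelling out the verification that the shifted function is again a weak solution with admissible datum; your more careful checking of the hypotheses on $g_0$ (moment propagation, entropy bound via Lemma~\ref{lem:entropy}) is a welcome addition.
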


\begin{remark}
		
\end{remark}

We even have the uniform bound
\begin{corollary}\label{cor:gevrey-m}
	Under the same assumptions as in Theorem \ref{thm:gevrey-main1-m} (or \ref{thm:gevrey-main2-m}, respectively \ref{thm:gevrey-main3-m}), for any weak solution $f$ of the Cauchy problem \eqref{eq:cauchyproblem}
	initial datum $f_0\ge 0$ and $f_0\in L^1_m(\R^d)\cap L\log L(\R^d)$ for some integer $m\ge 2$
	and for any $0<\alpha\leq \min\{\alpha_{d,m}, \nu\}$ (or any $0<\alpha\leq \min\{\alpha_{m,2}, \nu\}$, respectively $0<\alpha\leq \min\{\alpha_{m,1}, \nu\}$) there exist constants $0<K,C<\infty$ such that
	\begin{align}\label{eq:Fourier-uniform}
		\sup_{0\le t<\infty}\sup_{\eta\in\R^d}
		e^{K \min(t,1) \, \langle \eta \rangle^{2\alpha}} \,|\hat{f}(t,\eta)|
		\leq C .
	\end{align}
\end{corollary}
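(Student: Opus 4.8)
The plan is to deduce Corollary~\ref{cor:gevrey-m} from the $L^2$-results of Section~\ref{S2} by a \emph{restarting} argument, using the known $H^\infty$-smoothing of the non-cutoff Boltzmann and Kac equations to manufacture an $L^2$ initial datum at any positive time. Concretely: for every $t_1>0$ a weak solution satisfies $f(t_1,\cdot)\in H^\infty(\R^d)\subset L^2(\R^d)$ (Alexandre--El Safadi \cite{AE05} for $d\ge2$, Desvillettes \cite{Des95,Des97} and Morimoto \textit{et al.} \cite{MUXY09}), while $f(t_1,\cdot)\ge0$ still lies in $L^1_m\cap L\log L$ with mass, $m$-th moment and entropy controlled uniformly in $t_1$. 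Hence $(t,v)\mapsto f(t+t_1,v)$ is a weak solution with $L^2$ initial datum $f(t_1,\cdot)$, and Theorems~\ref{thm:gevrey1}, \ref{thm:gevrey2}, \ref{thm:gevrey3} give $f(t,\cdot)\in G^{1/(2\alpha)}$ for all $t>t_1$; since $t_1>0$ was arbitrary this already establishes the qualitative Theorems~\ref{thm:gevrey-main1-m}, \ref{thm:gevrey-main2-m}, \ref{thm:gevrey-main3-m}. The uniform bound \eqref{eq:Fourier-uniform} is then obtained by making this restarting quantitative and splitting into the ranges $t\ge1$ and $0<t<1$; the cut-off $\min(t,1)$ is unavoidable because the balance between the coercivity gain of Lemma~\ref{lem:subelliptic} and the commutator loss forces $\beta\to0$ as the time horizon $T_0\to\infty$, so the Gevrey exponent cannot be allowed to grow linearly forever.

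For $t\ge1$ I would use that, by moment propagation for Maxwellian molecules together with the smoothing estimates and the a priori bound of Corollary~\ref{cor:gronwallbound} (see also \cite{Vil02}), one has $B_0:=\sup_{\tau\ge1/2}\|f(\tau,\cdot)\|_{L^2(\R^d)}<\infty$. Given $t\ge1$, restart the clock at $\tau:=t-\tfrac12\ge\tfrac12$ and apply Corollary~\ref{cor:gevrey1} (or \ref{cor:gevrey2}, \ref{cor:gevrey3}) with initial datum $f(\tau,\cdot)$ and horizon $T_0=1$. Because the constants produced there are monotone functions of $\|f_0\|_{L^2}$, $\|f_0\|_{L^1_m}$ and the fixed collision data, one gets $\beta_0>0$ and $M_0<\infty$ depending only on $B_0$, $A_m:=\sup_{\tau\ge0}\|f(\tau,\cdot)\|_{L^1_m}$ and $b$ --- crucially \emph{not} on $\tau$ --- with $\sup_{\tau\le s\le\tau+1}\sup_\eta e^{\beta_0(s-\tau)\langle\eta\rangle^{2\alpha}}|\hat f(s,\eta)|\le M_0$. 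Evaluating at $s=t$, where $s-\tau=\tfrac12$, yields $e^{(\beta_0/2)\langle\eta\rangle^{2\alpha}}|\hat f(t,\eta)|\le M_0$ for all $t\ge1$, which is \eqref{eq:Fourier-uniform} on $\{t\ge1\}$ with $K\le\beta_0/2$ and $C\ge M_0$, since $\min(t,1)=1$ there.

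The delicate range is $0<t<1$, where $\min(t,1)=t$ and the exponent must stay linear in $t$ with a $t$-independent constant $K$. Here I would restart at $\tau:=t/2$ and re-run the $L^2$-machinery of Section~\ref{S2} with a time horizon of order $t$, keeping track of how $\beta$ and the multiplicative constant $M$ depend on $\|f(\tau,\cdot)\|_{L^2}$: the short horizon bounds the Gronwall factor and keeps $\beta$ bounded below, while the quantitative $H^\infty$-smoothing rate $\|f(\tau,\cdot)\|_{L^2}\lesssim\tau^{-\gamma}$ for $\tau\in(0,1]$ shows that $M$ grows at most polynomially in $1/t$. This gives $|\hat f(t,\eta)|\le M(t)\,e^{-c\,t\langle\eta\rangle^{2\alpha}}$ with a $t$-independent $c>0$, which one then combines with the trivial bound $|\hat f(t,\eta)|\le\|f_0\|_{L^1}$ through the elementary interpolation $\min(a,b)\le a^{1-\theta}b^\theta$, absorbing the polynomial blow-up of $M(t)$ into the exponential gain on the low-frequency region $\langle\eta\rangle^{2\alpha}\lesssim t^{-1}$. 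Reconciling the diverging constant $M(t)$ with a fixed $K$ as $t\downarrow0$ is the main obstacle of the whole corollary, and it is precisely here that one must exploit the sharp smoothing rate and the strict concavity of the Gevrey weights (Lemma~\ref{lem:abalpha}) rather than the mere qualitative Gevrey membership established in the first step.
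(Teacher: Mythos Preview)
Your approach differs substantially from the paper's. The paper gives a one-line proof: it simply invokes the propagation of Gevrey regularity result of \textsc{Desvillettes, Furioli, and Terraneo} \cite{DFT09} for Maxwellian molecules to extend the finite-time pointwise bounds of Corollaries~\ref{cor:gevrey1}--\ref{cor:gevrey3} to all times. That is, once Gevrey regularity is established at some fixed positive time via the $L^2$ theory (after $H^\infty$ smoothing), an external propagation theorem does the rest. You instead attempt a self-contained sliding-window restarting argument.

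For $t\ge 1$ your approach is a legitimate alternative to citing \cite{DFT09}: restarting at $\tau=t-\tfrac12$ and using that $\sup_{\tau\ge 1/2}\|f(\tau,\cdot)\|_{L^2}<\infty$ together with moment propagation does give uniform constants, and this trades an external reference for the machinery already built in Section~\ref{S2}. This part is sound.

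For $0<t<1$ your sketch has a genuine gap. The claim that ``the short horizon keeps $\beta$ bounded below'' is not justified and in fact can fail. Tracing through the proof of Theorem~\ref{thm:gevrey1}, one has $\beta\lesssim \tilde C_{f_0}/(T_0 M_1+1)$ with $M_1\gtrsim K_1\sim B^{2m/(2m+d)}$ and $B\sim\|f(t/2,\cdot)\|_{L^2}$. With the quantitative smoothing rate $\|f(\tau,\cdot)\|_{L^2}\lesssim \tau^{-d/(4\nu)}$ from Appendix~\ref{sec:appendix-hinfty} and $T_0\sim t$, one gets $T_0 M_1\sim t^{\,1-\frac{md}{2\nu(2m+d)}}$, which blows up as $t\to 0$ whenever $2\nu(2m+d)<md$ --- for instance in the physical case $d=3$, $m=2$, $\nu=\tfrac14$. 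The subsequent interpolation with the trivial $L^1$ bound also does not close as written: the polynomial loss $M(t)\sim t^{-p}$ depends on $t$ alone, while the exponential gain $e^{-(c-K)t\langle\eta\rangle^{2\alpha}}$ depends on the product $t\langle\eta\rangle^{2\alpha}$, and on the region where $t\langle\eta\rangle^{2\alpha}$ is merely of order one the factor $t^{-p}$ cannot be absorbed. The paper sidesteps this entire issue by outsourcing the large-time extension to \cite{DFT09} rather than trying to make the restarting quantitative down to $t=0$.
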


\begin{proof}[Proof of Theorems \ref{thm:gevrey-main1-m} through \ref{thm:gevrey-main3-m}]
	In the case where the initial condition $f_0$ obeys $f_0\ge 0$ and $f_0\in L^1_m(\R^d)\cap L\log L(\R^d)$ for some integer $m\ge 2$, but
	is not necessarily in $L^2(\R^d)$, we use the known $H^\infty$ smoothing of
	the Boltzmann \cite{DW10, AE05, MUXY09} and Kac equation\footnote{A $H^{\infty}$ smoothing effect for the homogeneous non-cutoff Kac equation was first proved by \textsc{L. Desvillettes} \cite{Des95}, but under the stronger assumption that all polynomial moments of the initial datum $f_0$ are bounded, i.e. $f_0\in L^1_k(\R)\cap L\log L(\R)$ for all $k\in\N$.} \cite{LX09} in a mild way (see also Appendix \ref{sec:appendix-hinfty}):
	for $\tau>0$ one has $f(\tau,\cdot)\in L^2(\R^d)$
	and using this as a new initial condition in Theorems \ref{thm:gevrey-main1} through \ref{thm:gevrey-main3}, and noting that $T_0$ in those theorems is arbitrary, this implies that $f(t,\cdot) \in G^{\frac{1}{2\alpha}}(\R^d)$
	for $t>0$.
\end{proof}

\begin{proof}[Proof of Corollary \ref{cor:gevrey-m} ]
	Using known results about propagation of Gevrey regularity by \textsc{Desvillettes}, \textsc{Furioli}, and \textsc{Terraneo} \cite{DFT09} for the non-cutoff homogeneous Boltzmann and Kac equation for Maxwellian molecules, the bounds from Corollary \ref{cor:gevrey1} through \ref{cor:gevrey3}  extend to all times.
\end{proof}

\appendix

\section{$L^2$ type reformulation of the Boltzmann and Kac equations}\label{sec:appendix-reformulation}

A reformulation of the weak form \eqref{eq:weakformulation} of the Boltzmann and Kac equations is derived. We want to choose a suitable test function $\varphi$ in terms of the weak solution $f$ itself in the weak formulation of the Cauchy problem \eqref{eq:cauchyproblem}. We use $\varphi(t,\cdot):= G_\Lambda^2(t,D_v)f(t,\cdot)$ and since this involves a hard cut-off in Fourier space, we automatically have high regularity  of $\varphi(t,v)$ in the velocity variable, the question is to have $\mathcal{C}^1$ regularity in the time variable. For this we follow the strategy by \textsc{Morimoto} \textit{et al.} \cite{MUXY09}.

\begin{proposition}[]\label{prop:L2reformA}
	Let $f$ be a weak solution of the Cauchy problem \eqref{eq:cauchyproblem} with initial datum $f_0$ satisfying \eqref{eq:initialdata}, and let $T_0>0$. Then for all $t\in(0, T_0]$, $\beta>0$, $\alpha\in(0,1)$, and $\Lambda>0$ we have
$G_{\Lambda}f \in \mathcal{C}\left([0, T_0]; L^2(\R^d)\right)$ and
	\begin{align}\label{eq:reformulationappendix}
	\begin{split}
	&\frac{1}{2} \|G_{\Lambda}(t,D_v)f(t,\cdot)\|_{L^2(\R^d)}^2 - \frac{1}{2} \int_0^t \left\langle f(\tau, \cdot), \left( \partial_t G_{\Lambda}^2(\tau, D_v) \right) f(\tau,\cdot)\right\rangle \,\mathrm{d}\tau \\
	&= \frac{1}{2} \|\1_{\Lambda}(D_v)f_0\|_{L^2(\R^d)}^2 + \int_0^t \left\langle Q(f,f)(\tau, \cdot), G_{\Lambda}^2(\tau, D_v)f(\tau, \cdot)\right\rangle \, \mathrm{d}\tau.
	\end{split}
	\end{align}
\end{proposition}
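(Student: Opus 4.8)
The plan is to obtain \eqref{eq:reformulationappendix} by inserting $\varphi(\tau,\cdot):=G_{\Lambda}^2(\tau,D_v)f(\tau,\cdot)$ as a test function in the weak formulation \eqref{eq:weakformulation} and rearranging. Once the substitution is justified, the identity is immediate: for each fixed $\tau$ the operator $G_{\Lambda}(\tau,D_v)$ is a bounded self-adjoint Fourier multiplier on $L^2(\R^d)$ whose symbol is supported in $\{|\eta|\le\Lambda\}$, so $\langle f(t),\varphi(t)\rangle=\int G_{\Lambda}^2(t,\eta)|\hat f(t,\eta)|^2\,\mathrm{d}\eta=\|G_{\Lambda}(t,D_v)f(t,\cdot)\|_{L^2}^2$; since $G_{\Lambda}(0,\cdot)=\1_{\Lambda}$ and $\1_{\Lambda}^2=\1_{\Lambda}$ one has $\langle f_0,\varphi(0)\rangle=\|\1_{\Lambda}(D_v)f_0\|_{L^2}^2$; and the Leibniz rule produces the two remaining integrals. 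The difficulty is that a priori $f$ is only continuous in time with values in $\mathcal{D}'(\R^d)$, so $\varphi$ need not be $\mathcal{C}^1$ in $\tau$, and $\varphi$ is not compactly supported in $v$. Both obstructions are removed using the hard Fourier cut-off in $G_{\Lambda}$, following \textsc{Morimoto} \textit{et al.}\ \cite{MUXY09}.

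The first step is to establish the time regularity. Since $f\in L^{\infty}(\R_+;L^1_2(\R^d))$ is bounded in total variation and tight, $\mathcal{D}'$-continuity of $t\mapsto f(t,\cdot)$ upgrades to weak-$*$ continuity as measures; hence $t\mapsto\hat f(t,\eta)$ is continuous for each $\eta$ with $|\hat f(t,\eta)|\le\|f_0\|_{L^1}$, and $\sup_{t}\|f(t,\cdot)\|_{L^1_2}\le\|f_0\|_{L^1_2}$ gives uniform continuity of $\hat f$ and of its $\eta$-derivatives up to order two. Because $G_{\Lambda}(t,\eta)=e^{\beta t\langle\eta\rangle^{2\alpha}}\1_{\Lambda}(\eta)$ is continuous in $t$ and bounded on its compact support, dominated convergence in $L^2(\R^d_{\eta})$ together with Plancherel yields $G_{\Lambda}f\in\mathcal{C}([0,T_0];L^2(\R^d))$, which is the first assertion. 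Next, testing \eqref{eq:weakformulation} against $\varphi(\tau,v)=\chi(\tau)\phi(v)$ with $\chi\in\mathcal{C}_0^{\infty}((0,T_0))$, $\phi\in\mathcal{C}_0^{\infty}(\R^d)$, gives $\tfrac{\mathrm{d}}{\mathrm{d}\tau}\langle f(\tau),\phi\rangle=\langle Q(f,f)(\tau),\phi\rangle$ in $\mathcal{D}'((0,T_0))$; by Bobylev's identity \eqref{eq:bobylev} (respectively its Kac analogue), the cancellation of $\hat f(\tau,\eta^-)\hat f(\tau,\eta^+)-\hat f(\tau,0)\hat f(\tau,\eta)$ as $\theta\to0^+$, and the integrability condition \eqref{eq:cross-section2} (respectively \eqref{eq:cross-section-kac2}), the function $\widehat{Q(f,f)}(\tau,\eta)$ is bounded uniformly in $(\tau,\eta)$ and continuous in $\tau$ for each $\eta$. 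Therefore $\hat f(t,\eta)=\hat f_0(\eta)+\int_0^t\widehat{Q(f,f)}(\tau,\eta)\,\mathrm{d}\tau$, so $\partial_t\hat f(t,\eta)=\widehat{Q(f,f)}(t,\eta)$ pointwise; multiplying by $G_{\Lambda}^2(t,\eta)$ and using its compact support once more gives $G_{\Lambda}^2 f\in\mathcal{C}^1([0,T_0];H^{\infty}(\R^d))$ with $\partial_{\tau}\bigl(G_{\Lambda}^2(\tau,D_v)f(\tau,\cdot)\bigr)=\bigl(\partial_{\tau}G_{\Lambda}^2(\tau,D_v)\bigr)f(\tau,\cdot)+G_{\Lambda}^2(\tau,D_v)Q(f,f)(\tau,\cdot)$.

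The second step is to enlarge the class of admissible test functions so that \eqref{eq:weakformulation} holds for every $\varphi\in\mathcal{C}^1([0,T_0];\mathcal{A})$, where $\mathcal{A}$ is the space of $\phi\in\mathcal{C}^{\infty}(\R^d)$ with $\hat\phi$ of compact support; such $\phi$ lie in $W^{2,\infty}(\R^d)$, so $\langle Q(f,f),\varphi\rangle$ is defined by the symmetric formula of Definition \ref{def:weaksolution}(iv). This is obtained by inserting the genuine test function $\chi(v/R)\varphi(\tau,v)\in\mathcal{C}^1([0,T_0];\mathcal{C}_0^{\infty}(\R^d))$, $\chi\in\mathcal{C}_0^{\infty}(\R^d)$ with $\chi\equiv1$ near $0$, and letting $R\to\infty$: all terms but the $Q$-term converge by dominated convergence using $f\in L^{\infty}(\R_+;L^1(\R^d))$, while for the $Q$-term one sets $\psi:=\chi(\cdot/R)\varphi(\tau,\cdot)$ and uses the uniform bound $\|D^2\psi\|_{L^{\infty}}\lesssim 1$, the cancellation estimate for $\psi(v')+\psi(v'_*)-\psi(v)-\psi(v_*)$ after the $\sigma$-integration together with \eqref{eq:cross-section2} (respectively \eqref{eq:cross-section-kac2}), and $\iint|v-v_*|^2 f(v_*)f(v)\,\mathrm{d}v\,\mathrm{d}v_*\le 2\|f_0\|_{L^1}\|f_0\|_{L^1_2}<\infty$, so that dominated convergence applies in $\tau,v,v_*$ and $\sigma$ simultaneously. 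By Step 1 the function $\varphi(\tau,\cdot):=G_{\Lambda}^2(\tau,D_v)f(\tau,\cdot)$ belongs to $\mathcal{C}^1([0,T_0];\mathcal{A})$, hence may be inserted. Using the formula for $\partial_{\tau}\varphi$ and the self-adjointness of $G_{\Lambda}^2(\tau,D_v)$ one computes $\int_0^t\langle f(\tau),\partial_{\tau}\varphi(\tau)\rangle\,\mathrm{d}\tau=\int_0^t\langle f(\tau),(\partial_{\tau}G_{\Lambda}^2(\tau,D_v))f(\tau)\rangle\,\mathrm{d}\tau+\int_0^t\langle Q(f,f)(\tau),G_{\Lambda}^2(\tau,D_v)f(\tau)\rangle\,\mathrm{d}\tau$, whereas the right-hand side of \eqref{eq:weakformulation} equals $\int_0^t\langle Q(f,f)(\tau),G_{\Lambda}^2(\tau,D_v)f(\tau)\rangle\,\mathrm{d}\tau$; substituting these, together with the values of $\langle f(t),\varphi(t)\rangle$ and $\langle f_0,\varphi(0)\rangle$ recorded above, into \eqref{eq:weakformulation} and dividing by $2$ yields exactly \eqref{eq:reformulationappendix}.

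I expect the main obstacle to be the time-regularity step: the a priori information on $f$ is too weak to differentiate it in time, and it is precisely the interplay of the compact Fourier support of $G_{\Lambda}$ with the cancellation structure of the Boltzmann and Kac kernels that turns $G_{\Lambda}^2 f$ into a genuine $\mathcal{C}^1$-in-time, $v$-regular function usable as a test function. An equivalent route, closer to \cite{MUXY09}, is to mollify $f$ in time, insert the mollified test function into \eqref{eq:weakformulation}, and pass to the limit using the continuity from Step 1, at the expense of keeping track of the boundary contributions at $\tau=0$.
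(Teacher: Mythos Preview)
Your argument is essentially correct and reaches the conclusion, but the route differs from the paper's in two substantive respects. First, to enlarge the class of test functions beyond $\mathcal{C}_0^\infty(\R^d)$, the paper invokes the bilinear estimate $\|Q(g,f)\|_{H^{-k}}\lesssim\|g\|_{L^1_2}\|f\|_{L^1_2}$ for $k>\tfrac{d+4}{2}$ (Lemma~\ref{lem:functional}, taken from \textsc{Alexandre}), which immediately admits any $\psi\in H^k$; you instead pass a spatial cut-off $\chi(\cdot/R)$ to the limit using the second-order cancellation in the symmetrised form of $Q$, which is more self-contained but requires the $\omega$-average explicitly. Second, and more notably, the paper never asserts that $G_\Lambda^2 f$ is $\mathcal{C}^1$ in time: it inserts the \emph{constant-in-time} test functions $\psi=G_\Lambda^2 f(t,\cdot)$ and $\psi=G_\Lambda^2 f(s,\cdot)$, sums, and obtains a Lipschitz bound on $t\mapsto\|G_\Lambda f(t)\|_{L^2}^2$; the $\partial_\tau\varphi$ integral is then recovered by rewriting it as a limit of symmetric difference quotients and evaluating with the continuity just proved. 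Your approach instead uses Bobylev to show $\partial_t\hat f(t,\eta)=\widehat{Q(f,f)}(t,\eta)$ pointwise and thereby promotes $G_\Lambda^2 f$ to a genuine $\mathcal{C}^1$ test function. The paper's route is slicker once the $H^{-k}$ bound is in hand; yours avoids that black box at the price of handling the cancellation by hand.

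Two minor points to tighten. Your derivation of $\hat f(t,\eta)=\hat f_0(\eta)+\int_0^t\widehat{Q(f,f)}(\tau,\eta)\,\mathrm{d}\tau$ in Step~1 already presupposes testing against $v\mapsto e^{-2\pi i v\cdot\eta}\notin\mathcal{C}_0^\infty$, so the constant-in-time version of your Step~2 cut-off argument must logically come first; this is an ordering issue, not a gap. And your claim that $\widehat{Q(f,f)}(\tau,\eta)$ is ``bounded uniformly in $(\tau,\eta)$'' should read ``locally uniformly in $\eta$'': after the $\omega$-symmetrisation the bound carries a factor of $|\eta|^2$, harmless here since you only use it on $\{|\eta|\le\Lambda\}$.
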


To ensure that we can use $G_{\Lambda}^2f$ as a test function in the weak formulation of the Boltzmann equation, we need the following bilinear estimate on $Q(g,f)$, which is a special case of a larger class of functional inequalities by \textsc{Alexandre} \cite{Ale06,Ale09,AH08}.

\begin{lemma}[Functional Estimate on Collision Operator] \label{lem:functional}
	Assume that the angular collision cross-section $b$ satisfies assumptions \eqref{eq:cross-section}-\eqref{eq:cross-section2} or \eqref{eq:cross-section-kac}-\eqref{eq:cross-section-kac2}, respectively. Then for any $k>\frac{d+4}{2}$ there exists a constant $C>0$ such that
	\begin{align}\label{eq:functionalestimate}
		\| Q(g,f)\|_{H^{-k}(\R^d)} \leq C \|g\|_{L^1_{2}(\R^d)} \|f\|_{L^1_{2}(\R^d)}.
	\end{align}
\end{lemma}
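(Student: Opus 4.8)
The plan is to argue on the Fourier side via Bobylev's identity \eqref{eq:bobylev} and reduce everything to the pointwise bound
\begin{align*}
	\bigl|\widehat{Q(g,f)}(\eta)\bigr| \le C\,\langle\eta\rangle^2\,\|g\|_{L^1_2(\R^d)}\,\|f\|_{L^1_2(\R^d)},\qquad \eta\in\R^d,
\end{align*}
with $C$ depending only on $d$ and $b$. Granting this, \eqref{eq:functionalestimate} is immediate, since
\begin{align*}
	\|Q(g,f)\|_{H^{-k}(\R^d)}^2 = \int_{\R^d}\langle\eta\rangle^{-2k}\bigl|\widehat{Q(g,f)}(\eta)\bigr|^2\,\mathrm{d}\eta \le C^2\,\|g\|_{L^1_2}^2\|f\|_{L^1_2}^2\int_{\R^d}\langle\eta\rangle^{4-2k}\,\mathrm{d}\eta,
\end{align*}
and the last integral is finite exactly when $2k-4>d$, i.e.\ $k>\tfrac{d+4}{2}$. (The angular integral in \eqref{eq:bobylev} is taken in the symmetrised principal-value sense in which $Q(g,f)$ is defined as a tempered distribution for $g,f\in L^1_2$; the symmetrisation below is precisely what makes it converge for all $0<\nu<1$.)

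To get the pointwise bound I would Taylor expand both Fourier factors around the grazing configuration $\eta^-=0$, $\eta^+=\eta$. Since $g,f\in L^1_2$ we have $\hat g,\hat f\in C^2_b$ with $|\nabla\hat g(0)|\le 2\pi\|g\|_{L^1_1}$, $\|D^2\hat g\|_{L^\infty}\le (2\pi)^2\|g\|_{L^1_2}$, and likewise for $f$; using $\eta^+-\eta=-\eta^-$ one obtains
\begin{align*}
	\hat g(\eta^-)\hat f(\eta^+)-\hat g(0)\hat f(\eta) = \eta^-\cdot\bigl(\hat f(\eta)\nabla\hat g(0)-\hat g(0)\nabla\hat f(\eta)\bigr) + R(\eta,\sigma),
\end{align*}
where, collecting the two Taylor remainders and the cross term, $|R(\eta,\sigma)|\le C\,|\eta^-|^2\,\|g\|_{L^1_2}\|f\|_{L^1_2}$. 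The heart of the matter is the angular integral of the linear term: writing $\sigma=\cos\theta\,\tfrac{\eta}{|\eta|}+\sin\theta\,\omega$ with $\omega\in\S^{d-2}(\eta)$ and $\mathrm{d}\sigma=\sin^{d-2}\theta\,\mathrm{d}\theta\,\mathrm{d}\omega$ gives $\eta^-=\tfrac{|\eta|}{2}\bigl((1-\cos\theta)\tfrac{\eta}{|\eta|}-\sin\theta\,\omega\bigr)$, so that integrating over $\S^{d-2}(\eta)$ first annihilates the $\omega$-part and leaves
\begin{align*}
	\int_{\S^{d-1}} b\Bigl(\tfrac{\eta}{|\eta|}\cdot\sigma\Bigr)\,\eta^-\,\mathrm{d}\sigma = \tfrac{c_b}{2}\,\eta,\qquad c_b := |\S^{d-2}|\int_0^{\pi/2}(1-\cos\theta)\sin^{d-2}\theta\,b(\cos\theta)\,\mathrm{d}\theta ,
\end{align*}
and $c_b<\infty$ because $(1-\cos\theta)\sin^{d-2}\theta\,b(\cos\theta)\sim\tfrac{\kappa}{2}\theta^{1-2\nu}$ as $\theta\to0^+$ by \eqref{eq:cross-section}, which is integrable for every $0<\nu<1$, the part near $\theta=\tfrac\pi2$ being controlled by \eqref{eq:cross-section2}. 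Likewise $\int_{\S^{d-1}}b\bigl(\tfrac{\eta}{|\eta|}\cdot\sigma\bigr)|\eta^-|^2\,\mathrm{d}\sigma = c_b'|\eta|^2$ with $c_b'=|\S^{d-2}|\int_0^{\pi/2}\sin^2\tfrac\theta2\sin^{d-2}\theta\,b(\cos\theta)\,\mathrm{d}\theta<\infty$. Combining, $|\widehat{Q(g,f)}(\eta)|\le \pi c_b\,|\eta|\bigl(\|g\|_{L^1_1}\|f\|_{L^1}+\|g\|_{L^1}\|f\|_{L^1_1}\bigr)+C\,c_b'\,|\eta|^2\|g\|_{L^1_2}\|f\|_{L^1_2}\le C\langle\eta\rangle^2\|g\|_{L^1_2}\|f\|_{L^1_2}$, using $\|h\|_{L^1}\le\|h\|_{L^1_1}\le\|h\|_{L^1_2}$.

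The one-dimensional Kac case runs in parallel with $\eta^-=\eta\sin\theta$, $\eta^+=\eta\cos\theta$, $\theta\in[-\tfrac\pi4,\tfrac\pi4]$: expanding $\hat g(\eta\sin\theta)$ and $\hat f(\eta\cos\theta)$ to second order, the term linear in $\sin\theta$ integrates to zero because $b_1$ is even, while $\int b_1(\theta)(1-\cos\theta)\,\mathrm{d}\theta$, $\int b_1(\theta)\sin^2\theta\,\mathrm{d}\theta$ and $\int b_1(\theta)\sin\theta\,(1-\cos\theta)\,\mathrm{d}\theta$ are all finite by \eqref{eq:cross-section-kac}--\eqref{eq:cross-section-kac2}; this again yields $|\widehat{K(g,f)}(\eta)|\le C\langle\eta\rangle^2\|g\|_{L^1_2}\|f\|_{L^1_2}$ and hence \eqref{eq:functionalestimate} for $k>\tfrac52$.

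I expect the only real obstacle to be the linear term. A crude bound uses $|\eta^-|=|\eta|\sin\tfrac\theta2=O(|\eta|\theta)$, and then $\int_0^{\pi/2}\theta\cdot\theta^{-1-2\nu}\,\mathrm{d}\theta$ diverges once $\nu\ge\tfrac12$; one therefore \emph{must} perform the integration over the codimension-one sphere $\S^{d-2}(\eta)$ (respectively exploit the parity of $b_1$ for Kac) \emph{before} estimating, which trades $|\eta^-|$ for the genuinely quadratic factor $1-\cos\theta\sim\tfrac12\theta^2$ and makes all angular integrals converge for the full range $0<\nu<1$. The remaining ingredients — the Taylor remainder bookkeeping, the elementary inequalities $\|h\|_{L^1}\le\|h\|_{L^1_1}\le\|h\|_{L^1_2}$, and the final integration in $\eta$ that produces the threshold $k>\tfrac{d+4}{2}$ — are routine.
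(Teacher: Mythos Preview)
Your argument is correct, and it is genuinely different from the paper's proof. The paper does not argue on the Fourier side at all: it simply invokes Alexandre's bilinear estimate (Theorem~7.4 in \cite{Ale09}), which gives $\|Q(g,f)\|_{H^{-m}} \le \widetilde C\,\|g\|_{L^1_{2\nu}}\|f\|_{H^{-m+2\nu}_{2\nu}}$, and then uses the elementary embedding $L^1_2(\R^d)\hookrightarrow H^{-k+2\nu}_{2\nu}(\R^d)$ for $k>\tfrac{d+4}{2}$ to conclude. That route is a two-line citation, but it imports a fairly heavy Littlewood--Paley machinery as a black box. Your approach, by contrast, is entirely self-contained: Bobylev's identity plus a second-order Taylor expansion, with the key cancellation coming from the $\S^{d-2}(\eta)$ average (respectively the parity of $b_1$), which upgrades the naive $|\eta^-|\sim|\eta|\theta$ to the genuinely quadratic $(1-\cos\theta)\sim\theta^2/2$ and makes all angular integrals converge for the full range $0<\nu<1$. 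This also yields the slightly stronger pointwise bound $|\widehat{Q(g,f)}(\eta)|\le C\langle\eta\rangle^2\|g\|_{L^1_2}\|f\|_{L^1_2}$ and makes the threshold $k>\tfrac{d+4}{2}$ completely transparent.

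One small point of care: your claim $|R(\eta,\sigma)|\le C|\eta^-|^2\|g\|_{L^1_2}\|f\|_{L^1_2}$ is not literally obtained by ``collecting the two Taylor remainders and the cross term'' if you expand both factors independently to second order (the product of the two remainders would be $O(|\eta^-|^4)$). It \emph{is} correct, however, if you organise the decomposition as $\hat g(\eta^-)\hat f(\eta^+)-\hat g(0)\hat f(\eta)=[\hat g(\eta^-)-\hat g(0)]\hat f(\eta)+\hat g(\eta^-)[\hat f(\eta^+)-\hat f(\eta)]$, expand each bracket to first order with integral remainder, and then peel off one more order from the cross term using the mean-value bound $|\hat g(\eta^-)-\hat g(0)|\le|\eta^-|\|\nabla\hat g\|_\infty$; every surviving piece is then manifestly $O(|\eta^-|^2)$ with coefficients controlled by $\|g\|_{L^1_2}\|f\|_{L^1_2}$.
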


\begin{proof}
This is a direct consequence\footnote{This result is proved in \cite{Ale09} for $d=3$, but the proof depends only on assumption \eqref{eq:cross-section} and general properties of Littlewood-Paley decompositions and holds in any dimension $d\ge 1$.} of Theorem 7.4 in \textsc{Alexandre}'s review \cite{Ale09}: under the assumptions on $b$, for any $m\in\R$ there exists a constant $\widetilde{C}>0$ such that
\begin{align*}
	\|Q(g,f)\|_{H^{-m}(\R^d)} \leq \widetilde{C} \|g\|_{L^1_{2\nu}(\R^d)} \| f\|_{H^{-m+2\nu}_{2\nu}(\R^d)}.
\end{align*}
Since $L^1(\R^d)\subset H^{-s}(\R^d)$ for any $s>\frac{d}{2}$, we obtain for $k>\frac{d+4}{2}$ and $\nu\in(0,1)$,
\begin{align*}
\|f\|_{H^{-k+2\nu}_{2\nu}(\R^d)} = \|\langle \cdot\rangle^{2\nu} f \|_{H^{-k+2\nu}(\R^d)} \leq C \|\langle \cdot\rangle^{2\nu} f\|_{L^1(\R^d)} \leq c \|\langle \cdot\rangle^{2} f\|_{L^1(\R^d)} = c\|f\|_{L^1_2(\R^d)}.
\end{align*}
i.e. $L^1_{2}(\R^d) \subset H^{-k+2\nu}_{2\nu}(\R^d)$ for any $k>\tfrac{d+4}{2}$ and $\nu\in (0,1)$.
Therefore,
\begin{align*}
	\|Q(g,f)\|_{H^{-k}(\R^d)} &\leq \widetilde{C} \|g\|_{L^1_{2\nu}(\R^d)} \|f\|_{H^{-k+2\nu}_{2\nu}(\R^d)} \leq C \|g\|_{L^1_{2}(\R^d)} \|f\|_{L^1_2(\R^d)}. 
\end{align*}
\end{proof}

Lemma \ref{lem:functional} implies that for $f,g\in L^1_2(\R^d)$, $\langle Q(g,f), h \rangle$ is well-defined for all $h\in H^k(\R^d)$, $k>\frac{d+4}{2}$, and one has $\langle Q(g,f), h \rangle  = \langle \widehat{Q(g,f)}, \widehat{h} \,\rangle_{L^2}$.

\begin{proof}[Proof of Proposition \ref{prop:L2reformA}]
Choosing a constant in time test function $\varphi(t,\cdot) = \psi \in \mathcal{C}^{\infty}_0(\R^d)$ in the weak formulation \eqref{eq:weakformulation} yields
\begin{align*}
	\int_{\R^d} f(t,v)\psi(v) \,\mathrm{d}v - \int_{\R^d} f(s,v)\psi(v)\,\mathrm{d}v = \int_s^t \langle Q(f,f)(\tau, \cdot), \psi\rangle \,\mathrm{d}\tau, \quad \text{for}\quad 0\leq s \leq t \leq T_0
\end{align*}
for all $\psi\in\mathcal{C}^{\infty}_0(\R^d)$ (this was already remarked by \textsc{Villani} \cite{Vil98} as an equivalent formulation of \eqref{eq:weakformulation}). By means of \eqref{eq:functionalestimate} this equality can be extended to test functions $\psi \in H^{k}$ for $k>\frac{d+4}{2}$, in particular one can choose $\psi = G_{\Lambda}^2 f(t, \cdot)$ and $\psi=G_{\Lambda}^2 f(s, \cdot)$ which, taking the sum of both resulting equations, yields
\begin{equation} \label{eq:Lip1}
  \begin{split}
	&\|G_{\Lambda}f(t, \cdot)\|_{L^2(\R^d)}^2 - \|G_{\Lambda}f(s,\cdot)\|_{L^2(\R^d)}^2
	   =  \left\langle f(t,\cdot), G_{\Lambda}^2 f(t,\cdot) \right\rangle - \left\langle f(s,\cdot), G_{\Lambda}^2 f(s,\cdot) \right\rangle \\
	 &=
	   \left\langle f(t,\cdot), \left( G_{\Lambda}^2(t,D_v) - G_{\Lambda}^2(s,D_v) \right) f(s,\cdot) \right\rangle
	    + \int_{s}^t \left\langle Q(f,f)(\tau, \cdot), G_{\Lambda}^2 f(t, \cdot) + G_{\Lambda}^2 f(s, \cdot) \right\rangle \,\mathrm{d}\tau.
  \end{split}
\end{equation}
Using Plancherel, the first term on the right hand side of \eqref{eq:Lip1} can be estimated by
\begin{align*}
	& \left| \left\langle f(t,\cdot),   \left( G_{\Lambda}^2(t,D_v) - G_{\Lambda}^2(s,D_v) \right) f(s,\cdot) \right\rangle \right|
	   = \left| \left\langle \hat{f}(t,\cdot),  \left( G_{\Lambda}^2(t,\cdot) - G_{\Lambda}^2(s,\cdot) \right) \hat{f}(s,\cdot) \right\rangle \right|  \\
	&\qquad  \leq \int_{\R^d} |\hat{f}(t,\eta)| \, |G_{\Lambda}^2(t,\eta) - G_{\Lambda}^2(s,\eta)|\, |\hat{f}(s,\eta)| \,\mathrm{d}\eta \\
	& \qquad \leq |t-s| \int_{\R^d} 2\beta \langle\eta\rangle^{2\alpha} G_{\Lambda}^2(t,\eta)\, \mathrm{d}\eta \,\|f(t,\cdot)\|_{L^1(\R^d)} \|f(s,\cdot)\|_{L^1(\R^d)} \leq C_{\Lambda,T_0} |t-s| \, \|f_0\|_{L^1(\R^d)}^2,
\end{align*}
and, using that the terms involving the collision operator can, for any $k>\frac{d+4}{2}$ (compare \eqref{eq:functionalestimate}), be bounded by
\begin{align*}
	|\langle Q(f,f)(\tau, \cdot), G_{\Lambda}^2f(t,\cdot)\rangle| &\leq \|Q(f,f)(\tau, \cdot)\|_{H^{-k}(\R^d)} \| G_{\Lambda}^2 f(t, \cdot)\|_{H^k(\R^d)} \\
	&\leq C \|f\|_{L^1_2(\R^d)}^2 \left( \int_{\R^d} \langle\eta\rangle^{2k} G_{\Lambda}^4(t,\eta) |\hat{f}(t,\eta)|^2 \, \mathrm{d}\eta \right)^{1/2} \\
	&\leq C \|f\|_{L^1_2(\R^d)}^2 \|f(t,\cdot)\|_{L^1(\R^d)} \left(\int_{\R^d} \langle\eta\rangle^{2k}G_{\Lambda}^4(T_0,\eta)\,\mathrm{d}\eta\right)^{1/2} \\
	&\leq C_{\Lambda,T_0}' \|f_0\|_{L^1_2(\R^d)}^2 \|f_0\|_{L^1(\R^d)}
\end{align*}
for any $t\in[0,T_0]$, yields
\begin{align*}
	\left| \int_{s}^t \langle Q(f,f)(\tau, \cdot), G_{\Lambda}^2f(t,\cdot) + G_{\Lambda}^2f(s,\cdot) \rangle \,\mathrm{d}\tau \right| \leq 2 C_{\Lambda, T_0}' |t-s| \, \|f_0\|_{L^1_2(\R^d)}^2 \|f_0\|_{L^1(\R^d)}.
\end{align*}
Plugging the latter two bounds into  \eqref{eq:Lip1} shows that $G_{\Lambda}f\in \mathcal{C}([0,T_0]; L^2(\R^d))$,
in fact, the map $[0,T_0]\ni t\mapsto \|G_\Lambda f(t,\cdot)\|_{L^2(\R^d)}$ is even Lipschitz continuous.
\bigskip

For any test function $\varphi\in\mathcal{C}^1(\R^+; \mathcal{C}_0^{\infty}(\R^d))$ the term involving the partial derivative $\partial_t\varphi$ in the weak formulation \eqref{eq:weakformulation} can be rewritten as
\begin{align*}
	\int_0^t \left\langle f(\tau, \cdot), \partial_{\tau}\varphi(\tau,\cdot)\right\rangle \,\mathrm{d}\tau = \lim_{h\to 0} \int_0^t \left\langle f(\tau, \cdot) + f(\tau+h,\cdot), \frac{\varphi(\tau+h,\cdot)-\varphi(\tau,\cdot)}{2h}\right\rangle\,\mathrm{d}\tau,
\end{align*}
since $f\in\mathcal{C}(\R^+; \mathcal{D}'(\R^d))$. The integral on the right hand side is well-defined even for $\varphi\in L^{\infty}([0,T_0]; W^{2,\infty}(\R^d))$, in particular for $\varphi = G_{\Lambda}^2f$, yielding
\begin{align*}
	&\int_0^t \left\langle f(\tau, \cdot) + f(\tau+h,\cdot) ,\frac{\varphi(\tau+h,\cdot)-\varphi(\tau,\cdot)}{2h}\right\rangle \,\mathrm{d}\tau \\
	&= \int_0^t \left\langle f(\tau, \cdot) + f(\tau+h,\cdot), \frac{G_{\Lambda}^2f(\tau+h,\cdot)-G_{\Lambda}^2f(\tau,\cdot)}{2h}\right\rangle \,\mathrm{d}\tau \\
	&= \frac{1}{2h} \int_0^t \left(\|G_{\Lambda}f(\tau+h,\cdot)\|_{L^2}^2-\|G_{\Lambda}f(\tau,\cdot)\|_{L^2}^2 \right)\,\mathrm{d}\tau \\ &\qquad + \int_0^t \left\langle f(\tau, \cdot), \frac{G_{\Lambda}^2(\tau+h, D_v) - G_{\Lambda}^2(\tau, D_v)}{2h} f(\tau+h,\cdot) \right\rangle \,\mathrm{d}\tau.
\end{align*}
Using $G_{\Lambda}f\in\mathcal{C}([0,T_0]; L^2(\R^d))$ it follows that
\begin{align*}
	&\frac{1}{2h} \int_0^t \left(\|G_{\Lambda}f(\tau + h, \cdot)\|_{L^2(\R^d)}^2 - \|G_{\Lambda}f(\tau, \cdot)\|_{L^2(\R^d)}^2 \right) \,\mathrm{d}\tau \\
	&= \frac{1}{2h} \int_t^{t+h} \|G_{\Lambda}f(\tau, \cdot)\|_{L^2(\R^d)}^2 \,\mathrm{d}\tau - \frac{1}{2h} \int_0^h \|G_{\Lambda}f(\tau, \cdot)\|_{L^2(\R^d)}^2\,\mathrm{d}\tau \\
	&\stackrel{h\to 0}{\longrightarrow} \frac{1}{2} \|G_{\Lambda}f(t,\cdot)\|_{L^2(\R^d)}^2 - \frac{1}{2}\|G_{\Lambda}f(0, \cdot)\|_{L^2(\R^d)}^2.
\end{align*}
where $\|G_{\Lambda}f(0, \cdot)\|_{L^2(\R^d)} = \|\1_{\Lambda}(D_v)f_0\|_{L^2(\R^d)}$.
For the second integral, an application of dominated convergence gives
\begin{align*}
	&\lim_{h\to 0} \int_0^t \left\langle f(\tau, \cdot), \frac{G_{\Lambda}^2(\tau+h, D_v) - G_{\Lambda}^2(\tau, D_v)}{2h} f(\tau+h,\cdot) \right\rangle \,\mathrm{d}\tau \\
	&\qquad = \frac{1}{2} \int_0^t \left\langle f(\tau, \cdot), \left(\partial_{\tau}G_{\Lambda}^2\right)(\tau, D_v) f(\tau,\cdot)\right\rangle \,\mathrm{d}\tau.
\end{align*}
Putting everything together, we thus have proved equation \eqref{eq:reformulationappendix}, i.e.
\begin{align*}
\begin{split}
	\frac{1}{2} \|G_{\Lambda} f\|_{L^2(\R^d)}^2 = \frac{1}{2} \|\1_{\Lambda}(D_v)f_0\|_{L^2(\R^d)}^2 &+ \frac{1}{2} \int_0^t \left\langle f(\tau, \cdot), \left(\partial_{\tau}G_{\Lambda}^2\right)(\tau, D_v) f(\tau,\cdot)\right\rangle \,\mathrm{d}\tau  \\ &+ \int_0^t \left\langle Q(f,f), G_{\Lambda}^2f \right\rangle \,\mathrm{d}\tau.
	\end{split}
\end{align*}
\end{proof}

\section{$H^{\infty}$ smoothing of the Boltzmann an Kac equations}\label{sec:appendix-hinfty}
We follow the strategy as in our proof of Gevrey regularity, with several simplifications. Of course, we \emph{do not} assume that $f_0$ is square integrable! We have
\begin{theorem}[$H^\infty$ smoothing for the homogeneous Boltzmann and Kac equation]\label{thm:Hinftysmoothing}
	Assume that the cross-section $b$ satisfies \eqref{eq:cross-section}-\eqref{eq:cross-section2} for $d\geq 2$, respectively \eqref{eq:cross-section-kac}-\eqref{eq:cross-section-kac2} for $d=1$, with $0<\nu<1$. Let $f$ be a weak solution of the Cauchy problem \eqref{eq:cauchyproblem} with initial datum satisfying conditions \eqref{eq:initialdata}. Then
	\begin{align}
		f(t,\cdot)\in H^\infty(\R^d)
	\end{align}
	for all $t>0$.
\end{theorem}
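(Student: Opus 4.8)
The plan is to run the argument of Section~\ref{S2} with the sub-Gaussian multiplier $G_\Lambda$ replaced by a \emph{polynomial} cut-off multiplier, which turns the nonlinear commutator into a harmless term. Fix $s>0$, $T_0>0$, $\Lambda>0$, choose any exponent $a>\frac{2s+d}{4\nu}$ (this mild over-damping in time is the only new point, and it is what keeps the argument elementary), and set $N_{s,\Lambda}(t,\eta):=t^a\langle\eta\rangle^s\1_\Lambda(|\eta|)$. As for $G_\Lambda$, the Fourier cut-off gives $N_{s,\Lambda}^2(t,D_v)f\in L^\infty([0,T_0];H^\infty(\R^d))$ whenever $f\in L^\infty([0,T_0];L^1(\R^d))$, so by the functional estimate Lemma~\ref{lem:functional} it is an admissible test function and the proof of Proposition~\ref{prop:L2reformA} applies verbatim; since $N_{s,\Lambda}(0,\cdot)\equiv 0$, the boundary term at $t=0$ vanishes, so \emph{$f_0$ need not be square integrable}, and one gets, using $\partial_t N_{s,\Lambda}^2=\frac{2a}{t}N_{s,\Lambda}^2$,
\[
	\tfrac12\|N_{s,\Lambda}f(t,\cdot)\|_{L^2}^2 = a\int_0^t\tfrac1\tau\|N_{s,\Lambda}f(\tau,\cdot)\|_{L^2}^2\,\mathrm{d}\tau + \int_0^t\langle Q(f,f)(\tau,\cdot),N_{s,\Lambda}^2f(\tau,\cdot)\rangle\,\mathrm{d}\tau .
\]

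Next, write $\langle Q(f,f),N_{s,\Lambda}^2f\rangle=\langle Q(f,N_{s,\Lambda}f),N_{s,\Lambda}f\rangle+\langle N_{s,\Lambda}Q(f,f)-Q(f,N_{s,\Lambda}f),N_{s,\Lambda}f\rangle$ and bound the first term by coercivity (Lemma~\ref{lem:subelliptic} and the remark following it) by $-\widetilde C_{f_0}\|N_{s,\Lambda}f\|_{H^\nu}^2+C_{f_0}\|N_{s,\Lambda}f\|_{L^2}^2$. For the commutation error we repeat the Bobylev computation of Proposition~\ref{prop:ce} and Lemma~\ref{lem:ce}; here the simplification anticipated in the remark after Proposition~\ref{prop:ce} is that, the weight being polynomial, the factor $G(\eta^-)^{\epsilon(\alpha,\cdot)}$ disappears and the ``bad'' factor is simply $|\hat f(\eta^-)|\le\|f_0\|_{L^1}$. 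Combining this with the elementary estimate $|\langle\eta\rangle^{s}-\langle\eta^+\rangle^{s}|\le C_s\langle\eta^+\rangle^{s-2}|\eta^-|^2$ (valid for every $s>0$; this replaces Corollary~\ref{cor:expdiff}) and $|\eta^-|^2/\langle\eta^+\rangle^2\le|\eta^-|^2/|\eta^+|^2=\tan^2\tfrac\theta2$, then applying the AM--GM step and the change of variables $\eta\mapsto\eta^+$ of Lemma~\ref{lem:ce}, one arrives at
\[
	\bigl|\langle N_{s,\Lambda}Q(f,f)-Q(f,N_{s,\Lambda}f),N_{s,\Lambda}f\rangle\bigr| \le C_{s,d}\,\|f_0\|_{L^1}\Bigl(\int_0^{\pi/2}\sin^{d-2}\theta\,b(\cos\theta)\tan^2\tfrac\theta2\,\mathrm{d}\theta\Bigr)\|N_{s,\Lambda}f\|_{L^2}^2
\]
(and the obvious analogue with $b_1$ for the Kac equation when $d=1$), the angular integral being finite by \eqref{eq:cross-section}--\eqref{eq:cross-section2} since $\tan^2\tfrac\theta2\sim\tfrac{\theta^2}{4}$ beats the $\theta^{-1-2\nu}$ singularity for $\nu<1$. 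Thus $\langle Q(f,f),N_{s,\Lambda}^2f\rangle\le-\widetilde C_{f_0}\|N_{s,\Lambda}f\|_{H^\nu}^2+\widetilde C'\|N_{s,\Lambda}f\|_{L^2}^2$, with $\widetilde C'$ depending only on $d$, $b$, $s$, $\|f_0\|_{L^1_2}$, $\|f_0\|_{L\log L}$.

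It remains to close the estimate, and the only point needing care is the singular term $\frac a\tau\|N_{s,\Lambda}f(\tau,\cdot)\|_{L^2}^2$ near $\tau=0$. Split the $\eta$-integral at $\{\langle\eta\rangle^{2\nu}=C_0/\tau\}$ with $C_0:=a/\widetilde C_{f_0}$: on the set $\langle\eta\rangle^{2\nu}>C_0/\tau$ one has $\frac a\tau\le\widetilde C_{f_0}\langle\eta\rangle^{2\nu}$, so that contribution is $\le\widetilde C_{f_0}\|N_{s,\Lambda}f(\tau,\cdot)\|_{H^\nu}^2$ and is absorbed into the coercive term; on the complementary set $|\eta|<(C_0/\tau)^{1/(2\nu)}$ the crude bounds $\int_{|\eta|<(C_0/\tau)^{1/(2\nu)}}|\hat f(\tau,\eta)|^2\,\mathrm{d}\eta\le c_d(C_0/\tau)^{d/(2\nu)}\|f_0\|_{L^1}^2$ and $\langle\eta\rangle^{2s}\le(1+C_0/\tau)^{s/\nu}$ give a contribution bounded by $C_1\tau^{2a-1-(2s+d)/(2\nu)}$, which is integrable on $[0,t]$ precisely because $a>\frac{2s+d}{4\nu}$. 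One is left with $\tfrac12\|N_{s,\Lambda}f(t,\cdot)\|_{L^2}^2\le C_2\,t^{2a-(2s+d)/(2\nu)}+\widetilde C'\int_0^t\|N_{s,\Lambda}f(\tau,\cdot)\|_{L^2}^2\,\mathrm{d}\tau$, and Gronwall gives $\sup_{0\le t\le T_0}\|N_{s,\Lambda}f(t,\cdot)\|_{L^2}^2\le K_s(T_0)$ with $K_s(T_0)$ \emph{independent of $\Lambda$}. Letting $\Lambda\to\infty$ by monotone convergence yields $t^a\langle D_v\rangle^s f(t,\cdot)\in L^2(\R^d)$ on $[0,T_0]$, hence $\|\langle D_v\rangle^s f(t,\cdot)\|_{L^2}\le t^{-a}\sqrt{K_s(T_0)}<\infty$ for every $t>0$; as $s>0$ and $T_0>0$ were arbitrary, $f(t,\cdot)\in H^\infty(\R^d)$ for all $t>0$. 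The step I expect to be the main obstacle is exactly this absorption: with the heat-kernel-natural weight $a=\frac{s}{2\nu}$ the crude low-frequency bound is \emph{not} integrable at $\tau=0$ (one would then need genuine $L^p$-smoothing of $f(\tau,\cdot)$, via the entropy dissipation underlying Lemma~\ref{lem:subelliptic}, as in \cite{AE05}), and it is the freedom to over-damp — harmless since $t>0$ is fixed in the conclusion — that both rescues integrability and keeps all constants uniform in $\Lambda$.
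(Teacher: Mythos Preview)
Your argument is correct and gives a genuine alternative to the paper's proof. Both routes exploit the same simplification---for a polynomial Fourier weight the ``bad'' factor $G(\eta^-)^{\epsilon}$ disappears and one may simply bound $|\hat f(\eta^-)|\le\|f_0\|_{L^1}$---but the two differ in how the lack of $L^2$ integrability at $t=0$ is handled. The paper takes the multiplier $M_\Lambda(t,\eta)=\langle\eta\rangle^{\beta t-d}\1_\Lambda(|\eta|)$, so the Sobolev index itself grows linearly in time from $-d$ (and $f_0\in L^1\subset H^{-d}$ supplies the initial bound); the time-derivative contributes $\beta\log\langle\eta\rangle$, which is dominated by the coercive $\langle\eta\rangle^{2\nu}$ term via the elementary fact $\sup_\eta(\beta\log\langle\eta\rangle-2\widetilde C_{f_0}\langle\eta\rangle^{2\nu})<\infty$, so no frequency splitting is needed and one Gronwall plus $\beta\to\infty$ finishes. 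Your choice $N_{s,\Lambda}(t,\eta)=t^a\langle\eta\rangle^s\1_\Lambda(|\eta|)$ fixes the target Sobolev index and kills the initial term by time-damping instead; the price is the singular factor $a/\tau$, which you absorb by the frequency splitting at $\langle\eta\rangle^{2\nu}\sim 1/\tau$ together with the over-damping $a>(2s+d)/(4\nu)$. The paper's scheme is a touch cleaner (no splitting, one limit instead of two) and yields the slightly sharper blow-up rate $\|f(t,\cdot)\|_{H^\gamma}\lesssim t^{-(\gamma+d)/(4\nu)}$, whereas your over-damping gives $\|f(t,\cdot)\|_{H^s}\lesssim t^{-(2s+d)/(4\nu)-\epsilon}$; for the $H^\infty$ conclusion this is of course immaterial. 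One small remark: Proposition~\ref{prop:L2reformA} does not quite apply ``verbatim'' since $\partial_t N_{s,\Lambda}^2$ is singular at $t=0$, but the standard fix---derive the identity on $[\epsilon,T_0]$ and let $\epsilon\to0$, using that $\|N_{s,\Lambda}f(\epsilon,\cdot)\|_{L^2}^2\le C(\Lambda,s)\|f_0\|_{L^1}^2\,\epsilon^{2a}\to0$ and that both integrands are integrable near $0$---is routine.
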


The proof is known, at least for the three dimensional Boltzmann equation see  \cite{MUXY09}, we give a proof for the convenience of the reader.  Again, one has to use suitable time-dependent Fourier multipliers.
Note that for $f_0\in L^1(\R^d)$ one has
	\begin{align*}
		\|f_0\|_{H^{-\gamma}(\R^d)} \le C_{d,\gamma} \|f_0\|_{L^1(\R^d)}
	\end{align*}
	with $C_{d,\gamma}= \left(\int_{\R^d}\langle \eta \rangle^{-\gamma}\,
						\mathrm{d}\eta \right)^{1/2}$
	which is finite for all $\gamma>d/2$. We choose $\gamma = d$, for convenience, and
	\begin{align*}
		M_\Lambda(t,\eta):= \langle \eta \rangle ^{-d} e^{\beta t\log\langle\eta\rangle} \1_\Lambda(|\eta|)
	\end{align*}
	as a multiplier. Then
	\begin{align*}
		\sup_{\Lambda > 0} \| M_\Lambda(0,D_v) f_0 \|_{L^2(\R^d)}
			= \| M_\infty(0,\cdot) \hat{f}_0\|_{L^2(\R^d)}
			= \|f_0\|_{H^{-d}(\R^d)}
			\le C_{d,d} \| f_0 \|_{L^1(\R^d)} 	
	\end{align*}
	
The proof of Proposition \ref{prop:L2reformA} carries over and we have
\begin{align}
	\begin{split}\label{eq:Hinfty1}
	&\frac{1}{2} \|M_{\Lambda}(t,D_v)f(t,\cdot)\|_{L^2(\R^d)}^2 - \frac{1}{2} \int_0^t \left\langle f(\tau, \cdot), \left( \partial_\tau M_{\Lambda}^2(\tau, D_v) \right) f(\tau,\cdot)\right\rangle \,\mathrm{d}\tau \\
	&= \frac{1}{2} \|M_\Lambda(0,D_v)f_0\|_{L^2(\R^d)}^2 + \int_0^t \left\langle Q(f,f)(\tau, \cdot), M_{\Lambda}^2(\tau, D_v)f(\tau, \cdot)\right\rangle \, \mathrm{d}\tau. \\
	\end{split}
\end{align}
and as in the proof of Corollary \ref{cor:gronwallbound}, we have
\begin{align}
\begin{split}\label{eq:Hinfty2}
	\langle Q(f,f), M_{\Lambda}^2f \rangle
	&=
	\langle Q(f,M_\Lambda f), M_{\Lambda}f \rangle
	+
	\langle M_\Lambda Q(f,f) - Q(f,M_\Lambda f), M_{\Lambda}f \rangle \\
	&\leq - \widetilde{C}_{f_0} \|M_{\Lambda}f\|_{H^{\nu}}^2
		+ C_{f_0} \|M_{\Lambda}f\|_{L^2}^2
		+ \langle M_{\Lambda}Q(f,f) - Q(f,M_{\Lambda}f), M_{\Lambda}f\rangle
\end{split}
\end{align}
The replacement of Proposition \ref{prop:ce} is
\begin{proposition}\label{prop:Hinftyce}
	The commutation error is bounded by
	\begin{align}\label{eq:ce-polynomial}
		\left|\langle  M_{\Lambda}Q(f,f) - Q(f,M_{\Lambda}f), M_{\Lambda}f\rangle\right|
		\leq  (1+2^{d-1})c_{b,d} \|f\|_{L^1}\left(\frac{d}{2} + \frac{\beta t}{2}2^{\beta t/2} \right) \|M_{\Lambda} f\|_{L^2}^2
	\end{align}	
	with the constant $c_{b,d}$ from Lemma \ref{lem:induction1}.
\end{proposition}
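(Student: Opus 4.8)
The plan is to follow the proof of Proposition~\ref{prop:ce} up to the application of Bobylev's identity, and then to exploit that, unlike the sub-Gaussian weight $G_\Lambda$, the multiplier $M_\Lambda$ grows only polynomially and does \emph{not} depend on $\eta^-$. Concretely, arguing exactly as at the beginning of the proof of Proposition~\ref{prop:ce} (Plancherel, Bobylev's identity, and the fact that $M_\Lambda$ is supported on $\{|\eta|\le\Lambda\}$ while $|\eta^+|\le|\eta|$, so that $M_\Lambda(\eta^+)-M_\Lambda(\eta)$ may be replaced by $M(\eta^+)-M(\eta)$) gives, for $d\ge2$,
\begin{align*}
	\left|\langle M_{\Lambda}Q(f,f)-Q(f,M_{\Lambda}f),M_{\Lambda}f\rangle\right|
	\le \int_{\R^d}\int_{\S^{d-1}} b\left(\tfrac{\eta}{|\eta|}\cdot\sigma\right)M_\Lambda(\eta)|\hat f(\eta)|\,|\hat f(\eta^-)|\,|\hat f(\eta^+)|\,|M(\eta^+)-M(\eta)|\,\mathrm d\sigma\,\mathrm d\eta ,
\end{align*}
and analogously in $d=1$, with $\int_{\S^{d-1}}\mathrm d\sigma$ and $b(\cos\theta)$ replaced by $\int_{-\pi/4}^{\pi/4}\mathrm d\theta$ and $b_1(\theta)$. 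The decisive simplification over the Gevrey case is that $M$ carries no $\eta^-$-dependence, so the ``bad'' factor is controlled for free by $|\hat f(\eta^-)|\le\|\hat f\|_{L^\infty}\le\|f\|_{L^1}$, as anticipated in the remark after Proposition~\ref{prop:ce}.

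The next step is the weight-difference estimate. Write $M=M_1M_2$ with $M_1(\xi)=\langle\xi\rangle^{-d}$ (decreasing) and $M_2(\xi)=e^{\beta t\log\langle\xi\rangle}=\langle\xi\rangle^{\beta t}$ (increasing), telescope
\begin{align*}
	M(\eta)-M(\eta^+)=M_1(\eta)\bigl(M_2(\eta)-M_2(\eta^+)\bigr)+M_2(\eta^+)\bigl(M_1(\eta)-M_1(\eta^+)\bigr),
\end{align*}
and estimate each difference $M_j(\eta)-M_j(\eta^+)$ by the fundamental theorem of calculus applied to $r\mapsto(1+r)^{\gamma/2}$ between $r=|\eta^+|^2$ and $r=|\eta|^2$. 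Using the geometric identities $|\eta^+|^2=|\eta|^2\cos^2\tfrac\theta2$, $|\eta|^2-|\eta^+|^2=|\eta^-|^2=|\eta|^2\sin^2\tfrac\theta2$, the elementary bound $\langle\eta^+\rangle^{-2}|\eta^-|^2\le\tan^2\tfrac\theta2\le\sin^2\theta$ valid for $\theta\in[0,\tfrac\pi2]$ (resp.\ $[-\tfrac\pi4,\tfrac\pi4]$ in the Kac case), and $\langle\eta\rangle\le\sqrt2\,\langle\eta^+\rangle$, one should arrive at
\begin{align*}
	|M(\eta)-M(\eta^+)|\le\Bigl(\tfrac d2+\tfrac{\beta t}2\,2^{\beta t/2}\Bigr)\sin^2\theta\;M_\Lambda(\eta^+),
\end{align*}
with the $M_2$-difference contributing the $\tfrac{\beta t}2 2^{\beta t/2}$ (the factor $2^{\beta t/2}$ entering through $M(\eta)\le(\langle\eta\rangle/\langle\eta^+\rangle)^{-d+\beta t}M(\eta^+)\le2^{\beta t/2}M(\eta^+)$) and the $M_1$-difference contributing the $\tfrac d2$.

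Finally I would close the estimate as in Lemma~\ref{lem:ce}: insert this bound, apply Young's inequality $M_\Lambda(\eta)|\hat f(\eta)|\cdot M_\Lambda(\eta^+)|\hat f(\eta^+)|\le\tfrac12\bigl(|M_\Lambda(\eta)\hat f(\eta)|^2+|M_\Lambda(\eta^+)\hat f(\eta^+)|^2\bigr)$, and use $|\hat f(\eta^-)|\le\|f\|_{L^1}$. The resulting ``$\eta$-term'' carries $|M_\Lambda(\eta)\hat f(\eta)|^2$ and an angular integral $\int_{\S^{d-1}}b(\cos\theta)\sin^2\theta\,\mathrm d\sigma=c_{b,d}$, yielding $\|M_\Lambda f\|_{L^2}^2$ after integrating in $\eta$; the ``$\eta^+$-term'' carries $|M_\Lambda(\eta^+)\hat f(\eta^+)|^2$ and is treated by the change of variables $\eta\mapsto\eta^+$ whose Jacobian is bounded by $2^d$ (Sylvester's determinant theorem, exactly as in the proof of Lemma~\ref{lem:ce}), together with $\vartheta=\theta/2$ and $\sin\vartheta\le\sin2\vartheta$, producing the factor $2^{d-1}c_{b,d}$. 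Collecting the two contributions gives \eqref{eq:ce-polynomial}, and the $d=1$ case is identical using the one-dimensional Bobylev identity. I expect no genuine analytic obstacle here: the whole difficulty of the Gevrey commutator estimate, namely controlling $G(\eta^-)^{\epsilon(\alpha,\cdot)}|\hat f(\eta^-)|$, has disappeared, and the only care required is the bookkeeping of constants in the weight-difference step, in particular tracking which of $M_\Lambda(\eta),M_\Lambda(\eta^+)$ dominates according to the sign of $-d+\beta t$.
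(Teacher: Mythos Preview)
Your proposal is correct and follows essentially the same route as the paper's proof. Both start from Bobylev's identity, bound $|\hat f(\eta^-)|\le\|f\|_{L^1}$, telescope $M=M_1M_2$ and estimate each factor via the fundamental theorem of calculus, then finish with Young's inequality and the $\eta\mapsto\eta^+$ change of variables from Lemma~\ref{lem:ce}. The only cosmetic difference is that the paper works directly with $1-|\eta^+|^2/|\eta|^2=\sin^2\tfrac\theta2$ and the bound $\tfrac{s-s^+}{1+s^+}\le 2(1-\tfrac{s^+}{s})$, whereas you pass through $\langle\eta^+\rangle^{-2}|\eta^-|^2\le\tan^2\tfrac\theta2\le\sin^2\theta$; the resulting constants match (your intermediate bound is in fact slightly tighter by a harmless factor).
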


\begin{remark} Of course, for any weak solution $f$ of the Boltzmann and Kac equations, $\|f\|_{L^1}=\|f(t,\cdot)\|_{L^1} = \|f_0\|_{L^1}$.
 The fact that the commutator is bounded in terms of the $L^2$ norm of
 $M_\Lambda f$ makes the proof of $H^\infty$ smoothing for the Boltzmann and Kac equations much simpler than the proof of Gevrey regularity.
\end{remark}
\begin{proof}
  As in the proof of Proposition \ref{prop:ce}, Bobylev's formula shows
  \begin{align}
  \begin{split}\label{eq:easyce1}
  	&|\langle  M_{\Lambda}Q(f,f) - Q(f,M_{\Lambda}f), M_{\Lambda}f\rangle|
  	\le  \\
  	&\quad \le
  	 \int_{\R^d} \int_{\S^{d-1}} b\left(\frac{\eta}{|\eta|}\cdot\sigma\right) M_{\Lambda}(\eta) |\hat{f}(\eta)|
  	 |\hat{f}(\eta^-)|  |\hat{f}(\eta^+)|
  	 |M_\Lambda(t,\eta)- M_\Lambda(t,\eta^+)|
  	  \,\mathrm{d}\sigma \,\mathrm{d}\eta \\
  	&\quad \le
  	  \|\hat{f}\|_{L^\infty} \int_{\R^d} \int_{\S^{d-1}} b\left(\frac{\eta}{|\eta|}\cdot\sigma\right) M_{\Lambda}(\eta) |\hat{f}(\eta)|
     |\hat{f}(\eta^+)|
  	 |M_\Lambda(t,\eta)- M_\Lambda(t,\eta^+)|
  	  \,\mathrm{d}\sigma \,\mathrm{d}\eta
  \end{split}
  \end{align}	
    where, as before, $\eta^\pm = \frac{1}{2}(\eta \pm |\eta|\sigma)$.
  To bound $|M_\Lambda(\eta)- M_\Lambda(\eta^+)|$, we let $s:=|\eta|^2$ and $s^{+} = |\eta^{+}|^2$. Recall that $|\eta^+|^2= \frac{|\eta|^2}{2}(1+ \frac{\eta}{|\eta|}\cdot\sigma)$ and
  \begin{align*}
  	1-\frac{s^+}{s} = 1-\frac{|\eta^+|^2}{|\eta|^2} = \frac{1}{2}\left(1-\frac{\eta}{|\eta|}\cdot\sigma\right)
  \end{align*}
  Again, because of the support condition on the collision kernel $b(\cos\theta)$, we have $\frac{s}{2} \leq s^+ \leq s$. Set
  $\widetilde{M}(s):= (1+s)^{-d/2}e^{\frac{\beta t}{2}\log(1+s)}$.
  Then, for $|\eta|\le \Lambda$,
  \begin{align}
  \begin{split}\label{eq:easyce2}
  	M_\Lambda(\eta) & - M_\Lambda(\eta^+)
  	 = \widetilde{M}(s)- \widetilde{M}(s^+)
  	  = (1+s)^{-d/2}e^{\frac{\beta t}{2}\log(1+s)} - (1+s^+)^{-d/2}e^{\frac{\beta t}{2}\log(1+s^+)} \\
  	&=
  		(1+s)^{-d/2}\left(e^{\frac{\beta t}{2}\log(1+s)} - e^{\frac{\beta t}{2}\log(1+s^+)}\right)
  		+\left( (1+s)^{-d/2} - (1+s^+)^{-d/2}\right)e^{\frac{\beta t}{2}\log(1+s^+)} .
  \end{split}
  \end{align}
  Since $s\le 2s^+$, we have $ (1+s^+)^{-1}\le 2(1+s)^{-1}$. Hence
  \begin{align*}
  	\left| (1+s)^{-d/2} - (1+s^+)^{-d/2}\right|
  	& = \frac{d}{2}\int_{s^+}^{s} (1+r)^{-d/2-1}\, \mathrm{d}r
  	  \le  \frac{d}{2} (1+s^+)^{-d/2-1} (s-s^+) \\
  	& \le d (1+s^+)^{-d/2} \Big( 1-\frac{s^+}{s} \Big)
  \end{align*}
  In addition, $ \log(1+s)\le \log(2(1+s^+)) = \log2 + \log(1+s^+)$. So
  \begin{align*}
  	\left| e^{\frac{\beta t}{2}\log(1+s)} - e^{\frac{\beta t}{2}\log(1+s^+)} \right|
  	&\le \frac{\beta t}{2} \int_{s^+}^{s} \frac{1}{1+r}e^{\frac{\beta t}{2}\log(1+r)} \, \mathrm{d}r
  	  \le \frac{\beta t}{2} \frac{s}{1+s^+} e^{\frac{\beta t}{2}\log(1+s)}\Big(1-\frac{s^+}{s}\Big) \\
  	&\le \beta t 2^{\frac{\beta t}{2}}e^{\frac{\beta t}{2}\log(1+s^+)}\Big( 1-\frac{s^+}{s} \Big).
  \end{align*}
  Also   $ \log(1+s)\le \log(2(1+s^+)) = \log2 + \log(1+s^+)$. These bounds together with \eqref{eq:easyce2} show
  \begin{align*}
  	\left| M_\Lambda(\eta)  - M_\Lambda(\eta^+) \right|
  	\le \left( d + \beta t\, 2^{\frac{\beta t}{2}} \right)
  	     \left( 1-\frac{|\eta^+|^2}{|\eta|^2} \right)M_\Lambda(\eta^+)
  \end{align*}
  for all $|\eta|\le \Lambda$.
  Since the integration in \eqref{eq:easyce1} is only over $|\eta|\le \Lambda$, plugging this together with $\|\hat{f}\|_{L^\infty}\leq \|f\|_{L^1}$
  into \eqref{eq:easyce1} yields
  \begin{align*}
  \begin{split}
  	&|\langle  M_{\Lambda}Q(f,f) - Q(f,M_{\Lambda}f), M_{\Lambda}f\rangle|
  	  \\
  	&\quad \le
  	 \|f\|_{L^1} \left( d + \beta t \, 2^{\frac{\beta t}{2}} \right) \int_{\R^d}
  	 \int_{\S^{d-1}} b\left(\frac{\eta}{|\eta|}\cdot\sigma\right)
  	    \left( 1-\frac{|\eta^+|^2}{|\eta|^2} \right)
  	    M_{\Lambda}(\eta) |\hat{f}(\eta)| \, 
  	    M_\Lambda (\eta^+) |\hat{f}(\eta^+)|
  	  \,\mathrm{d}\sigma \,\mathrm{d}\eta.
  \end{split}
  \end{align*}
   Noting again 
   \begin{align*}
   	 	M_{\Lambda}(\eta) |\hat{f}(\eta)| \, M_\Lambda (\eta^+) |\hat{f}(\eta^+)| \leq \frac{1}{2} \left( (M_{\Lambda}(\eta) |\hat{f}(\eta)|)^2 + (M_\Lambda (\eta^+) |\hat{f}(\eta^+)|)^2 \right)
   \end{align*}
   and performing the same change of variables for the integral containing $\eta^+$ as in the proof of Lemma \ref{lem:ce} finishes the proof of equation \eqref{eq:ce-polynomial}.
\end{proof}

Now we can finish the
\begin{proof}[Proof of Theorem \ref{thm:Hinftysmoothing}]
	Using \eqref{eq:Hinfty1}, \eqref{eq:Hinfty2}, Proposition \ref{prop:Hinftyce}, and
	\begin{align*}
		\partial_\tau M_\Lambda(\tau,\eta)^2
		  = 2\beta \log\langle \eta \rangle \, M_\Lambda(\tau,\eta)^2
	\end{align*}
	 one sees
	\begin{align*}
	 \|M_{\Lambda}(t,D_v)f(t,\cdot)\|_{L^2}^2
	 &\le
	   \|f_0\|_{H^{-d}}^2
	   + 2 C_{f_0} \int_0^t \|M_{\Lambda}(\tau,D_v)f(\tau,\cdot)\|_{L^2}^2 \, \mathrm{d}\tau \\
	 &\quad +
	     \int_0^t \left\langle M_{\Lambda}(\tau,D_v)f(\tau, \cdot), \Big(\beta\log\langle D_v \rangle -2\widetilde{C}_{f_0}\langle D_v\rangle^{2\nu}\Big)M_{\Lambda}(\tau,D_v)f(\tau,\cdot)\right\rangle \,\mathrm{d}\tau \\
	 & \quad+ (1+2^{d-1}) c_{b,d} \|f_0\|_{L^1}
	 	\int_0^t\left(\frac{d}{2} + \frac{\beta \tau}{2}2^{\frac{\beta \tau}{2}} \right) \|M_{\Lambda}(\tau,D_v) f(\tau,\cdot)\|_{L^2}^2
    \end{align*}
	Setting
	\begin{align*}
		A(\beta,\tau) &:=
		 \sup_{\eta\in\R^d} \left(\beta\log\langle \eta \rangle - 2\widetilde{C}_{f_0}\langle \eta \rangle^{2\nu}\right)
		 	+ 2 C_{f_0}
		 	+ (1+2^{d-1})c_{b,d} \|f_0\|_{L^1} \left(\frac{d}{2} + \frac{\beta \tau}{2}2^{\frac{\beta \tau}{2}} \right) \\
		 	&= \frac{\beta}{2\nu} \left[ \log\left(\frac{\beta}{4\nu \widetilde{C}_{f_0}}\right) - 1\right] + 2 C_{f_0}
		 	+ (1+2^{d-1})c_{b,d} \|f_0\|_{L^1} \left(\frac{d}{2} + \frac{\beta \tau}{2}2^{\frac{\beta \tau}{2}} \right)
	\end{align*}
	the above can be bounded by
	\begin{align*}
		\|M_{\Lambda}(t,D_v)f(t,\cdot)\|_{L^2}^2
	 	\le
	      \|f_0\|_{H^{-d}}^2 + \int_0^t A(\beta,\tau) \|M_{\Lambda}(\tau,D_v)f(\tau,\cdot)\|_{L^2}^2 \, \mathrm{d}\tau
	\end{align*}
	and from Gronwall's lemma we get
	\begin{align*}
		\|M_{\Lambda}(t,D_v)f(t,\cdot)\|_{L^2}^2
		\le
			\|f_0\|_{H^{-d}}^2 \exp\left( \int_0^t A(\beta,\tau)\, \mathrm{d}\tau \right) .
	\end{align*}
	Letting $\Lambda\to\infty$ one sees
	\begin{align*}
		\|f(t,\cdot)\|_{H^{\beta t-d}}^2 = \|M_\infty(t,D_v)f(t,\cdot)\|_{L^2}^2
		\le
			\|f_0\|_{H^{-d}}^2 \exp\left( \int_0^t A(\beta,\tau)\, \mathrm{d}\tau \right) .
	\end{align*}
	that is, $f(t,\cdot)\in H^{\beta t-d}(\R^d)$. Now let $\beta\to\infty$ to see that $f(t,\cdot)\in H^{\infty}(\R^d)$ for any $t>0$.
\end{proof}

\begin{remark}
	Setting $\beta=\frac{\gamma+d}{t}$, one sees that $\|f(t, \cdot)\|_{H^{\gamma}(\R^d)} \lesssim t^{-\frac{\gamma+d}{4\nu}}$, so the $H^{\gamma}$ norms, in particular the $L^2$ norm, of $f(t, \cdot)$ blow up at most polynomially as $t\to 0$.
\end{remark}

\section{The Kolmogorov-Landau inequality}\label{sec:app-landau}
In this section we give a short proof of
\begin{lemma}[Kolmogorov-Landau inequality on the unit interval]\label{lem:landau-app}
	Let $m\geq 2$ be an integer. There exists a constant $C_m>0$ such that for all $w\in W^{m, \infty}([0,1])$,
	\begin{align*}
		\|w^{(k)}\|_{L^{\infty}([0,1])} \leq C_m \left( \frac{\|w\|_{L^{\infty}([0,1])}}{u^k} + u^{m-k} \|w^{(m)}\|_{L^{\infty}([0,1])} \right), \quad k=1, \dots, m-1,
	\end{align*}
	for all $0< u \leq 1$.
\end{lemma}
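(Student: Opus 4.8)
The plan is to deduce Lemma~\ref{lem:landau-app} from its special case on an interval of length one, via scaling and localization, and to prove that special case by Lagrange interpolation at equally spaced nodes.

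\emph{Reduction.} First I would observe that it suffices to prove the following interval version: for every interval $I$ of length $\ell\le 1$ and every $v\in W^{m,\infty}(I)$,
\begin{equation}\label{eq:interval-version}
  \|v^{(k)}\|_{L^\infty(I)} \le C_m\bigl(\ell^{-k}\|v\|_{L^\infty(I)} + \ell^{m-k}\|v^{(m)}\|_{L^\infty(I)}\bigr),\qquad k=1,\dots,m-1 .
\end{equation}
Indeed, given $w\in W^{m,\infty}([0,1])$, $0<u\le 1$, and $x\in[0,1]$, one may choose a subinterval $I_u\subset[0,1]$ of length $u$ with $x\in I_u$ (possible precisely because $u\le 1$); applying \eqref{eq:interval-version} to $w|_{I_u}$ with $\ell=u$ and using $\|w\|_{L^\infty(I_u)}\le\|w\|_{L^\infty([0,1])}$, $\|w^{(m)}\|_{L^\infty(I_u)}\le\|w^{(m)}\|_{L^\infty([0,1])}$ bounds $|w^{(k)}(x)|$ by the right-hand side of the Lemma; taking the supremum over $x$ finishes it. Moreover \eqref{eq:interval-version} itself reduces, via the affine substitution $t\mapsto a+\ell t$ mapping $[0,1]$ onto $I=[a,a+\ell]$ (under which a factor $\ell^{-k}$, resp.\ $\ell^{m-k}$, is produced by the chain rule), to the case $I=[0,1]$, $\ell=1$:
\begin{equation}\label{eq:unit-version}
  \|v^{(k)}\|_{L^\infty([0,1])} \le C_m\bigl(\|v\|_{L^\infty([0,1])} + \|v^{(m)}\|_{L^\infty([0,1])}\bigr),\qquad k=1,\dots,m-1 .
\end{equation}

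\emph{Proof of \eqref{eq:unit-version}.} Let $p$ be the polynomial of degree $\le m-1$ interpolating $v$ at the $m$ nodes $x_j=j/(m-1)$, $j=0,\dots,m-1$. The standard interpolation error estimate gives $\|v-p\|_{L^\infty([0,1])}\le \tfrac1{m!}\|v^{(m)}\|_{L^\infty([0,1])}$, hence $\|p\|_{L^\infty([0,1])}\le \|v\|_{L^\infty([0,1])}+\tfrac1{m!}\|v^{(m)}\|_{L^\infty([0,1])}$. Since $k$-fold differentiation maps the finite-dimensional space of polynomials of degree $\le m-1$ into itself, there is a constant $C_m'$ (a Markov-type constant, obtainable from norm equivalence on a finite-dimensional space) with $\|p^{(k)}\|_{L^\infty([0,1])}\le C_m'\|p\|_{L^\infty([0,1])}$ for all such $p$. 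On the other hand $v-p$ vanishes at the $m$ nodes, so by iterated Rolle's theorem $g_j:=(v-p)^{(j)}$ has at least $m-j$ zeros in $[0,1]$; since $g_m=v^{(m)}$ and, for $j=m-1,m-2,\dots,k$, picking a zero $z_j$ of $g_j$ gives $g_j(x)=\int_{z_j}^x g_{j+1}(s)\,\mathrm{d}s$, one obtains inductively $\|g_k\|_{L^\infty([0,1])}\le \|v^{(m)}\|_{L^\infty([0,1])}$. Combining,
\begin{align*}
  \|v^{(k)}\|_{L^\infty([0,1])}
  &\le \|p^{(k)}\|_{L^\infty([0,1])} + \|(v-p)^{(k)}\|_{L^\infty([0,1])}\\
  &\le C_m'\Bigl(\|v\|_{L^\infty([0,1])}+\tfrac1{m!}\|v^{(m)}\|_{L^\infty([0,1])}\Bigr) + \|v^{(m)}\|_{L^\infty([0,1])} ,
\end{align*}
which is \eqref{eq:unit-version} with $C_m=\max\bigl(C_m',\,\tfrac{C_m'}{m!}+1\bigr)$; taking the maximum over $k=1,\dots,m-1$ keeps the constant dependent only on $m$.

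\emph{Main obstacle.} The only genuinely delicate point is the low-regularity hypothesis $v\in W^{m,\infty}$ rather than $v\in C^m$: the interpolation error formula and the repeated appeal to Rolle's theorem are classically stated for $C^m$ functions. I would handle this in the standard way by mollifying $v$ on a slightly smaller interval, applying the smooth case, and passing to the limit, using $\|v_\varepsilon^{(m)}\|_{L^\infty}\le\|v^{(m)}\|_{L^\infty}$ together with the uniform convergence of $v_\varepsilon$ and its derivatives up to order $m-1$. Everything else is elementary bookkeeping with the fundamental theorem of calculus, Rolle's theorem, and finite-dimensional norm equivalence.
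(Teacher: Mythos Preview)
Your proof is correct and takes a genuinely different route from the paper's. The paper fixes a point $x$, writes Taylor's formula with integral remainder at $x$ evaluated at $m-1$ shifted points $x+\lambda_s h$, and solves the resulting Vandermonde system for the intermediate derivatives $w^{(k)}(x)$; the bound comes out directly in terms of $h^{-k}\|w\|_{L^\infty}$ and $h^{m-k}\|w^{(m)}\|_{L^\infty}$, and one finishes by putting $h=u/2$. You instead first reduce by localization and scaling to the pure norm inequality $\|v^{(k)}\|_{L^\infty}\le C_m(\|v\|_{L^\infty}+\|v^{(m)}\|_{L^\infty})$ on $[0,1]$, then split $v=p+(v-p)$ with $p$ the Lagrange interpolant at $m$ equispaced nodes, handle $p^{(k)}$ by finite-dimensional norm equivalence (a Markov-type bound) and $(v-p)^{(k)}$ by iterated Rolle and the fundamental theorem of calculus. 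The paper's argument yields an explicit (if non-optimal) constant via the Vandermonde inverse and avoids a separate localization step; your argument is more modular and invokes only off-the-shelf approximation-theoretic facts, at the price of a non-explicit $C_m'$. Your remark on the regularity issue is apt; in fact for $v\in W^{m,\infty}([0,1])$ one already has $v\in C^{m-1}$ with $v^{(m-1)}$ absolutely continuous, so Rolle up to order $m-1$ and the integral-form interpolation error go through directly, making the mollification step optional.
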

For the convenience of the reader, we give a short proof. The following argument is in part borrowed from \textsc{R. A. DeVore} and \textsc{G. G. Lorentz}'s book \cite{DL93} (pp.37--39).
\begin{proof}
	Since $w \in W^{m,\infty}([0,1])$, it has absolutely continuous derivatives of order up to $m-1$ and essentially bounded $m^{\mathrm{th}}$ derivative.
	
	Let $x\in [0, \tfrac{1}{2}]$ and $h\in[0, \tfrac{1}{2}]$. Then, by Taylor's theorem,
	\begin{align*}
		w(x+h) = w(x) + \sum_{j=1}^{m-1} \frac{h^j}{j!} w^{(j)}(x) + R_m(x,h)
	\end{align*}
	with remainder $R_m(x,h) = \int_0^h \frac{(h-t)^{m-1}}{(m-1)!} w^{(m)}(x+t)\,\mathrm{d}t$, which can be bounded by
	\begin{align*}
		|R_m(x,h)| \leq \|w^{(m)}\|_{L^{\infty}([0,1])} \int_0^h \frac{(h-t)^{m-1}}{(m-1)!}\, \mathrm{d}t = \frac{h^m}{m!} \|w^{(m)}\|_{L^{\infty}([0,1])}.
	\end{align*}
	
	Choosing $m-1$ real numbers $0<\lambda_1 < \lambda_2 < \cdots < \lambda_{m-1} \leq 1$ we obtain for $h\in [0, \frac{1}{2}]$ the system of equations
	\begin{align}\label{eq:kl-system}
		\sum_{j=1}^{m-1} \lambda_s^j \frac{h^j}{j!} w^{(j)}(x) = w(x+\lambda_s h) - w(x) - R_m(x, \lambda_s h) \quad \text{for } s = 1, \cdots, m-1.
	\end{align}
	Setting
	\begin{align*}
		V &= \begin{pmatrix} 	
		\lambda_1 & \lambda_1^2 & \cdots & \lambda_1^{m-1} \\
 		\lambda_2 & \lambda_2^2 & \cdots & \lambda_2^{m-1} \\
 		\vdots    &      &    \ddots    & \vdots \\
 		\lambda_{m-1} & \lambda_{m-1}^2& \cdots & \lambda_{m-1}^{m-1}
 			\end{pmatrix},
 		\quad \mathbf{w}(x) = \begin{pmatrix}
 			h w'(x) \\ \frac{h^2}{2} w''(x) \\ \vdots \\ \frac{h^{m-1}}{(m-1)!} w^{(m-1)}(x)
			 \end{pmatrix},\\
  		\mathbf{b}(x) &= \begin{pmatrix} w(x+\lambda_1 h) - w(x) - R_m(x, \lambda_1 h) \\ w(x+\lambda_2 h) - w(x) - R_m(x, \lambda_2 h) \\ \vdots \\ w(x+\lambda_{m-1} h) - w(x) - R_m(x, \lambda_{m-1} h)
  			\end{pmatrix},
	\end{align*}
	we have $V \mathbf{w}(x) = \mathbf{b}(x)$.
	Since the Vandermonde determinant
	\begin{align*}
	\det V = \prod_{i=1}^{m-1} \lambda_i \prod_{1\leq j<l \leq m-1} (\lambda_l - \lambda_j) \neq 0,
	\end{align*}
	$V$ is invertible and we obtain $\mathbf{w}(x) = V^{-1} \mathbf{b}(x)$ and therefore
	\begin{align}\label{eq:KL-normestimate}
		\left|\frac{h^{k}}{k!} w^{(k)}(x)\right| \leq \|\mathbf{w}(x)\| \leq \|V^{-1}\| \, \|\mathbf{b}(x)\|.
	\end{align}
	where $\|\cdot\|$ is any norm on $\R^{m-1}$, respectively the induced operator norm on the space of $(m-1)\times (m-1)$ real matrices. Choosing for concreteness the $\ell^{1}$ norm on $\R^{m-1}$, we have
	\begin{align*}
		\|\mathbf{b}(x)\| = \sum_{s=1}^{m-1} |w(x+\lambda_s h) - w(x) - R_m(x, \lambda_s h)| \leq (m-1) \left(2\|w\|_{L^{\infty}([0,1])} + \frac{h^m}{m!} \|w^{(m)}\|_{L^{\infty}([0,1])}\right).
	\end{align*}	
	While for our application the size of $\|V^{-1}\|$ is of no importance, one can even explicitly calculate it: The inverse of the Vandermonde matrix $V$ is explicitly known (see for instance \cite{Gau62}),
	\begin{align*}
			\left(V^{-1}\right)_{\alpha\beta} = (-1)^{\alpha-1} \frac{\sigma_{m-1-\alpha}^{\beta}}{\lambda_{\beta} \prod_{\nu\neq\beta} (\lambda_{\nu} - \lambda_{\beta})}, \quad \alpha, \beta = 1, \dots, m-1,
	\end{align*}
	where $\sigma_{i}^{j}$, $i,j=1, \dots,m-2$ is the $i^{\mathrm{th}}$ elementary symmetric function in the $(m-2)$ variables $\lambda_1, \dots, \lambda_{j-1}, \lambda_{j+1}, \dots, \lambda_{m-1}$,
	\begin{align*}
		\sigma_{i}^{j} = \sum_{\substack{1\leq \nu_1 < \cdots < \nu_i \leq m-1 \\ \nu_1, \dots, \nu_i \neq j}} \lambda_{\nu_1}\cdots \lambda_{\nu_i}, \qquad\sigma_0^{j} := 1.
	\end{align*}
	By means of the identity (Lemma 1 in \cite{Gau62})
	\begin{align*}
		\sum_{i=0}^{m-2} \sigma_{i}^j = \prod_{\substack{\nu=1\\\nu\neq j}}^{m-1}(1+\lambda_{\nu})
	\end{align*}
	which holds since the $\lambda_{\nu}$ are all positive, we get
	\begin{align*}
		\|V^{-1}\| &= \max_{1\leq \beta\leq m-1} \sum_{\alpha=1}^{m-1} \left| \left( V^{-1} \right)_{\alpha\beta}\right| = \max_{1\leq \beta\leq m-1} \frac{1}{\lambda_{\beta} \prod_{\nu\neq\beta} |\lambda_{\nu} - \lambda_{\beta}|} \sum_{\alpha=1}^{m-1} \sigma_{m-1-\alpha}^{\beta} \\
		&= \max_{1\leq \beta\leq m-1} \frac{1}{\lambda_{\beta}} \prod_{\substack{\nu=1 \\\nu\neq\beta}}^{m-1} \frac{1+\lambda_{\nu}}{|\lambda_{\nu} - \lambda_{\beta}|}.
	\end{align*}
	
	Going back to inequality \eqref{eq:KL-normestimate}, we have so far proved that
	\begin{align*}
		\frac{h^k}{k!} \left| w^{(k)}(x) \right| \leq (m-1) \|V^{-1}\| \left(2\|w\|_{L^{\infty}([0,1])} + \frac{h^m}{m!} \|w^{(m)}\|_{L^{\infty}([0,1])}\right),
	\end{align*}
	which yields
	\begin{align}\label{eq:KL-1}
	\begin{split}
		\left| w^{(k)}(x) \right| &\leq (m-1) \|V^{-1}\| \left(\frac{2 k!}{h^k} \|w\|_{L^{\infty}([0,1])} + h^{m-k} \frac{k!}{m!} \|w^{(m)}\|_{L^{\infty}([0,1])}\right) \\
		&\leq (m-1) \|V^{-1}\| \left(\frac{2 m!}{h^k} \|w\|_{L^{\infty}([0,1])} +  h^{m-k} \|w^{(m)}\|_{L^{\infty}([0,1])}\right)
		\end{split}
	\end{align}
	For $x\in[\tfrac{1}{2},1]$ the same calculations with $h$ replaced by $-h$ prove inequality \eqref{eq:KL-1} also in this case, so
	\begin{align}\label{eq:KL-2}
		\|w^{(k)}\|_{L^{\infty}([0,1])} \leq (m-1) \|V^{-1}\| \left(\frac{2 m!}{h^k} \|w\|_{L^{\infty}([0,1])} +  h^{m-k} \|w^{(m)}\|_{L^{\infty}([0,1])}\right)
	\end{align}
	for all $h\in[0, \tfrac{1}{2}]$. Taking an arbitrary $u\in[0,1]$, inequality \eqref{eq:KL-2} implies with $h=\frac{u}{2} \in [0, \tfrac{1}{2}]$,
	\begin{align*}
		\|w^{(k)}\|_{L^{\infty}([0,1])} &\leq 2^m m! (m-1) \|V^{-1}\| \left(\frac{1}{u^k} \|w\|_{L^{\infty}([0,1])} +  u^{m-k} \|w^{(m)}\|_{L^{\infty}([0,1])}\right),
	\end{align*}
	which is the claimed inequality with
	\begin{align}\label{eq:KL-constant}
		C_m = 2^m m! (m-1) \|V^{-1}\| = 2^m m! (m-1)  \max_{1\leq \beta\leq m-1} \frac{1}{\lambda_{\beta}} \prod_{\substack{\nu=1 \\\nu\neq\beta}}^{m-1} \frac{1+\lambda_{\nu}}{|\lambda_{\nu} - \lambda_{\beta}|}.
	\end{align}
\end{proof}

\begin{remark}
The constant $C_m$ in equality \eqref{eq:KL-constant} is far from optimal, but can be made small by minimising in the choice of the points $0<\lambda_1< \cdots< \lambda_{m-1}\leq 1$, suggesting that the optimal constant might be obtained by  methods from approximation theory.

Indeed, by a more refined argument making use of numerical differentiation formulas, the minimisers of the associated multiplicative Kolmogorov-Landau inequality, i.e., extremisers of
	\begin{align*}
		M_k(\sigma):=\sup\{ \|w^{(k)}\|_{L^{\infty}([0,1])}: w\in W^{m, \infty}([0,1]), \|w\|_{L^{\infty}([0,1])} \leq 1, \|w^{m}\|_{L^{\infty}([0,1])} \leq \sigma\}
	\end{align*}
	are explicitly known (at least for a wide range of parameters $m\in\N$ and $\sigma\geq 0$). The optimal Kolmogorov-Landau constants in these cases are given by the end-point values of certain Chebyshev type perfect splines. We refer to the papers by \textsc{A. Pinkus} \cite{Pin78} and \textsc{S. Karlin} \cite{Kar75}, as well as the recent article by \textsc{A. Shadrin} \cite{Sha14} and references therein.
\end{remark}

\section{Proof of Lemma \ref{lem:entropy}}\label{sec:appendix-LlogL}
\begin{proof}
	Let $f\in L^1_2(\R^d)\cap L\log L(\R^d)$ Then
	\begin{equation*}
		|H(f)| = \int_{\R^d} f \log_+ f \, \mathrm{d}v + \int_{\R^d} f\log_- f \,\mathrm{d}v.
	\end{equation*}
	The positive part is bounded by $\int f\log(1+f)\,\mathrm{d}v = \|f\|_{L\log L}$. The negative part can be controlled by
	\begin{align*}
		\int_{\R^d} f\log_- f\,\mathrm{d}v = \int_{\{f\leq 1\}}f\log \frac{1}{f}\,\mathrm{d}v \leq C_{\delta} \int_{\{f\leq 1\}} f^{1-\delta} \,\mathrm{d}v \leq C_{\delta} \left(\int_{\R^d} (1+|v|^2)^{-\frac{1-\delta}{\delta}} \,\mathrm{d}v\right)^{\delta} \|f\|_{L^1_2}^{1-\delta}
	\end{align*}
	which is finite for $0<\delta<\frac{2}{d+2}$, having used that for any $\delta>0$ there exists a constant $C_{\delta}$ such that $\log t \leq C_{\delta}t^{\delta}$ for all $t\geq 1$.

	Conversely, let $f\in L^1_2(\R^d)$ with finite entropy $H(f)$. Then
	\begin{align*}
		\int_{\R^d} f \log(1+f) \,\mathrm{d}v
		&=
		\int_{\{f\le 1\}} f \log(1+f) \,\mathrm{d}v
		+\int_{\{f>1\}} f \log(1+f) \,\mathrm{d}v
	\end{align*}
	On where $f\le 1$, we replace $f$ by $1$ and where $f>1$, we bound $1+f$ by $2f$
	leading to
	 	\begin{align*}
		\int_{\R^d} f \log(1+f) \,\mathrm{d}v
		&\leq
		\log2 \int_{\R^d} f \,\mathrm{d}v
		+\int_{\R^d} f \log f  \,\mathrm{d}v
		+  \int_{\R^d} f \log_- f  \,\mathrm{d}v
	\end{align*}
	As above, we conclude
	 \begin{align}\label{eq:LlogL}
		\int_{\R^d} f \log(1+f) \,\mathrm{d}v
		&\leq
		\log2 ||f\|_{L^1(\R^d)} + H(f) +  C_{\delta,d} \|f\|_{L^1_2(\R^d)}^{1-\delta}.
	\end{align}
	with a finite constant $C_{\delta,d}$ for $0<\delta<\frac{2}{d+2}$.
\end{proof}

\bigskip
\noindent\textbf{Acknowledgements.} We would like to thank Radjesvarane Alexandre for a discussion emphasising the question of smoothing properties of the  Boltzmann equation. It is a pleasure to  thank the REB program of CIRM for giving us the opportunity to start this research.
	J.-M.~B.\ was partially supported by the project SQFT ANR-12-JS01-0008-01. D.~H., T.~R., and S.~V.\ gratefully acknowledge financial support by the Deutsche Forschungsgemeinschaft (DFG) through CRC 1173. D.~H.\ also thanks the Alfried Krupp von Bohlen und Halbach Foundation for financial support.
	Furthermore, we thank the University of Toulon and the Karlsruhe Institute of Technology for their hospitality.

\bibliographystyle{aomplain}
\bibliography{boltzmann}

\vfill \noindent
\textbf{Jean-Marie Barbaroux}\\
\textsc{Aix-Marseille Universit\'e, CNRS, CPT, UMR 7332, 13288 Marseille, France}\\
et \textsc{Universit\'e de Toulon, CNRS, CPT, UMR 7332, 83957 La Garde, France}\\
\textit{E-mail}: \href{mailto:barbarou@univ-tln.fr}{barbarou@univ-tln.fr}\\
\textbf{Dirk Hundertmark}\\
\textsc{Karlsruhe Institute of Technology, Englerstra{\ss}e 2, 76131 Karlsruhe, Germany}\\
\textit{E-mail}: \href{mailto:dirk.hundertmark@kit.edu}{dirk.hundertmark@kit.edu}\\
\textbf{Tobias Ried}\\
\textsc{Karlsruhe Institute of Technology, Englerstra{\ss}e 2, 76131 Karlsruhe, Germany}\\
\textit{E-mail}: \href{mailto:tobias.ried@kit.edu}{tobias.ried@kit.edu}\\
\textbf{Semjon Vugalter}\\
\textsc{Karlsruhe Institute of Technology, Englerstra{\ss}e 2, 76131 Karlsruhe, Germany}\\
\textit{E-mail}: \href{mailto:semjon.wugalter@kit.edu}{semjon.wugalter@kit.edu}

\end{document}